\documentclass[a4paper,11pt]{amsart}

\usepackage{amssymb,amsmath,amsthm,mathrsfs,enumerate,graphicx, color}
\usepackage[pdfpagelabels,colorlinks,linkcolor=blue,citecolor=black,urlcolor=blue]{hyperref}
\usepackage{esint}
\usepackage{tikz}
\usetikzlibrary{arrows}
\usepackage{citeref}

\newtheorem{thm}{Theorem}[section]

\newtheorem{cor}[thm]{Corollary}
\newtheorem{lem}[thm]{Lemma}
\newtheorem{prop}[thm]{Proposition}
\newtheorem{defn}[thm]{Definition}
\newtheorem{rem}[thm]{Remark}










\newcommand{\ip}[1]{\langle #1 \rangle} 




\newcommand{\lesi}{\lesssim}

\newcommand{\supp}{\operatorname{supp}}
\newcommand{\f}{\frac}

\newcommand{\om}{\omega}

\newcommand{\vc}{\infty}
\newcommand{\Rn}{\mathbb{R}^n}

\textwidth =160mm \textheight =240mm
\oddsidemargin 0mm
\evensidemargin 0mm
\setlength{\topmargin}{0cm}

\title[Harmonic analysis associated with Laguerre Expansions]{Hardy spaces and Campanato spaces associated with  Laguerre expansions and higher order Riesz transforms }         

\author[T. A. Bui]{The Anh Bui}
\address{School of Mathematical and Physical Sciences, Macquarie University, NSW 2109,
	Australia}
\email{the.bui@mq.edu.au}

\author[X. T. Duong]{Xuan Thinh Duong}
\address{School of Mathematical and Physical Sciences, Macquarie University, NSW 2109,
	Australia}
\email{xuan.duong@mq.edu.au}

\keywords{Laguerre function expansion; higher-order Riesz transform; Hardy space, Campanato space; heat kernel}

\begin{document}

\begin{abstract}
Let \(\mathcal{L}_\nu\) be the Laguerre differential operator which is the self-adjoint extension of the differential operator  
\[
L_\nu := \sum_{i=1}^n \left[-\frac{\partial^2}{\partial x_i^2} + x_i^2 + \frac{1}{x_i^2} \left(\nu_i^2 - \frac{1}{4} \right) \right]
\]  
initially defined on \(C_c^\infty(\mathbb{R}_+^n)\) as its natural domain, where  \(\nu \in [-1/2,\infty)^n\), \(n \geq 1\). In this paper, we first develop the theory of Hardy spaces \(H^p_{\mathcal{L}_\nu}\) associated with \(\mathcal{L}_\nu\) for the full range \(p \in (0,1]\). Then we investigate the corresponding BMO-type spaces and establish that they coincide with the dual spaces of \(H^p_{\mathcal{L}_\nu}\). Finally, we show boundedness of higher-order Riesz transforms on Lebesgue spaces, as well as on our new Hardy and BMO-type spaces.

\end{abstract}
\date{}

\maketitle

\tableofcontents

\section{Introduction}\label{sec: intro}
For each $\nu =(\nu_1,\ldots, \nu_n)\in (-1,\vc)^n$, we consider the Laguerre differential operator 
\[
\begin{aligned}
	L_{\nu}
	&=\sum_{i=1}^n\Big[-\f{\partial^2 }{\partial x_i^2} + x_i^2+\frac{1}{x_i^2}\Big(\nu_i^2 - \frac{1}{4}\Big)\Big], \quad \quad x \in (0,\vc)^n.
\end{aligned}
\]
The $j$-th partial derivative associated with $L_{\nu}$ is given by
\[
\delta_{\nu_j} = \frac{\partial}{\partial x_j} + x_j-\frac{1}{x_j}\Big(\nu_j + \f{1}{2}\Big).
\]
Then the  adjoint of $\delta_{\nu_j}$ in $L^2(\mathbb{R}^n_+)$ is
\[
\delta_{\nu_j}^* = -\frac{\partial}{\partial x_j} + x_j-\frac{1}{x_j}\Big(\nu_j + \f{1}{2}\Big).
\]
It is straightforward that
\[
\sum_{i=1}^{n} \delta_{\nu_i}^* \delta_{\nu_i} = L_{\nu} {-2}(|\nu| + n).
\]
Let $k = (k_1, \ldots, k_n) \in \mathbb{N}^n$, $\mathbb N = \{0, 1, \ldots\}$, and $\nu = (\nu_1, \ldots, \nu_n) \in (-1, \infty)^n$ be multi-indices. The Laguerre function $\varphi_k^{\nu}$ on $\mathbb{R}^n_+$ is defined as
\[
\varphi_k^{\nu}(x) = \varphi^{\nu_1}_{k_1}(x_1) \ldots \varphi^{\nu_n}_{k_n}(x_n), \quad x = (x_1, \ldots, x_n) \in \mathbb{R}^n_+,
\]
where $\varphi^{\nu_i}_{k_i}$ are the one-dimensional Laguerre functions
\[
\varphi^{\nu_i}_{k_i}(x_i) = \Big(\f{2\Gamma(k_i+1)}{\Gamma(k_i+\nu_i+1)}\Big)^{1/2}L_{\nu_i}^{k_i}(x_i^2)x_i^{\nu_i + 1/2}e^{-x_i^2/2}, \quad x_i > 0, \quad i = 1, \ldots, n.
\]
Note that for $\nu_i > -1$ and $k_i \in \mathbb{N}$, $L_{\nu_i}^{k_i}$ denotes the Laguerre polynomial of degree $k_i$ and order $\nu_i$ outlined in  \cite[p.76]{L}.

Then it is well-known that the system $\{\varphi_k^{\nu} : k \in \mathbb{N}^n\}$ is an orthonormal basis of $L^2(\mathbb{R}^n_+, dx)$. Moreover, each $\varphi_k^{\nu}$ is an eigenfunction of  $L_{\nu}$ corresponding to the eigenvalue of $4|k| + 2|\nu| + 2n$, i.e.,
\begin{equation}\label{eq-eignevalue eigenvector}
L_{\nu}\varphi_k^{\nu} = (4|k| + 2|\nu| + 2n) \varphi_k^{\nu},
\end{equation}
where $|\nu| = \nu_1 + \ldots + \nu_n$ and  $|k| = k_1 + \ldots + k_n$. The operator $L_{\nu}$ is positive and symmetric in $L^2(\mathbb{R}^n_+, dx)$.

If $\nu = \left(-\frac{1}{2}, \ldots, -\frac{1}{2}\right)$, then $L_{\nu}$ becomes the harmonic oscillator $-\Delta + |x|^2$ on $\mathbb R^n_+$. Note that the Riesz transforms associated to the harmonic oscillator $-\Delta + |x|^2$ on $\mathbb R^n$ were investigated in \cite{ST, Th}.

The operator
\[
\mathcal{L}_{\nu}f = \sum_{k\in \mathbb{N}^{n}} (4|k| + 2|\nu| + 2n) \langle f, \varphi_{k}^\nu\rangle \varphi_{k}^\nu 
\]
defined on the domain $\text{Dom}\, \mathcal{L}_{\nu} = \{f \in L^2(\mathbb{R}^n_+): \sum_{k\in \mathbb{N}^{d}} (4|k| + 2|\nu| + 2n) |\langle f, \varphi_{k}^\nu\rangle|^2 < \infty\}$ is a self-adjoint extension of $L_{\nu}$ (the inclusion $C^{\infty}_c(\mathbb{R}^{d}_{+}) \subset \text{Dom}\, \mathcal{L}_{\nu}$ may be easily verified), has the discrete spectrum $\{4\ell + 2|\nu| + 2n: \ell \in \mathbb{N}\}$, and admits the spectral decomposition
\[
\mathcal{L}_{\nu}f = \sum_{\ell=0}^{\infty} (4\ell + 2|\nu| + 2n) P_{\nu,\ell}f,
\]
where the spectral projections are
\[
P_{\nu,\ell}f = \sum_{|k|=\ell} \langle f, \varphi_{k}^\nu\rangle \varphi_{k}^\nu.
\]

Moreover,
\begin{equation}\label{eq- delta and eigenvector}
	\delta_{\nu_j} \varphi_k^\nu =-2\sqrt{k_j} \varphi_{k-e_j}^{\nu+e_j}, \ \ \delta_{\nu_j}^* \varphi_k^\nu =-2\sqrt{k_j+1} \varphi_{k+e_j}^{\nu-e_j},
\end{equation}
where $\{e_1,\ldots, e_n\}$ is the standard basis for $\mathbb R^n$. Here and later on we use the convention that $\varphi_{k-e_j}^{\nu+e_j}=0$ if $k_j-1<0$ and $\varphi_{k+e_j}^{\nu-e_j}=0$ if $\nu_j-1<0$. See for example \cite{NS}.


\medskip

The first goal of the paper is to develop a new theory on Hardy spaces and BMO-type spaces associated with the Laguerre operator $\mathcal{L}_\nu$. The study of function spaces associated with operators is an active area of research in harmonic analysis, attracting significant interest. See, for example, \cite{DY, HLMMY, Yan, CFYY, Dziu, JY, SY, BDK, YZ} and references therein. A key problem in this direction is to characterize Hardy spaces via new atomic decompositions as in \cite{Dziu, YZ, BDK}, rather than using abstract atom definitions based on the underlying operator as in \cite{DY, HLMMY, Yan}.  

Let us provide an overview of this research direction. Let $L=-\Delta+V$ be a Schr\"odinger operator on $\Rn$ with $n\ge 3$, where $V$ is a nonnegative potential satisfying a suitable reverse H\"older inequality. Define
\begin{equation}\label{eq1}
\rho(x) =\sup\Big\{r>0: \f{1}{r^{n-2}}\int_B V(x)dx\le 1\Big\}.
\end{equation}
Let $p\in (0,1]$. A function $a$ is called a  $(p,\rho)$-atom associated to the ball $B(x_0,r)$ if
\begin{enumerate}[{\rm (i)}]
	\item ${\rm supp}\, a\subset B(x_0,r)$;
	\item $\|a\|_{L^\vc}\leq |B(x_0,r)|^{-1/p}$;
	\item $\displaystyle \int a(x)x^\alpha dx =0$ for all multi-indices $\alpha$ with $|\alpha| \le n(1/p-1)$ if $r< \rho(x_0)$.
\end{enumerate}
Then we define the Hardy space $H^{p}_{\rho}(\mathbb{R}^n)$ as the completion of all $f\in L^2(\Rn)$ functions under the norm
\[
\|f\|_{H^{p}_{\rho}(\mathbb{R}^n)}=\inf\Big\{\Big(\sum_j|\lambda_j|^p\Big)^{1/p}:
f=\sum_j \lambda_ja_j \ \text{in $L^2(\Rn)$}, \text{where $a_j$ are $(p,\rho)$ atoms} \Big\}.
\]
We also have the Hardy $H^p_L(\mathbb R^n)$ defined as the completion of all $f\in L^2(\Rn)$ functions under the norm
\[
\|f\|_{H^p_L(\mathbb R^n)}=\|\sup_{t>0}|e^{-tL}f|\|_p.
\]
Under the reverse H\"older condition
\begin{equation}\label{eq- reverse holder}
	\Big(\frac{1}{|B|}\int_BV(x)^{n/2}dx\Big)^{2/n}\lesssim \frac{1}{|B|}\int_BV(x) dx,
\end{equation}
for all balls $B\subset \mathbb{R}^n$, it was shown in \cite{Dziu} that $H^{1}_{\rho}(\mathbb{R}^n)$ and $H^1_L(\mathbb{R}^n)$ coincide. In particular, for polynomial potentials $V$, these spaces coincide for all $p\in (0,1]$ (see \cite{Dziu2}).  Later, Yang and Zhou \cite{YZ} extended these results to Hardy spaces on metric space $(X,d)$ equipped with a doubling measure in the sense of Coifmann and Weiss \cite{CW} by introducing a critical function $\rho$ satisfying
\begin{equation}\label{criticalfunction}
	\rho(y)\lesi \rho(x)\left(1 +\frac{d(x,y)}{\rho(x)}\right)^{\frac{k_0}{k_0+1}}
\end{equation}
for some $k_0>0$. They developed atomic Hardy spaces $H^1_\rho(X)$ and provided maximal function characterizations, leading to applications in settings such as Lie groups, doubling manifolds, and Heisenberg groups.

In the Laguerre setting, similar results were shown in \cite{DZ1} for $n=1$, $\nu>-1/2$, and $p=1$, later extended to $n\geq1$, $\nu\in [-1/2,\infty)^n$, and $p\in (\frac{n}{n+1},1]$ in \cite{B}. This paper advances the theory further by covering the full range $p\in(0,1]$ (Theorems \ref{mainthm2s} and \ref{dual}). In addition, we also investigate the BMO function type spaces, which will be proved as 
the dual spaces of the Hardy spaces. The extension presents challenges since:
\begin{itemize}
	\item The potential $V$ in $\mathcal{L}_\nu$ is neither a polynomial nor satisfies \eqref{eq- reverse holder}.
	\item The critical function \eqref{eq- critical function} does not satisfy \eqref{criticalfunction} as in \cite{YZ}.
	\item The heat kernel of $\mathcal{L}_\nu$ lacks certain conditions, including the H\"older continuity (see \cite{BDK}).
	\item Extending results from $p\in(\frac{n}{n+1},1]$ to $p\in(0,1]$ requires new techniques (see Section 3). Note that, until now, such results for the full range $p \in (0,1]$ were only known in the setting of the Schr\"odinger operator on $\mathbb{R}^n$ and on the Heisenberg group with a \textit{nonnegative polynomial} potential; see \cite{Dziu2, BHH}. In our setting, the potential is not a polynomial, and $\mathbb{R}^n_+$ can be viewed as a domain of $\mathbb{R}^n$, which introduces additional difficulties. 
\end{itemize}

The second goal of this paper is to investigate the boundedness of the higher-order Riesz transform  
\[
\delta_\nu^k \mathcal{L}_\nu^{-|k|/2}, \quad k\in \mathbb{N}^n,
\]
where \(\delta_\nu^k = \delta_{\nu_n}^{k_n} \ldots \delta_{\nu_1}^{k_1}\). The study of Riesz transforms in the context of orthogonal expansions was initiated by Muckenhoupt and Stein \cite{MS}, and since then, significant contributions have been made, particularly concerning Riesz transforms associated with Laguerre and Hermite operators. See, for example, \cite{Betancor, Betancor2, Muc, NS, NS2, ST, Th} and the references therein.  

In the one-dimensional case (\(n=1\)), higher-order Riesz transforms were studied in \cite{Betancor} for \(\nu > -1\). However, the techniques in \cite{Betancor} do not extend naturally to higher dimensions. In the case \(n\geq 2\), only the first-order Riesz transforms  
\[
\delta_{\nu_j} \mathcal{L}_\nu^{-1/2}, \quad j=1,\ldots, n,
\]
(not the higher-order transforms) have been analyzed. Specifically, it was shown in \cite{NS} that these transforms are Calder\'on-Zygmund operators provided that \(\nu_j \in \{-1/2\} \cup (1/2, \infty)\), and this result was later extended to the full range \(\nu_j \in [-1/2, \infty)\) in \cite{B}.  

To the best of our knowledge, the boundedness of higher-order Riesz transforms in dimensions \(n \geq 2\) remains an open problem due to several challenges. First, while the \(L^2\)-boundedness of the first-order Riesz transforms \(\delta_{\nu_j} \mathcal{L}_\nu^{-1/2}\) follows directly from spectral theory, proving the \(L^2\)-boundedness of the higher-order Riesz transforms \(\delta_\nu^k \mathcal{L}_\nu^{-|k|/2}\) for \(|k| \geq 2\) is significantly more involved. Second, obtaining kernel estimates for these operators requires precise control over the higher-order derivatives of the heat kernel associated with \(\mathcal{L}_\nu\), which have been absent in the literature.  

Motivated by these gaps, we aim to establish that the higher-order Riesz transforms \(\delta_\nu^k \mathcal{L}_\nu^{-|k|/2}\) are Calder\'on-Zygmund operators for \(\nu \in [-1/2, \infty)^n\) (see Theorem \ref{thm-Riesz transform}). This range is sharp, as in general, if \(\nu \in (-1,\infty)^n\), the Riesz transforms may fail to be Calder\'on-Zygmund operators, even in the one-dimensional case (see \cite{Betancor}). Furthermore, we show that these operators are bounded on our new Hardy space and the corresponding BMO-type space, completing the characterization of their boundedness (see Theorem \ref{thm- boundedness on Hardy and BMO}). Our results not only extend those in \cite{NS, B, Betancor2} but also provide a comprehensive description of the boundedness properties of higher-order 
Riesz transforms associated to the Lageurre operators.

\medskip

\bigskip

The paper is organized as follows. In Section 2, we recall some  properties of the critical function $\rho_\nu$, revisit the theory of the Hardy space associated with operators, including its maximal characterization and the theory of tent spaces. Section 3 is dedicated to proving kernel estimates related to the heat semigroup $e^{-t\mathcal L_\nu}$. In particular, we prove estimates related to the higher-order derivatives of the heat kernel. Section 4 is devoted to the theory of Hardy spaces and BMO type spaces associated to $\mathcal L_\nu$. Finally, Section 5 provides the
proofs of the boundedness of the Riesz transforms on Hardy spaces and Campanato spaces associated with $\mathcal L_\nu$ ( Theorem \ref{thm- boundedness on Hardy and BMO}).

\bigskip
 
 Throughout the paper, we always use $C$ and $c$ to denote positive constants that are independent of the main parameters involved but whose values may differ from line to line. We will write $A\lesi B$ if there is a universal constant $C$ so that $A\leq CB$ and $A\sim B$ if $A\lesi B$ and $B\lesi A$. For $a \in \mathbb{R}$, we denote the integer part of $a$ by $\lfloor a\rfloor$.  For a given ball $B$, unless specified otherwise, we shall use $x_B$ to denote the center and $r_B$ for the radius of the ball.
 
In the whole paper, we will often use the following inequality without any explanation $e^{-x}\le c(\alpha) x^{-\alpha}$ for any $\alpha>0$ and $x>0$.
\section{Preliminaries}
In this section, we will establish some preliminary results regarding the critical functions and the theory of Hardy spaces associated to an abstract operator.
\subsection{Critical functions}

 For $\nu =(\nu_1,\ldots,\nu_n) \in [-1/2,\vc)^n$ and $x=(x_1,\ldots, x_n)\in (0,\vc)^n$, define
\begin{equation}\label{eq- critical function}
	\rho_\nu(x) = \f{1}{16}\min \Big\{\f{1}{|x|}, 1,  x_j: j\in \mathcal J_\nu  \Big\},
\end{equation}
where $\mathcal J_\nu:=\{j: \nu_j>-1/2\}$.

It is easy to see that 
\begin{enumerate}[{\rm (i)}]
	\item If $\mathcal J_\nu=\emptyset$, then we have
	\[
	\rho_\nu(x) \sim  \min \Big\{\f{1}{|x|}, 1\Big\}\sim \f{1}{1+|x|}, \ \ x\in \mathbb R_+^n.
	\]
	
	\item If $\mathcal J_\nu\ne \emptyset$, then
	\begin{equation*} 
		\rho_\nu(x) = \f{1}{16}\min \Big\{\f{1}{|x|}, x_j: j\in \mathcal J_\nu  \Big\}, \ \ x\in \mathbb R_+^n.
	\end{equation*}
	\item If $\mathcal J_\nu=\{1,\ldots,n\}$, then
	\begin{equation*}
		\rho_\nu(x) = \f{1}{16}\min \Big\{\f{1}{|x|}, x_1,\ldots,x_n  \Big\}, \ \ x\in \mathbb R_+^n. 
	\end{equation*}
	In this case, the critical function $\rho_\nu$ is independent of $\nu$. More precisely, if $\nu,\nu'\in (-1/2,\vc)^n$, then 
	\[
	\rho_\nu(x) =\rho_{\nu'}(x)= \f{1}{16}\min \Big\{\f{1}{|x|}, x_1,\ldots,x_n  \Big\}, \ \ x\in \mathbb R_+^n.
	\]
\end{enumerate}

For each $j=1,\ldots, n$, define
\[
\rho_{\nu_j}(x_j) =\begin{cases} \displaystyle \f{1}{16}\times \min\{1,x_j^{-1}\}, \ \ \ &\nu_j=-1/2,\\
\displaystyle\f{1}{16}\min\{x_j,x_j^{-1}\}, \ \ \ &\nu_j>-1/2.	
\end{cases}
\]
Then we have 
\begin{equation}\label{eq- equivalence of rho}
\rho_\nu(x) \sim \min \Big\{\rho_{\nu_1}(x_1),\ldots, \rho_{\nu_n}(x_n)\Big\}.
\end{equation}

We recall a couple of important properties regarding the critical function in \cite[Lemma 2.1 \& Corollary 2.2]{B}.
\begin{lem}[\cite{B}]\label{lem-critical function}
	Let  $\nu\in [-1/2,\vc)^n$ and $x\in \mathbb R^n_+$. Then for every $y\in 4B(x,\rho_\nu(x))$,
	\[
	\f{1}{2}\rho_\nu(x)\le\rho_\nu(y)\le 2\rho_\nu(x).
	\]
\end{lem}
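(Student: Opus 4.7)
The plan is to rewrite the conclusion in terms of $g_\nu(x) := 16\rho_\nu(x) = \min\{1/|x|,\,1,\,x_j : j\in \mathcal{J}_\nu\}$, so the goal becomes
\begin{equation*}
\tfrac{1}{2} g_\nu(x) \le g_\nu(y) \le 2 g_\nu(x) \quad \text{whenever}\quad |y-x| < g_\nu(x)/4.
\end{equation*}
Note that $g_\nu(x)\le 1$ always (the $1$ sits inside the min), and that every remaining candidate $T\in\{1/|x|,\,x_j : j\in \mathcal{J}_\nu\}$ satisfies $T\ge g_\nu(x)$ by definition. The proof then splits into a lower bound and an upper bound, each handled by a short case analysis over the terms of the defining min.

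For the lower bound, I would show that every quantity appearing in $g_\nu(y)$ is at least $g_\nu(x)/2$. The term $1$ is clearly $\ge g_\nu(x)$. For each $j\in \mathcal{J}_\nu$, the bound $x_j \ge g_\nu(x)$ gives $y_j \ge x_j - g_\nu(x)/4 \ge 3g_\nu(x)/4$. For the reciprocal term, using $|x|\le 1/g_\nu(x)$ together with $g_\nu(x)\le 1$,
\begin{equation*}
|y| < |x| + g_\nu(x)/4 \le \frac{1}{g_\nu(x)} + \frac{1}{4g_\nu(x)} = \frac{5}{4g_\nu(x)},
\end{equation*}
so $1/|y| > 4g_\nu(x)/5$. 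Taking the min over all three types of terms yields $g_\nu(y) \ge 3g_\nu(x)/4 \ge g_\nu(x)/2$.

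For the upper bound, the plan is to pick a single term $T(x)$ that realizes the min in $g_\nu(x)$ and show the corresponding term $T(y)$ of $g_\nu(y)$ satisfies $T(y) \le 2g_\nu(x)$; then $g_\nu(y)\le T(y)$ closes the loop. If $T(x)=1$ then $T(y)=1=g_\nu(x)$. If $T(x)=x_j$ for some $j\in \mathcal{J}_\nu$, then $T(y)=y_j \le x_j + g_\nu(x)/4 = 5g_\nu(x)/4$. If $T(x)=1/|x|$, then $|x|\ge 1$ (since $g_\nu(x)\le 1$), which forces $|y-x| < 1/(4|x|) \le |x|/2$, hence $|y|>|x|/2$ and so $T(y)=1/|y|<2/|x|=2g_\nu(x)$.

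The main obstacle is essentially organizational: one must identify which term realizes the min at $x$ and match it to the analogous estimate at $y$, without being misled by the piecewise nature of $\rho_\nu$ (the min can switch branches as $y$ moves away from $x$, which is handled automatically by bounding a single chosen term from above and bounding \emph{all} terms from below). The factors $1/16$ in the definition of $\rho_\nu$ and $4$ in the radius $4\rho_\nu(x)$ are calibrated so that every componentwise distortion stays in $[3/4, 5/4]$, comfortably inside the claimed $[1/2, 2]$ window.
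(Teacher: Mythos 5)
Your proof is correct: the reduction to $g_\nu=16\rho_\nu$, the lower bound obtained by bounding \emph{every} candidate term at $y$ from below (each estimate $1\ge g_\nu(x)$, $y_j>3g_\nu(x)/4$, $1/|y|>4g_\nu(x)/5$ checks out, using $g_\nu(x)\le 1$ and $g_\nu(x)\le x_j$, $g_\nu(x)\le 1/|x|$), and the upper bound obtained by tracking only the term that realizes the minimum at $x$, are all sound, and the constants land comfortably inside $[1/2,2]$. The paper itself states this lemma as a citation of \cite[Lemma 2.1]{B} and gives no proof to compare against; your direct case analysis is the natural self-contained argument for it.
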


\begin{cor}[\cite{B}]\label{cor1}
	Let $\nu\in[-1/2,\vc)^n$. There exist a family of balls $\{B(x_\xi,\rho_\nu(x_\xi)): \xi\in \mathcal I\}$ and a family of functions $\{\psi_\xi: \xi \in \mathcal I\}$ such that 
	\begin{enumerate}[{\rm (i)}]
		\item $\displaystyle \bigcup_{\xi\in \mathcal I}B(x_\xi,\rho_\nu(x_\xi)) = \mathbb R^n_+$;
		\item $\{B(x_\xi,\rho_\nu(x_\xi)/5): \xi\in \mathcal I\}$ is pairwise disjoint;
		\item  $\displaystyle \sum_{\xi\in \mathcal I} \chi_{B(x_\xi,\rho_\nu(x_\xi))}\lesi 1$;
		\item $\supp \psi_\xi\subset B(x_\xi,\rho_\nu(x_\xi))$ and $0\le \psi_\xi \le 1$ for each $\xi \in \mathcal I$;
		\item $\displaystyle \sum_{\xi\in \mathcal I} \psi_\xi =1$.
	\end{enumerate}
\end{cor}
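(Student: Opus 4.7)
The plan is to construct the family of balls by a standard Vitali–type selection on the smaller balls of radius $\rho_\nu(x_\xi)/5$, and then to manufacture the partition of unity by smoothing and normalizing a pointwise finite sum of bump functions. The slowly varying property of $\rho_\nu$ in Lemma \ref{lem-critical function} will be the one tool used repeatedly at every step, and the only real subtlety is to ensure that the normalization below is well-defined, i.e.\ that the covering balls (not just their enlargements by a factor $5$) genuinely cover $\mathbb R^n_+$ with some uniform ``thickness.''

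First I would invoke Zorn's lemma to obtain a maximal family of pairwise disjoint balls $\{B(x_\xi,\rho_\nu(x_\xi)/5):\xi\in\mathcal I\}$ in $\mathbb R^n_+$; this is exactly item (ii). To prove (i), fix $x\in\mathbb R^n_+$; by maximality there exist $\xi\in\mathcal I$ and $y\in B(x,\rho_\nu(x)/5)\cap B(x_\xi,\rho_\nu(x_\xi)/5)$. If $\rho_\nu(x)\le\rho_\nu(x_\xi)$ then immediately $|x-x_\xi|<\tfrac{2}{5}\rho_\nu(x_\xi)$. If instead $\rho_\nu(x_\xi)<\rho_\nu(x)$, then $|x-x_\xi|<\tfrac{2}{5}\rho_\nu(x)<4\rho_\nu(x)$, so Lemma \ref{lem-critical function} applied at $x$ gives $\rho_\nu(x)\le 2\rho_\nu(x_\xi)$ and hence $|x-x_\xi|<\tfrac{4}{5}\rho_\nu(x_\xi)$. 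In both cases $x\in B(x_\xi,\tfrac{4}{5}\rho_\nu(x_\xi))\subset B(x_\xi,\rho_\nu(x_\xi))$, proving (i) and, crucially, the sharper statement that every $x$ lies in a $\tfrac{4}{5}$-shrunken ball — this is what will make the partition of unity work.

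Next I would establish the bounded-overlap property (iii). Suppose $x\in B(x_\xi,\rho_\nu(x_\xi))$ for every $\xi$ in some index set $F\subset\mathcal I$. Then $x_\xi\in 4B(x_\xi,\rho_\nu(x_\xi))$ trivially and $x\in 4B(x_\xi,\rho_\nu(x_\xi))$, so Lemma \ref{lem-critical function} yields $\rho_\nu(x_\xi)\sim \rho_\nu(x)$ uniformly in $\xi\in F$. Consequently $x_\xi\in B(x,2\rho_\nu(x))$ and the disjoint balls $B(x_\xi,\rho_\nu(x_\xi)/5)$, each of volume comparable to $\rho_\nu(x)^n$, are all contained in some fixed ball $B(x,C\rho_\nu(x))$. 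A volume comparison then forces $\#F\lesssim 1$, which is (iii).

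Finally, for (iv) and (v), pick a fixed smooth radial cutoff $\phi\in C_c^\infty(\mathbb R^n)$ with $0\le\phi\le1$, $\phi\equiv1$ on $B(0,4/5)$, and $\supp\phi\subset B(0,1)$, and set
\[
\wt\psi_\xi(x)=\phi\!\left(\frac{x-x_\xi}{\rho_\nu(x_\xi)}\right),\qquad \xi\in\mathcal I.
\]
Then $\supp\wt\psi_\xi\subset B(x_\xi,\rho_\nu(x_\xi))$. By the sharpened version of (i) proved above, for each $x$ there exists $\xi$ with $x\in B(x_\xi,\tfrac{4}{5}\rho_\nu(x_\xi))$, whence $\wt\psi_\xi(x)=1$ and in particular $1\le\sum_{\xi'\in\mathcal I}\wt\psi_{\xi'}(x)\lesssim 1$, the upper bound coming from (iii). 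The normalization
\[
\psi_\xi(x)=\frac{\wt\psi_\xi(x)}{\sum_{\xi'\in\mathcal I}\wt\psi_{\xi'}(x)}
\]
is therefore a bounded, well-defined smooth function satisfying all the conditions in (iv) and (v). The only real obstacle in the whole argument is the double-case analysis in proving (i), since the factor $2$ in Lemma \ref{lem-critical function} must be combined with the $1/5$ from the Vitali selection in just the right way to land inside a ball of radius strictly smaller than $\rho_\nu(x_\xi)$; once this is done, the partition of unity follows routinely.
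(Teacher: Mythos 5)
Your argument is correct and is essentially the standard Vitali-selection plus partition-of-unity construction; the paper itself states this corollary without proof, quoting it from \cite{B}, where the same covering argument based on the slowly varying property of $\rho_\nu$ (Lemma \ref{lem-critical function}) is used. Your sharpened covering by the $\tfrac{4}{5}$-shrunken balls, obtained by splitting into the two cases $\rho_\nu(x)\le\rho_\nu(x_\xi)$ and $\rho_\nu(x_\xi)<\rho_\nu(x)$, is exactly the right way to make the normalization in (v) well defined, and the bounded-overlap count via disjointness and comparability of the radii is the standard route to (iii).
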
	

As the following estimate is standard, we omit the proof.

\begin{lem}\label{lem- product two kernels}
	Let $a>0$. Then there exist $C, b>0$ such that 
	\[
	\int_{\Rn_+}  \f{1}{t^{n/2}}\exp\Big(-\f{|x-z|^2}{at}\Big)\f{1}{s^{n/2}}\exp\Big(-\f{|y-z|^2}{as}\Big) dz\le C\f{1}{(t+s)^{n/2}}\exp\Big(-\f{|x-z|^2}{b(t+s)}\Big) 
	\]
	for all $s,t>0$ and $x,y\in \Rn_+$.
\end{lem}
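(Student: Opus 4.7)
The proof is an explicit Gaussian integral computation; essentially nothing here depends on the Laguerre setting beyond the fact that the integrand is nonnegative. (I read the right-hand side with $|x-y|^2$ in place of $|x-z|^2$, since $z$ is the variable being integrated out.)

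First I would upper bound the integral over $\Rn_+$ by the same integral over all of $\Rn$. This is permissible because the integrand is nonnegative, and it reduces matters to a classical Gaussian computation on full Euclidean space.

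The key algebraic step is to complete the square in $z$. A direct expansion verifies the identity
\[
\f{|x-z|^2}{at} + \f{|y-z|^2}{as} = \f{s+t}{ast}\, \Big|z - \f{sx+ty}{s+t}\Big|^2 + \f{|x-y|^2}{a(s+t)},
\]
so the factor $\exp(-|x-y|^2/(a(s+t)))$, which is independent of $z$, pulls out of the integral. The remaining $z$-integral is a translated Gaussian; applying $\int_{\Rn} e^{-\lambda|z-z_0|^2}\,dz = (\pi/\lambda)^{n/2}$ with $\lambda = (s+t)/(ast)$ evaluates it to $(\pi a st/(s+t))^{n/2}$.

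Combining this with the prefactor $(ts)^{-n/2}$, the $ts$ factors cancel neatly, yielding $(\pi a)^{n/2}(s+t)^{-n/2}\exp(-|x-y|^2/(a(s+t)))$. Thus the lemma holds with $C=(\pi a)^{n/2}$ and $b=a$. There is essentially no obstacle here; the only delicate point is that the reduction from $\Rn_+$ to $\Rn$ is valid precisely because we seek an upper bound and the integrand is nonnegative, not because of any intrinsic structure of the half-space.
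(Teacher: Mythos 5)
Your computation is correct and is exactly the standard argument; the paper itself omits the proof of this lemma (declaring the estimate standard), so there is nothing to compare against, and completing the square to reduce to the Chapman--Kolmogorov identity for the Gaussian kernel, after enlarging the domain of integration from $\Rn_+$ to $\Rn$, is precisely what is intended. Your reading of the right-hand side with $|x-y|^2$ in place of $|x-z|^2$ is also the correct interpretation: the stated form is a typographical slip, since $z$ is the integration variable, and the lemma is only ever used in the paper as a bound depending on $x$ and $y$.
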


\subsection{Hardy spaces associated to operators}
On our setting $(\mathbb R^n_+, |\cdot |, dx)$, we consider an operator $L$ satisfying the following two conditions:
\begin{enumerate}
	\item[(A1)] $L$ is  a nonnegative, {possibly unbounded}, self-adjoint operator on $L^2(\mathbb{R}^n_+)$;
	\item[(A2)] $L$ generates a semigroup $\{e^{-tL}\}_{t>0}$ whose kernel $p_t(x,y)$ admits a Gaussian upper bound. That is, there exist two positive finite constants $C$ and  $c$ so that for all $x,y\in \mathbb{R}^n_+$ and $t>0$,
	\begin{equation*}
		\displaystyle |p_t(x,y)|\leq \f{C}{t^{n/2}}\exp\Big(-\f{|x-y|^2}{ct}\Big).
	\end{equation*}
\end{enumerate}

We now recall the definition of the Hardy spaces associated to $L$ in \cite{HLMMY, JY}.

Let $0<p\le 1$. Then the Hardy space $H_{S_L}^p(\mathbb R^n_+)$  is
defined as the completion of
$$
\{f\in L^2(\Rn_+) : S_Lf \in L^p(\mathbb R^n_+)\}
$$
under the norm $\|f\|_{H_{S_L}^p(\mathbb R^n_+)}=\|S_Lf\|_{L^p(\mathbb R^n_+)}$, where the square function $S_{L}$ is defined as
$$
S_Lf(x)=\Bigl(\int_0^\vc \int_{|x-y|<t}|t^2L e^{-t^2L}f(y)|^2\f{ dy dt}{t^{n+1}}\Bigr)^{1/2}.
$$

\begin{defn}[Atoms associated to $L$]\label{def: L-atom}
	Let $p\in (0,1]$ and $M\in \mathbb{N}$. A function $a$ supported in a ball $B$ is called a  $(p,M)_L$-atom if there exists a
	function $b$ in the domain of $L^M$ such that
	\begin{enumerate}[{\rm (i)}]
		\item  $a=L^M b$;
		\item $\supp L ^{k}b\subset B, \ k=0, 1, \dots, M$;
		\item $\|L^{k}b\|_{L^\vc(\mathbb{R}^n_+)}\leq
		r_B^{2(M-k)}|B|^{-\f{1}{p}},\ k=0,1,\dots,M$.
	\end{enumerate}
\end{defn}

\noindent Then the atomic Hardy spaces associated to the operator $L$ are defined as follows:
\begin{defn}[Atomic Hardy spaces associated to $L$]
	
	Given $p\in (0,1]$ and $M\in \mathbb{N}$, we  say that $f=\sum
	\lambda_ja_j$ is an atomic $(p,M)_L$-representation if
	$\{\lambda_j\}_{j=0}^\infty\in l^p$, each $a_j$ is a $(p,M)_L$-atom,
	and the sum converges in $L^2(\mathbb{R}^n_+)$. The space $H^{p}_{L,at,M}(\mathbb{R}^n_+)$ is then defined as the completion of
	\[
	\left\{f\in L^2(\mathbb{R}^n_+):f \ \text{has an atomic
		$(p,M)_L$-representation}\right\},
	\]
	with the norm given by
	$$
	\|f\|_{H^{p}_{L,at,M}(\mathbb{R}^n_+)}=\inf\left\{\left(\sum|\lambda_j|^p\right)^{1/p}:
	f=\sum \lambda_ja_j \ \text{is an atomic $(p,M)_L$-representation}\right\}.
	$$
\end{defn}

\noindent The maximal Hardy spaces associated to $L$ are defined as follows.

\begin{defn}[Maximal Hardy spaces associated to $L$]\label{defn-maximal Hardy spaces}
	
	For $f\in L^2(\mathbb{R}^n_+)$, we define  the   maximal function  by
	\[
	\mathcal M_{L}(x)=\sup_{t>0}|e^{-tL}f(x)|.
	\]
	Given $p\in (0,1]$,  the Hardy space $H^{p}_{L}(\mathbb{R}^n_+)$ is defined as the completion of
	$$
	\left\{f
	\in L^2(\mathbb{R}^n_+): \mathcal M_{L} \in L^p(\mathbb{R}^n_+)\right\},
	$$
	with the norm given by
	$$
	\|f\|_{H^{p}_{L}(\mathbb{R}^n_+)}=\|\mathcal M_{L}\|_{L^p(\mathbb{R}^n_+)}.
	$$
\end{defn}
The following theorem is taken from \cite[Theorem 1.3]{SY}.
\begin{thm}\label{mainthm1-Hardy}
	Let $p\in (0,1]$, and $M>\f{n}{2}\big(\f{1}{p}-1\big)$. Then the Hardy spaces $H_{S_L}(\mathbb R^n_+)$, $H^{p}_{L,at,M}(\mathbb{R}^n_+)$ and $H^{p}_{L}(\mathbb{R}^n_+)$  coincide with equivalent norms.
\end{thm}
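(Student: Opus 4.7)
The plan is to prove Theorem \ref{mainthm1-Hardy} by establishing the chain of continuous inclusions
\[
H^{p}_{L,at,M}(\mathbb{R}^n_+) \hookrightarrow H_{S_L}^p(\mathbb{R}^n_+) \hookrightarrow H^{p}_{L,at,M}(\mathbb{R}^n_+) \hookrightarrow H^{p}_{L}(\mathbb{R}^n_+) \hookrightarrow H_{S_L}^p(\mathbb{R}^n_+),
\]
so that all three spaces coincide with equivalent quasinorms. The main analytic inputs are, first, the $L^2$ spectral identity $\int_0^\infty \|t^2 L e^{-t^2 L} f\|_{L^2}^2\,dt/t \sim \|f\|_{L^2}^2$ from (A1), and second, the Davies--Gaffney-type off-diagonal $L^2$-estimates for the families $\{(t^2L)^k e^{-t^2L}\}_{k\ge 0}$ together with pointwise derivative estimates of the form $|\partial_t^k p_t(x,y)| \lesssim t^{-k-n/2}\exp(-|x-y|^2/ct)$, all of which follow from (A2) by the analyticity of the semigroup on $L^2$. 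The condition $M > \frac{n}{2}(\frac{1}{p}-1)$ will be used to ensure summability over dyadic annuli.

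For the inclusion $H^{p}_{L,at,M}\hookrightarrow H_{S_L}^p$, take a $(p,M)_L$-atom $a = L^M b$ supported in a ball $B$ and split $\|S_L a\|_{L^p}^p$ over $4B$ and the annuli $S_j(B) = 2^{j+1}B\setminus 2^j B$. The local term is controlled by H\"older's inequality and $L^2$-boundedness of $S_L$. On $S_j(B)$ one cuts the $t$-integral at $t = r_B$: for $t \leq r_B$, the Davies--Gaffney decay produces a factor $\exp(-c(2^j r_B/t)^2)$, while for $t\ge r_B$, the $M$-fold cancellation in $a = L^M b$ yields a factor $(r_B/t)^{2M}$; the condition on $M$ then gives summability in $j$. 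A similar annular splitting proves $H^{p}_{L,at,M}\hookrightarrow H^p_L$, now using $e^{-tL} a = e^{-tL} L^M b = (-\partial_t)^M e^{-tL} b$ together with the derivative kernel estimate above.

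The reverse inclusion $H_{S_L}^p\hookrightarrow H^{p}_{L,at,M}$ goes through tent space theory. The Calder\'on reproducing formula
\[
f = c_M \int_0^\infty (t^2L)^{M+1} e^{-t^2L}\,[t^2 L e^{-t^2L} f]\,\frac{dt}{t}
\]
lifts $f$ to $F(x,t) = t^2L e^{-t^2L}f(x)$, which lies in the tent space $T^p$ with $\|F\|_{T^p}\sim \|S_L f\|_{L^p}$. The Coifman--Meyer--Stein atomic decomposition $F = \sum_j \lambda_j A_j$, with $A_j$ supported in a tent $\hat{B_j}$, combined with the reproducing formula yields $f = \sum_j \lambda_j a_j$, where $a_j = L^M b_j$ and $b_j = c_M \int_0^{r_{B_j}} t^{2M+2} L e^{-t^2L} A_j(\cdot,t)\,dt/t$; the Davies--Gaffney estimates give the support and $L^\infty$-size bounds on $L^k b_j$, showing that $a_j$ is, up to a fixed constant, a $(p,M)_L$-atom associated to a dilate of $B_j$. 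The main obstacle is the final inclusion $H^p_L \hookrightarrow H_{S_L}^p$, since for an abstract operator satisfying only (A1)--(A2) there is no harmonic-extension or Poisson-kernel machinery available. Here we follow the Song--Yan strategy from \cite{SY}: compare the radial maximal function with non-tangential and grand maximal functions via a Calder\'on reproducing formula, and establish a good-$\lambda$ inequality relating $S_L f$ to $\mathcal{M}_L f$ with the error controlled by the off-diagonal decay provided by (A2), which closes the cycle of inclusions.
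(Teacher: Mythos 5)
The paper does not prove this theorem: it is quoted verbatim from \cite[Theorem 1.3]{SY} (with the square-function/atomic part going back to \cite{HLMMY, JY}), so there is no internal proof to compare against. Your outline reproduces the standard strategy of those references — annular estimates for the atomic-to-square-function and atomic-to-maximal inclusions, tent-space atomic decomposition plus a Calder\'on reproducing formula for the converse, and the Song--Yan comparison of radial, non-tangential and grand maximal functions for $H^p_L\hookrightarrow H^p_{S_L}$ — and at that level it is the right proof.

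There is, however, one step in your sketch that fails as written. In the inclusion $H^p_{S_L}\hookrightarrow H^p_{L,at,M}$ you set $b_j = c_M\int_0^{r_{B_j}} t^{2M+2} L e^{-t^2L}A_j(\cdot,t)\,\frac{dt}{t}$ and assert that ``the Davies--Gaffney estimates give the support \dots bounds on $L^k b_j$, showing that $a_j$ is \dots a $(p,M)_L$-atom associated to a dilate of $B_j$.'' Under (A1)--(A2) the operator $e^{-t^2L}$ does \emph{not} preserve compact supports, and Davies--Gaffney decay only yields off-support smallness, not vanishing; so your $L^k b_j$ is not supported in a dilate of $B_j$, and $a_j$ is a molecule rather than an atom in the sense of Definition \ref{def: L-atom}. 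The standard repair, and the one implicitly needed for the cited result, is to replace $e^{-t^2L}$ in the reproducing formula by an even function $\Phi(t\sqrt{L})$ whose kernel is supported in $\{|x-y|\le t\}$ via finite propagation speed of $\cos(t\sqrt{L})$ (equivalent to Davies--Gaffney for nonnegative self-adjoint $L$, by Sikora's theorem); this is exactly the device the paper sets up in Lemma \ref{lem:finite propagation}, albeit for a different purpose. Alternatively, one proves the decomposition into molecules and separately shows every molecule lies in $H^p_{L,at,M}$ with uniform norm. Either way the argument closes, but as stated your support claim is a genuine gap, not a routine omission.
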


	\subsection{Tent spaces}
The tent spaces, which were first introduced in \cite{CMS}, become an effective tool in the study of function spaces in harmonic analysis including Hardy spaces. In this section we will recall the definitions of the tent spaces and their fundamental properties in \cite{CMS}. We begin with some notations in \cite{CMS}.
\begin{itemize}
	\item For $x\in \Rn_+$, we denote $$\Gamma  (x):=\big\{(t,y)\in \Rn_+ \times \big(0,\infty\big): |x-y| \leq t\big\}.$$
	
	\item For any closed subset $F \subset \Rn_+$ , define 
	$$ R (F) = \cup_{x\in F} \Gamma(x).$$ 
	
	\item If $O$ is an open set in $\Rn_+$, then the tent over $\widehat{O}$ is defined as 
	$$\widehat{O}=\big(R(O^c)\big)^c.$$
	
\end{itemize}
For a measurable function $F$ defined in ${\mathbb{R}^n_+\times (0,\vc)}$, we define
$$\mathcal A(F)(x)=\Big(\int_{\Gamma(x)}|F(y,t)|^2\frac{dydt}{t^{n+1}}\Big)^\frac{1}{2},$$

$$\mathcal C(F)(x)=\underset{x\in B}{\sup}\Big(\frac{1}{|B|}\int_{\widehat{B}}|F(y,t)|^2\frac{dydt}{t}\Big)^\frac{1}{2},$$
and
$$\mathcal C_p(F)(x)=\underset{x\in B}{\sup}\Big(\frac{1}{|B|^{\frac{1}{p}-\frac{1}{2}}}\int_{\widehat{B}}|F(y,t)|^2\frac{dydt}{t}\Big)^\frac{1}{2} \ \ \ \text{ for $0 < p \leq 1$}.$$

The following definition is taken from \cite{CMS}.		
\begin{defn}[The tent spaces] The tent spaces  are defined as follows.
	\begin{itemize}
		\item For $0<p < \infty$, we define $T_2^p:= \{F: \ \mathcal A(F)\in L^p(\Rn) \}$ with the norm $\|F\|_{T^p_2}= \|\mathcal A(F)\|_{p}$ which is a Banach space for $1\le p<\vc$.
		\item  For $p = \infty$, we define $T_2^\infty=\big\{F: \  \mathcal C(F) \in L^\vc(\Rn) \big\}$ with the norm $\|F\|_{T_2^\vc(\Rn)}=\|\mathcal C(F)\|_{\vc}$ which  is a Banach space.
		\item For $0<p \leq 1$, we define $T_2^{p,\infty}=\big\{F: \|\mathcal C_p(F)\|_p<\infty\big\}$ with the norm $\|F\|_{T_2^{p,\infty}}=\|\mathcal C_pF\|_\vc$. Obviously, $T^{1,\infty}_2=T_2^\infty$.
	\end{itemize}
\end{defn}

	

We also recall duality results of the tent spaces.
\begin{prop}[\cite{CMS}]\label{05}
	\begin{enumerate}[\rm (i)]\label{T1 TP}
		\item The following inequality holds, whenever $f\in T_2^1$ and $g \in T_2^\infty$:
		$$\int_{\mathbb{R}^n_+\times (0,\vc)}|f(x,t)g(x,t)|\frac{dxdt}{t}\leq c\int_{\Rn_+}\mathcal A(f)(x)\mathcal C(g)(x)dx.$$
		\item Suppose $1<p< \infty$, $f \in T^p_2$ and $ g\in T_2^{p'}$ with $\frac{1}{p}+\frac{1}{p'}=1$ then 
		$$\int_{\mathbb{R}^n_+\times (0,\vc)}|f(y,t)g(y,t)|\frac{dydt}{t}\leq \int_{\Rn_+}\mathcal A(f)(x)\mathcal A(g)(x)dx.$$
		\item If $0 <p \leq 1$, then the dual space of $T_2^p$ is $T_2^{p,\infty}$. More precisely, the pairing 
		$$\left\langle f,g \right\rangle =\int_{\mathbb{R}^n_+\times (0,\vc)}f(x,t)g(x,t)\frac{dxdt}{t}$$ 
		realizes $T_2^{p,\infty}$ 	as   the dual of $T_2^p$. 
	\end{enumerate}
\end{prop}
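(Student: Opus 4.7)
The statement is cited from Coifman--Meyer--Stein \cite{CMS}, and my plan would be to follow their argument. The common starting point is the Fubini-on-cones identity
\[
\int_0^{\vc}\int_{\Rn_+} |f(y,t)g(y,t)|\frac{dydt}{t} = \frac{1}{c_n}\int_{\Rn_+}\int\!\!\int_{\Gamma(x)}|f(y,t)g(y,t)|\frac{dydt}{t^{n+1}}\,dx,
\]
with $c_n = |\{y \in \Rn : |y|<1\}|$, which converts integrals against $dydt/t$ on the upper half space into integrals of cone-wise quantities over $\Rn_+$. Part (ii) is then immediate: apply Cauchy--Schwarz to the inner cone integral and recognize the two factors as $\mathcal A(f)(x)$ and $\mathcal A(g)(x)$.

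For part (i), pointwise Cauchy--Schwarz is not enough since $\mathcal C(g)$ is a Carleson-type quantity rather than an $L^2$-cone integral, so I would run a stopping-time decomposition on the level sets of $\mathcal A(f)$. Define $O_k = \{\mathcal A(f) > 2^k\}$ and its Hardy--Littlewood thickening $\widetilde{O}_k = \{M\chi_{O_k} > \tfrac{1}{2}\}$, and partition the upper half-space into tent shells $T_k = \widehat{\widetilde{O}_k}\setminus\widehat{\widetilde{O}_{k+1}}$. On each shell I would combine three ingredients: a cone-to-tent inequality $\int\!\!\int_{R(F)}|f|^2\frac{dydt}{t}\lesi \int_F \mathcal A(f)^2\,dx$ valid whenever $F$ has the global density property (a direct consequence of Fubini applied to $|\{x\in F:|x-y|<t\}|$), the $T_2^{\infty}$ Carleson estimate $\int\!\!\int_{\widehat B}|g|^2\frac{dydt}{t}\le |B|\,\|g\|_{T_2^\infty}^2$ applied to the Whitney-type balls covering $\widetilde{O}_k$, and Cauchy--Schwarz on $T_k$. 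Summing a geometric series in $k$ and rewriting via the distribution function of $\mathcal A(f)$ recovers the claimed bound by $\int \mathcal A(f)\mathcal C(g)\,dx$.

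For part (iii), one implication uses an atomic decomposition of $T_2^p$: any $f\in T_2^p$ can be written $f = \sum_j \lambda_j a_j$ with $\sum_j |\lambda_j|^p \lesi \|f\|_{T_2^p}^p$, where each atom $a_j$ is supported in a tent $\widehat{B_j}$ and $L^2(dydt/t)$-normalized with $\|a_j\|_{L^2(dydt/t)}^2\le |B_j|^{1-2/p}$; Cauchy--Schwarz on $\widehat{B_j}$ then pairs this normalization against the $T_2^{p,\infty}$-norm of $g$ to give $|\langle a_j,g\rangle|\lesi \|g\|_{T_2^{p,\infty}}$, which is summable in $j$. For the converse, I would restrict a functional $\Lambda\in (T_2^p)^*$ to the Hilbert space $L^2(\widehat B,dydt/t)$ over each fixed tent, apply Riesz representation to obtain compatible local representatives, glue them into a global $g$, and test $\Lambda$ against unit atoms to verify $g\in T_2^{p,\infty}$ with the expected norm.

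The main obstacle is the atomic decomposition of $T_2^p$ for $p\le 1$, which itself rests on the same stopping-time construction as in part (i): the atoms are built as pieces of $f\chi_{T_k}$ indexed by a Whitney-type cover of $\widetilde{O}_k$, and their $L^2(dydt/t)$ normalization is verified through the cone-to-tent inequality above. Once this decomposition is in hand, (i), (ii), and (iii) all reduce to pairing Fubini-on-cones with Cauchy--Schwarz at the right scale.
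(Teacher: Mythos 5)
The paper offers no proof of this proposition; it is quoted directly from Coifman--Meyer--Stein \cite{CMS}, which is exactly the source whose argument you reconstruct. Your outline (Fubini-on-cones plus Cauchy--Schwarz for (ii), the stopping-time/tent-shell decomposition over level sets of $\mathcal A(f)$ for (i), and the atomic decomposition together with local Riesz representation for (iii)) is the standard and correct route, so it matches the intended justification.
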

\section{Some   kernel estimates}

This section is devoted to establishing some kernel estimates related to the heat kernel of $\mathcal L_\nu$. These estimates play an essential role in proving our main results. We begin by providing an explicit formula for the heat kernel of $\mathcal L_\nu$.

Let $\nu \in [-1/2,\vc)^n$. For each $j=1,\ldots, n$, similarly to $\mathcal L_\nu$, denote by $\mathcal L_{\nu_j}$ the self-adjoint extension of the differential operator 
\[
 L_{\nu_j} :=  -\frac{\partial^2}{\partial x_j^2} + x_j^2 + \frac{1}{x_j^2}(\nu_j^2 - \frac{1}{4})
\] 
on $C_c^\infty(\mathbb{R}_+)$ as the natural domain. It is easy to see that 
\[
\mathcal L_\nu =\sum_{j=1}^n \mathcal L_{\nu_j}.
\]

Let $p_t^\nu(x,y)$ be the kernel of $e^{-t\mathcal L_\nu}$ and let $p_t^{\nu_j}(x_j,y_j)$ be the kernel of $e^{-t\mathcal L_{\nu_j}}$ for each $j=1,\ldots, n$. Then we have
\begin{equation}\label{eq- prod ptnu}
	p_t^\nu(x,y)=\prod_{j=1}^n p_t^{\nu_j}(x_j,y_j).
\end{equation}
For $\nu_j\ge -1/2$, $j=1,\ldots, n,$, the kernel of $e^{-t\mathcal L_{\nu_j}}$ is given by
\begin{equation}
	\label{eq1-ptxy}
	p_t^{\nu_j}(x_j,y_j)=\f{2(rx_jy_j)^{1/2}}{1-r}\exp\Big(-\f{1}{2}\f{1+r}{1-r}(x_j^2+y_j^2)\Big)I_{\nu_j}\Big(\f{2r^{1/2}}{1-r}x_jy_j\Big),
\end{equation}
where $r=e^{-4t}$ and $I_\alpha$ is the usual Bessel funtions of an imaginary argument defined by
\[
I_\alpha(z)=\sum_{k=0}^\vc \f{\Big(\f{z}{2}\Big)^{\alpha+2k}}{k! \Gamma(\alpha+k+1)}, \ \ \ \ \alpha >-1.
\]
See for example \cite{Dziu, NS}.

Note that for each $j=1,\ldots, n$, we can rewrite the kernel $p_t^{\nu_j}(x_j,y_j)$ as follows
\begin{equation}
	\label{eq2-ptxy}
	\begin{aligned}
		p_t^{\nu_j}(x_j,y_j)=\f{2(rx_jy_j)^{1/2}}{1-r}&\exp\Big(-\f{1}{2}\f{1+r}{1-r}|x_j-y_j|^2\Big)\exp\Big(-\f{1-r^{1/2}}{1+r^{1/2}}x_jy_j\Big)\\
		&\times \exp\Big(-\f{2r^{1/2}}{1-r}x_jy_j\Big)I_{\nu_j}\Big(\f{2r^{1/2}}{1-r}x_jy_j\Big),
	\end{aligned}
\end{equation}
where $r=e^{-4t}$.

The following  properties of the Bessel function $I_\alpha$  with $\alpha>-1$ are well-known and are taken from \cite{L}:
\begin{equation}
	\label{eq1-Inu}
	I_\alpha(z)\sim z^\alpha, \ \ \ 0<z\le 1,
\end{equation}
\begin{equation}
	\label{eq2-Inu}
	I_\alpha(z)= \f{e^z}{\sqrt{2\pi z}}+S_\alpha(z),
\end{equation}
where
\begin{equation}
	\label{eq3-Inu}
	|S_\alpha(z)|\le  Ce^zz^{-3/2}, \ \ z\ge 1,
\end{equation}
\begin{equation}
	\label{eq4-Inu}
	\f{d}{dz}(z^{-\alpha}I_\alpha(z))=z^{-\alpha}I_{\alpha+1}(z)
\end{equation}
and
\begin{equation}
	\label{eq6s-Inu}
	0< I_\alpha(z)-I_{\alpha+2}(z)=\f{2(\alpha+1)}{z}I_{\alpha+1}(z), \ \ \ z>0.
\end{equation}

Let $\nu > -1$, we have
\[
I_\alpha(z)-I_{\alpha+1}(z)=[I_\alpha(z)-I_{\alpha+2}(z)] -[I_{\alpha+1}(z)-I_{\alpha+2}(z)].
\]
Applying \eqref{eq6s-Inu}, 
\[
I_\alpha(z)-I_{\alpha+2}(z) =\f{2(\alpha+1)}{z}I_{\alpha+1}(z).
\]
Hence,
\[
|I_\alpha(z)-I_{\alpha+1}(z)|\le \f{2(\alpha+1)}{z}I_{\alpha+1}(z)+|I_{\alpha+1}(z)-I_{\alpha+2}(z)|.
\]
On the other hand, since $\alpha+1>-1/2$, we have
\[
0< I_{\alpha+1}(z)-I_{\alpha+2}(z)<2(\alpha+2)\f{I_{\alpha+2}(z)}{z}<2(\alpha+2)\f{I_{\alpha+1}(z)}{z}, \ \ \ z>0.
\]
See for example \cite{Na}.

Consequently,
\begin{equation}
	\label{eq5s-Inu}
	|I_\alpha(z)-I_{\alpha+1}(z)|<(4\alpha +6)\f{I_{\alpha+1}(z)}{z}, \ \ \ \alpha>-1, z>0.
\end{equation}

From \eqref{eq6s-Inu}, \eqref{eq5s-Inu} and \eqref{eq1-ptxy}, for $\alpha>-1$ we immediately obtain the following, for $t>0$ and $x,y \in (0,\vc)$, 
\begin{equation}
\label{eq6-Inu}
p_t^\alpha(x,y)-p_t^{\alpha+2}(x,y)= 2(\alpha+1)\f{1-r}{2r^{1/2}xy}p_t^{\alpha+1}(x,y) 
\end{equation}
and
\begin{equation}
	\label{eq5-Inu}
	|p_t^\alpha(x,y)-p_t^{\alpha+1}(x,y)|\lesi \f{1-r}{2r^{1/2}xy}p_t^{\alpha+1}(x,y), 
\end{equation}
where $r=e^{-4t}$.

\medskip

\subsection{The case $n=1$}	In this case,  we have $\rho_\nu(x) =\f{1}{16}\min\{1/x, x\}$ if $\nu>-1/2$ and $\rho_\nu(x) =\f{1}{16}\min\{1/x, 1\}$ if $\nu=-1/2$.

We will consider $\nu>-1/2$ and $\nu=-1/2$ separately.

\bigskip

\noindent \textbf{We first consider the case $\nu=-1/2$.}

We recall the following result in \cite[Proposition 3.2]{B}.
\begin{prop}[\cite{B}]
	\label{prop- derivative heat kernel nu = -1/2}
	Let $\nu=-1/2$. Then for any  $k, \ell \in \mathbb{N}$ and $N>0$, 
	\begin{equation}
		\label{eq- partial alpha x nu = -1/2}
		|x^\ell \partial^kp^{\nu}_t(x,y)|\lesi \f{e^{-t/4}}{t^{(\ell+k+1)/2}}\exp\Big(-\f{|x-y|^2}{ct}\Big) \Big[1+\f{\sqrt t}{\rho_\nu(x)}+\f{\sqrt t}{\rho_\nu(x)}\Big]^{-N}
	\end{equation}
	for all $t>0$ and $x,y>0$.
\end{prop}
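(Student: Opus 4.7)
The plan is to exploit that for $\nu=-1/2$ the modified Bessel function admits the closed form $I_{-1/2}(z)=\sqrt{2/(\pi z)}\cosh z$. Substituting this into \eqref{eq1-ptxy}, using the identities $\frac{1+r}{1-r}=\coth(2t)$ and $\frac{2\sqrt r}{1-r}=\frac{1}{\sinh(2t)}$ (with $r=e^{-4t}$), together with the rearrangement $\coth(2t)(x^2+y^2)\mp \frac{2xy}{\sinh(2t)}=\tanh(t)(x^2+y^2)+\frac{(x\mp y)^2}{\sinh(2t)}$, one arrives at the clean representation $p_t^{-1/2}(x,y)=M_t(x,y)+M_t(x,-y)$, where
\[
M_t(x,y)=\frac{1}{\sqrt{2\pi\sinh(2t)}}\exp\Big(-\frac{\tanh(t)}{2}(x^2+y^2)-\frac{(x-y)^2}{2\sinh(2t)}\Big)
\]
is the Mehler kernel of the harmonic oscillator on $\mathbb R$. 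Since $x,y>0$, $(x+y)^2\geq (x-y)^2$, so $M_t(x,-y)$ obeys the same pointwise bound as $M_t(x,y)$; the task reduces to estimating $|x^\ell\partial_x^k M_t(x,y)|$, essentially a weighted Gaussian kernel on $\mathbb R$.

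Next, apply Leibniz to $\partial_x^k M_t(x,y)$, distributing between the two exponential factors. The elementary estimate $|\partial_x^j e^{-ax^2/2}|\lesssim a^{j/2}e^{-ax^2/4}$, together with its translation-invariant analogue, absorbs the polynomial factors arising from differentiation into slightly wider Gaussians and yields
\[
|\partial_x^k M_t(x,y)|\lesssim \frac{e^{-\tanh(t)(x^2+y^2)/4}\, e^{-(x-y)^2/(4\sinh 2t)}}{\sqrt{\sinh(2t)}}\sum_{j=0}^{k}\tanh(t)^{j/2}\sinh(2t)^{-(k-j)/2}.
\]
The $x^\ell$ factor is handled via the pointwise inequality $x^\ell e^{-\tanh(t)x^2/8}\lesssim \tanh(t)^{-\ell/2}(1+x\sqrt{\tanh(t)})^{-N}$, obtained by setting $u=x\sqrt{\tanh(t)}$ and using that $u^\ell e^{-u^2/8}\lesssim (1+u)^{-N}$ for every $N\geq 0$. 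A case split on whether $x\leq 1$ or $x>1$, and on whether $t\leq 1$ or $t>1$, using $\rho_{-1/2}(x)=\tfrac{1}{16}\min(1,1/x)$ and $\sqrt{\tanh(t)}\sim\min(\sqrt t,1)$, then gives $1+x\sqrt{\tanh(t)}\gtrsim 1+\sqrt t/\rho_{-1/2}(x)$, producing the desired $(1+\sqrt t/\rho_\nu(x))^{-N}$ decay.

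It remains to combine the various $t$-powers in the two natural regimes. For $t\leq 1$, $\sinh(2t)\sim t$ and $\tanh(t)\sim t$; each summand in the Leibniz expansion is at most $t^{-k/2}$, and combining with the prefactor $\sinh(2t)^{-1/2}\sim t^{-1/2}$ and $\tanh(t)^{-\ell/2}\sim t^{-\ell/2}$ yields exactly $t^{-(k+\ell+1)/2}$, while $e^{-(x-y)^2/(4\sinh 2t)}\sim e^{-|x-y|^2/(ct)}$ and $e^{-t/4}\sim 1$. For $t\geq 1$, the prefactor $\sinh(2t)^{-1/2}\sim e^{-t}$ supplies decay far stronger than the required $e^{-t/4}$; the factor $e^{-\tanh(t)(x^2+y^2)/4}\sim e^{-(x^2+y^2)/4}$ absorbs both $x^\ell$ and any $\rho$-type factor, and the inequality $|x-y|^2\leq 2(x^2+y^2)$ upgrades this decay to the Gaussian $e^{-|x-y|^2/(ct)}$ once $c$ is taken large enough. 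The most delicate step, and hence the main obstacle, is the bookkeeping that converts the natural factors appearing in the closed formula into the asymmetric, $\rho_\nu$-weighted bound, with uniform constants inside $e^{-|x-y|^2/ct}$ across both time regimes and compatibility with the case-split definition of $\rho_{-1/2}$ at $x=1$.
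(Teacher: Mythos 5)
The paper does not prove this statement at all; it is imported verbatim from \cite{B} (Proposition 3.2 there), so your proof is being measured against the standard argument rather than one printed here. Your route --- using $I_{-1/2}(z)=\sqrt{2/(\pi z)}\cosh z$ to collapse \eqref{eq1-ptxy} into the even reflection $M_t(x,y)+M_t(x,-y)$ of the Mehler kernel, then running Leibniz with the Gaussian-derivative bound $|\partial_x^j e^{-ax^2/2}|\lesssim a^{j/2}e^{-ax^2/4}$ --- is exactly the natural one for $\nu=-1/2$, and I checked the algebra: the prefactor does reduce to $(2\pi\sinh 2t)^{-1/2}$, the exponent identity $\tfrac12\coth(2t)(x^2+y^2)\mp\tfrac{xy}{\sinh 2t}=\tfrac12\tanh(t)(x^2+y^2)+\tfrac{(x\mp y)^2}{2\sinh 2t}$ is correct, and the $t$-power bookkeeping gives $t^{-(k+\ell+1)/2}$ for $t\le 1$ and better-than-$e^{-t/4}$ decay for $t\ge 1$. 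The one misstatement is the claim that the case split yields $1+x\sqrt{\tanh t}\gtrsim 1+\sqrt t/\rho_{-1/2}(x)$ in all regimes: this is false for $t>1$ (take $x=1$ and $t$ large; the left side stays bounded while the right side grows like $\sqrt t$). It is not a fatal gap because your final paragraph handles $t\ge 1$ by a different and correct mechanism --- trading part of $\sinh(2t)^{-1/2}\sim e^{-t}$ and of $e^{-\tanh(t)(x^2+y^2)/4}$ for the factors $t^{-(k+\ell+1)/2}$ and $(1+\sqrt t\max(1,x))^{-N}$ --- but you should restrict the displayed inequality to $t\le 1$ so the write-up is internally consistent. (Also note the statement as printed repeats $\rho_\nu(x)$ twice, evidently a typo for $\rho_\nu(x)$ and $\rho_\nu(y)$; your retained factor $e^{-\tanh(t)y^2/4}$ yields the $y$-version by the identical argument, so nothing further is needed.)
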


The above estimate together with the expressions $\delta_\nu = \partial_x +x$ and $\delta_\nu^* = -\partial_x +x$ implies the following estimates.
\begin{prop}\label{prop-delta k p x y nu = -1/2}
	Let $\nu=-1/2$. For  $k\in \mathbb N$ and $N>0$, we have
	\begin{equation*}
		\begin{aligned}
			|\delta^k_\nu  p_t^\nu(x,y)|&\lesi  \f{1}{t^{(k+1)/2}}\exp\Big(-\f{|x-y|^2}{ct}\Big)\Big(1+\f{\sqrt t}{\rho_\nu(x)}+\f{\sqrt t}{\rho_\nu(y)}\Big)^{-N}
		\end{aligned}
	\end{equation*}
	for all $t>0$ and all $x, y\in (0,\vc)$.
\end{prop}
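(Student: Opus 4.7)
The plan is to reduce the statement to Proposition \ref{prop- derivative heat kernel nu = -1/2} via a normal-form expansion of $\delta_\nu^k$ in $x$ and $\partial_x$. Since $\nu=-1/2$, we have $\delta_\nu = \partial_x + x$, and the commutator relation $[\partial_x, x] = 1$ allows one to push every $\partial_x$ past each $x$. A straightforward induction on $k$ then yields
\[
\delta_\nu^k = \sum_{\ell + m \le k} c_{k, \ell, m}\, x^\ell \partial_x^m
\]
for some absolute constants $c_{k, \ell, m}$. The precise values of these constants are irrelevant; what matters is only that finitely many pairs $(\ell, m)$ with $\ell + m \le k$ appear, and that each term has total differential order at most $k$.

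Applying this expansion to $p_t^\nu(\cdot,y)$ and invoking Proposition \ref{prop- derivative heat kernel nu = -1/2} for each summand gives
\[
|\delta_\nu^k p_t^\nu(x,y)| \lesssim \sum_{\ell + m \le k} \frac{e^{-t/4}}{t^{(\ell + m + 1)/2}} \exp\Big(-\frac{|x-y|^2}{ct}\Big) \Big(1 + \frac{\sqrt t}{\rho_\nu(x)} + \frac{\sqrt t}{\rho_\nu(y)}\Big)^{-N}.
\]
To reconcile the exponent of $t$ with the target $(k+1)/2$, I would factor out $t^{-(k+1)/2}$ and write
\[
\frac{e^{-t/4}}{t^{(\ell+m+1)/2}} = \frac{1}{t^{(k+1)/2}} \cdot e^{-t/4}\, t^{(k-\ell-m)/2}.
\]
Since $k - \ell - m \ge 0$, the map $t \mapsto e^{-t/4} t^{(k-\ell-m)/2}$ is uniformly bounded on $(0,\infty)$, so the extra factor contributes only an absolute constant. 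Summing over the finitely many allowed $(\ell, m)$ then yields the stated bound.

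The only (quite mild) obstacle is bookkeeping for the $(\partial_x + x)^k$ expansion; once this is in place, the essential analytic content is supplied by Proposition \ref{prop- derivative heat kernel nu = -1/2}, and the exponential decay $e^{-t/4}$ present there absorbs any polynomial discrepancy in $t$ with no further work. No new cancellations, semigroup identities, or kernel estimates are required, which is precisely why the author can present this result as an immediate corollary of the derivative estimates for $p_t^\nu$.
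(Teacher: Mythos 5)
Your proposal is correct and follows exactly the route the paper intends: the paper itself states only that Proposition \ref{prop- derivative heat kernel nu = -1/2} together with $\delta_\nu=\partial_x+x$ "implies" the result, and your normal-form expansion $\delta_\nu^k=\sum_{\ell+m\le k}c_{k,\ell,m}\,x^\ell\partial_x^m$ plus the observation that $e^{-t/4}t^{(k-\ell-m)/2}$ is uniformly bounded on $(0,\infty)$ is precisely the bookkeeping the authors leave to the reader. No gaps.
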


\begin{prop}\label{prop-gradient x y nu = -1/2}
	Let $\nu>-1/2$. For $k,j \in \mathbb N$, we have
	\begin{equation*}
		\begin{aligned}
			|\partial^k \delta_\nu^j p_t^\nu(x,y)|&\lesi   \f{1}{t^{(j+k+1)/2}}\exp\Big(-\f{|x-y|^2}{ct}\Big)\Big(1+\f{\sqrt t}{\rho_\nu(x)}+\f{\sqrt t}{\rho_\nu(y)}\Big)^{-N}
		\end{aligned}
	\end{equation*}
	for all $t>0$ and all $x, y\in (0,\vc)$.
\end{prop}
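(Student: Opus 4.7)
My strategy is to reduce $\delta_\nu^j p_t^\nu$ to a finite sum of shifted heat kernels $p_t^{\nu+\ell}$ with polynomial coefficients via an algebraic recursion that cancels the singular potential term in $\delta_\nu$, and then control $\partial^k$ applied to this sum using a base estimate for $\partial^k p_t^{\nu'}$ valid for every $\nu' > -1/2$. The crux is a key recursion derived from \eqref{eq1-ptxy} and the Bessel identity $I_\nu'(z) = I_{\nu+1}(z) + (\nu/z)I_\nu(z)$ (a consequence of \eqref{eq4-Inu}): with $r = e^{-4t}$, direct differentiation gives
\[
\partial_x p_t^\nu = \f{2\nu+1}{2x}\, p_t^\nu - \f{1+r}{1-r}\, x\, p_t^\nu + \f{2r^{1/2}y}{1-r}\, p_t^{\nu+1},
\]
and combining with the remaining two pieces of $\delta_\nu = \partial_x + x - (\nu+1/2)/x$ kills the singular $1/x$ contribution entirely, yielding the clean recursion
\[
\delta_\nu p_t^\nu(x,y) = -\f{2rx}{1-r}\, p_t^\nu(x,y) + \f{2r^{1/2}y}{1-r}\, p_t^{\nu+1}(x,y).
\]

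Since $\nu + \ell > -1/2$ for every $\ell \in \mathbb{N}$, the critical function is stable under the shift, $\rho_{\nu+\ell} = \rho_\nu$. Iterating the recursion $j$ times yields by induction on $j$ a representation
\[
\delta_\nu^j p_t^\nu(x,y) = \sum_{\ell=0}^{j} P_{j,\ell}(x,y,r)\, p_t^{\nu+\ell}(x,y),
\]
with each $P_{j,\ell}$ a polynomial in $x, y$ of total degree at most $j$ whose coefficients are bounded in modulus by $C\,(r^{1/2}/(1-r))^{j}$. Separately, I would establish for any $\nu' > -1/2$, $a, k \in \mathbb{N}$, $N > 0$, the base estimate
\[
|x^a \partial^k p_t^{\nu'}(x,y)| \lesi \f{t^{a/2}}{t^{(k+1)/2}} \exp\Big(-\f{|x-y|^2}{ct}\Big)\Big(1 + \f{\sqrt{t}}{\rho_\nu(x)} + \f{\sqrt{t}}{\rho_\nu(y)}\Big)^{-N},
\]
the analogue of Proposition \ref{prop- derivative heat kernel nu = -1/2} in the range $\nu' > -1/2$. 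Its proof follows the same template: differentiate \eqref{eq2-ptxy}, use \eqref{eq4-Inu} to handle Bessel derivatives, extract the Gaussian from $\exp(-\f{1+r}{2(1-r)}|x-y|^2)$, and produce the $\rho_\nu$-localization by a case split on $\sqrt{t}$ versus $\rho_\nu(x)$ and $\rho_\nu(y)$, using \eqref{eq1-Inu}--\eqref{eq3-Inu} together with $\exp(-\f{1-r^{1/2}}{1+r^{1/2}} xy)$.

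Applying $\partial^k$ to the iterated identity and expanding by Leibniz gives
\[
\partial^k \delta_\nu^j p_t^\nu = \sum_{\ell=0}^{j} \sum_{k_1+k_2=k} \binom{k}{k_1}\, (\partial^{k_1} P_{j,\ell})\, \partial^{k_2} p_t^{\nu+\ell},
\]
each term of which is controlled by the base estimate; polynomial factors in $x, y$ absorb into the Gaussian via $x^a \exp(-|x-y|^2/(ct)) \lesi (\sqrt{t}+y)^a \exp(-|x-y|^2/(c't))$, and the prefactor $(r^{1/2}/(1-r))^{j}$ behaves like $t^{-j/2}$ for small $t$, giving the required $t^{-(j+k+1)/2}$ size, while for large $t$ it is damped by the Bessel decay feeding the $\rho_\nu$-localization factor. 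The main obstacle is the base estimate: extracting the $(1+\sqrt{t}/\rho_\nu(x)+\sqrt{t}/\rho_\nu(y))^{-N}$ factor uniformly for all $k \in \mathbb{N}$ and $\nu' > -1/2$ when the Bessel function $I_{\nu'}$ no longer admits the closed-form expression available for $\nu' = -1/2$. Once that is secured, the recursion completely decouples the singular behavior of $\delta_\nu$ from the derivative estimates, and the remaining assembly is mechanical.
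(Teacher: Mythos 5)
There is a genuine gap, and it starts with the reading of the statement. Despite the literal hypothesis ``$\nu>-1/2$'', this proposition sits in the block of results for $\nu=-1/2$ (its label and its neighbours \ref{prop-delta k p x y nu = -1/2}, \ref{prop-gradient x y dual delta nu=-1/2} confirm this), and the conclusion --- decay of order $-N$ for \emph{arbitrary} $N$ with no loss of the form $\rho_\nu(x)^{-k}$ --- is only possible in that case. Indeed, for $j=0$, $k=1$ and $\nu\in(-1/2,1/2)$ one has $p_t^\nu(x,y)\sim C(t,y)\,x^{\nu+1/2}$ as $x\to0$ by \eqref{eq1-ptxy} and \eqref{eq1-Inu}, so $\partial_x p_t^\nu(x,y)\sim C\,x^{\nu-1/2}\to\infty$, while the claimed right-hand side tends to $0$ (since $\rho_\nu(x)\sim x$ there). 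So the estimate you are trying to prove is false for $\nu>-1/2$ as written; the correct statement for $\nu>-1/2$ is Proposition \ref{prop-gradient x y}, which carries the extra factor $\bigl[\rho_\nu(x)^{-k}+t^{-k/2}\bigr]$ and only the capped decay $-(\nu+1/2)$. The intended proof of the present proposition is the short one the paper implies for $\nu=-1/2$: there $\delta_\nu=\partial_x+x$ with no singular term, so $\partial^k\delta_\nu^j$ expands into finitely many operators $x^a\partial^b$ with $a+b\le j+k$, each of which is controlled directly by Proposition \ref{prop- derivative heat kernel nu = -1/2} (the $e^{-t/4}$ factor absorbs the mismatch in powers of $t$ for $t\ge1$).

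Beyond the misreading, two concrete steps in your argument fail. First, the representation $\delta_\nu^j p_t^\nu=\sum_{\ell=0}^{j}P_{j,\ell}(x,y,r)\,p_t^{\nu+\ell}$ with \emph{polynomial} coefficients is wrong for $j\ge2$: after one application of \eqref{eq- chain rule 2} you must apply $\delta_\nu$ to $y\,p_t^{\nu+1}$, and since $\delta_\nu=\delta_{\nu+1}+\tfrac1x$ by \eqref{eq- delta k}, a term $\tfrac{y}{x}p_t^{\nu+1}$ appears; the coefficients are rational in $x$ with poles at $x=0$, which is precisely why the paper's Propositions \ref{prop- prop 1}--\ref{prop 3} split into off-diagonal and diagonal regions and exploit cancellations such as \eqref{eq6-Inu} instead of a clean closed form. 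Second, your ``base estimate'' $|x^a\partial^k p_t^{\nu'}(x,y)|\lesssim t^{(a-k-1)/2}\exp(-|x-y|^2/ct)\,(1+\sqrt t/\rho_\nu(x)+\sqrt t/\rho_\nu(y))^{-N}$ for all $N$ and all $\nu'>-1/2$ is false for the same reason as above: the heat kernel itself only decays like $(\sqrt t/\rho)^{-(\nu'+1/2)}$ (Proposition \ref{prop-heat kernel}), and its $x$-derivatives acquire genuine singularities at $x=0$ unless $\nu'+1/2$ is an even integer offset. Once these two pillars are removed, the Leibniz assembly at the end has nothing to stand on.
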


\begin{prop}\label{prop-gradient x y dual delta nu=-1/2}
	Let $\nu=-1/2$. Then for any $j, k\in \mathbb N$ and $N>0$,
	\[
	|\mathcal L_\nu^j (\delta_\nu^*)^kp^{\nu}_t(x,y)|\lesi \f{1}{t^{(k+2j+1)/2}}\exp\Big(-\f{|x-y|^2}{ct}\Big)\Big(1+\f{\sqrt t}{\rho_\nu(x)}+\f{\sqrt t}{\rho_\nu(y)}\Big)^{-N}
	\]
	for all $t>0$ and $x,y\in (0,\vc)$.
\end{prop}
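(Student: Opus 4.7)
The plan is to reduce the statement to the derivative bounds already established in Proposition \ref{prop- derivative heat kernel nu = -1/2}. The key simplification for $\nu=-1/2$ is that the potential term $\frac{1}{x^2}(\nu^2-\frac{1}{4})$ vanishes, so $\mathcal{L}_\nu = -\partial_x^2 + x^2$ and $\delta_\nu^* = -\partial_x + x$ are ordinary polynomial differential operators on $(0,\infty)$. Consequently $\mathcal{L}_\nu^{\,j}(\delta_\nu^*)^{\,k}$ is a polynomial expression in $\partial_x$ and the multiplication-by-$x$ operator of total degree $2j+k$ (assigning degree one to each factor of $\partial_x$ and each factor of $x$).

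First I would place this operator in normal form, using the commutator $[\partial_x,x]=1$ to push every factor of $x$ to the left of every factor of $\partial_x$:
\[
\mathcal{L}_\nu^{\,j}(\delta_\nu^*)^{\,k} \;=\; \sum_{a+b\le 2j+k} c^{\,j,k}_{a,b}\, x^a\partial_x^b,
\]
a finite sum whose coefficients $c^{\,j,k}_{a,b}$ depend only on $j$ and $k$. Each application of the commutator lowers the total degree by two, so all surviving monomials satisfy $a+b\le 2j+k$. Applying Proposition \ref{prop- derivative heat kernel nu = -1/2} term by term gives, for any $N>0$,
\[
|x^a\partial_x^b p_t^\nu(x,y)| \;\lesssim\; \frac{e^{-t/4}}{t^{(a+b+1)/2}}\exp\!\Big(-\frac{|x-y|^2}{ct}\Big)\Big(1+\frac{\sqrt t}{\rho_\nu(x)}+\frac{\sqrt t}{\rho_\nu(y)}\Big)^{-N}.
\]

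It remains to check that each such term is dominated by $t^{-(k+2j+1)/2}$ (up to the same Gaussian and localization factors) uniformly in $t>0$. For $t\le 1$, the inequality $a+b\le 2j+k$ gives $t^{-(a+b+1)/2}\le t^{-(2j+k+1)/2}$ directly. For $t\ge 1$, the factor $e^{-t/4}$ decays faster than any polynomial, so $e^{-t/4}t^{-(a+b+1)/2}\lesssim t^{-(2j+k+1)/2}$ with an implicit constant depending on $j,k$. Summing the finitely many contributions yields the claim. The only real obstacle is algebraic bookkeeping, namely verifying that normal ordering produces only terms of total degree at most $2j+k$; the crucial analytic input is already contained in the $e^{-t/4}$ factor of Proposition \ref{prop- derivative heat kernel nu = -1/2}, which absorbs the lower-degree normal-ordered terms for large $t$ without introducing any extra polynomial loss.
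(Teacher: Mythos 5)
Your proposal is correct and follows exactly the route the paper intends: the paper states this proposition (together with its companions for $\nu=-1/2$) as a direct consequence of Proposition \ref{prop- derivative heat kernel nu = -1/2} combined with the expressions $\delta_\nu^*=-\partial_x+x$ and $\mathcal L_\nu=-\partial_x^2+x^2$, giving no further detail. Your normal-ordering bookkeeping and the absorption of lower-degree terms via the $e^{-t/4}$ factor (for $t\ge 1$) and the monotonicity of $t\mapsto t^{-s}$ (for $t\le 1$) supply precisely the omitted verification.
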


\bigskip

\noindent \textbf{We now consider the case $\nu>-1/2$.} From now on, unless otherwise specified,  
we assume that $\nu>-1/2$. Before coming the the kernel estimates, we need the following simple identities, which can be verified by induction easily,for $k\in \mathbb N$,
\begin{equation}\label{eq- delta k xf}
	\delta^{k}_\nu \Big[xf(x)\Big]= k\delta^{k-1}_\nu  f(x) + x\delta^{k}_\nu  f(x), 
\end{equation}
and
\begin{equation}\label{eq- delta k}
	\delta_\nu^{k}=\Big(\delta_{\nu+1}+\f{1}{x}\Big)^{k} =\delta_{\nu+1}^k +\f{k}{x}\delta^{k-1}_{\nu+1}.
\end{equation}

In this case, note that $\displaystyle \rho_\nu(x)=\tfrac{1}{16}\min \{x,x^{-1}\}$ for $x\in (0,\vc)$. We have the following result (see \cite[Proposition 3.3]{B}).
\begin{prop}[\cite{B}]\label{prop-heat kernel}
	Let $\nu >-1/2$. Then we have
	\begin{equation}
	\label{eq-ptxy rho}
	\begin{aligned}
	p_t^\nu(x,y)\lesi \f{e^{-t/2}}{\sqrt t}\exp\Big(-\f{|x-y|^2}{ct}\Big)\Big(1+\f{\sqrt t}{\rho_\nu(x)} +\f{\sqrt t}{\rho_\nu(y)}\Big)^{-(\nu+1/2)}
	\end{aligned}
	\end{equation}
	for all $t>0$ and all $x,  y\in (0,\vc)$.
\end{prop}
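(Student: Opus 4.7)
The plan is to substitute the explicit formula \eqref{eq2-ptxy} and bound the Bessel factor $I_\nu$ uniformly in its argument. Write $r = e^{-4t}$, $s = 2\sqrt{r}xy/(1-r)$, and $\sigma = (1-\sqrt{r})/(1+\sqrt{r})$, and note the elementary relations $r^{1/4}/\sqrt{1-r} \lesssim e^{-t/2}/\sqrt{t}$, $(1+r)/(2(1-r)) \geq 1/(2t)$, and $\sigma \sim \min(t,1)$.

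First I would combine the small-argument asymptotic \eqref{eq1-Inu} with the large-argument expansion \eqref{eq2-Inu}--\eqref{eq3-Inu} into the single uniform bound $I_\nu(z) \lesssim z^\nu e^z (1+z)^{-(\nu+1/2)}$, valid for all $z > 0$. Plugging this into \eqref{eq2-ptxy} and using the algebraic identity $\frac{\sqrt{rxy}}{1-r}s^\nu = C\frac{r^{1/4}}{\sqrt{1-r}}s^{\nu+1/2}$ together with $(s/(1+s))^{\nu+1/2} \sim \min(s,1)^{\nu+1/2}$, one obtains the base estimate
\[
p_t^\nu(x,y) \lesssim \frac{e^{-t/2}}{\sqrt{t}}\, \min(s,1)^{\nu+1/2}\, \exp\!\left(-\frac{|x-y|^2}{c_0 t}\right) \exp(-\sigma xy)
\]
for some $c_0 > 0$. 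It remains to absorb $\min(s,1)^{\nu+1/2} e^{-\sigma xy}$, possibly borrowing a small fraction of the Gaussian, into the factor $\bigl(1+\sqrt{t}/\rho_\nu(x)+\sqrt{t}/\rho_\nu(y)\bigr)^{-(\nu+1/2)}$.

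By symmetry in $x, y$ and the equivalence $(1+A+B)^{-\beta} \sim \min\bigl((1+A)^{-\beta},(1+B)^{-\beta}\bigr)$, it suffices to prove that for some $\epsilon \in (0, 1/c_0)$,
\[
\min(s,1)^{\nu+1/2}\, e^{-\sigma xy}\, e^{-\epsilon|x-y|^2/t} \lesssim \left(1+\tfrac{\sqrt{t}}{\rho_\nu(x)}\right)^{-(\nu+1/2)},
\]
with the analogous bound in $\rho_\nu(y)$. I would split by the location of $x$: (i) if $x \in [\sqrt{t}, 1/\sqrt{t}]$ then $\sqrt{t}/\rho_\nu(x) \lesssim 1$ and the bound is trivial; (ii) if $x < \sqrt{t}$ then $\rho_\nu(x) = x/16$ and the required $(x/\sqrt{t})^{\nu+1/2}$ is read off from $s^{\nu+1/2} \sim (xy/t)^{\nu+1/2}$ directly when $y \lesssim \sqrt{t}$, and in conjunction with the Gaussian via $\xi^{\nu+1/2}e^{-\epsilon \xi^2} \lesssim 1$ (with $\xi = y/\sqrt{t}$) when $y \gg \sqrt{t}$; (iii) if $x > 1/\sqrt{t}$ then $\rho_\nu(x) = 1/(16x)$ and the required $(\sqrt{t}x)^{-(\nu+1/2)}$ is extracted from $e^{-\sigma xy}$ when $y \sim x$ (using $\sigma xy \gtrsim tx^2$, hence $(\sqrt{t}x)^{\nu+1/2}e^{-\sigma xy} \lesssim 1$), and from $e^{-\epsilon|x-y|^2/t}$ when $y$ is far from $x$.

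The main obstacle is case (iii) in the large-$t$ regime, where $\sigma \sim 1$ rather than $\sim t$: one must instead control $(\sqrt{t}x)^{\nu+1/2} e^{-xy}$ via $\xi^{\nu+1/2}e^{-\xi} \lesssim 1$, treating $x$ and $y$ somewhat asymmetrically. In every sub-case the argument ultimately reduces to the elementary polynomial-exponential bounds $\xi^\alpha e^{-\xi} \lesssim 1$ and $\xi^\alpha e^{-\xi^2/c} \lesssim 1$.
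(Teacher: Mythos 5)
The paper does not prove this proposition at all: it is imported verbatim from \cite[Proposition 3.3]{B}, so there is nothing internal to compare against. Your self-contained argument is the natural one and is essentially correct. The uniform bound $I_\nu(z)\lesssim z^\nu e^z(1+z)^{-(\nu+1/2)}$ does follow from \eqref{eq1-Inu}--\eqref{eq3-Inu}, the algebra converting the prefactor times $s^\nu$ into $\frac{r^{1/4}}{\sqrt{1-r}}s^{\nu+1/2}$ checks out, $\frac{r^{1/4}}{\sqrt{1-r}}\lesssim e^{-t/2}t^{-1/2}$ holds in both regimes of $t$, and the reduction via $(1+A+B)^{-\beta}\sim\min\bigl((1+A)^{-\beta},(1+B)^{-\beta}\bigr)$ and symmetry is legitimate. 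Your case analysis then delivers the critical-function factor.

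Three small points deserve attention. First, $\frac{1+r}{2(1-r)}\ge\frac{1}{2t}$ is false for small $t$ (the left side is $\sim\frac{1}{4t}$); you only get $\ge\frac{1}{8t}$, which is harmless since $c$ is unspecified. Second, your case split by comparing $x$ with $\sqrt t$ and $1/\sqrt t$ does not by itself determine $\rho_\nu(x)=\frac{1}{16}\min(x,1/x)$: when $t>1$ the regions $x<\sqrt t$ and $x>1/\sqrt t$ overlap, and in case (ii) the identity $\rho_\nu(x)=x/16$ requires $x\le 1$. The fix is cosmetic --- whenever $\rho_\nu(x)=x/16$ one has $x\le1$ and the only nontrivial target is $(x/\sqrt t)^{\nu+1/2}$ (relevant precisely when $x<\sqrt t$), and symmetrically for $x\ge1$ --- but the cases should be indexed by which term attains $\max(1,\sqrt t/x,\sqrt tx)$. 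Third, and most substantively, in case (iii) with $t\ge1$ and $y\sim x$ the bound $e^{-\sigma xy}\lesssim e^{-cx^2}$ produces $x^{-(\nu+1/2)}$ but nothing that beats the required $t^{-(\nu+1/2)/2}$; you must explicitly invoke either $\min(s,1)^{\nu+1/2}\lesssim e^{-2t(\nu+1/2)}(xy)^{\nu+1/2}$ or the spare exponential in $\frac{r^{1/4}}{\sqrt{1-r}}\sim e^{-t}=\frac{e^{-t/2}}{\sqrt t}\cdot\sqrt t\,e^{-t/2}$ to absorb that power of $t$. You flag this regime as the obstacle but the sketch of its resolution ("$\xi^{\nu+1/2}e^{-\xi}\lesssim1$, treating $x$ and $y$ asymmetrically") does not by itself produce the $t$-decay; once the source of the extra $e^{-ct}$ is named, the argument closes.
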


\bigskip

Next we will estimate the derivatives associated with $\mathcal L_\nu$ of the heat kernel. To do this, note that from \eqref{eq1-ptxy} we can rewrite $p_t^\nu(x,y)$ as
$$
p_t^\nu(x,y)=\Big(\f{2\sqrt r}{1-r}\Big)^{1/2}\Big(\f{2r^{1/2}}{1-r}xy\Big)^{\nu+1/2}\exp\Big(-\f{1+r}{2(1-r)}(x^2+y^2)\Big)\Big(\f{2r^{1/2}}{1-r}xy\Big)^{-\nu}I_\nu\Big(\f{2r^{1/2}}{1-r}xy\Big),
$$
where $r=e^{-4t}$.

Setting 
$$
H_\nu(r;x,y)=\Big(\f{2r^{1/2}}{1-r}xy\Big)^{-\nu}I_\nu\Big(\f{2r^{1/2}}{1-r}xy\Big),
$$
then
\begin{equation}\label{eq-new formula of heat kernel}
p_t^\nu(x,y)=\Big(\f{2r^{1/2}}{1-r}\Big)^{1/2}\Big(\f{2r^{1/2}}{1-r}xy\Big)^{\nu+1/2}\exp\Big(-\f{1+r}{2(1-r)}(x^2+y^2)\Big)H_\nu(r;x,y). 
\end{equation}
From \eqref{eq-new formula of heat kernel}, applying the chain rule and \eqref{eq4-Inu},
\begin{equation}\label{eq- chain rule}
	\begin{aligned}
		\partial_x p_t^\nu(x,y)&=  \f{\nu+1/2}{x}p_t^\nu(x,y)- \f{1+r}{1-r}xp_t^\nu(x,y) + \f{2r^{1/2}}{1-r}y p_t^{\nu+1}(x,y),
	\end{aligned}
\end{equation}
which, in combination with the fact $\displaystyle \delta_\nu = \partial_x + x - \f{\nu+1/2}{x}$, implies that
\begin{equation}\label{eq- chain rule 2}
	\begin{aligned}
		\delta_\nu p_t^\nu(x,y)&=  xp_t^\nu(x,y)- \f{1+r}{1-r}xp_t^\nu(x,y) + \f{2r^{1/2}}{1-r}y p_t^{\nu+1}(x,y)\\
		&= -\f{2r}{1-r}xp_t^\nu(x,y)+\f{2r^{1/2}}{1-r}y p_t^{\nu+1}(x,y)
	\end{aligned}
\end{equation}

\begin{prop}
	\label{prop- prop 1} For $m\in \mathbb N$, we have
	\begin{equation}
		\label{eq- delta k pt k ge 1}
		\Big| \Big(\f{x}{\sqrt t}\Big)^k\delta_\nu^m p_t^\nu(x,y)\Big|\lesi_{\nu,k,m}   \f{1}{t^{(m+1)/2}}\exp\Big(-\f{|x-y|^2}{ct}\Big)\Big(1+\f{\sqrt t}{\rho_\nu(x)}+\f{\sqrt t}{\rho_\nu(y)}\Big)^{-(\nu+1/2)}
	\end{equation}
	for all $\nu>-1/2$, $k\in \mathbb N$, $t\in (0,1)$ and $(x,y)\in D$, where 
	$$D:=\{(x,y): xy<t\} \cup\{(x,y): xy \ge t, y<x/2\}\cup\{(x,y): xy \ge t, y>2x\}.
	$$
	Consequently, for $m\in \mathbb N$,
	\[
	|\delta_\nu^m p_t^\nu(x,y)|\lesi_{\nu,k,m}   \f{1}{t^{(m+1)/2}}\exp\Big(-\f{|x-y|^2}{ct}\Big)\Big(1+\f{\sqrt t}{\rho_\nu(x)}+\f{\sqrt t}{\rho_\nu(y)}\Big)^{-(\nu+1/2)}
	\]
	for all $\nu>-1/2$, $k\in \mathbb N$, $t\in (0,1)$ and $(x,y)\in D$.
	
\end{prop}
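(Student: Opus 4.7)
My plan is to first derive an explicit inductive formula expressing $\delta_\nu^m p_t^\nu(x,y)$ as a finite sum
$$
\delta_\nu^m p_t^\nu(x,y) \;=\; \sum_{(a,b,c,d,j)\in \mathcal{S}_m} C_{a,b,c,d,j}\,\f{r^a}{(1-r)^b}\,x^c y^d\,p_t^{\nu+j}(x,y),
$$
where $\mathcal{S}_m$ is a finite index set satisfying the structural constraints $0\le j\le m$, $d\ge 0$, $c+j\ge 0$, and the scaling identity $2b-c-d=m$. The induction step rests on \eqref{eq- chain rule 2} applied at level $\nu+j$, on the intertwining $\delta_\nu=\delta_{\nu+j}+j/x$ coming from \eqref{eq- delta k}, and on the Leibniz-type identity $\delta_{\nu+j}(x^c f)=x^c\delta_{\nu+j}f + c\,x^{c-1}f$; combining these yields
$$
\delta_\nu\bigl(x^c p_t^{\nu+j}\bigr) \;=\; -\f{2r\,x^{c+1}}{1-r}\,p_t^{\nu+j}+\f{2r^{1/2}y\,x^c}{1-r}\,p_t^{\nu+j+1}+(c+j)\,x^{c-1}\,p_t^{\nu+j}.
$$
All three daughter terms satisfy $2b-c-d=m+1$, and the coefficient $(c+j)$ in the last term vanishes precisely when $c+j=0$, which prevents $c+j$ from ever becoming negative along the iteration.

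Second, for $t\in(0,1)$ we have $r\sim 1$ and $1-r\sim t$, hence $r^a(1-r)^{-b}\lesi t^{-b}$. Applying Proposition \ref{prop-heat kernel} to each $p_t^{\nu+j}$ (noting $\rho_{\nu+j}=\rho_\nu$ since $\nu+j>-1/2$), multiplying by $(x/\sqrt t)^k$, and invoking $2b-c-d=m$, each term is dominated by
$$
t^{-(m+1)/2}\,(x/\sqrt t)^{c+k}\,(y/\sqrt t)^d\,\exp\bigl(-|x-y|^2/ct\bigr)\,A(x,y)^{-(\nu+j+1/2)},
$$
where $A(x,y):=1+\sqrt t/\rho_\nu(x)+\sqrt t/\rho_\nu(y)$. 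The task reduces to showing, uniformly on $D$, that $(x/\sqrt t)^{c+k}(y/\sqrt t)^d A(x,y)^{-(\nu+j+1/2)}$ is dominated by $A(x,y)^{-(\nu+1/2)}$ up to an enlargement of the Gaussian constant. A three-piece analysis matched to $D$ handles this:
\begin{itemize}
	\item On $\{xy<t\}$, at least one of $x,y$ is $\le\sqrt t$; whenever $x\ge 2\sqrt t$ or $y\ge 2\sqrt t$ one has $|x-y|\gtrsim \max(x,y)$, so positive-power factors are absorbed into the Gaussian via $z^N e^{-z^2}\lesi 1$.
	\item On $\{xy\ge t,\ y<x/2\}$, $x>\sqrt{2t}$ and $|x-y|>x/2$; bounding $(y/\sqrt t)^d\le(x/\sqrt t)^d$ collapses the two factors into $(x/\sqrt t)^{c+k+d}$, absorbed by the Gaussian when $c+k+d\ge 0$ and $\le 1$ when $c+k+d<0$ (since $x/\sqrt t>1$).
	\item On $\{xy\ge t,\ y>2x\}$ the situation is symmetric in $x,y$ with $|x-y|>y/2$.
\end{itemize}

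The main obstacle is the case $c+k<0$, which can occur because $c$ can be as small as $-j$. For small $x\le\sqrt t$, $\rho_\nu(x)=x/16$, so $A(x,y)^{-1}\lesi x/\sqrt t$, hence $A^{-(\nu+j+1/2)}\lesi A^{-(\nu+1/2)}(x/\sqrt t)^j$. Since $|c+k|\le j$ and $x/\sqrt t\le 1$ in this regime, $(x/\sqrt t)^j\le (x/\sqrt t)^{|c+k|}$, and multiplying by $(x/\sqrt t)^{c+k}$ yields exactly $A^{-(\nu+1/2)}$; for $x>\sqrt t$, $(x/\sqrt t)^{c+k}\le 1$ trivially. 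The analogous negative-power issue for $x$ on $\{xy\ge t,\ y>2x\}$ is handled identically. The ``consequently'' assertion is just the $k=0$ case of the main bound.
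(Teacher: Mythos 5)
Your proposal is correct, but it takes a genuinely different route from the paper. The paper proves \eqref{eq- delta k pt k ge 1} by an analytic induction on $m$ in which the inductive hypothesis is the estimate itself, quantified over all $k\in\mathbb N$ and all $\nu>-1/2$ simultaneously: the recursion \eqref{eq- chain rule 2} produces $\delta_\nu^{\ell}[xp_t^\nu]$ and $y\,\delta_\nu^{\ell}p_t^{\nu+1}$, which are handled by invoking the hypothesis at the larger power $k+1$ and at the shifted parameter $\nu+1$ (together with \eqref{eq- delta k xf}, \eqref{eq- delta k}, the splitting $y\le|x-y|+x$, and the inequality \eqref{eq- x rho x < 1}); the base cases $m=0,1$ are imported from \cite[Proposition 3.4]{B}. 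You instead run a purely algebraic induction to obtain a closed-form expansion $\delta_\nu^m p_t^\nu=\sum C\,\f{r^a}{(1-r)^b}x^cy^dp_t^{\nu+j}$ with the invariants $2b-c-d=m$, $c+j\ge 0$, $d\ge 0$, $0\le j\le m$, and then estimate every term in one pass using only the $m=0$ Gaussian bound of Proposition \ref{prop-heat kernel}. The ingredients are the same three identities, and your trade-off $A^{-1}\lesi x/\sqrt t$ for $x\le\sqrt t$ is the same observation $\rho_\nu(x)\le x$ that underlies \eqref{eq- x rho x < 1}; the surplus decay $A^{-j}$ coming from $p_t^{\nu+j}$ plays exactly the role that the paper's shifted exponent $-(\nu+3/2)$ plays in its estimate of $E_2$. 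What your version buys is self-containedness (no separate $m=1$ base case) and complete transparency about where the restriction to $D$ enters (it is needed precisely to absorb the nonnegative powers $(x/\sqrt t)^{c+k}(y/\sqrt t)^d$ into the Gaussian, which fails on the excluded diagonal region $xy\ge t$, $x/2\le y\le 2x$ treated in Proposition \ref{prop 3}); the cost is the heavier bookkeeping of the five-index expansion and the final three-region case analysis. Two small points of hygiene: the absorption step should be stated as $z^Ne^{-z^2}\lesi_N e^{-z^2/2}$ so that a Gaussian factor survives with an enlarged constant $c$ (you acknowledge this but then write $z^Ne^{-z^2}\lesi 1$), and in the regime $\sqrt t<x\le 2\sqrt t$ of the region $xy<t$ one should note that $(x/\sqrt t)^{c+k}\le 1$ directly when $c+k<0$, since the identity $\rho_\nu(x)=x/16$ is only guaranteed for $x\le 1$.
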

\begin{proof}

We will prove the proposition by induction.

	\noindent $\bullet$  For $m = 0, 1$, the inequality 	\eqref{eq- delta k pt k ge 1} was proved in \cite[Proposition 3.4]{B}.
		
	\bigskip
	
	\noindent $\bullet$ Assume that  \eqref{eq- delta k pt k ge 1} holds true for $m = 0,1,\ldots,  \ell$ for some $ \ell\in \mathbb N$, i.e, for $m=0,1,\ldots,  \ell$, we have
		\begin{equation}
		\label{eq- delta k pt k ge 1 hypothesis}
		\Big| \Big(\f{x}{\sqrt t}\Big)^k\delta_\nu^m p_t^\nu(x,y)\Big|\lesi_{\nu,k}   \f{1}{t^{(m+1)/2}}\exp\Big(-\f{|x-y|^2}{ct}\Big)\Big(1+\f{\sqrt t}{\rho_\nu(x)}+\f{\sqrt t}{\rho_\nu(y)}\Big)^{-(\nu+1/2)}
	\end{equation}
	for all $\nu>-1/2$, $k\in \mathbb N$, $t\in (0,1)$ and $(x,y)\in D$.
	
	We need to show that 
	\eqref{eq- delta k pt k ge 1} is also true for $\ell+1$. 	
	Indeed,  from \eqref{eq- chain rule 2} we have
	\[
	\delta_\nu^{\ell+1} p_t^\nu(x,y)=  -\f{2r}{1-r}\delta_\nu^{\ell}[xp_t^\nu(x,y)] + \f{2r^{1/2}}{1-r}y \delta_\nu^{\ell} p_t^{\nu+1}(x,y).
	\]
	Moreover, since $r = e^{-4t}$ we have $1+r\sim r\sim 1$ and $1-r\sim t$ as $t\in (0,1)$. This and \eqref{eq- delta k pt k ge 1 hypothesis} imply that
	\[
	\begin{aligned}
		\Big(\f{x}{\sqrt t}\Big)^k|\delta_\nu^{\ell+1} p_t^\nu(x,y)|
		&\lesi    \f{1}{t}\Big(\f{x}{\sqrt t}\Big)^k|\delta^\ell_\nu[xp_t^\nu(x,y)]| + \f{y}{t}\Big(\f{x}{\sqrt t}\Big)^k| \delta^\ell_\nu p_t^{\nu+1}(x,y)|\\
		&=: E_1 +E_2
	\end{aligned}
	\]
	for all $\nu>-1/2$, $k\in \mathbb N$, $t\in (0,1)$ and $(x,y)\in D$.

	For the term $E_1$, using \eqref{eq- delta k xf} and \eqref{eq- delta k pt k ge 1 hypothesis},
	\[
	\begin{aligned}
		E_1 &\lesi \f{1}{t}\Big(\f{x}{\sqrt t}\Big)^kx|\delta^\ell_\nu p_t^\nu(x,y)|+\f{1}{t}\Big(\f{x}{\sqrt t}\Big)^k|\delta^{\ell-1}_\nu p_t^\nu(x,y)|\\
		&\lesi \f{1}{t^{(\ell+2)/2}}\exp\Big(-\f{|x-y|^2}{ct}\Big)\Big(1+\f{\sqrt t}{\rho_\nu(x)}+\f{\sqrt t}{\rho_\nu(y)}\Big)^{-(\nu+1/2)}
	\end{aligned}
	\]
	as desired.

	For the second term $E_2$, using \eqref{eq- delta k} we obtain
	\[
	\begin{aligned}
		E_2&\lesi \f{y}{t}\Big(\f{x}{\sqrt t}\Big)^k\Big[|\delta_{\nu+1}^\ell p_t^{\nu+1}(x,y)|+\f{1}{x}|\delta_{\nu+1}^{\ell-1} p_t^{\nu+1}(x,y)|\Big].
	\end{aligned}
	\]
Applying \eqref{eq- delta k pt k ge 1 hypothesis} to estimate each term on the right hand side of the above inequality, we obtain
	\[
	\begin{aligned}
		\f{y}{t}\Big(\f{x}{\sqrt t}\Big)^k |\delta_{\nu+1}^\ell p_t^{\nu+1}(x,y)|&\le \f{|x-y|}{t}\Big(\f{x}{\sqrt t}\Big)^k|\delta_{\nu+1}^\ell p_t^{\nu+1}(x,y)|+\f{1}{\sqrt t}\Big(\f{x}{\sqrt t}\Big)^{k+1}|\delta_{\nu+1}^\ell p_t^{\nu+1}(x,y)|\\
		&\lesi \f{1}{t^{(\ell +2)/2}}\exp\Big(-\f{|x-y|^2}{ct}\Big)\Big(1+\f{\sqrt t}{\rho_\nu(x)}+\f{\sqrt t}{\rho_\nu(y)}\Big)^{-(\nu+1/2)}
	\end{aligned}
	\]
	and
	\[
	\begin{aligned}
		\f{y}{t}\Big(\f{x}{\sqrt t}\Big)^k\f{1}{x} |\delta_{\nu+1}^{\ell-1} p_t^{\nu+1}(x,y)|&\le \f{|x-y|}{t}\Big(\f{x}{\sqrt t}\Big)^k\f{1}{x}|\delta_{\nu+1}^{\ell-1} p_t^{\nu+1}(x,y)|+\f{1}{t}\Big(\f{x}{\sqrt t}\Big)^{k}|\delta_{\nu+1}^{\ell-1} p_t^{\nu+1}(x,y)|\\
		&\lesi \f{1}{t^{(\ell +1)/2}}\f{1}{x}\exp\Big(-\f{|x-y|^2}{ct}\Big)\Big(1+\f{\sqrt t}{\rho_\nu(x)}+\f{\sqrt t}{\rho_\nu(y)}\Big)^{-(\nu+3/2)}\\
		& \ \ \ +\f{1}{t^{(\ell +2)/2}}\exp\Big(-\f{|x-y|^2}{ct}\Big)\Big(1+\f{\sqrt t}{\rho_\nu(x)}+\f{\sqrt t}{\rho_\nu(y)}\Big)^{-(\nu+1/2)}\\
		& \lesi \f{1}{t^{(\ell +2)/2}}\exp\Big(-\f{|x-y|^2}{ct}\Big)\Big(1+\f{\sqrt t}{\rho_\nu(x)}+\f{\sqrt t}{\rho_\nu(y)}\Big)^{-(\nu+1/2)},
	\end{aligned}
	\]	
	where in the last inequality we use the fact that
	\begin{equation}\label{eq- x rho x < 1}
	\f{1}{x}\Big(1+\f{\sqrt t}{\rho_\nu(x)}+\f{\sqrt t}{\rho_\nu(y)}\Big)^{-1}\le \f{\rho_\nu(x)}{x\sqrt t } \lesi \f{1}{\sqrt t},
	\end{equation}
	since $\rho_\nu(x)\le x$.
	
	Hence,
	\[
	E_2\lesi \f{1}{t^{(\ell+2)/2}}\exp\Big(-\f{|x-y|^2}{ct}\Big)\Big(1+\f{\sqrt t}{\rho_\nu(x)}+\f{\sqrt t}{\rho_\nu(y)}\Big)^{-(\nu+1/2)}.
	\]
	The estimates of $E_1$ and $E_2$  ensure \eqref{eq- delta k pt k ge 1} for $\ell +1$.
	
	This completes our proof.
\end{proof}

\begin{prop}
	\label{prop- prop 2} For $m \in \mathbb N$ we have
	\begin{equation}
		\label{eq- delta k pt k ge 1 the case t > 1}
		\Big| x^k\delta_\nu^m p_t^\nu(x,y)\Big|\lesi_{\nu,k,m}   \f{e^{-t/2^{m+2}}}{t^{(m+1)/2}}\exp\Big(-\f{|x-y|^2}{ct}\Big)\Big(1+\f{\sqrt t}{\rho_\nu(x)}+\f{\sqrt t}{\rho_\nu(y)}\Big)^{-(\nu+1/2)}
	\end{equation}
	for all $\nu>-1/2$, $k\in \mathbb N$, $t\ge 1$ and $x,y>0$.
	
	Consequently, for $m \in \mathbb N$ we have
	\begin{equation*}
		\Big| \delta_\nu^m p_t^\nu(x,y)\Big|\lesi_{\nu,k}   \f{1}{t^{(m+1)/2}}\exp\Big(-\f{|x-y|^2}{ct}\Big)\Big(1+\f{\sqrt t}{\rho_\nu(x)}+\f{\sqrt t}{\rho_\nu(y)}\Big)^{-(\nu+1/2)}
	\end{equation*}
	for all  $\nu>-1/2$, $t\ge 1$ and $x,y>0$.
\end{prop}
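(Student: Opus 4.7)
The plan is to proceed by induction on $m$, closely mirroring the inductive structure of Proposition~\ref{prop- prop 1} but adapted to the regime $t\ge 1$, where $r=e^{-4t}$ is small, $1-r\sim 1$, and the estimate must hold for all $x,y>0$ rather than only on the region $D$.

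For the base case $m=0$, Proposition~\ref{prop-heat kernel} is not strong enough on its own to absorb an arbitrary power $x^k$, since $(1+\sqrt t/\rho_\nu(x))^{-(\nu+1/2)}$ decays only like $x^{-(\nu+1/2)}$ for large $x$. I would therefore re-derive the bound directly from the explicit kernel formula \eqref{eq1-ptxy}. Since $\f{1+r}{2(1-r)}\ge \f12$ for $t\ge 1$, the Gaussian factor in the kernel splits as
\[
\exp\Big(-\f{1+r}{2(1-r)}(x^2+y^2)\Big)=\exp\Big(-\f{x^2+y^2}{4}\Big)\cdot\exp\Big(-\Big(\f{1+r}{2(1-r)}-\f14\Big)(x^2+y^2)\Big),
\]
with the second coefficient still bounded below by $1/4$. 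The elementary inequality $x^ke^{-x^2/4}\lesi_k 1$ absorbs $x^k$, and the remaining exponential together with the Bessel asymptotics \eqref{eq1-Inu}--\eqref{eq3-Inu} reproduces the $\exp(-|x-y|^2/(ct))$ and $(1+\sqrt t/\rho_\nu(x)+\sqrt t/\rho_\nu(y))^{-(\nu+1/2)}$ factors exactly as in Proposition~\ref{prop-heat kernel}, while leaving a factor $e^{-t/4}$ of the original $e^{-t/2}$ decay intact.

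For the inductive step $m=\ell\mapsto\ell+1$, I expand $x^k\delta_\nu^{\ell+1}p_t^\nu(x,y)$ using \eqref{eq- chain rule 2} together with the Leibniz-type identity \eqref{eq- delta k xf} and the shift formula \eqref{eq- delta k}, reducing it to a finite sum of terms of the form $\f{r^{\alpha}}{(1-r)^{\beta}}\,x^a\delta_{\nu'}^b p_t^{\nu'}(x,y)$ with $\nu'\in\{\nu,\nu+1\}$ and $b\le\ell$; the induction hypothesis applies to each (using that $\rho_\nu\equiv\rho_{\nu+1}$ on $(0,\infty)$ whenever $\nu>-1/2$). For $t\ge 1$ one has $\f{2r}{1-r}\lesi e^{-4t}$ and $\f{2r^{1/2}}{1-r}\lesi e^{-2t}$, which provide strong exponential decay. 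The spurious $y$ in the second summand of \eqref{eq- chain rule 2} is handled by $y\le |x-y|+x$, the $|x-y|$ piece being absorbed by the Gaussian at the cost of $\sqrt t$, the $x$ piece merging into the existing power of $x$. A short calculation, resting on the estimate $t^N e^{-2t}\lesi e^{-t/2^{\ell+3}}$ for $t\ge 1$, then shows that every resulting term is dominated by the target bound with $m=\ell+1$ and decay factor $e^{-t/2^{(\ell+1)+2}}$.

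The principal obstacle is the base case: the statement of Proposition~\ref{prop-heat kernel} is a size estimate that does not see the extra Gaussian decay in $x^2+y^2$ hidden inside the kernel, and extracting that decay is what makes an arbitrary power $x^k$ affordable. Once this is in place, the inductive step is essentially bookkeeping, since the factors $e^{-4t}$ and $e^{-2t}$ arising from $\f{r}{1-r}$ and $\f{r^{1/2}}{1-r}$ are far stronger than the required $e^{-t/2^{m+2}}$ decay, leaving ample room to absorb polynomial losses in $t$ and the $\sqrt t$ loss from the Gaussian when splitting $y$.
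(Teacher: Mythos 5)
Your inductive step is essentially the paper's own argument: expand via \eqref{eq- chain rule 2}, commute the power of $x$ and the index shift using \eqref{eq- delta k xf} and \eqref{eq- delta k}, split $y\le |x-y|+x$, and use $\rho_{\nu+1}=\rho_\nu$. The only cosmetic difference there is that the paper simply bounds $r\le 1$, $1-r\sim 1$ and pays for powers of $t$ by degrading the exponential constant along the chain $2^{m+2}$, whereas you harvest the extra $e^{-2t}$ hidden in the prefactors; both work. Two remarks. First, the paper imports the base case $m=0,1$ from \cite[Proposition 3.5]{B} rather than rederiving it; your direct extraction of $e^{-(x^2+y^2)/4}$ from \eqref{eq1-ptxy} to absorb $x^k$ is the right mechanism if you want this self-contained, though you should still run the small-argument/large-argument dichotomy for $I_\nu$ to recover the $\bigl(1+\f{\sqrt t}{\rho_\nu(x)}+\f{\sqrt t}{\rho_\nu(y)}\bigr)^{-(\nu+1/2)}$ factor. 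Second, your reduction to terms $\f{r^{\alpha}}{(1-r)^{\beta}}x^a\delta_{\nu'}^{b}p_t^{\nu'}$ silently includes $a=-1$, coming from the $\f{k}{x}$ term in \eqref{eq- delta k}; the induction hypothesis as stated only covers nonnegative powers of $x$, so these terms are not directly covered and must be absorbed via $\f{1}{x}\bigl(1+\f{\sqrt t}{\rho_\nu(x)}\bigr)^{-1}\lesi \f{1}{\sqrt t}$ (that is, \eqref{eq- x rho x < 1}), using the extra power $-(\nu+3/2)$ available when the hypothesis is applied at parameter $\nu+1$ --- exactly as the paper does. With that point made explicit, the proof is complete.
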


\begin{proof}
	We will prove the proposition by induction.
	
	\noindent $\bullet$ For $m=0,1$, the estimate \eqref{eq- delta k pt k ge 1 the case t > 1} was proved in \cite[Proposition 3.5]{B}.
	
	\noindent $\bullet$ Assume that \eqref{eq- delta k pt k ge 1 the case t > 1} holds true for $m=0,1,\ldots,\ell$ for some $\ell\in \mathbb N$, i.e., for $m=0,1,\ldots,\ell$ we have
	\begin{equation}
		\label{eq- delta k pt k ge 1 the case t > 1 hypothesis}
		\Big| x^k\delta_\nu^m p_t^\nu(x,y)\Big|\lesi_{\nu,k,m}   \f{e^{-t/2^{m+2}}}{t^{(m+1)/2}}\exp\Big(-\f{|x-y|^2}{ct}\Big)\Big(1+\f{\sqrt t}{\rho_\nu(x)}+\f{\sqrt t}{\rho_\nu(y)}\Big)^{-(\nu+1/2)}
	\end{equation}
	for all $\nu>-1/2$, $k\in \mathbb N$, $t\ge 1$ and $x,y>0$.

	We need to verify the estimate for $m=\ell+1$. From \eqref{eq- chain rule 2} we have
	\[
	\delta_\nu^{\ell+1} p_t^\nu(x,y)=  -\f{2r}{1-r}\delta_\nu^{\ell} [xp_t^\nu(x,y)] + \f{2r^{1/2}}{1-r}y \delta_\nu^{\ell} p_t^{\nu+1}(x,y),
	\]
	which implies that
	\[
	\begin{aligned}
		x^k|\delta_\nu^{\ell+1} p_t^\nu(x,y)|&\lesi   x^k|\delta_\nu^{\ell} [xp_t^\nu(x,y)]| + yx^k| \delta_\nu^{\ell} p_t^{\nu+1}(x,y)|,
	\end{aligned}
	\]
	since $1+r\sim 1-r\sim  1$ and $r\le 1$ as $t\ge 1$.

	This, together with \eqref{eq- delta k xf} and \eqref{eq- delta k}, further implies that
	\[
	\begin{aligned}
		x^k|\delta_\nu^{\ell+1} p_t^\nu(x,y)|&\lesi   x^k |\delta_\nu^{\ell-1}p_t^\nu(x,y)| + x^{k+1}|\delta_\nu^{\ell} p_t^\nu(x,y)| + yx^k| \delta_{\nu+1}^{\ell}p_t^{\nu+1}(x,y)|+ \f{yx^k}{x}| \delta_{\nu+1}^{\ell-1}p_t^{\nu+1}(x,y)|,
	\end{aligned}
	\]
	Using \eqref{eq- delta k pt k ge 1 the case t > 1 hypothesis},
	\[
	\begin{aligned}
		x^k |\delta_\nu^{\ell-1}p_t^\nu(x,y)| + x^{k+1}|\delta_\nu^{\ell} p_t^\nu(x,y)|&\lesi \Big[\f{e^{-t/2^{\ell+1}}}{ t^{\ell/2}} + \f{e^{-t/2^{\ell+2}}}{t^{(\ell+1)/2}}\Big]\exp\Big(-\f{|x-y|^2}{ct}\Big)\Big(1+\f{\sqrt t}{\rho_\nu(x)}+\f{\sqrt t}{\rho_\nu(y)}\Big)^{-(\nu+1/2)}\\
		&\lesi \f{e^{-t/2^{\ell+3}}}{t^{(\ell+2)/2}}\exp\Big(-\f{|x-y|^2}{ct}\Big)\Big(1+\f{\sqrt t}{\rho_\nu(x)}+\f{\sqrt t}{\rho_\nu(y)}\Big)^{-(\nu+1/2)}.
	\end{aligned}
	\]
	For the next term $yx^k| \delta_{\nu+1}^{\ell}p_t^{\nu+1}(x,y)|$ we write
	\begin{equation}\label{eq- estimates related to y}
	yx^k| \delta_{\nu+1}^{\ell}p_t^{\nu+1}(x,y)|\le |x-y|x^k| \delta_{\nu+1}^{\ell}p_t^{\nu+1}(x,y)|+x^{k+1}| \delta_{\nu+1}^{\ell}p_t^{\nu+1}(x,y)|
	\end{equation}
	and then arguing similarly, we also obtain
	\[
	\begin{aligned}
	yx^k| \delta_{\nu+1}^{\ell}p_t^{\nu+1}(x,y)|&\lesi \f{e^{-t/2^{\ell+3}}}{t^{(\ell+2)/2}}\exp\Big(-\f{|x-y|^2}{ct}\Big)\Big(1+\f{\sqrt t}{\rho_{\nu+1}(x)}+\f{\sqrt t}{\rho_{\nu+1}(y)}\Big)^{-(\nu+1/2)}\\
	&\sim \f{e^{-t/2^{\ell+3}}}{t^{(\ell+2)/2}}\exp\Big(-\f{|x-y|^2}{ct}\Big)\Big(1+\f{\sqrt t}{\rho_\nu(x)}+\f{\sqrt t}{\rho_\nu(y)}\Big)^{-(\nu+1/2)},
		\end{aligned}
	\]
	where in the last inequality we used $\rho_{\nu+1}(x)=\rho_\nu(x)$ for all $x\in (0,\vc)$.

	For the last term, we write
	\[
	\f{yx^k}{x}| \delta_{\nu+1}^{\ell-1}p_t^{\nu+1}(x,y)|\le \f{|x-y|x^k}{x}| \delta_{\nu+1}^{\ell-1}p_t^{\nu+1}(x,y)|+yx^k| \delta_{\nu+1}^{\ell-1}p_t^{\nu+1}(x,y)|.
	\]
	Since the term $yx^k| \delta_{\nu+1}^{\ell-1}p_t^{\nu+1}(x,y)|$ can be estimated similarly to \eqref{eq- estimates related to y}, we need only to take case of the first term on the RHS of the above inequality. Indeed, by using  \eqref{eq- delta k pt k ge 1 the case t > 1 hypothesis} and the fact $\rho_{\nu+1}(x)=\rho_\nu(x)$ for all $x\in (0,\vc)$, we have
	\[
	\begin{aligned}
		\f{|x-y|x^k}{x}| \delta_{\nu+1}^{\ell-1}p_t^{\nu+1}(x,y)|&\lesi \f{1}{x}\f{e^{-t/2^{\ell+1}}}{t^{\ell/2}}\exp\Big(-\f{|x-y|^2}{ct}\Big)\Big(1+\f{\sqrt t}{\rho_\nu(x)}+\f{\sqrt t}{\rho_\nu(y)}\Big)^{-(\nu+3/2)}\\
		&\lesi \f{e^{-t/2^{\ell+3}}}{t^{(\ell+2)/2}}\exp\Big(-\f{|x-y|^2}{ct}\Big)\Big(1+\f{\sqrt t}{\rho_\nu(x)}+\f{\sqrt t}{\rho_\nu(y)}\Big)^{-(\nu+1/2)},
	\end{aligned}
	\]
where in the last inequality we used \eqref{eq- x rho x < 1}.
	
	This completes our proof.
\end{proof}

\bigskip

\begin{prop}
	\label{prop 3}  For $m \in \mathbb N\setminus \{0\}$ we have
	\begin{equation}
		\label{eq- delta k pt k}
		| \delta_\nu^m p_t^\nu(x,y)|+\f{x}{t}|\delta_\nu^{m-1}[p_t^\nu(x,y)-p_t^{\nu+1}(x,y)]|\lesi   \f{1}{t^{(m+1)/2}}\exp\Big(-\f{|x-y|^2}{ct}\Big)\Big(1+\f{\sqrt t}{\rho_\nu(x)}+\f{\sqrt t}{\rho_\nu(y)}\Big)^{-(\nu+1/2)}
	\end{equation}
	for all $\nu>-1/2$, $t\in (0,1)$ and $(x,y)$ satisfying $xy\ge t$ and $x/2\le y\le 2x$.
\end{prop}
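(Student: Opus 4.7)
The plan is to establish the estimate by induction on $m$. For the base case $m=1$, the bound on $|\delta_\nu p_t^\nu|$ is already Proposition~3.4 of \cite{B} (stated here for $m=0,1$ in Proposition~\ref{prop- prop 1}). For the second summand $\frac{x}{t}|p_t^\nu-p_t^{\nu+1}|$, the estimate \eqref{eq5-Inu} yields $|p_t^\nu-p_t^{\nu+1}|\lesssim \frac{1-r}{r^{1/2}xy}\,p_t^{\nu+1}\sim \frac{t}{xy}\,p_t^{\nu+1}$ for $t\in(0,1)$. Combined with Proposition~\ref{prop-heat kernel} applied to $p_t^{\nu+1}$, the identity $\rho_{\nu+1}=\rho_\nu$, and the elementary inequality $\frac{1}{y}(1+\sqrt{t}/\rho_\nu(y))^{-1}\le \rho_\nu(y)/(y\sqrt{t})\lesssim 1/\sqrt{t}$ (from $\rho_\nu(y)\le y/16$), this delivers the base case.

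For the inductive step, assume the estimate holds for orders $1,\ldots,m$ at every $\nu>-1/2$. I rearrange \eqref{eq- chain rule 2} into the identity
\[
\delta_\nu p_t^\nu = \frac{2r^{1/2}(y-r^{1/2}x)}{1-r}\,p_t^{\nu+1} \;-\; \frac{2r}{1-r}\,x\,(p_t^\nu - p_t^{\nu+1})
\]
and apply $\delta_\nu^m$ to both sides. Using \eqref{eq- delta k xf} to compute $\delta_\nu^m[(y-r^{1/2}x)p_t^{\nu+1}]$ and $\delta_\nu^m[x(p_t^\nu-p_t^{\nu+1})]$, then regrouping via $\delta_\nu^{m-1}p_t^{\nu+1}+\delta_\nu^{m-1}(p_t^\nu-p_t^{\nu+1})=\delta_\nu^{m-1}p_t^\nu$, produces the central recursion
\[
\delta_\nu^{m+1}p_t^\nu + \frac{2rx}{1-r}\,\delta_\nu^m(p_t^\nu - p_t^{\nu+1}) = \frac{2r^{1/2}(y-r^{1/2}x)}{1-r}\,\delta_\nu^m p_t^{\nu+1} - \frac{2rm}{1-r}\,\delta_\nu^{m-1}p_t^\nu.
\]
To bound the right-hand side by $\frac{1}{t^{(m+2)/2}}G_\nu$ (where $G_\nu$ denotes the target factor), I use \eqref{eq- delta k} to expand $\delta_\nu^m p_t^{\nu+1} = \delta_{\nu+1}^m p_t^{\nu+1} + (m/x)\delta_{\nu+1}^{m-1}p_t^{\nu+1}$ and invoke the inductive hypothesis at parameter $\nu+1$; the factor $|y-r^{1/2}x|\lesssim |y-x|+tx$ is split so that the $|y-x|$ part is absorbed via the standard inequality $\frac{|y-x|}{\sqrt{t}}e^{-|y-x|^2/ct}\lesssim e^{-|y-x|^2/c't}$ and the $tx$ part is absorbed via the crucial estimate $x\,G_{\nu+1}\lesssim G_\nu/\sqrt{t}$, which follows from the additional factor $(1+\sqrt{t}/\rho_\nu(x))^{-1}$ in $G_{\nu+1}$ together with $x\rho_\nu(x)\le 1/16$. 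The lower-order term $\frac{2rm}{1-r}\delta_\nu^{m-1}p_t^\nu\sim (1/t)\delta_\nu^{m-1}p_t^\nu$ is directly controlled by the inductive hypothesis at order $m-1$.

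The main obstacle is that the recursion controls only the signed combination on its left side, not the two individual summands appearing in the statement. To obtain a direct bound on $\frac{x}{t}|\delta_\nu^m(p_t^\nu - p_t^{\nu+1})|$, I write $q_t^\alpha:=p_t^\alpha-p_t^{\alpha+1}$ and use the identity $q_t^\nu + q_t^{\nu+1} = \frac{(\nu+1)(1-r)}{r^{1/2}xy}\,p_t^{\nu+1}$, which follows from \eqref{eq6-Inu}. Applying $\delta_\nu^m$ and exploiting the clean identity
\[
\delta_\nu^k\!\left(\frac{p_t^{\nu+1}}{x}\right)=\frac{1}{x}\,\delta_{\nu+1}^k\, p_t^{\nu+1}\qquad (k\in\mathbb{N}),
\]
proved by induction from $\delta_\nu(gf)=g'f+g\,\delta_\nu f$ together with $\delta_\nu=\delta_{\nu+1}+1/x$, reduces the right-hand side to $\frac{(\nu+1)(1-r)}{r^{1/2}xy}\delta_{\nu+1}^m p_t^{\nu+1}$. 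Invoking the inductive hypothesis at $\nu+1$ and using the same $\frac{1}{y}(1+\sqrt{t}/\rho_\nu)^{-1}\lesssim 1/\sqrt{t}$ trick then yields $\frac{x}{t}|\delta_\nu^m q_t^\nu + \delta_\nu^m q_t^{\nu+1}|\lesssim \frac{1}{t^{(m+2)/2}}G_\nu$. Iterating this relation at successive parameters $\nu+1,\nu+2,\ldots$, together with the geometric decay $G_{\nu+k}\le (1+\sqrt{t}/\rho_\nu(x)+\sqrt{t}/\rho_\nu(y))^{-k}G_\nu$ and the rapid decay of $|\delta_{\nu+N}^m q_t^{\nu+N}|$ needed to dominate the tail (bounded crudely through Propositions~\ref{prop- prop 1} and~\ref{prop-heat kernel}), furnishes the desired direct bound $\frac{x}{t}|\delta_\nu^m(p_t^\nu-p_t^{\nu+1})|\lesssim \frac{1}{t^{(m+2)/2}}G_\nu$. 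Combining this with the recursion and the triangle inequality closes the induction and delivers the bound on $|\delta_\nu^{m+1}p_t^\nu|$.
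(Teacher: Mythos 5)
Your overall architecture (induction on $m$, the base case via \eqref{eq5-Inu}, and the recursion obtained by applying $\delta_\nu^m$ to \eqref{eq- chain rule 2}) matches the paper's proof, and your treatment of the recursion's right-hand side — splitting $|y-r^{1/2}x|\lesssim |y-x|+tx$, absorbing $|y-x|/\sqrt t$ into the Gaussian, and using $x\,G_{\nu+1}\lesssim G_\nu/\sqrt t$ — is sound and essentially what the paper does with its terms $F_1,F_2,F_3$. The gap is in the step where you extract the bound on $\f{x}{t}|\delta_\nu^{m}(p_t^\nu-p_t^{\nu+1})|$ from the sum identity $q_t^\nu+q_t^{\nu+1}=\f{(\nu+1)(1-r)}{r^{1/2}xy}p_t^{\nu+1}$ by iterating over $\nu+1,\nu+2,\dots$. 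Writing $A_k:=\f{x}{t}\delta^m q_t^{\nu+k}$ (with the appropriate $\delta$'s), your telescoping gives $A_0=\sum_{k<N}(-1)^k(A_k+A_{k+1})+(-1)^N A_N$, and both pieces fail to be controlled uniformly. For the partial sums: each summand is bounded by $C_k\,t^{-(m+2)/2}G_{\nu+k}$ with $G_{\nu+k}=(1+\sqrt t/\rho_\nu(x)+\sqrt t/\rho_\nu(y))^{-k}G_\nu$ and $C_k\gtrsim \nu+k+1$ (the explicit factor in the identity, not counting the untracked $\nu$-dependence of the constants in the inductive hypothesis); at $x=y=1$ and $t\to 0$ the ratio $(1+32\sqrt t)^{-1}$ tends to $1$, so $\sum_k C_k(1+32\sqrt t)^{-k}\sim t^{-1}$ and the sum overshoots the target by a factor of $1/t$. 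For the tail: $A_N$ is exactly the quantity you are trying to estimate, only at a shifted parameter, and it does not get easier as $N$ grows uniformly in $(t,x,y)$ — the crude bound via Propositions \ref{prop- prop 1} and \ref{prop-heat kernel} gives $\f{x}{t}|\delta^m q_t^{\nu+N}|\lesssim \f{x}{\sqrt t}\,t^{-(m+2)/2}G_{\nu+N}$, and at $x=y=1$, $t\to0$ one has $G_{\nu+N}\sim G_\nu\sim1$ while $x/\sqrt t=t^{-1/2}\to\infty$, so no fixed $N$ (nor any $N=N(t)$ consistent with the partial-sum bound) closes the argument. The genuine decay of $I_\alpha(z)$ in $\alpha$ only sets in for $\alpha\gtrsim z\sim xy/t$, which is unbounded in your regime.

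The paper avoids this entirely by peeling off one derivative first: it writes $\delta_\nu^{\ell}[p_t^\nu-p_t^{\nu+1}]=\delta_\nu^{\ell-1}[\delta_\nu p_t^\nu-\delta_\nu p_t^{\nu+1}]$, converts $\delta_\nu p_t^{\nu+1}=\delta_{\nu+1}p_t^{\nu+1}+\f1x p_t^{\nu+1}$, and uses \eqref{eq-del pt new} together with \eqref{eq6-Inu} to express $\delta_\nu p_t^\nu-\delta_{\nu+1}p_t^{\nu+1}$ in terms of $\f{r^{1/2}}{y}p_t^{\nu+1}$, $x\,(p_t^{\nu+1}-p_t^{\nu+2})$ and $\f{y-x}{1-r}(p_t^{\nu+1}-p_t^{\nu+2})$. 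After applying $\delta_\nu^{\ell-1}$, every difference term that appears is of order $\le\ell-1$ at parameter $\nu+1$, which is exactly what the inductive hypothesis (second summand, order $\ell$, parameter $\nu+1$) supplies — a single shift in $\nu$, no infinite iteration. If you replace your telescoping step by this commutation identity, the rest of your argument goes through.
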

\begin{proof}
In this situation, we have $x\sim y$ and $x\gtrsim \sqrt t$.

We will prove the proposition by induction. Note that in this case $1-r\sim t$ and $r<1$ as $t\in (0,1)$, where $r=e^{-4t}$.

	\noindent $\bullet$ For $m=1$, the estimate for $\delta_\nu^m p_t^\nu(x,y)$ was proved in \cite{B}. For the second term, using \eqref{eq5-Inu} and Proposition \ref{prop-heat kernel},
		\[
		\begin{aligned}
			\f{x}{t}|p_t^\nu(x,y)-p_t^{\nu+1}(x,y)|&\lesi \f{x}{t}\f{t}{xy} |p_t^{\nu+1}(x,y)|\\
			&\lesi \f{1}{t}\exp\Big(-\f{|x-y|^2}{ct}\Big)\Big(1+\f{\sqrt t}{\rho_\nu(x)}+\f{\sqrt t}{\rho_\nu(y)}\Big)^{-(\nu+1/2)},
		\end{aligned}
		\]
		provided that  $xy\ge t$ and $x/2\le y\le 2x$.
		
		This proves the inequality for the case $m=1$.
		
		\medskip

	\noindent $\bullet$ Assume that the estimate holds true for $m=1,\ldots, \ell$ for some $\ell\ge 1$, i.e., for $m=1,\ldots, \ell$,
	\begin{equation}
		\label{eq- delta k pt k hypothesis}
		| \delta_\nu^m p_t^\nu(x,y)|+\f{x}{t}|\delta_\nu^{m-1}[p_t^\nu(x,y)-p_t^{\nu+1}(x,y)]|\lesi   \f{1}{t^{(m+1)/2}}\exp\Big(-\f{|x-y|^2}{ct}\Big)\Big(1+\f{\sqrt t}{\rho_\nu(x)}+\f{\sqrt t}{\rho_\nu(y)}\Big)^{-(\nu+1/2)}
	\end{equation}
	for all $\nu>-1/2$, $t\in (0,1)$ and all $x,y$ satisfying $xy\ge t$ and $x/2\le y\le 2x$.

	We now prove the estimate for $\ell+1$. We first show that 
	\[
	 \f{x}{t}|\delta_\nu^{\ell}[p_t^\nu(x,y)-p_t^{\nu+1}(x,y)]|\lesi   \f{1}{t^{(\ell+2)/2}}\exp\Big(-\f{|x-y|^2}{ct}\Big)\Big(1+\f{\sqrt t}{\rho_\nu(x)}+\f{\sqrt t}{\rho_\nu(y)}\Big)^{-(\nu+1/2)}
	\]
		for all $\nu>-1/2$, $t\in (0,1)$ and all $x,y $ satisfying $xy\ge t$ and $x/2\le y\le 2x$.

	From \eqref{eq- chain rule 2},
	\begin{equation}\label{eq-del pt new}
		\begin{aligned}
			\delta_\nu p_t^\nu(x,y)&=-\f{2r}{1-r}x[p_t^\nu(x,y)-p_t^{\nu+1}(x,y)]-\f{2r}{1-r}xp_t^{\nu+1}(x,y)+\f{2r^{1/2}}{1-r}y p_t^{\nu+1}(x,y)\\	 	 
			&=-\f{2r}{1-r}x[p_t^\nu(x,y)-p_t^{\nu+1}(x,y)]-\Big[\f{2r}{1-r}-\f{2r^{1/2}}{1-r}\Big]xp_t^{\nu+1}(x,y)+\f{2r^{1/2}}{1-r}(y-x) p_t^{\nu+1}(x,y)\\
			&=-\f{2r}{1-r}x[p_t^\nu(x,y)-p_t^{\nu+1}(x,y)]+ \f{2r^{1/2}}{1+r^{1/2}} xp_t^{\nu+1}(x,y)+\f{2r^{1/2}}{1-r}(y-x) p_t^{\nu+1}(x,y).
		\end{aligned}
	\end{equation}
Using this and \eqref{eq- delta k} to write
	\[
	\begin{aligned}
	\f{x}{t}|\delta_\nu^{\ell}[p_t^\nu(x,y)-p_t^{\nu+1}(x,y)]| &=\f{x}{t}|\delta_\nu^{\ell-1}[\delta_\nu p_t^\nu(x,y)-\delta_\nu p_t^{\nu+1}(x,y)]|\\
		&=\f{x}{t}|\delta_\nu^{\ell-1}[\delta_\nu p_t^\nu(x,y)-\delta_{\nu+1} p_t^{\nu+1}(x,y) -\f{1}{x}p_t^{\nu+1}(x,y)]|\\
		&\lesi \f{x}{t}|\delta_\nu^{\ell-1}[\delta_\nu p_t^\nu(x,y)-\delta_{\nu+1} p_t^{\nu+1}(x,y)| +\f{x}{t}|\delta_\nu^{\ell-1}\big[\f{1}{x}p_t^{\nu+1}(x,y)\big]|\\
		&=:E_1+E_2.
	\end{aligned}
	\]
	For $E_1$, using \eqref{eq-del pt new} and \eqref{eq6-Inu}, we have
	\[
	\begin{aligned}
		\delta_\nu p_t^\nu(x,y)-\delta_{\nu+1} p_t^{\nu+1}(x,y)=&-\f{2r}{1-r}x[p_t^\nu(x,y)-p_t^{\nu+2}(x,y)]+ \f{2r^{1/2}}{1+r^{1/2}} x[p_t^{\nu+1}(x,y)-p_t^{\nu+2}(x,y)]\\
		& \ \ \ +\f{2r^{1/2}}{1-r}(y-x) [p_t^{\nu+1}(x,y)-p_t^{\nu+2}(x,y)]\\
		=&-2(\nu+2)\f{r^{1/2}}{y} p_t^{\nu+1}(x,y)+ \f{2r^{1/2}}{1+r^{1/2}} x[p_t^{\nu+1}(x,y)-p_t^{\nu+2}(x,y)]\\
		& \ \ \ +\f{2r^{1/2}}{1-r}(y-x) [p_t^{\nu+1}(x,y)-p_t^{\nu+2}(x,y)].
	\end{aligned}
	\]
	This, together with the facts that $x\sim y\gtrsim \sqrt t$, $r\le 1 $ and $1-r \sim t$, implies that 
	\[
	\begin{aligned}
		E_1&\lesi \f{1}{t}|\delta_\nu^{\ell-1}p_t^{\nu+1}(x,y)|+\f{x}{t}|\delta_\nu^{\ell-1}\big[x(p_t^{\nu+1}(x,y)-p_t^{\nu+2}(x,y))\big]|\\
		& \ \ \ +\f{x}{t^2}|\delta_\nu^{\ell-1}\big[(x-y)(p_t^{\nu+1}(x,y)-p_t^{\nu+2}(x,y))\big]|.
	\end{aligned}
	\]
	Using \eqref{eq- delta k},  \eqref{eq- delta k pt k hypothesis} and the fact $\rho_{\nu+1}(x)=\rho_\nu(x)$ for all $x\in (0,\vc)$,
	\[
	\begin{aligned}
		\f{1}{t}|\delta_\nu^{\ell-1}p_t^{\nu+1}(x,y)|&\lesi \f{1}{t}|\delta_{\nu+1}^{\ell-1}p_t^{\nu+1}(x,y)|+\f{1}{tx}|\delta_{\nu+1}^{\ell-2}p_t^{\nu+1}(x,y)|\\
		&\lesi \f{1}{t}|\delta_{\nu+1}^{\ell-1}p_t^{\nu+1}(x,y)|+\f{1}{t^{3/2}}|\delta_{\nu+1}^{\ell-2}p_t^{\nu+1}(x,y)|\\
		&\lesi  \f{1}{t^{(\ell+2)/2}}\exp\Big(-\f{|x-y|^2}{ct}\Big)\Big(1+\f{\sqrt t}{\rho_\nu(x)}+\f{\sqrt t}{\rho_\nu(y)}\Big)^{-(\nu+1/2)},
	\end{aligned}
	\]
	as long as $xy\ge t$ and $x/2\le y\le 2x$.
	
	By \eqref{eq- delta k} and \eqref{eq- delta k xf}, we have
	\[
	\begin{aligned}
		\f{x}{t}|\delta_\nu^{\ell-1}&\big[x(p_t^{\nu+1}(x,y)-p_t^{\nu+2}(x,y))\big]|\\
		&\lesi \f{x}{t}|\delta_{\nu+1}^{\ell-1}\big[x(p_t^{\nu+1}(x,y)-p_t^{\nu+2}(x,y))\big]|+\f{x}{t}\f{1}{x}|\delta_{\nu+1}^{\ell-2}\big[x(p_t^{\nu+1}(x,y)-p_t^{\nu+2}(x,y))\big]|\\
		&\lesi \f{x^2}{t}|\delta_{\nu+1}^{\ell-1}\big[(p_t^{\nu+1}(x,y)-p_t^{\nu+2}(x,y))\big]|+\f{x}{t}|\delta_{\nu+1}^{\ell-2}\big[(p_t^{\nu+1}(x,y)-p_t^{\nu+2}(x,y))\big]|\\
		&+\f{x}{t} |\delta_{\nu+1}^{\ell-2}\big[(p_t^{\nu+1}(x,y)-p_t^{\nu+2}(x,y))\big]|+\f{x}{t}\f{1}{x} |\delta_{\nu+1}^{\ell-3}\big[(p_t^{\nu+1}(x,y)-p_t^{\nu+2}(x,y))\big]|.	
	\end{aligned}
	\]
	This, together with \eqref{eq- delta k pt k hypothesis} and the fact $\rho_{\nu+1}(x)=\rho_\nu(x)\lesi \min \{x, 1/x\}$, implies that
	\[
	\begin{aligned}
		\f{x}{t}|\delta_\nu^{\ell-1}&\big[x(p_t^{\nu+1}(x,y)-p_t^{\nu+2}(x,y))\big]|
		&\lesi \f{1}{t^{(\ell+2)/2}}\exp\Big(-\f{|x-y|^2}{ct}\Big)\Big(1+\f{\sqrt t}{\rho_\nu(x)}+\f{\sqrt t}{\rho_\nu(y)}\Big)^{-(\nu+1/2)}.
	\end{aligned}
	\]
	Similarly,
	\[
	\begin{aligned}
		\f{x}{t^2}|\delta_\nu^{\ell-1}\big[(x-y)(p_t^{\nu+1}(x,y)-p_t^{\nu+2}(x,y))\big]|
		&\lesi \f{1}{t^{(\ell+2)/2}}\exp\Big(-\f{|x-y|^2}{ct}\Big)\Big(1+\f{\sqrt t}{\rho_\nu(x)}+\f{\sqrt t}{\rho_\nu(y)}\Big)^{-(\nu+1/2)}.
	\end{aligned}
	\]
	Hence,
	\[
	E_1\lesi \f{1}{t^{(\ell+2)/2}}\exp\Big(-\f{|x-y|^2}{ct}\Big)\Big(1+\f{\sqrt t}{\rho_\nu(x)}+\f{\sqrt t}{\rho_\nu(y)}\Big)^{-(\nu+1/2)}. 
	\]
	
	For the term $E_2$, by \eqref{eq- delta k} we have
	\[
	\begin{aligned}
		E_2 \lesi  \f{x}{t} \Big|\delta^{\ell-1}_{\nu+1}\Big[\f{1}{x}p_t^{\nu+1}(x,y)\Big]\Big|+ \f{1}{t} \Big|\delta^{\ell-2}_{\nu+1}\Big[\f{1}{x}p_t^{\nu+1}(x,y)\Big]\Big|.
	\end{aligned}
	\]

	We now use the  formula
	\[
	\delta^k_\nu\Big(\f{1}{x}f\Big)=\sum_{i=0}^k \f{c_i}{x^{k-i+1}} \delta_\nu^i f,
	\]
	where $c_i$ are constants, and the fact that $x\gtrsim \sqrt t$ to obtain
	\[
	\begin{aligned}
		E_2 &\lesi \sum_{i=0}^{\ell-1}\f{1}{x^{(\ell-1)-i+1}}\f{x}{t} |\delta^{i}_{\nu+1} p_t^{\nu+1}(x,y)|+ \sum_{i=0}^{\ell-2}\f{1}{tx^{(\ell-2)-i+1}} \delta^{\ell-2}_{\nu+1}\Big[\f{1}{x}p_t^{\nu+1}(x,y)\Big]\\
		&\lesi \sum_{i=0}^{\ell-1}\f{1}{t^{(\ell-i+1)/2}}  |\delta^{i}_{\nu+1} p_t^{\nu+1}(x,y)|+ \sum_{i=0}^{\ell-2}\f{1}{t^{(\ell-i+1)/2}} \delta^{\ell-2}_{\nu+1}\Big[\f{1}{x}p_t^{\nu+1}(x,y)\Big]\\
		&\lesi \f{1}{t^{(\ell+2)/2}}\exp\Big(-\f{|x-y|^2}{ct}\Big)\Big(1+\f{\sqrt t}{\rho_\nu(x)}+\f{\sqrt t}{\rho_\nu(y)}\Big)^{-(\nu+1/2)},
	\end{aligned}
	\]
where in the last inequality we used  \eqref{eq- delta k pt k hypothesis}.	

Hence, we have proved that for $m=1,\ldots, \ell$,
\begin{equation}\label{eq- different p}
 \f{x}{t}|\delta_\nu^{m}[p_t^\nu(x,y)-p_t^{\nu+1}(x,y)]|\lesi   \f{1}{t^{(m+2)/2}}\exp\Big(-\f{|x-y|^2}{ct}\Big)\Big(1+\f{\sqrt t}{\rho_\nu(x)}+\f{\sqrt t}{\rho_\nu(y)}\Big)^{-(\nu+1/2)}
\end{equation}
for all $\nu>-1/2$, $t\in (0,1)$ and $(x,y)$ satisfying $xy\ge t$ and $x/2\le y\le 2x$.

\bigskip

We now take care of $\delta_\nu^\ell p_t^\nu(x,y)$. From \eqref{eq-del pt new}, we have
	\[
	 \begin{aligned}
		\delta^{\ell+1}_\nu p_t^\nu(x,y) 
		&=-\f{2r}{1-r}\delta^{\ell}_\nu \big[x(p_t^\nu(x,y)-p_t^{\nu+1}(x,y))\big]+ \f{2r^{1/2}}{1+r^{1/2}} \delta^{\ell}_\nu \big[xp_t^{\nu+1}(x,y)]\\
		& \ \ \ \ +\f{2r^{1/2}}{1-r}\delta^{\ell}_\nu \big[(y-x) p_t^{\nu+1}(x,y)\big],
	\end{aligned}
	\]
	which implies that
	\[
	\begin{aligned}
		|\delta^{\ell+1}_\nu p_t^\nu(x,y)| 
		&\lesi \f{1}{t}\big|\delta^{\ell}_\nu \big[x(p_t^\nu(x,y)-p_t^{\nu+1}(x,y))\big]\big|+  \big|\delta^{\ell}_\nu \big[xp_t^{\nu+1}(x,y)]\big|+\f{1}{t}\big|\delta^{\ell}_\nu \big[(y-x) p_t^{\nu+1}(x,y)\big]\big|\\
		&=:F_1+F_2+F_3.
	\end{aligned}
	\]
	Applying  \eqref{eq- delta k xf}, \eqref{eq- delta k} and $t\in (0,1)$, we obtain
	\[
	\begin{aligned}
		F_2+F_3	&\le x|\delta_\nu^\ell p_t^{\nu+1}(x,y)|+|\delta_\nu^{\ell-1} p_t^{\nu+1}(x,y)|+ \f{|x-y|}{t}|\delta_\nu^\ell p_t^{\nu+1}(x,y)|+\f{1}{t}|\delta_\nu^{\ell-1} p_t^{\nu+1}(x,y)|  \\
		&\le x|\delta_\nu^\ell p_t^{\nu+1}(x,y)| + \f{|x-y|}{t}|\delta_\nu^\ell p_t^{\nu+1}(x,y)|+\f{1}{t}|\delta_\nu^{\ell-1} p_t^{\nu+1}(x,y)|  \\
		&\le x|\delta_{\nu+1}^\ell p_t^{\nu+1}(x,y)|+ |\delta_{\nu+1}^{\ell-1} p_t^{\nu+1}(x,y)| \\
		&+\f{|x-y|}{t}|\delta_{\nu+1}^\ell p_t^{\nu+1}(x,y)|+\f{|x-y|}{t}\f{1}{x}|\delta_{\nu+1}^{\ell-1} p_t^{\nu+1}(x,y)|\\
		&+\f{1}{t}|\delta_{\nu+1}^{\ell-1} p_t^{\nu+1}(x,y)| +\f{1}{xt}|\delta_{\nu+1}^{\ell-2} p_t^{\nu+1}(x,y)|. 
	\end{aligned}
	\]
	Using \eqref{eq- delta k pt k hypothesis}, \eqref{eq- x rho x < 1}  and the fact $\rho_{\nu+1}(x)=\rho_\nu(x)\lesi 
	\min\{x, 1/x\}$, we obtain 
	\[
	F_2+F_3\lesi \f{1}{t^{(\ell+2)/2}}\exp\Big(-\f{|x-y|^2}{ct}\Big)\Big(1+\f{\sqrt t}{\rho_\nu(x)}+\f{\sqrt t}{\rho_\nu(y)}\Big)^{-(\nu+1/2)}.
	\]
	It remains to estimate $F_1$. Using \eqref{eq- delta k xf}, we have
	\[
	\begin{aligned}
		F_1&\lesi \f{x}{t}|\delta_\nu^{\ell}[p_t^\nu(x,y)-p_t^{\nu+1}(x,y)]|+\f{1}{t}|\delta_\nu^{\ell-1}[p_t^\nu(x,y)-p_t^{\nu+1}(x,y)]|\\
		&\lesi \f{x}{t}|\delta_\nu^{\ell}[p_t^\nu(x,y)-p_t^{\nu+1}(x,y)]|+\f{1}{\sqrt t}\f{x}{t}|\delta_\nu^{\ell-1}[p_t^\nu(x,y)-p_t^{\nu+1}(x,y)]|\\
			&\lesi \f{1}{t^{(\ell+2)/2}}\exp\Big(-\f{|x-y|^2}{ct}\Big)\Big(1+\f{\sqrt t}{\rho_\nu(x)}+\f{\sqrt t}{\rho_\nu(y)}\Big)^{-(\nu+1/2)},
	\end{aligned}
	\]
	as desired, where in the last inequality we used \eqref{eq- different p}.
	
	This completes our proof.
\end{proof}

From Propositions \ref{prop- prop 1}, \ref{prop- prop 2} and \ref{prop 3}, we have: 
\begin{prop}\label{prop-delta k p x y} 
	Let $\nu>-1/2$. For  $k\in \mathbb N$, we have
	\begin{equation}
		\label{eq- delta k pt}
		\begin{aligned}
			|\delta^k_\nu  p_t^\nu(x,y)|&\lesi  \f{1}{t^{(k+1)/2}}\exp\Big(-\f{|x-y|^2}{ct}\Big)\Big(1+\f{\sqrt t}{\rho_\nu(x)}+\f{\sqrt t}{\rho_\nu(y)}\Big)^{-(\nu+1/2)}
		\end{aligned}
	\end{equation}
	for all $t>0$ and all $x, y\in (0,\vc)$.
\end{prop}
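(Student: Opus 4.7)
The proposition is essentially a case-combination of the three preceding propositions, so the plan is to carefully verify that the regions covered by Propositions \ref{prop- prop 1}, \ref{prop- prop 2}, and \ref{prop 3} exhaust all possibilities for the pair $(t; x, y)$ with $t>0$ and $x,y\in(0,\infty)$.

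First I would split on the size of $t$. For $t\ge 1$, the conclusion of Proposition \ref{prop- prop 2} (taking $k=0$ there) gives exactly the claimed bound
\[
|\delta_\nu^m p_t^\nu(x,y)|\lesssim \frac{1}{t^{(m+1)/2}}\exp\Big(-\frac{|x-y|^2}{ct}\Big)\Big(1+\frac{\sqrt t}{\rho_\nu(x)}+\frac{\sqrt t}{\rho_\nu(y)}\Big)^{-(\nu+1/2)}
\]
for all $x,y\in(0,\infty)$, since that proposition places no restriction on $(x,y)$ in the regime $t\ge 1$.

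Next, for $t\in(0,1)$ I would further split $(0,\infty)^2$ into the set $D$ defined in Proposition \ref{prop- prop 1} and its complement. On $D$, Proposition \ref{prop- prop 1} (again with $k=0$) yields the desired estimate directly. On the complement, namely $\{(x,y):xy\ge t\text{ and }x/2\le y\le 2x\}$, Proposition \ref{prop 3} gives the same bound (dropping the auxiliary term involving $p_t^\nu-p_t^{\nu+1}$). Since
\[
D\cup\{(x,y):xy\ge t,\ x/2\le y\le 2x\}=(0,\infty)^2,
\]
the two estimates together handle all $(x,y)$ when $t\in(0,1)$.

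The proof is therefore purely organizational: no new estimate is required, only the observation that the three regions in Propositions \ref{prop- prop 1}, \ref{prop- prop 2}, and \ref{prop 3} cover every pair $(t;x,y)$. The only step that deserves a one-line check is that the three cases fit together with consistent constants, but since each proposition already yields the bound with the same form $t^{-(k+1)/2}\exp(-|x-y|^2/(ct))(1+\sqrt t/\rho_\nu(x)+\sqrt t/\rho_\nu(y))^{-(\nu+1/2)}$, taking the maximum of the implicit constants from the three sources closes the argument. There is no genuine obstacle here; the substantive work was carried out in establishing the three preceding propositions by induction.
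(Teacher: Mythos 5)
Your proposal is correct and coincides with the paper's treatment: the paper states Proposition \ref{prop-delta k p x y} as a direct consequence of Propositions \ref{prop- prop 1}, \ref{prop- prop 2} and \ref{prop 3}, with exactly the case split you describe ($t\ge 1$ for all $x,y$; $t\in(0,1)$ with $(x,y)\in D$; and $t\in(0,1)$ with $xy\ge t$, $x/2\le y\le 2x$, which is the complement of $D$). No further argument is needed.
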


\begin{prop}\label{prop-gradient x y}
	Let $\nu>-1/2$. For $k,j \in \mathbb N$, we have
	\begin{equation}
		\label{eq-partial k delta x pt}
		\begin{aligned}
			|\partial^k \delta_\nu^j p_t^\nu(x,y)|&\lesi  \Big[ \f{1}{\rho_\nu(x)^k}+\f{1}{t^{k/2}}\Big] \f{1}{t^{(j+1)/2}}\exp\Big(-\f{|x-y|^2}{ct}\Big)\Big(1+\f{\sqrt t}{\rho_\nu(x)}+\f{\sqrt t}{\rho_\nu(y)}\Big)^{-(\nu+1/2)}
		\end{aligned}
	\end{equation}
	for all $t>0$ and all $x, y\in (0,\vc)$.
\end{prop}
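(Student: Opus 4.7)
The plan is to prove the estimate by strong induction on $k$, leveraging the identity
\[
\partial_x = \delta_\nu - x + \frac{\nu + 1/2}{x},
\]
which follows directly from $\delta_\nu = \partial_x + x - (\nu+1/2)/x$. The base case $k = 0$ reduces to Proposition~\ref{prop-delta k p x y} since $\rho_\nu(x)^0 + t^0 = 2$.

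For the inductive step, assuming the bound holds for every $k' \le k$ and every $j \in \mathbb N$, I would write
\[
\partial^{k+1}\delta_\nu^j p_t^\nu = \partial^k\delta_\nu^{j+1} p_t^\nu - \partial^k\!\left[x\,\delta_\nu^j p_t^\nu\right] + (\nu+\tfrac{1}{2})\,\partial^k\!\left[\tfrac{1}{x}\,\delta_\nu^j p_t^\nu\right]
\]
and bound the three terms separately. The first term is estimated directly by the induction hypothesis at level $(k,j+1)$, producing an extra factor $1/\sqrt t$; this is absorbed by the case split on $\sqrt{t}$ versus $\rho_\nu(x)$: if $\sqrt t \ge \rho_\nu(x)$ then $\tfrac{1}{\rho_\nu(x)^k\sqrt t}\le \tfrac{1}{\rho_\nu(x)^{k+1}}$, while if $\sqrt t < \rho_\nu(x)$ then $\tfrac{1}{\rho_\nu(x)^k\sqrt t}\le\tfrac{1}{t^{(k+1)/2}}$.

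For the second term, Leibniz gives $\partial^k[xf] = x\partial^k f + k\partial^{k-1} f$. The piece $x\partial^k f$ uses $x\le \tfrac{1}{16\rho_\nu(x)}$ (immediate from $\rho_\nu(x)\le \tfrac{1}{16x}$) combined with the induction hypothesis and the same case split on $\sqrt t$ versus $\rho_\nu(x)$. For the piece $k\partial^{k-1} f$ I use $\rho_\nu(x)\le 1/16$ to get $1/\rho_\nu(x)^{k-1}\lesi 1/\rho_\nu(x)^{k+1}$, and I split on $t$: when $t\le 1$, $1/t^{(k-1)/2}\le 1/t^{(k+1)/2}$; when $t>1$, $1/t^{(k-1)/2}\le 1\lesi 1/\rho_\nu(x)^{k+1}$. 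For the third term, Leibniz yields $\partial^k[\tfrac{1}{x}f] = \sum_{i=0}^{k} c_{k,i}\, x^{-(k-i+1)}\partial^i f$; applying $1/x\le 1/\rho_\nu(x)$ (from $\rho_\nu(x)\le x$) and the induction hypothesis at each index $i$, every summand is dominated by $\tfrac{1}{\rho_\nu(x)^{k-i+1}}\!\left[\tfrac{1}{\rho_\nu(x)^i}+\tfrac{1}{t^{i/2}}\right]$ times the standard Gaussian-weight envelope, and one final $\sqrt t$ versus $\rho_\nu(x)$ split closes the bound at $1/\rho_\nu(x)^{k+1}+1/t^{(k+1)/2}$.

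The Gaussian factor $\exp(-|x-y|^2/(ct))$ and the weight $(1+\sqrt t/\rho_\nu(x)+\sqrt t/\rho_\nu(y))^{-(\nu+1/2)}$ pass through every step unchanged, since the argument never needs to absorb a positive power of $\sqrt t/\rho_\nu$ into the weight. The main difficulty is simply the combinatorial bookkeeping of the many case splits on $\sqrt t$ versus $\rho_\nu(x)$ and on $t$ versus $1$; each individual verification is routine and relies only on the elementary inequalities $\rho_\nu(x)\le \min\{x,1/x,1/16\}$, and no new kernel estimate beyond Propositions~\ref{prop-delta k p x y}, \ref{prop- prop 1}, \ref{prop- prop 2}, and \ref{prop 3} is required.
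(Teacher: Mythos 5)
Your proposal is correct and follows essentially the same route as the paper: induction on the number of ordinary derivatives, using the identity $\partial_x=\delta_\nu-x+\tfrac{\nu+1/2}{x}$ together with Proposition~\ref{prop-delta k p x y} and the elementary bounds $\rho_\nu(x)\le\tfrac{1}{16}\min\{x,1/x\}\le\tfrac{1}{16}$ to trade powers of $x$, $1/x$ and $1/\sqrt t$ for powers of $1/\rho_\nu(x)$. The only cosmetic difference is that you peel off one derivative at a time via Leibniz and carry general $j$ through the induction, whereas the paper expands the full power $\delta_\nu^{\ell+1}$ into terms $x^{\pm i}\partial^j$ at once and treats only $j=0$ explicitly; both reduce to the same case splits on $\sqrt t$ versus $\rho_\nu(x)$.
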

\begin{proof}
	For the sake of simplicity, we only prove the proposition for $j=0$ since the proof in the general case $j\ge 1$ can be done similarly. We will do this by induction.
	
$\bullet$	For $k=1$, we have
\[
\delta_\nu = \partial_x + x -\f{\nu+1/2}{x},
\]
which, together with the fact $\rho_\nu(x)\le \min\{x, 1/x\}$, implies that
\[
\begin{aligned}
	|\partial  p_t^\nu(x,y)| &\lesi  |\delta_\nu p_t^\nu(x,y)| + xp_t^\nu(x,y)+\f{1}{x}p_t^\nu(x,y)\\
	&\lesi |\delta_\nu p_t^\nu(x,y)| + \f{1}{\rho_\nu(x)}p_t^\nu(x,y)\\
	&\lesi \Big[\f{1}{\sqrt t}+\f{1}{\rho_\nu(x)}\Big]\f{1}{\sqrt t}\exp\Big(-\f{|x-y|^2}{ct}\Big)\Big(1+\f{\sqrt t}{\rho_\nu(x)}+\f{\sqrt t}{\rho_\nu(y)}\Big)^{-(\nu+1/2)},
\end{aligned}
\]
where in the last inequality we used Propositions \ref{prop-heat kernel} and \ref{prop-delta k p x y}.

$\bullet$	Assume that \eqref{eq-partial k delta x pt} holds true for $k=1,\ldots, \ell$ for some $\ell \in \mathbb N$, i.e., for $k=1,\ldots, \ell$, we have
\begin{equation}
	\label{eq-partial k delta x pt hypothesis}
	\begin{aligned}
		|\partial^k  p_t^\nu(x,y)|&\lesi   \Big[ \f{1}{\rho_\nu(x)^k}+\f{1}{t^{k/2}}\Big]  \f{1}{\sqrt t}\exp\Big(-\f{|x-y|^2}{ct}\Big)\Big(1+\f{\sqrt t}{\rho_\nu(x)}+\f{\sqrt t}{\rho_\nu(y)}\Big)^{-(\nu+1/2)}
	\end{aligned}
\end{equation}
	for all $t>0$ and all $x, y\in (0,\vc)$.

We now claim the estimate for $k=\ell+1$. It is easy to see that 
\[
\delta_\nu^{\ell+1} =\Big(\partial_x + x -\f{\nu+1/2}{x}\Big)^{\ell+1} = \partial_x^{\ell+1} + \sum_{i+j\le \ell+1  \atop j\le \ell}\Big[a_{ij}x^i \partial^j_x +b_{ij}\f{1}{x^i} \partial^j_x\Big],
\]
where $a_{ij}$ and $b_{ij}$ are constants.

It follows that 
\[
\partial_x^{\ell+1}=\delta_\nu^{\ell+1} - \sum_{i+j\le \ell+1  \atop j\le \ell}\Big[a_{ij}x^i \partial^j_x +b_{ij}\f{1}{x^i} \partial^j_x\Big].
\]
This, together with Proposition \ref{prop-delta k p x y}, \eqref{eq-partial k delta x pt hypothesis} and the fact that $\rho(x)\lesi \min\{x,x^{-1}\} =\min\{1, x,x^{-1}\}\le 1$, implies that 
\[
\begin{aligned}
	|\partial^{\ell+1}  p_t^\nu(x,y)|&\lesi   \Big[ \f{1}{t^{(\ell+1)/2}} +\sum_{i+j\le \ell+1  \atop j\le \ell}\f{1}{\rho_\nu(x)^i}\Big(\f{1}{t^{j/2}}+\f{1}{\rho_\nu(x)^j}\Big)\Big] \\
	& \ \ \ \times  \f{1}{\sqrt t}\exp\Big(-\f{|x-y|^2}{ct}\Big)\Big(1+\f{\sqrt t}{\rho_\nu(x)}+\f{\sqrt t}{\rho_\nu(y)}\Big)^{-(\nu+1/2)}.
\end{aligned}
\]
Since $\rho_\nu(x)\lesi 1$, we have
\[
\sum_{i+j\le \ell+1  \atop j\le \ell}\f{1}{\rho_\nu(x)^i}\Big(\f{1}{t^{j/2}}+\f{1}{\rho_\nu(x)^j}\Big)\lesi \f{1}{\rho_\nu(x)^{\ell+1}}  + \f{1}{t^{(\ell+1)/2}}.
\]
Therefore,
\[
	|\partial^{\ell+1}  p_t^\nu(x,y)|\lesi   \Big[\f{1}{\rho_\nu(x)^{\ell+1}}  + \f{1}{t^{(\ell+1)/2}}\Big]  \f{1}{\sqrt t}\exp\Big(-\f{|x-y|^2}{ct}\Big)\Big(1+\f{\sqrt t}{\rho_\nu(x)}+\f{\sqrt t}{\rho_\nu(y)}\Big)^{-(\nu+1/2)},
\]
which ensures \eqref{eq-partial k delta x pt} for $k=\ell+1$.

This completes our proof.

\end{proof}

\begin{prop}\label{prop-gradient x y dual delta}
	Let $\nu>-1/2$. Then for any $m, k,\ell\in \mathbb N$ with $\ell\ge k+2m$ we have
	\[
	|\mathcal L_\nu^m (\delta_\nu^*)^kp^{\nu+\ell}_t(x,y)|\lesi \f{1}{t^{(k+2m+1)/2}}\exp\Big(-\f{|x-y|^2}{ct}\Big)\Big(1+\f{\sqrt t}{\rho_\nu(x)}+\f{\sqrt t}{\rho_\nu(y)}\Big)^{-(\nu+1/2)}
	\]
	for all $t>0$ and $x,y\in (0,\vc)$.
\end{prop}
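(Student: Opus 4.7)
The plan is to express $\mathcal{L}_\nu^m(\delta_\nu^*)^k$ as a finite-order differential operator in $x$ whose coefficients are controlled by powers of $\rho_\nu(x)^{-1}$, apply the pointwise bound on the pure derivatives $\partial_x^i p_t^{\nu+\ell}$ provided by Proposition~\ref{prop-gradient x y} (with $\nu$ replaced by $\nu+\ell$, noting $\rho_{\nu+\ell}=\rho_\nu$ whenever both indices exceed $-1/2$), and finally to use the hypothesis $\ell\ge k+2m$ to trade the excess Bessel-type decay in $(1+\sqrt t/\rho_\nu)^{-(\nu+\ell+1/2)}$ for the required power of $t^{-1/2}$.

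Since $\delta_\nu^*=-\partial_x+x-(\nu+1/2)/x$ and $\mathcal{L}_\nu=-\partial_x^2+x^2+(\nu^2-1/4)/x^2$, assigning weight $1$ to each of the symbols $\partial_x$, $x$, $x^{-1}$ makes $\delta_\nu^*$ weight-$1$ and $\mathcal{L}_\nu$ weight-$2$, so the composition $\mathcal{L}_\nu^m(\delta_\nu^*)^k$ has total weight $2m+k$. The commutation relations $[\partial_x,x]=1$ and $[\partial_x,x^{-1}]=-x^{-2}$ preserve total weight, and a straightforward induction on $2m+k$ then yields an expansion
\[
\mathcal{L}_\nu^m(\delta_\nu^*)^k=\sum_{i=0}^{2m+k}Q_i(x)\,\partial_x^i,
\]
in which each $Q_i$ is a finite linear combination of monomials $x^a x^{-b}$ with $a,b\ge 0$ and $a+b\le 2m+k-i$. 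Since $\max\{x,x^{-1}\}\lesi \rho_\nu(x)^{-1}$ for $x>0$, this gives $|Q_i(x)|\lesi \rho_\nu(x)^{-(2m+k-i)}$.

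Applying Proposition~\ref{prop-gradient x y} (with index $\nu+\ell>-1/2$) to $p_t^{\nu+\ell}$ and combining with the coefficient bound, each term in the sum satisfies
\[
|Q_i(x)\partial_x^i p_t^{\nu+\ell}(x,y)|\lesi \rho_\nu(x)^{-(2m+k-i)}\Big[\rho_\nu(x)^{-i}+t^{-i/2}\Big]\frac{1}{\sqrt t}\exp\Big(-\frac{|x-y|^2}{ct}\Big)\Big(1+\frac{\sqrt t}{\rho_\nu(x)}+\frac{\sqrt t}{\rho_\nu(y)}\Big)^{-(\nu+\ell+1/2)},
\]
which is a sum of expressions of the form $\rho_\nu(x)^{-j}\,t^{-(i+1)/2}$ times Gaussian and Bessel factors, with $j\in\{2m+k,\,2m+k-i\}$, in particular $j\le 2m+k$. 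The elementary inequality $\rho_\nu(x)^{-1}\le t^{-1/2}(1+\sqrt t/\rho_\nu(x))$ then yields
\[
\rho_\nu(x)^{-j}\Big(1+\frac{\sqrt t}{\rho_\nu(x)}+\frac{\sqrt t}{\rho_\nu(y)}\Big)^{-(\nu+\ell+1/2)}\le t^{-j/2}\Big(1+\frac{\sqrt t}{\rho_\nu(x)}+\frac{\sqrt t}{\rho_\nu(y)}\Big)^{-(\nu+\ell+1/2-j)}.
\]
The hypothesis $\ell\ge k+2m$ forces $j\le\ell$, so the remaining exponent satisfies $-(\nu+\ell+1/2-j)\le -(\nu+1/2)$, and the Bessel factor is bounded by $(1+\sqrt t/\rho_\nu(x)+\sqrt t/\rho_\nu(y))^{-(\nu+1/2)}$. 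Collecting the surviving powers of $t$ produces exactly $t^{-(k+2m+1)/2}$, which yields the claim.

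The main technical obstacle is verifying the coefficient bound $|Q_i(x)|\lesi \rho_\nu(x)^{-(2m+k-i)}$ in the weight-preserving expansion; once that piece is set up the remainder is algebraic bookkeeping and the routine trade-off between $\rho_\nu(x)^{-1}$ and $t^{-1/2}$. The assumption $\ell\ge k+2m$ is used in exactly one place, namely to ensure $j\le\ell$ when each factor of $\rho_\nu(x)^{-1}$ is exchanged for a factor of $t^{-1/2}$, which explains why the hypothesis on $\ell$ takes precisely this form.
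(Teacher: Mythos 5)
Your proof is correct and follows essentially the same route as the paper: both expand $\mathcal L_\nu^m(\delta_\nu^*)^k$ into a normal-ordered sum of multiplication operators of total weight $\le 2m+k$ times derivative operators, bound the coefficients by powers of $\rho_\nu(x)^{-1}$, and use $\ell\ge k+2m$ to pay for each exchange $\rho_\nu(x)^{-1}\le t^{-1/2}(1+\sqrt t/\rho_\nu(x))$ out of the surplus decay $(1+\sqrt t/\rho_\nu(x)+\sqrt t/\rho_\nu(y))^{-(\nu+\ell+1/2)}$. The only (immaterial) difference is that the paper normal-orders with respect to $\delta_{\nu+\ell}$, writing $\mathcal L_\nu^m(\delta_\nu^*)^k=\sum c_{ijh}\,x^{i}x^{-j}\delta_{\nu+\ell}^h$ with $i+j+h\le 2m+k$ and invoking Proposition \ref{prop-delta k p x y}, whereas you normal-order with respect to $\partial_x$ and invoke Proposition \ref{prop-gradient x y}; your parenthetical that the commutators ``preserve'' weight should read ``do not increase'' weight (e.g.\ $[\partial_x,x]=1$ drops the weight by two), but that is exactly what the bound $a+b\le 2m+k-i$ requires.
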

\begin{proof}
	We have
	\[
	\begin{aligned}
		-\delta_\nu^* &=\partial_x  - x+\frac{1}{x}\Big(\nu + \f{1}{2}\Big)\\
		&=\partial_x  + x-\frac{1}{x}\Big(\nu+\ell + \f{1}{2}\Big) -2x+\f{1}{x}(2\nu+\ell+1)\\
		&=\delta_{\nu+\ell}-2x+\f{1}{x}(2\nu+\ell+1), 
	\end{aligned}
	\]
	which implies that
	\[
	\begin{aligned}
		\delta^*_\nu &=-\delta_{\nu+\ell} +2x - \f{1}{x}\Big(2\nu+\ell + 1\Big).
	\end{aligned}
	\]
	Similarly,
	\[
		\delta_\nu = \delta_{\nu+\ell}  + \f{\ell}{x}.
	\]
	This, together with the facts that $\delta_{\nu+\ell}(fg)=f'g + f\delta_{\nu+\ell}g$ and $\mathcal L_\nu = \delta^*_\nu\delta_\nu +2(\nu+1)$, implies
	\[
	\mathcal L_\nu^m(\delta^*_\nu )^k=\sum_{i,j,h\ge 0 \atop i+j+h\le 2m+k}c_{ijh} \f{x^i}{x^j} \delta^h_{\nu+\ell},
	\]
	where $c_{ijh}$ are constants.

	Hence,
	\[
	\begin{aligned}
		|\mathcal L_\nu^m(\delta^*_\nu )^kp^{\nu+\ell}_t(x,y)|\lesi \sum_{i,j,h\ge 0 \atop i+j+h\le 2m+ k} \f{x^i}{x^j} |\delta^h_{\nu+\ell}p^{\nu+\ell}_t(x,y)|.
	\end{aligned}
	\]

By Proposition \ref{prop-delta k p x y} and the facts $\ell\ge 2m+k$ and  $\rho_\nu(x)= \rho_{\nu+\ell}(x)\sim \min\{x,1/x\}\le 1$, we further obtain
\[
\begin{aligned}
	|\mathcal L_\nu^m(\delta^*_\nu )^kp^{\nu+\ell}_t(x,y)|&\lesi \sum_{i,j,h\ge 0 \atop i+j+h\le 2m+k} \f{x^i}{x^j} \f{e^{-t/4}}{t^{(h+1)/2}}\exp\Big(-\f{|x-y|^2}{ct}\Big)\Big(1+\f{\sqrt t}{\rho_\nu(x)}+\f{\sqrt t}{\rho_\nu(y)}\Big)^{-(\nu+\ell+1/2)}\\
	&\lesi \sum_{i,j,h\ge 0 \atop i+j+h\le 2m+ k} \f{x^i}{x^j}\Big(\f{\sqrt t}{\rho_\nu(x)}\Big)^{-(i+j)} \f{e^{-t/4}}{t^{(h+1)/2}}\exp\Big(-\f{|x-y|^2}{ct}\Big)\Big(1+\f{\sqrt t}{\rho_\nu(x)}+\f{\sqrt t}{\rho_\nu(y)}\Big)^{-(\nu +1/2)}\\
		&\lesi \sum_{i,j,h\ge 0 \atop i+j+h\le 2m+ k}  \f{1}{t^{(i+j)/2}} \f{e^{-t/4}}{t^{(h+1)/2}}\exp\Big(-\f{|x-y|^2}{ct}\Big)\Big(1+\f{\sqrt t}{\rho_\nu(x)}+\f{\sqrt t}{\rho_\nu(y)}\Big)^{-(\nu +1/2)}\\
		&\lesi  \f{1}{t^{(k+2m+1)/2}}\exp\Big(-\f{|x-y|^2}{ct}\Big)\Big(1+\f{\sqrt t}{\rho_\nu(x)}+\f{\sqrt t}{\rho_\nu(y)}\Big)^{-(\nu +1/2)}.
\end{aligned}
\]
	This completes our proof.
\end{proof}

\bigskip

\subsection{The case $n\ge 2$} For  $\nu\in [-1/2,\vc)^n$, we define
\begin{equation}\label{eq- nu min}
\nu_{\min} =  \min\{\nu_j: j\in \mathcal J_\nu\}
\end{equation}
with the convention $\inf \emptyset = \vc$, where $\mathcal J_\nu=\{j: \nu_j>-1/2\}$. In what follows, by $A\le B^{-\vc}$ we mean for every $N>0$, $A\lesi_N B^{-N}$.

We recall that in what follows, $\rho_\nu$ is the critical  function defined in \eqref{eq- critical function}.

From \eqref{eq- prod ptnu} and \eqref{eq- equivalence of rho}, the following propositions are just direct consequences of Propositions \ref{prop- derivative heat kernel nu = -1/2}, \ref{prop-heat kernel}, \ref{prop-delta k p x y}, \ref{prop-gradient x y}  and \ref{prop-gradient x y dual delta} and the fact $\rho_{\nu+e_j}(x)\le \rho_{\nu}(x)$. 


\begin{prop}
	\label{prop- delta k pt d>2} Let $\nu\in [-1/2,\vc)^n$ and $\ell\in \mathbb N$. Then we have
	\begin{equation*}
		| \delta_\nu^\ell  p_t^\nu(x,y)|\lesi   \f{1}{t^{(n+\ell)/2}}\exp\Big(-\f{|x-y|^2}{ct}\Big)\Big(1+\f{\sqrt t}{\rho_\nu(x)}+\f{\sqrt t}{\rho_\nu(y)}\Big)^{-(\nu_{\min}+1/2)}
	\end{equation*}
	for $t>0$ and $x,y\in \mathbb R^n_+$.
\end{prop}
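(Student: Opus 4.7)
The plan is to reduce the $n$-dimensional estimate to the one-dimensional results already proved in Section~3.1, using the tensor-product structure of the heat kernel and the product formula
\[
p_t^\nu(x,y)=\prod_{j=1}^n p_t^{\nu_j}(x_j,y_j)
\]
from \eqref{eq- prod ptnu}. Interpreting $\delta_\nu^\ell$ as the multi-index operator $\delta_\nu^\ell=\delta_{\nu_n}^{\ell_n}\cdots \delta_{\nu_1}^{\ell_1}$ with $|\ell|$ equal to the exponent appearing in the statement, each factor $\delta_{\nu_j}^{\ell_j}$ acts only on the variable $x_j$, so
\[
\delta_\nu^\ell p_t^\nu(x,y)=\prod_{j=1}^n \delta_{\nu_j}^{\ell_j} p_t^{\nu_j}(x_j,y_j).
\]
First I would bound each one-dimensional factor. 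For indices $j$ with $\nu_j=-1/2$, I apply Proposition~\ref{prop- derivative heat kernel nu = -1/2} (with arbitrarily large $N$); for indices $j$ with $\nu_j>-1/2$, I apply Proposition~\ref{prop-delta k p x y}, which gives the decay exponent $(\nu_j+1/2)$. Multiplying these together, combining the $t^{-1/2}$ factors into $t^{-n/2}$ and the derivative factors into $t^{-|\ell|/2}$, and using $|x-y|^2=\sum_j|x_j-y_j|^2$ to merge the Gaussian factors, yields
\[
|\delta_\nu^\ell p_t^\nu(x,y)|\lesi  \f{1}{t^{(n+|\ell|)/2}}\exp\Big(-\f{|x-y|^2}{ct}\Big)\prod_{j=1}^n\Big(1+\f{\sqrt t}{\rho_{\nu_j}(x_j)}+\f{\sqrt t}{\rho_{\nu_j}(y_j)}\Big)^{-a_j},
\]
where $a_j=\nu_j+1/2$ if $j\in \mathcal J_\nu$ and $a_j$ can be chosen arbitrarily large if $j\notin \mathcal J_\nu$.

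The remaining step, which I view as the only non-routine point, is to convert this product of one-dimensional decay factors into the single factor involving $\rho_\nu$ with exponent $\nu_{\min}+1/2$. Using \eqref{eq- equivalence of rho}, we have $\rho_\nu(x)\sim \min_j \rho_{\nu_j}(x_j)$, so
\[
1+\f{\sqrt t}{\rho_\nu(x)}+\f{\sqrt t}{\rho_\nu(y)}\sim \max_j\Big(1+\f{\sqrt t}{\rho_{\nu_j}(x_j)}+\f{\sqrt t}{\rho_{\nu_j}(y_j)}\Big).
\]
I pick the index $j_0$ realizing this maximum. Since each of the remaining factors is bounded above by $1$, the full product is controlled by the single $j_0$-factor, which is comparable to $(1+\sqrt t/\rho_\nu(x)+\sqrt t/\rho_\nu(y))^{-a_{j_0}}$. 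Finally, $a_{j_0}\ge \nu_{\min}+1/2$ in both possible cases: if $j_0\in\mathcal J_\nu$, by the very definition of $\nu_{\min}$, and if $j_0\notin\mathcal J_\nu$, by the freedom to take $a_{j_0}$ arbitrarily large. Since the base is $\ge 1$, lowering the exponent to $\nu_{\min}+1/2$ only weakens the bound, which gives the claimed estimate.

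In summary, the proof is essentially a bookkeeping exercise combining Propositions~\ref{prop- derivative heat kernel nu = -1/2} and \ref{prop-delta k p x y} with the product structure \eqref{eq- prod ptnu} and the comparison \eqref{eq- equivalence of rho}. The only care needed is in the last step, where one must exploit that every decay factor not corresponding to the maximizing index $j_0$ is harmless (being $\le 1$), and that the exponent $a_{j_0}$ is always at least $\nu_{\min}+1/2$; the indices $j\notin \mathcal J_\nu$ never obstruct this because Proposition~\ref{prop- derivative heat kernel nu = -1/2} supplies arbitrary polynomial decay there.
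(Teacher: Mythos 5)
Your proof is correct and is essentially the argument the paper intends: the paper states this proposition without proof as a ``direct consequence'' of the one-dimensional estimates, the product formula \eqref{eq- prod ptnu}, and the comparison \eqref{eq- equivalence of rho}, and your write-up simply fills in that bookkeeping, including the one genuinely non-routine step of passing from the product of per-coordinate decay factors to the single factor with exponent $\nu_{\min}+1/2$. The only cosmetic remark is that for coordinates with $\nu_j=-1/2$ the statement you actually need is the $\delta_{\nu_j}^{\ell_j}$-bound of Proposition \ref{prop-delta k p x y nu = -1/2} rather than the $x^\ell\partial^k$-bound of Proposition \ref{prop- derivative heat kernel nu = -1/2} from which it is derived.
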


\begin{prop}\label{prop-gradient x y d>2}
		Let $\nu\in [-1/2,\vc)^n$. For $k,j \in \mathbb N^n$, we have
		\begin{equation}
			\label{eq-partial k delta x pt n>1}
			\begin{aligned}
				|\partial^k \delta_\nu^j p_t^\nu(x,y)|&\lesi  \Big[ \f{1}{\rho_\nu(x)^{|k|}}+\f{1}{t^{|k|/2}}\Big] \f{1}{t^{(|j|+1)/2}}\exp\Big(-\f{|x-y|^2}{ct}\Big)\Big(1+\f{\sqrt t}{\rho_\nu(x)}+\f{\sqrt t}{\rho_\nu(y)}\Big)^{-(\nu_{\min}+1/2)}
			\end{aligned}
		\end{equation}
		for all $t>0$ and all $x,y\in \mathbb R^n_+$.
	\end{prop}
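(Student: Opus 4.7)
The plan is to reduce the multidimensional estimate to the one-dimensional case via the tensor product structure of the heat kernel. By \eqref{eq- prod ptnu}, $p_t^\nu(x,y)=\prod_{i=1}^n p_t^{\nu_i}(x_i,y_i)$, and since each $\delta_{\nu_i}$ and $\partial_{x_i}$ acts only on the $i$-th coordinate, we have the clean factorization
\[
\partial^k \delta_\nu^j p_t^\nu(x,y)=\prod_{i=1}^n \partial_{x_i}^{k_i}\,\delta_{\nu_i}^{j_i}\,p_t^{\nu_i}(x_i,y_i).
\]
For each factor, I would apply the one-dimensional bounds already available: Proposition \ref{prop-gradient x y nu = -1/2} when $\nu_i=-1/2$ (which yields decay of arbitrary order $N$ since $\mathcal{J}_\nu$ does not see that coordinate) and Proposition \ref{prop-gradient x y} when $\nu_i>-1/2$ (which yields the decay exponent $\nu_i+1/2$). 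In the former case one can absorb the factor $\frac{1}{t^{k_i/2}}$ trivially into the desired form $\bigl[\rho_{\nu_i}(x_i)^{-k_i}+t^{-k_i/2}\bigr]$, so both cases can be unified.

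Next I would assemble the product. The exponentials multiply to $\exp\!\bigl(-\sum_i |x_i-y_i|^2/(ct)\bigr)=\exp(-|x-y|^2/(ct))$. For the polynomial prefactor $\prod_i\bigl[\rho_{\nu_i}(x_i)^{-k_i}+t^{-k_i/2}\bigr]$, I expand into $2^n$ mixed terms of the form $\rho_\nu(x)^{-a}\,t^{-b/2}$ with $a+b=|k|$ (using the crucial monotonicity $\rho_\nu(x)\lesssim \rho_{\nu_i}(x_i)$ coming from \eqref{eq- equivalence of rho}, so that $\rho_{\nu_i}(x_i)^{-k_i}\lesi \rho_\nu(x)^{-k_i}$), and then apply the elementary bound
\[
\rho_\nu(x)^{-a}\,t^{-b/2}\le \rho_\nu(x)^{-(a+b)}+t^{-(a+b)/2},
\]
which holds since either $\rho_\nu(x)\le\sqrt{t}$ (the first term dominates) or $\rho_\nu(x)>\sqrt{t}$ (the second does). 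Summing the $2^n$ terms gives the desired $\bigl[\rho_\nu(x)^{-|k|}+t^{-|k|/2}\bigr]$.

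The most delicate step is to reconcile the decay factors $\prod_i \bigl(1+\sqrt t/\rho_{\nu_i}(x_i)+\sqrt t/\rho_{\nu_i}(y_i)\bigr)^{-N_i}$ with a single expression involving $\rho_\nu$. Here I would exploit that each individual factor is $\le 1$, and select one index $i^\star$ achieving $\rho_{\nu_{i^\star}}(x_{i^\star})\sim \rho_\nu(x)$ and one index $j^\star$ achieving $\rho_{\nu_{j^\star}}(y_{j^\star})\sim \rho_\nu(y)$, which exist by \eqref{eq- equivalence of rho}. If $i^\star=j^\star$ the factor at that single index directly gives $(1+\sqrt t/\rho_\nu(x)+\sqrt t/\rho_\nu(y))^{-(\nu_{\min}+1/2)}$; if $i^\star\ne j^\star$, the factors at these two indices combine through $(1+a)^{-1}(1+b)^{-1}\le(1+a+b)^{-1}$ to the same bound. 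The admissible exponent is $\nu_{\min}+1/2$ (with the convention $\inf\emptyset=\infty$) because for indices $i\in\mathcal{J}_\nu$ we have $N_i=\nu_i+1/2\ge \nu_{\min}+1/2$, while for indices with $\nu_i=-1/2$ the exponent $N_i$ may be chosen arbitrarily large.

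The main obstacle is simply the bookkeeping in step two: the product $\prod_i[\rho_{\nu_i}(x_i)^{-k_i}+t^{-k_i/2}]$ does not collapse into $[\rho_\nu(x)^{-|k|}+t^{-|k|/2}]$ in a single line, and requires the two elementary inequalities above combined with the equivalence \eqref{eq- equivalence of rho}. Once this combinatorial step is handled, everything else follows by multiplication of the one-dimensional estimates, with no new analytical input.
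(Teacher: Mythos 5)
Your argument is correct and coincides with the paper's, which simply declares Proposition \ref{prop-gradient x y d>2} a direct consequence of the product formula \eqref{eq- prod ptnu}, the equivalence \eqref{eq- equivalence of rho}, and the one-dimensional Propositions \ref{prop-gradient x y nu = -1/2} and \ref{prop-gradient x y}; you have supplied exactly the factorization and bookkeeping the authors omit, and each of your three assembly steps (multiplying the Gaussians, collapsing $\prod_i[\rho_{\nu_i}(x_i)^{-k_i}+t^{-k_i/2}]$ via $\rho_\nu(x)\lesssim\rho_{\nu_i}(x_i)$ and the dichotomy $\rho_\nu(x)\lessgtr\sqrt t$, and extracting the decay exponent $\nu_{\min}+1/2$ from the indices realizing the minima) is sound. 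Note only that the tensor product of the one-dimensional factors $t^{-(j_i+1)/2}$ yields $t^{-(|j|+n)/2}$ rather than the displayed $t^{-(|j|+1)/2}$; since the latter is consistent neither with Proposition \ref{prop- delta k pt d>2} nor with how the estimate is invoked later (where the $t^{-n/2}$ normalization is used), the stated exponent appears to be a typo that your proof correctly does not reproduce.
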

	
For $\nu\in [-1/2,\vc)^n$ and $\ell=(\ell_1,\ldots, \ell_n)\in \mathbb N^n$, define $\ell_\nu = (\ell_\nu^1, \ldots, \ell_\nu^n)$, where
\begin{equation}\label{eq-ell nu}
\ell_\nu^j=\begin{cases}
	\ell_j, \ \ & \nu_j >-1/2\\
	0, \ \ &\nu_j=-1/2
\end{cases}
\end{equation}
for all $j=1,\ldots, n$.

\begin{prop}
	\label{prop- delta k pt d>2 dual delta} Let $\nu\in [-1/2,\vc)^n$. 	 Then for any $m \in \mathbb N$ and $k\in \mathbb N^n$ with $\ell\ge k+2\vec{m}$, where $\vec{m}=(m,\ldots, m)\in \mathbb R^n$, we have
	\[
	|\mathcal L_\nu^m (\delta_\nu^*)^kp^{\nu+\ell_\nu}_t(x,y)|\lesi \f{1}{t^{(k+2m+1)/2}}\exp\Big(-\f{|x-y|^2}{ct}\Big)\Big(1+\f{\sqrt t}{\rho_\nu(x)}+\f{\sqrt t}{\rho_\nu(y)}\Big)^{-(\nu+1/2)}
	\]
	for all $t>0$ and $x,y\in \mathbb R^n_+$.
\end{prop}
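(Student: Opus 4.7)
The plan is to reduce the $n$-dimensional bound to the one-dimensional estimates already established, namely Propositions \ref{prop-gradient x y dual delta nu=-1/2} and \ref{prop-gradient x y dual delta}, by exploiting the tensor structure of the heat kernel. Since $\mathcal L_\nu=\sum_{j=1}^n \mathcal L_{\nu_j}$ with each $\mathcal L_{\nu_j}$ acting only on the $j$-th coordinate, the $n$ one-dimensional operators pairwise commute, so by the multinomial theorem
\[
\mathcal L_\nu^m=\sum_{|\alpha|=m}\binom{m}{\alpha}\prod_{j=1}^n \mathcal L_{\nu_j}^{\alpha_j}.
\]
Likewise $(\delta_\nu^*)^k=\prod_j (\delta_{\nu_j}^*)^{k_j}$ factors coordinate-wise, and from \eqref{eq- prod ptnu} the kernel factors as $p_t^{\nu+\ell_\nu}(x,y)=\prod_j p_t^{\nu_j+\ell_\nu^j}(x_j,y_j)$. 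Combining these,
\[
\mathcal L_\nu^m(\delta_\nu^*)^k p_t^{\nu+\ell_\nu}(x,y)=\sum_{|\alpha|=m}\binom{m}{\alpha}\prod_{j=1}^n \mathcal L_{\nu_j}^{\alpha_j}(\delta_{\nu_j}^*)^{k_j}p_t^{\nu_j+\ell_\nu^j}(x_j,y_j),
\]
which reduces the problem to bounding each univariate factor.

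For each $j$ I will invoke the appropriate one-dimensional estimate. If $\nu_j=-1/2$, then by \eqref{eq-ell nu} we have $\ell_\nu^j=0$ and Proposition \ref{prop-gradient x y dual delta nu=-1/2} yields a bound with decay factor $(\cdots)^{-N}$ of arbitrary order $N>0$. If $\nu_j>-1/2$, then $\ell_\nu^j=\ell_j$, and the hypothesis $\ell\ge k+2\vec m$ together with $\alpha_j\le |\alpha|=m$ forces $\ell_j\ge k_j+2\alpha_j$, so Proposition \ref{prop-gradient x y dual delta} applies and gives
\[
\big|\mathcal L_{\nu_j}^{\alpha_j}(\delta_{\nu_j}^*)^{k_j} p_t^{\nu_j+\ell_j}(x_j,y_j)\big|\lesi \f{1}{t^{(k_j+2\alpha_j+1)/2}}\exp\Big(-\f{|x_j-y_j|^2}{ct}\Big)\Big(1+\f{\sq t}{\rho_{\nu_j}(x_j)}+\f{\sq t}{\rho_{\nu_j}(y_j)}\Big)^{-(\nu_j+1/2)}.
\]

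Taking the product over $j$, the Gaussians combine to $\exp(-|x-y|^2/(ct))$ and the powers of $t$ collect to $t^{-(|k|+2m+n)/2}$ since $|\alpha|=m$, leaving a product of one-dimensional decay factors. The only mildly delicate step, and the one I expect to need a short separate argument, is consolidating this product into a single factor involving $\rho_\nu$: by \eqref{eq- equivalence of rho} I pick indices $j^*, j^{**}$ with $\rho_\nu(x)\sim\rho_{\nu_{j^*}}(x_{j^*})$ and $\rho_\nu(y)\sim\rho_{\nu_{j^{**}}}(y_{j^{**}})$, apply the elementary inequality $(1+A)^{-a}(1+B)^{-a}\le (1+A+B)^{-a}$ valid for $A,B,a\ge 0$, and discard the remaining factors, each of which is bounded by $1$. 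When $\nu_{j^*}$ or $\nu_{j^{**}}$ equals $-1/2$, I exploit the freedom in the parameter $N$ of Proposition \ref{prop-gradient x y dual delta nu=-1/2} to upgrade the decay order to $\nu_{\min}+1/2$; when $\mathcal J_\nu=\emptyset$ the convention $\nu_{\min}=\infty$ is consistent with the arbitrary decay available throughout. The finite sum over multi-indices $\alpha$ with $|\alpha|=m$ is absorbed into the implicit constant, yielding the stated bound.
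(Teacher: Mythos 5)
Your proof is correct and follows essentially the same route as the paper, which states (without writing out the details) that Proposition \ref{prop- delta k pt d>2 dual delta} is a direct consequence of the one-dimensional estimates via the product structure \eqref{eq- prod ptnu} and the equivalence \eqref{eq- equivalence of rho}. Your tensorization argument --- the multinomial expansion of $\mathcal L_\nu^m$, the case split according to whether $\nu_j=-1/2$ or $\nu_j>-1/2$, and the consolidation of the coordinate-wise decay factors into a single factor in $\rho_\nu$ --- supplies exactly the verification the paper leaves to the reader.
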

\bigskip

\section{Hardy spaces associated to the Laguerre operator and its duality}
In this section, we develop the theory of Hardy spaces adapted to the Laguerre operator $\mathcal{L}_\nu$. We begin by introducing the Hardy space via an atomic decomposition involving a new local atom related to the critical function $\rho_\nu$. Next, we characterize the Hardy space using the maximal function associated with the heat semigroup of $\mathcal{L}_\nu$. As a counterpart to the Hardy space, we also study a new BMO type space, which we establish as the dual of the Hardy space. The novelty of our theory lies in its extension to the full range $p \in (0,1]$.

\subsection{Hardy spaces associated to the Laguerre operator}

 We first introduce a new atomic Hardy space related to the critical function.

\begin{defn}\label{def: rho atoms}
	Let $p\in (0,1]$, $\nu\in [-1/2,\vc)^n$ and $\rho_\nu$ be the function as in \eqref{eq- critical function}.  A function $a$ is called a  $(p,\rho_\nu)$-atom associated to the ball $B(x_0,r)$ if
	\begin{enumerate}[{\rm (i)}]
		\item ${\rm supp}\, a\subset B(x_0,r)$;
		\item $\|a\|_{L^\vc}\leq |B(x_0,r)|^{-1/p}$;
		\item $\displaystyle \int a(x)x^\alpha dx =0$ for all multi-indices $\alpha$ with $|\alpha| \le n(1/p-1)$ if $r< \rho_\nu(x_0)$.
	\end{enumerate}
\end{defn}

If $\rho_\nu(x_0) =1$, the $(p,\rho_\nu)$-atom coincides with the local atoms defined in \cite{G}. Therefore, we can view a $(p,\rho_\nu)$ atom as a local atom associated to the critical function $\rho_\nu$.
Let $p\in (0,1]$. We  say that $f=\sum_j	\lambda_ja_j$ is an atomic $(p,\rho_\nu)$-representation if
$\{\lambda_j\}_{j=0}^\infty\in l^p$, each $a_j$ is a $(p,\rho_\nu)$-atom,
and the sum converges in $L^2(\mathbb{R}^n_+)$. The space $H^{p}_{\rho_\nu}(\mathbb{R}^n_+)$ is then defined as the completion of
\[
\left\{f\in L^2(\mathbb{R}^n_+):f \ \text{has an atomic
	$(p,\rho_\nu)$-representation}\right\}
\]
under the norm given by
$$
\|f\|_{H^{p}_{\rho_\nu}(\mathbb{R}^n_+)}=\inf\Big\{\Big(\sum_j|\lambda_j|^p\Big)^{1/p}:
f=\sum_j \lambda_ja_j \ \text{is an atomic $(p,\rho_\nu)$-representation}\Big\}.
$$

\begin{rem}
	\label{rem1}
	For a $(p,\rho_\nu)$ atom $a$ associated with the ball $B(x_0,r)$, if $r> \rho_\nu(x_0)$, by the argument used in the proof of Theorem 3.2 in \cite{YZ}, it can be verified that we can decompose $a =\sum_{j=1}^N\lambda_ja_j$ for some $N\in \mathbb N$, where $\sum_j |\lambda_j|^p \sim 1$ and $a_j$ is a $(p,\rho_\nu)$ atom associated with the ball $B(x_j,r_j)$ with $r_j=\rho(x_j)$ for each $j$. For this reason, in Definition \ref{def: rho atoms}, we might assume that $r\le \rho(x_0)$ for any $(p,\rho_\nu)$ atom associated to $B(x_0,r)$.	
\end{rem}

We next introduce the Hardy space associated with the Laguerre operator $\mathcal L_\nu$. For $f\in L^2(\mathbb{R}^n_+)$ we define 
\[
\mathcal M_{\mathcal L_\nu}f(x) = \sup_{t>0}|e^{-t^2\mathcal L_\nu}f(x)|
\]
for all $x\in \mathbb R^n_+$.\\

The maximal Hardy spaces associated to $\mathcal L_\nu$ are defined as follows.
\begin{defn}
	Given $p\in (0,1]$,  the Hardy space $H^p_{\mathcal L_\nu}(\mathbb{R}^n_+)$ is defined as the completion of
	\[
	\{f
	\in L^2(\mathbb{R}^n_+): \mathcal M_{\mathcal L_\nu}f \in L^p(\mathbb{R}^n_+)\}
	\]
	under the norm given by
	$$
	\|f\|_{H^p_{\mathcal L_\nu}(\mathbb{R}^n_+)}=\|\mathcal M_{\mathcal L_\nu}f\|_{L^p(\mathbb{R}^n_+)}.
	$$
	
\end{defn}
We have the following result.
\begin{thm}\label{mainthm2s}
	Let $\nu\in [-1/2,\vc)^n$ and $\gamma_\nu =  \nu_{\min}+1/2 $, where $\nu_{\min}$ is defined by \eqref{eq- nu min}. For $p\in (\f{n}{n+\gamma_\nu},1]$, we have
	\[
	H^{p}_{\rho_\nu}(\mathbb{R}^n_+)\equiv H^p_{\mathcal L_\nu}(\mathbb{R}^n_+)
	\]
	with equivalent norms.
\end{thm}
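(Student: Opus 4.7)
The plan is to prove both containments $H^p_{\rho_\nu}(\mathbb R^n_+) \subseteq H^p_{\mathcal L_\nu}(\mathbb R^n_+)$ and $H^p_{\mathcal L_\nu}(\mathbb R^n_+) \subseteq H^p_{\rho_\nu}(\mathbb R^n_+)$ separately. For the first inclusion, it suffices to show the uniform estimate $\|\mathcal M_{\mathcal L_\nu} a\|_{L^p(\mathbb R^n_+)} \lesi 1$ for every $(p,\rho_\nu)$-atom $a$. Fix such an atom associated with $B = B(x_0, r)$; by Remark~\ref{rem1} we may assume $r \le \rho_\nu(x_0)$. I split $\|\mathcal M_{\mathcal L_\nu} a\|_{L^p}^p$ into a local part on $4B$ and a global part on $(4B)^c$. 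The local estimate follows from H\"older's inequality, the size bound $\|a\|_\infty \le |B|^{-1/p}$, and the $L^2$-boundedness of $\mathcal M_{\mathcal L_\nu}$, which is a consequence of spectral theory for the self-adjoint operator $\mathcal L_\nu$.

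For the global estimate on $(4B)^c$ I distinguish two subcases. When $r \le \rho_\nu(x_0)/2$, the atom has vanishing moments up to order $d := \lfloor n(1/p-1)\rfloor$; I subtract a degree-$d$ Taylor expansion of $p_{t^2}^\nu(\cdot,y)$ at $x_0$ and bound the remainder using the derivative estimates of Proposition~\ref{prop-gradient x y d>2}, gaining a factor $r^{d+1}$ together with Gaussian decay. Integrating in $t$ and $x$ over dyadic annuli produces the desired uniform bound. When $r \sim \rho_\nu(x_0)$ no cancellation is available, so I rely on the decay factor $(1 + \sqrt{t}/\rho_\nu(x))^{-\gamma_\nu}$ provided by Proposition~\ref{prop- delta k pt d>2}, combined with Lemma~\ref{lem-critical function} which gives $\rho_\nu(x) \lesi 2^k \rho_\nu(x_0)$ on shells $\{|x-x_0| \sim 2^k \rho_\nu(x_0)\}$. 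Summing the resulting geometric series is exactly where the restriction $p > n/(n+\gamma_\nu)$ becomes necessary and sharp.

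For the reverse inclusion, I invoke Theorem~\ref{mainthm1-Hardy} to identify $H^p_{\mathcal L_\nu}(\mathbb R^n_+)$ with $H^p_{\mathcal L_\nu, at, M}(\mathbb R^n_+)$ for $M > \tfrac{n}{2}(1/p-1)$ sufficiently large, so it remains to decompose each $(p,M)_{\mathcal L_\nu}$-atom $a = \mathcal L_\nu^M b$ supported in $B = B(x_0, r)$ into an atomic $(p,\rho_\nu)$-representation with $\ell^p$-summable coefficients. When $r \gtrsim \rho_\nu(x_0)$, I use the partition of unity $\{\psi_\xi\}$ from Corollary~\ref{cor1} to write $a = \sum_\xi a\psi_\xi$; each localized piece (after normalization) is supported in a ball of radius $\rho_\nu(x_\xi)$ and qualifies as a $(p,\rho_\nu)$-atom requiring no moment condition. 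When $r < \rho_\nu(x_0)$, the argument is more subtle: I use the reproducing identity $a = c_M \int_0^{r^2} (s \mathcal L_\nu)^M e^{-s \mathcal L_\nu} a \, \frac{ds}{s}$ together with tent space atomic decompositions (Proposition~\ref{05}) to express $a$ as a sum over dyadic scales. Each resulting piece lives on an enlarged ball of controlled radius and, owing to the factor $(s\mathcal L_\nu)^M$, acquires the required vanishing moments via integration by parts against polynomials of degree $\le d$, with the Laguerre potential contributions absorbed using the refined mixed-derivative estimates of Propositions~\ref{prop-delta k p x y}--\ref{prop-gradient x y dual delta}.

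The main obstacle, as anticipated in the introduction's list of challenges, will be the decomposition step in the case $r < \rho_\nu(x_0)$: unlike the Schr\"odinger setting of \cite{Dziu2, BHH} where the potential is a nonnegative polynomial that interacts cleanly with polynomial test functions, the Laguerre potential $x_j^2 + (\nu_j^2 - 1/4)/x_j^2$ is singular at $\partial \mathbb R^n_+$, so extracting classical vanishing moments from the operator-defined atom $\mathcal L_\nu^M b$ requires the careful $\delta_\nu^k$ and $(\delta_\nu^*)^k$ kernel bounds of Section~3 and a delicate bookkeeping of the $\rho_\nu$-decay. Balancing this decay against the number of moments needed is precisely what pins down the sharp range $p \in (n/(n+\gamma_\nu), 1]$ appearing in the statement.
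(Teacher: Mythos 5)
Your first inclusion follows the paper's own route almost verbatim: the same local/global split, the same two cases according to whether $r$ is comparable to or strictly smaller than $\rho_\nu(x_0)$, the Taylor expansion against the vanishing moments in the small-ball case, and the decay factor $\bigl(1+\sqrt t/\rho_\nu\bigr)^{-\gamma_\nu}$ to close the range $p>\tfrac{n}{n+\gamma_\nu}$. Your handling of a $(p,M)_{\mathcal L_\nu}$-atom with $r_B\ge\rho_\nu(x_B)$ in the reverse direction is also fine, though over-engineered: such an atom is already a $(p,\rho_\nu)$-atom outright, since no moment condition is imposed at that scale, so the partition of unity is unnecessary there.

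The genuine gap is in the reverse inclusion when $r_B<\rho_\nu(x_B)$. You assert that each piece of your decomposition ``acquires the required vanishing moments via integration by parts against polynomials of degree $\le d$, owing to the factor $(s\mathcal L_\nu)^M$.'' That step fails: by self-adjointness, $\int (s\mathcal L_\nu)^M g(x)\,x^\alpha\,dx = s^M\int g(x)\,\mathcal L_\nu^M(x^\alpha)\,dx$, and $\mathcal L_\nu^M(x^\alpha)\ne 0$ because the Laguerre potential $x_i^2+(\nu_i^2-1/4)x_i^{-2}$ does not annihilate polynomials. So no piece of $a=\mathcal L_\nu^M b$ has exactly vanishing moments; the most one can extract is the smallness bound $\bigl|\int(x-x_B)^\alpha a\bigr|\lesssim |B|^{1-1/p}\,r_B^{|\alpha|}\,\bigl(r_B/\rho_\nu(x_B)\bigr)^{2M-|\alpha|}$, which is precisely the paper's estimate \eqref{eq-integral of a}, obtained by the integration by parts you describe together with $|\mathcal L_\nu^M(x-x_B)^\alpha|\lesssim\rho_\nu(x_B)^{-(2M-|\alpha|)}$ on $B$. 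Converting these small-but-nonzero moments into an honest atomic $(p,\rho_\nu)$-representation is the missing idea, and neither of your proposed tools supplies it: the truncated reproducing formula $\int_0^{r^2}$ is not an identity (the Calder\'on formula requires the full range $(0,\infty)$, leaving an error term $\eta(r^2\mathcal L_\nu)a$), and tent-space atoms produce $(p,M)_L$-atoms, not atoms with polynomial cancellation. The paper resolves this by subtracting from $a$ its orthogonal projection onto polynomials of degree $\le\lfloor n(1/p-1)\rfloor$ on $B$ and then telescoping the extracted moments across dyadic annuli $2^jB$ via dual polynomial bases, out to the critical scale $2^{j_0}r_B\sim\rho_\nu(x_B)$ where the final piece requires no moment condition; the coefficients are $\ell^p$-summable exactly because of the gain $\bigl(r_B/\rho_\nu(x_B)\bigr)^{2M-|\alpha|}$ and the choice $M>n(1/p-1)$. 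Without some such redistribution mechanism your decomposition does not yield $(p,\rho_\nu)$-atoms, so the second inclusion is not established.
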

If all $\nu_j$ are large, then we have the larger range of $p$, i.e., $\f{n}{n+\gamma_\nu}$ is close to 0. Note that the critical function $\rho_\nu$ defined in \eqref{eq- critical function} does not satisfy the property \eqref{criticalfunction} as in \cite[Definition 2.2]{YZ}. In addition,  the heat kernel of $\mathcal L_\nu$ does not the H\"older continuity condition, which played an essential roles in \cite{YZ, BDK}. Consequently, the approaches outlined in \cite{YZ, BDK} (including \cite{CFYY}) cannot be directly applied in our specific context. This necessitates the development of new ideas and a novel approach.\\

It is worth noting that the equivalence between the atomic Hardy space $H^p_{\rho_\nu}(\mathbb R^n_+)$ and the Hardy space $H^p_{\mathcal L_\nu}(\mathbb{R}^n_+)$ was established in \cite{Dziu} for the specific case of $n=1, p=1$, and $\nu > -1/2$. However, the approach employed in \cite{Dziu} is not  applicable to higher-dimensional cases.\\


We now give the proof of Theorem \ref{mainthm2s}. 
\begin{proof}[Proof of Theorem \ref{mainthm2s}:]

	\bigskip
	
	\underline{We first prove that   $H^{p}_{{\rm at},\rho_\nu}(\mathbb{R}^n_+)\hookrightarrow H^p_{\mathcal L_\nu}(\mathbb{R}^n_+)$}.  Let $a$ be a $(p,\rho_\nu)$-atom associated with a ball $B:=B(x_0,r)$. By Remark \ref{rem1}, we might assume that $r\le \rho_\nu(x_0)$. We need to verify that 
	\[
	\|\mathcal M_{\mathcal L_\nu} a\|_p \lesi 1.
	\]
	To do this, we write
	\[
	\begin{aligned}
		\|\mathcal M_{\mathcal L_\nu} a\|_p&\lesi \|\mathcal M_{\mathcal L_\nu} a\|_{L^p(4B)} +\|\mathcal M_{\mathcal L_\nu} a\|_{L^p((4B)^c)}\\
		&\lesi E_1 + E_2.
	\end{aligned}
	\]
	By H\"older's inequality and the $L^2$-boundedness of $\mathcal M_{\mathcal L_\nu}$,
	\[
	\begin{aligned}
		\|\mathcal M_{\mathcal L_\nu} a\|_{L^p(4B)}&\lesi |4B|^{1/p-1/2} \|\mathcal M_{\mathcal L_\nu} a\|_{L^2(4B)}\\
		 &\lesi |4B|^{1/p-1/2} \|a\|_{L^2(B)}\\
		 &\lesi 1.
	\end{aligned}
	\]
	It remains to take care of the second term $E_2$. We now consider two cases.
	\medskip
	
	\textbf{Case 1: $r=\rho_\nu(x_0)$.} By Proposition \ref{prop-heat kernel}, for $x\in (4B)^c$,
	\[
	\begin{aligned}
		|\mathcal M_{\mathcal L_\nu} a(x)|\lesi \sup_{t>0}\int_{B}\f{1}{t^{n/2}}\exp\Big(-\f{|x-y|^2}{ct}\Big)\Big(\f{\rho_\nu(y)}{\sqrt t}\Big)^{\gamma_\nu}|a(y)|dy,
	\end{aligned}
	\]
where $\gamma_\nu=\nu_{\min}+1/2$.

	By Lemma \ref{lem-critical function}, $\rho_\nu(y)\sim \rho_\nu(x_0)$ for $y\in B$. This, together with the fact that $|x-y|\sim |x-x_0|$ for $x\in (4B)^c$ and $y\in B$, further imply
	\[
	\begin{aligned}
		\mathcal M_{\mathcal L_\nu} a(x)&\lesi \sup_{t>0}\int_{B}\f{1}{t^{n/2}}\exp\Big(-\f{|x-x_0|^2}{ct}\Big)\Big(\f{\rho_\nu(x_0)}{\sqrt t}\Big)^{\gamma_\nu}|a(y)|dy\\
		&\lesi \Big(\f{\rho_\nu(x_0)}{|x-x_0|}\Big)^{\gamma_\nu}\f{1}{|x-x_0|^n}\|a\|_1\\
		&\lesi \Big(\f{r}{|x-x_0|}\Big)^{\gamma_\nu}\f{1}{|x-x_0|^n}|B|^{1-1/p}
	\end{aligned}
	\]
	Hence we obtain
	\[
	\begin{aligned}
		\|\mathcal M_{\mathcal L_\nu} a\|_{L^p((4B)^c)}&\lesi |B|^{1-1/p} \Big[\int_{(4B)^c}\Big(\f{r}{|x-x_0|}\Big)^{p\gamma_\nu}\f{1}{|x-x_0|^{np}}dx\Big]^{1/p}\\
		&\lesi 1,
	\end{aligned}
	\]
	as long as $\f{n}{n+\gamma_\nu}<p\le 1$.
	
	\bigskip
	

\textbf{Case 2: $r<\rho_\nu(x_0)$.} Using the cancellation property $\int a(x) x^\alpha dx= 0$ for all $|\alpha|\le \lfloor n(1/p-1)\rfloor=:N_p$ and the Taylor expansion, we have
	\[
	\begin{aligned}
		\sup_{t>0} |e^{-t\mathcal L_\nu}a(x)|&= \sup_{t>0}\Big|\int_{B}[p_t^\nu(x,y)-p_t^\nu(x,x_0)]a(y)dy\Big|\\
		&= \sup_{t>0}\Big|\int_{B}\Big[p_t^\nu(x,y)-\sum_{|\alpha|\le N_p}\f{\partial^\alpha p_t^\nu(x,x_0)}{\alpha!}(y-x_0)^\alpha\Big]a(y)dy\Big|\\
		&= \sup_{t>0}\Big|\int_{B} \sum_{|\alpha|= N_p+1}\f{\partial^\alpha p_t^\nu(x,x_0+\theta(y-x_0))}{\alpha!}(y-x_0)^\alpha a(y)dy\Big|
	\end{aligned}
	\]
	for some $\theta \in (0,1)$.
	
	Since $y\in B$ we have $\rho_\nu(x_0+\theta(y-x_0))\sim \rho_\nu(x_0)$ and $|x-[x_0+\theta(y-x_0)]|\sim |x-x_0|$ for all $x\in (4B)^c$, $y\in B$ and $\theta \in (0,1)$, by  Proposition \ref{prop-gradient x y} we obtain, for $x\in (4B)^c$,
	\[
	\begin{aligned}
		\sup_{t>0} |e^{-t\mathcal L_\nu}a(x)|&\lesi  \sup_{t>0}\int_{B}\Big(\f{|y-y_0|}{\sqrt t}+\f{|y-y_0|}{\rho_\nu(x_0)}\Big)^{N_p+1}\f{1}{t^{n/2}}\exp\Big(-\f{|x-y|^2}{ct}\Big)\Big(1+\f{\sqrt t}{\rho_\nu(x_0)}\Big)^{-\gamma_\nu} |a(y)|dy\\
		&\sim  \sup_{t>0}\int_{B}\Big(\f{r}{\sqrt t}+\f{r}{\rho_\nu(x_0)}\Big)^{N_p+1}\f{1}{t^{n/2}}\exp\Big(-\f{|x-x_0|^2}{ct}\Big)\Big(1+\f{\sqrt t}{\rho_\nu(x_0)}\Big)^{-\gamma_\nu} \|a\|_1\\
		&\lesi   \sup_{t>0}  \Big(\f{r}{\sqrt t}\Big)^{N_p+1} \f{1}{t^{n/2}}\exp\Big(-\f{|x-x_0|^2}{ct}\Big) \|a\|_1\\
		& \ \ +   \sup_{t>0}  \Big(\f{r}{\rho_\nu(x_0)}\Big)^{N_p+1} \f{1}{t^{n/2}}\exp\Big(-\f{|x-x_0|^2}{ct}\Big)\Big(1+\f{\sqrt t}{\rho_\nu(x_0)}\Big)^{-\gamma_\nu} \|a\|_1\\	
		&= F_1 + F_2.
	\end{aligned}
	\]
	For the term $F_1$, it is straightforward to check that 
	\[
	\begin{aligned}
		F_1&\lesi \Big(\f{r}{|x-x_0|}\Big)^{N_p+1}\f{1}{|x-x_0|^n}|B|^{1-1/p}\\
		&\lesi \Big(\f{r}{|x-x_0|}\Big)^{(N_p+1)\wedge \gamma_\nu}\f{1}{|x-x_0|^n}|B|^{1-1/p},	
		\end{aligned}
	\]
where in the last inequality we used the fact $r\le |x-x_0|$ for $x\in (4B)^c$.
	
	For $F_2$, since $r<\rho_\nu(x_0)$, we have
	\[
	\begin{aligned}
		F_2&\lesi \sup_{t>0} \Big(\f{r}{\rho_\nu(x_0)}\Big)^{(N_p+1)\wedge \gamma_\nu}\f{1}{t^{n/2}}\exp\Big(-\f{|x-x_0|^2}{ct}\Big)\Big(\f{\rho_\nu(x_0)}{\sqrt t}\Big)^{(N_p+1)\wedge \gamma_\nu} \|a\|_1\\
		&\lesi \Big(\f{r}{|x-x_0|}\Big)^{^{(N_p+1)\wedge \gamma_\nu}}\f{1}{|x-x_0|^n}|B|^{1-1/p}.
	\end{aligned}
	\]
	 Taking this and the estimate of $F_1$ into account, we obtain
	 \[
	 \sup_{t>0} |e^{-t\mathcal L_\nu}a(x)|\lesi \Big(\f{r}{|x-x_0|}\Big)^{^{(N_p+1)\wedge \gamma_\nu}}\f{1}{|x-x_0|^n}|B|^{1-1/p}.
	 \]
	 Therefore,
	 \[
	 |\mathcal M_{\mathcal L_\nu} a(x)| \lesi \Big(\f{r}{|x-x_0|}\Big)^{^{(N_p+1)\wedge \gamma_\nu}}\f{1}{|x-x_0|}|B|^{1-1/p},
	 \]
	 which implies that
	 \[
	 \|\mathcal M_{\mathcal L_\nu} a\|_p\lesi 1,
	 \]
	 as long as $\f{n}{n+\gamma_\nu}< p\le 1$.

	 This completes the proof of the first step.

	 \bigskip
	 
	\underline{It remains to show that $H^p_{\mathcal L_\nu}(\mathbb{R}^n_+)\hookrightarrow  H^{p}_{\rho_\nu}(\mathbb{R}^n_+)$.} To do this, let $f\in H^p_{\mathcal L_\nu}(\mathbb{R}^n_+)\cap L^2(\Rn_+)$. By Theorem \ref{mainthm1-Hardy} we can write $f$ as a linear decomposition of $(p,N)_{\mathcal L_\nu}$ atoms with $N>n(\f{1}{p}-1)$. Assume that $a$ is a $(p,N)_{\mathcal L_\nu}$ atom associated with a ball $B$. If $r_B\ge \rho_\nu(x_B)$, then $a$ is also a $(p,\rho_\nu)$ atom. Therefore, it suffices to prove that $a\in H^p_{\rho_\nu}(\Rn_+)$ in the case $r_B<\rho_\nu(x_B)$. To do this, we recall that $a$ satisfies the following properties
	 \begin{enumerate}[{\rm (i)}]
	 	\item  $a=\mathcal L_\nu^N b$;
	 	\item $\supp \mathcal L_\nu^{k}b\subset B, \ k=0, 1, \dots, M$;
	 	\item $\|\mathcal L_\nu^{k}b\|_{L^\vc(\mathbb{R}^n_+)}\leq
	 	r_B^{2(N-k)}|B|^{-\f{1}{p}},\ k=0,1,\dots,N$.
	 \end{enumerate}
	 We first claim that for any multi-index $\alpha$ with $|\alpha|<N$, we have 
	 \begin{equation}\label{eq-integral of a}
	 	\Big|\int (x-x_B)^\alpha a(x)\,dx\Big| \le |B|^{1-\f{1}{p}} r_B^{|\alpha|} \Big(\f{r_B}{\rho_B}\Big)^{N-|\alpha|}.
	 	\end{equation} 
	 Indeed, from (i) we have
	 \[
	 \begin{aligned}
	 	\Big|\int (x-x_B)^\alpha a(x)\,dx\Big|&=\Big|\int_B (x-x_B)^\alpha \mathcal L_\nu^N b (x)\,dx\Big|\\
	 	&=\Big|\int_B \mathcal L_\nu^N(x-x_B)^\alpha  b (x)\,dx\Big|\\
	 	&\le  \int_B |\mathcal L_\nu^N(x-x_B)^\alpha| |b (x)|\,dx.
	 \end{aligned}
	 \]
	 For $x\in B$, using Lemma \ref{lem-critical function} it can be verified that 
	 \[
	 \begin{aligned}
	 	|\mathcal L_\nu^N(x-x_B)^\alpha|&\lesi  \sum_{j=0}^{|\alpha|}r_B^{|\alpha| -j} \f{1}{\rho(x_B)^{2N-j}}\\
	 	&\lesi \f{1}{\rho(x_B)^{2N-|\alpha|}}.
	\end{aligned}
	 \]
	  Hence,
	  \[
	  \begin{aligned}
	  	\Big|\int (x-x_B)^\alpha a(x)\,dx\Big|&\lesi \f{1}{\rho(x_B)^{2N-|\alpha|}}\|b\|_{1}\\
	  	&\lesi |B|^{1-1/p} \f{r_B^{2N}}{\rho(x_B)^{2N-|\alpha|}} = |B|^{1-1/p} r_B^{|\alpha|} \Big(\f{r_B}{\rho_B}\Big)^{2N-|\alpha|}. 
	  \end{aligned}
	  \]
This confirms \eqref{eq-integral of a}.

We now  prove that	$a\in H^p_{\rho_\nu}(\Rn_+)$ as long as $r_B<\rho_\nu(x_B)$. Recall that $S_0(B)=B, S_j(B)=2^{j}B\setminus 2^{j-1}B, j\ge 1$. Set $\omega = \lfloor n(1/p-1)\rfloor$. Let $\mathcal{V}_j$ be the span of the polynomials $\big\{(x-x_B)^\alpha\big\}_{|\alpha|\le \om }$ on $S_j(B)$ corresponding to the inner product space given by
	 $$ \ip{f,g}_{\mathcal{V}_j}:=\fint_{S_j(B)} f(x)g(x)\,dx.$$
	 Let $\{u_{j,\alpha}\}_{|\alpha|\le \om }$ be an orthonormal basis for $\mathcal{V}_j$ obtained via the Gram--Schmidt process applied to $\big\{(x-x_B)^\alpha\big\}_{|\alpha|\le \om }$
	 which, through homogeneity and uniqueness of the process, gives
	 \begin{align}\label{eq:mol1}
	 	u_{j,\alpha}(x)=\sum_{|\beta|\le \om } \lambda_{\alpha,\beta}^j (x-x_B)^\beta,
	 \end{align}
	 where for each $|\alpha|, |\beta|\le \om $ we have
	 \begin{align}\label{eq:mol2}
	 	|u_{j,\alpha}(x)|\le C\qquad\text{and}\qquad |\lambda_{\alpha,\beta}^j |\lesi (2^j r_B)^{-|\beta|}. 
	 \end{align}
	 Let $\{v_{j,\alpha}\}_{|\alpha|\le \om }$ be the dual basis of $\big\{(x-x_B)^\alpha\big\}_{|\alpha|\le \om }$ in $\mathcal{V}_j$; 
	 that is, it is the unique collection of polynomials such that
	 \begin{align}\label{eq:mol3} \ip{v_{j,\alpha}, (\cdot-x_B)^\beta}_{\mathcal V_j}=\delta_{\alpha,\beta}, \qquad  |\alpha|, |\beta|\le \om . \end{align}
	 Then we have
	 \begin{align}\label{eq:mol5}
	 	\Vert v_{j,\alpha}\Vert_\infty \lesi (2^jr_B)^{-|\alpha|},\qquad \forall \; |\alpha|\le \om .
	 \end{align}
	 Now let $P:={\rm proj}_{\mathcal{V}_0} (a)$ be the orthogonal projection of $a$ onto $\mathcal{V}_0$. Then  we have
	 \begin{align}\label{eq:mol6}
	 	P = \sum_{|\alpha|\le \om } \ip{a, u_{0,\alpha}}_{\mathcal V_0} u_{0,\alpha} = \sum_{|\alpha|\le \om } \ip{a,(\cdot-x_B)^\alpha}_{\mathcal V_0} v_{0,\alpha}.
	 \end{align}
	 Let $j_0\in \mathbb N$ such that $2^{j_0}r_B\ge \rho_{\nu}(x_B)>2^{j_0-1}r_B$. Then we write
	 \[
	 \begin{aligned}
	 	P =& \sum_{|\alpha|\le \om } \ip{a,(\cdot-x_B)^\alpha}_{\mathcal V_0} v_{0,\alpha}\\
	 	&=\sum_{|\alpha|\le \om }\sum_{j=0}^{j_0-2} \ip{a,(\cdot-x_B)^\alpha} \Big[\f{v_{j,\alpha}}{|S_j(B)|}-\f{v_{j+1,\alpha}}{|S_{j+1}(B)|}\Big] + \ip{a,(\cdot-x_B)^\alpha} \f{v_{j_0-1,\alpha}}{|S_{j_0-1}(B)|}.
	 \end{aligned}
	 \]
	 Hence, we can decompose
	 \[
	 \begin{aligned}
	 	a &= (a-P) +\sum_{|\alpha|\le \om }\sum_{j=0}^{j_0-2} \ip{a,(\cdot-x_B)^\alpha} \Big[\f{v_{j,\alpha}}{|S_j(B)|}-\f{v_{j+1,\alpha}}{|S_{j+1}(B)|}\Big] + \sum_{|\alpha|\le \om}\ip{a,(\cdot-x_B)^\alpha}\f{v_{j_0-1,\alpha}}{|S_{j_0-1}(B)|}\\
	 	&=a_1 +\sum_{|\alpha|\le \om }\sum_{j=0}^{j_0-2}a_{2,j,\alpha} +\sum_{|\alpha|\le \om }a_{3,\alpha}.
	 \end{aligned}
	 \]
	 
	 Let us now outline the important properties of the functions in the above decomposition. 
	 For $a_1$ we observe that for all $|\alpha|\le \om $,
	 \begin{align}\label{eq:mol8}
	 	\supp a_1 \subset B, &&
	 	\int a_1(x)(x-x_B)^\alpha dx=0, &&
	 	\Vert a_1\Vert_{L^\vc}\lesi |B|^{-1/p}.
	 \end{align}
	 Note that the property 
	 \[
	 \int a_1(x)(x-x_B)^\alpha dx=0 \ \text{for all $|\alpha|\le \om$}
	 \]
	 implies that 
	 \[
	 \int a_1(x)x^\alpha dx=0 \ \text{for all $|\alpha|\le \om$}.
	 \]
Hence, in this case $a_1$ is a $(p,\rho_\nu)$ atom.

	 
	 Next, for $a_{2,j,\alpha}$, it is obvious that $\supp a_{2,j,\alpha} \subset 2^{j+1}B$. In addition, from \eqref{eq:mol3}, we have
	 \[
	 \int a_{2,j,\alpha}(x)(x-x_B)^\beta dx=0 \ \text{for all $|\beta|\le \om $},
	 \]
	 which implies
	 \[
	 \int a_{2,j,\alpha}(x)x^\beta dx=0 \ \text{for all $|\beta|\le \om $}.
	 \]
	 We now estimate the size of $a_{2,j,\alpha}$. Using \eqref{eq-integral of a}, we have
	 \[
	 \begin{aligned}
	 	\|a_{2,j,\alpha}\|_{\vc}&\lesi (2^jr_B)^{-|\alpha|} |2^jB|^{-1} \Big|\int a(x)(x-x_B)^\alpha dx\Big| \\
	 	&\lesi |2^jB|^{-1 -|\alpha|/n} |B|^{1-1/p}r_B^{|\alpha|}\Big(\f{r_B}{\rho_{\nu}(x_B)}\Big)^{2N-|\alpha|}\\
	 	&\lesi 2^{-j(2N+n-n/p)} |2^jB|^{-1/p}.
	 \end{aligned}
	 \]
	 This means that $a_{2,j,\alpha}$ is a multiple of a $(p,\rho_\nu)$ atom, which further implies that
	 \[
	 \Big\|\sum_{|\alpha|\le \om }\sum_{j=0}^{j_0-2}a_{2,j,\alpha}\Big\|_{H^p_{\rho_\nu}}(\Rn_+)\lesi 1,
	 \]
	 as long as $N>n(1/p-1)$.
	 
 Next, for $a_{3,\alpha}$ we first have $\supp a_{3,\alpha} \subset 2^{j_0}B$. Moreover,  using \eqref{eq-integral of a} again, we obtain
	 \[
	 \begin{aligned}
	 	\|a_{2,j,\alpha}\|_{\vc}&\lesi (2^{j_0}r_B)^{-|\alpha|} |2^{j_0}B|^{-1} \Big|\int a(x)(x-x_B)^\alpha dx\Big| \\
	 	&\lesi |2^{j_0}B|^{-1 -|\alpha|/n} |B|^{1-1/p}r_B^{|\alpha|}\Big(\f{r_B}{\rho_{\nu}(x_B))}\Big)^{2N-|\alpha|}\\
	 	&\lesi 2^{-j_0(2N+n-n/p)} |2^{j_0}B|^{-1/p}\\
	 	&\lesi |2^{j_0}B|^{-1/p},
	 \end{aligned}
	 \]
	 as long as $N>n(1/p-1)$.
	 
	 It follows that $a_{3,\alpha}$ is a $(p,\rho_\nu)$ atom and $\|a_{3,\alpha}\|_{H^p_\rho(\Rn_+)}\lesi 1$.
	 
	 This completes our proof.
\end{proof}

\bigskip

		
		

\bigskip

\begin{prop}\label{prop-equivalence L +2}
		Let $\nu\in [-1/2,\vc)^n$ and $\ell\in \mathbb N^n$. Let $\ell_\nu$ be defined by \eqref{eq-ell nu}. Then for $\f{n}{n+\gamma_\nu}<p\le 1$ we have
		\[
		H^p_{\mathcal L_{\nu+\ell_\nu}}(\mathbb R^n_+) = H^p_{\mathcal L_\nu}(\mathbb R^n_+).
		\]
\end{prop}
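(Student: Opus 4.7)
The plan is to deduce Proposition \ref{prop-equivalence L +2} as a structural corollary of Theorem \ref{mainthm2s}, by observing that the critical function $\rho_\nu$ is invariant under the shift $\nu \mapsto \nu + \ell_\nu$. Once this invariance is established, the atomic Hardy spaces $H^p_{\rho_\nu}$ and $H^p_{\rho_{\nu+\ell_\nu}}$ literally coincide, and Theorem \ref{mainthm2s} identifies each of them with the corresponding $H^p_{\mathcal L}$ space.

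First I would verify that the index set $\mathcal J_\nu = \{j : \nu_j > -1/2\}$ is preserved under the shift. Indeed, by the definition \eqref{eq-ell nu}, $\ell_\nu^j = 0$ exactly when $\nu_j = -1/2$, so $(\nu+\ell_\nu)_j = -1/2$ in that case; and when $\nu_j > -1/2$, we have $(\nu+\ell_\nu)_j = \nu_j + \ell_j \geq \nu_j > -1/2$. Hence $\mathcal J_{\nu+\ell_\nu} = \mathcal J_\nu$, and the definition \eqref{eq- critical function} then yields $\rho_{\nu+\ell_\nu}(x) = \rho_\nu(x)$ pointwise on $\mathbb R^n_+$. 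In particular, the class of $(p,\rho_{\nu+\ell_\nu})$-atoms in Definition \ref{def: rho atoms} coincides literally with that of $(p,\rho_\nu)$-atoms, so $H^p_{\rho_{\nu+\ell_\nu}}(\mathbb R^n_+) = H^p_{\rho_\nu}(\mathbb R^n_+)$ with identical norms.

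Next, I would check that the range $\frac{n}{n+\gamma_\nu} < p \le 1$ is also admissible for Theorem \ref{mainthm2s} applied to $\mathcal L_{\nu+\ell_\nu}$. Since the minimum in \eqref{eq- nu min} is taken over the same index set $\mathcal J_\nu = \mathcal J_{\nu+\ell_\nu}$, one has $(\nu+\ell_\nu)_{\min} = \min\{\nu_j + \ell_j : j \in \mathcal J_\nu\} \geq \nu_{\min}$, hence $\gamma_{\nu+\ell_\nu} \geq \gamma_\nu$, which gives $\frac{n}{n+\gamma_{\nu+\ell_\nu}} \leq \frac{n}{n+\gamma_\nu}$. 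Applying Theorem \ref{mainthm2s} to both operators then produces the chain
\[
H^p_{\mathcal L_{\nu+\ell_\nu}}(\mathbb R^n_+) \;=\; H^p_{\rho_{\nu+\ell_\nu}}(\mathbb R^n_+) \;=\; H^p_{\rho_\nu}(\mathbb R^n_+) \;=\; H^p_{\mathcal L_\nu}(\mathbb R^n_+),
\]
which is the desired equality. There is no substantive obstacle here: the nontrivial analytic work is already contained in Theorem \ref{mainthm2s}, and the proposition reduces entirely to the combinatorial observation that adding $\ell_\nu$ to $\nu$ preserves $\mathcal J_\nu$ and hence $\rho_\nu$.
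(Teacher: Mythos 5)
Your proposal is correct and follows exactly the paper's own argument: the paper likewise deduces the proposition directly from Theorem \ref{mainthm2s} together with the identities $\rho_{\nu}=\rho_{\nu+\ell_\nu}$ and $\gamma_\nu\le \gamma_{\nu+\ell_\nu}$. You merely spell out the verification of those two facts, which the paper leaves implicit.
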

\begin{proof}
	The proposition is a direct consequence of Theorem \ref{mainthm2s} and the facts that $\rho_{\nu}=\rho_{\nu+\ell_\nu}$ and $\gamma_\nu\le \gamma_{\nu+\ell_\nu}$.
\end{proof}


	

\subsection{Campanato spaces associated to the Laguerre operator $\mathcal L_\nu$}

For the duality of the Hardy space, we introduce the Campanato space associated to the critical function $\rho_\nu$. Let $\mathcal P_M$ 
denote the set of all polynomials of degree at most $M$. 
For any $g \in L^1_{\rm loc} (\Rn)$ and any ball $B \subset \Rn$, we denote $P_B^M g$ the \textit{minimizing polynomial}
of $g$ on the ball $B$ with homogeneous degree  $\leq M$, which means that $P_B^M g$ is the unique polynomial $P \in \mathcal{P}_M$
such that,
\begin{align} \label{minipoly}
	\int_B [g(x)-P(x)]x^\alpha dx  =0 \quad \mbox{for every } |\alpha|\le M.
\end{align}
It is known that if $g$ is locally integrable, then $P^M_B g$ uniquely exists (see \cite{JTW}). Recall that for any polynomial $P$ of degree at most $M$, any ball $B$ and any multi-index $\alpha$, we have
\begin{equation}\label{eq- polynomial}
\sup_{x\in B}|\partial_x^\alpha P(x)|\lesi_{\alpha,M} \sup_{x\in B}|P(x)| \ \ \ \text{and} \ \ \ \sup_{x\in B}|P(x)|\lesi_M \fint_B |P(x)|dx.
\end{equation}
which, together with Lemma 4.1 in \cite{Lu}, implies that
\begin{equation}\label{eq1- PBM}
	\sup_{x\in B}|\partial_x^\alpha P_B^Mg(x)|\lesi \f{r_B^{-|\alpha|}}{|B|}\int_B|g(x)|dx.
\end{equation}

We define the local Campanato spaces associated to critical functions as follows.
\begin{defn}\label{defn-BMO}
	Let $\nu\in [-1/2,\vc)^n$. Let $\rho_\nu$ be the critical function as in \eqref{eq- critical function}. 
	Let $s \geq 0$, $1 \leq q \leq \infty$ and $M \in \mathbb{N}_0$.
	The local Campanato space $BMO^{s,q,M}_{\rho_\nu}(\Rn_+)$ associated to $\rho_\nu$ is defined to
	be the space of all locally $L^q$ functions $f$ on $\Rn_+$ such that
	\begin{equation*}
		\begin{split}
			\|f\|_{BMO^{s, q,M}_{\rho_\nu}(\Rn_+)}&:=\sup_{\substack{B: \  {\rm balls}\\ r_B < \rho_{\nu}(x_B)}}  \left[ \frac{1}{|B|^{s/n}}
			\left(\frac{1}{|B|}\int_B|f(x)-P^M_Bf(x)|^q dx \right)^{1/q} \right] \\
			& \quad\quad +\sup_{\substack{B: \ {\rm balls}\\ r_B \geq \rho_{\nu}(x_B)} } \left[ \frac{1}{|B|^{s/n}}
			\left(\frac{1}{|B|}\int_B|f(x)|^q dx \right)^{1/q} \right] <\infty,
		\end{split}
	\end{equation*}
	with the usual modification when $q =\infty$. Here, $x_B$ and $r_B$ denote the center and radius of the ball $B$, respectively.
\end{defn}
Note that for each ball $B\subset \Rn_+$ if $r_B\le \rho_\nu(x_B)$, then $B$ is identical to its extension $B_{\Rn}$ to $\Rn$ defined by $B_{\Rn}:=\{y\in \Rn: |y-x_B|<r_B\}$. Therefore, in the above definition the polynomial $P_B^Mf$ is well-defined. 

In  Proposition \ref{independent of q} below, we will show that the function spaces in the above definition are independent of 
the choices of $1\le r\le \vc, M\ge \lfloor s\rfloor$ for $s>0$ and with $1\le r<\vc, M\ge 0$ for $s=0$, where $\lfloor s\rfloor$ denotes the greatest integer 
less than or equal to $s$. Hence, we define the space $BMO^s_{\rho_\nu}(\Rn_+)$ as any space $BMO^{s,r,M}_{\rho_\nu}(\Rn_+)$ 
with $1\le r\le \vc, M\ge \lfloor s\rfloor$ for $s>0$ and with $1\le r<\vc, M\ge 0$ for $s=0$.

We have the following result regarding the duality of the Hardy space $H^p_{\mathcal L_\nu}(\mathbb R^n_+)$.
\begin{thm} \label{dual}  Let $\nu\in [-1/2,\vc)^n$ and $\gamma_\nu =  \nu_{\min}+1/2$, where $\nu_{\min}$ is given in \eqref{eq- nu min}. Let $\rho_\nu$ be defined
	as in \eqref{eq- critical function}. Then for  $p \in (\f{n}{n+\gamma_\nu},1]$, we have
	\begin{align*}
		\big(H^p_{\mathcal L_\nu} (\mathbb{R}^n_+)\big)^\ast = BMO^{s}_{\rho_{\nu}} (\mathbb{R}^n_+),
		\quad \mbox{where } s: = n (1/p-1).
	\end{align*}
\end{thm}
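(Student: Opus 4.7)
The plan is to follow the classical Hardy--BMO duality paradigm, using the atomic identification $H^p_{\mathcal L_\nu}(\mathbb R^n_+) \equiv H^p_{\rho_\nu}(\mathbb R^n_+)$ from Theorem \ref{mainthm2s}, and to prove the two inclusions separately.

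\textbf{First inclusion} $BMO^{s}_{\rho_{\nu}}(\mathbb R^n_+) \hookrightarrow (H^p_{\mathcal L_\nu}(\mathbb R^n_+))^{*}$. For $g \in BMO^{s}_{\rho_{\nu}}$ I would define the pairing $L_g(f) := \int_{\mathbb R^n_+} fg\,dx$ initially on $L^2(\mathbb R^n_+)\cap H^p_{\mathcal L_\nu}$ and verify $|L_g(a)| \lesssim \|g\|_{BMO^{s}_{\rho_{\nu}}}$ uniformly over $(p,\rho_\nu)$-atoms $a$ supported in a ball $B$. For the non-cancellation case $r_B \ge \rho_\nu(x_B)$, a direct H\"older estimate with $\|a\|_{\infty}\le |B|^{-1/p}$ and the definition of $BMO^{s}_{\rho_{\nu}}$ suffice, using $s/n = 1/p - 1$. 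For the case $r_B < \rho_\nu(x_B)$, I use $\int a(x)x^\alpha\,dx = 0$ for $|\alpha|\le \lfloor s\rfloor$ to replace $g$ by $g - P^{\lfloor s\rfloor}_B g$ before applying H\"older, and invoke the small-ball control in Definition \ref{defn-BMO}. Extending to atomic representations via $\sum|\lambda_j| \le (\sum|\lambda_j|^p)^{1/p}$ (valid since $p\le 1$) and Theorem \ref{mainthm2s} then gives a bounded linear functional with $\|L_g\|\lesssim \|g\|_{BMO^{s}_{\rho_{\nu}}}$.

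\textbf{Second inclusion} $(H^p_{\mathcal L_\nu}(\mathbb R^n_+))^{*}\hookrightarrow BMO^{s}_{\rho_{\nu}}(\mathbb R^n_+)$. Given $L\in (H^p_{\mathcal L_\nu})^{*}$, set $M=\lfloor s\rfloor$ and for each ball $B\subset \mathbb R^n_+$ introduce
\[
\mathcal X_B :=
\begin{cases}
\{f\in L^2(B):\ \int f(x)x^\alpha\,dx=0\text{ for }|\alpha|\le M\}, & r_B<\rho_\nu(x_B),\\
L^2(B), & r_B\ge \rho_\nu(x_B).
\end{cases}
\]
The key lemma to establish is that every $f\in\mathcal X_B$ belongs to $H^p_{\mathcal L_\nu}$ with
\[
\|f\|_{H^p_{\mathcal L_\nu}}\lesssim |B|^{1/p-1/2}\|f\|_{L^2(B)},
\]
equivalently that an $L^2$-normalised $(p,\rho_\nu)$-atom is a bounded element of $H^p_{\mathcal L_\nu}$. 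This I would obtain by a Calder\'on--Zygmund stopping-time decomposition of $|f|$ that preserves support in $B$ and (via subtraction of local polynomial projections at each stopping level) also preserves the moment conditions up to order $M$; each resulting piece is an honest $L^\infty$-normalised $(p,\rho_\nu)$-atom, and the $\ell^p$-sum of the coefficients is controlled by $|B|^{1/p-1/2}\|f\|_{L^2}$. Granting this, $L|_{\mathcal X_B}$ is a bounded linear functional on the Hilbert space $\mathcal X_B$ with norm $\lesssim |B|^{1/p-1/2}\|L\|$, so the Riesz representation theorem yields $g_B\in L^2(B)/\mathcal P_M$ (respectively $L^2(B)$) such that $L(f)=\int fg_B\,dx$ for all $f\in\mathcal X_B$. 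Normalising each representative by $P^{M}_B g_B = 0$ in the small-ball case and using uniqueness on intersections of nested balls produces a well-defined global $g\in L^2_{\rm loc}(\mathbb R^n_+)$.

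Finally, the $BMO^{s}_{\rho_{\nu}}$-norm of $g$ is bounded by $\|L\|$: in the case $r_B<\rho_\nu(x_B)$, duality with $\mathcal X_B$ yields $\|g-P^M_B g\|_{L^2(B)}\le \|L\|\,|B|^{1/p-1/2}$, which after dividing by $|B|^{1/2+s/n}$ and using $s/n=1/p-1$ gives exactly the first supremum in Definition \ref{defn-BMO}; the large-ball case is analogous without the polynomial subtraction. The forthcoming independence proposition (referred to as Proposition \ref{independent of q}) identifies this $BMO^{s,2,\lfloor s\rfloor}_{\rho_\nu}$-norm with the generic $BMO^{s}_{\rho_\nu}$-norm, completing the duality. \textbf{The principal obstacle} is the $L^2$-vs-$L^\infty$ atomic equivalence in Step 2, since the atoms in Theorem \ref{mainthm2s} are $L^\infty$-normalised; handling this, together with preserving the moment conditions under a Calder\'on--Zygmund splitting in the small-ball regime, is the technical heart of the argument. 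A secondary but easier point is compatibility of the local $g_B$ across overlapping balls in the critical-function geometry, resolved by the canonical $P^M_B g_B=0$ normalisation.
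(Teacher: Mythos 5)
Your overall two-step strategy (atoms for the lower bound, local Riesz representation plus the Folland--Stein identity $\|g\|_{(L^2_0(B))^\ast}=\inf_{P\in\mathcal P_M}\|g-P\|_{L^2(B)}$ for the upper bound) is the same skeleton as the paper's, but both halves of your write-up have a concrete gap. For the inclusion $BMO^{s}_{\rho_\nu}\hookrightarrow (H^p_{\mathcal L_\nu})^\ast$, the uniform bound $|\int ag|\lesssim\|g\|_{BMO^s_{\rho_\nu}}$ over atoms does not by itself yield a bounded functional on $H^p_{\mathcal L_\nu}$: an atomic representation $f=\sum_j\lambda_ja_j$ converges in $L^2$, while $g$ is only locally square integrable with growth controlled as in Lemma \ref{inte}, so the interchange $\int fg=\sum_j\lambda_j\int a_jg$ is not automatic, and the infimum of $(\sum|\lambda_j|^p)^{1/p}$ over finite representations need not recover $\|f\|_{H^p}$. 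This is precisely why the paper routes the pairing through the Calder\'on reproducing formula of Lemma \ref{07} (whose proof needs the pointwise decay estimates \eqref{jus1}--\eqref{jus2}), the Carleson-type estimate of Lemma \ref{03}, and tent-space duality (Proposition \ref{05}): the functional is then defined and bounded on all of $H^p_{\mathcal L_\nu}$ via the square-function characterization, and only afterwards identified with $f\mapsto\int fg$ on atoms. Your proposal omits this entire mechanism.

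For the converse inclusion, your key lemma $\|f\|_{H^p_{\mathcal L_\nu}}\lesssim|B|^{1/p-1/2}\|f\|_{L^2(B)}$ is also what the paper proves, but the paper obtains it directly from the heat-kernel bounds (H\"older on $4B$; off $4B$, either the decay factor $(1+t/\rho_\nu)^{-\gamma_\nu}$ when $r_B\ge\rho_\nu(x_B)$, or Taylor expansion against the moment conditions when $r_B<\rho_\nu(x_B)$), rather than by a Calder\'on--Zygmund resplitting into $L^\infty$-atoms; your route is viable but heavier. The more serious issue is your gluing step: on small balls the Riesz representative $g_B$ is determined only modulo $\mathcal P_M$, and two overlapping (or nested) balls normalized by $P^M_Bg_B=0$ and $P^M_{B'}g_{B'}=0$ differ on the overlap by a nonzero polynomial, so ``uniqueness on intersections of nested balls'' does not produce a single global $g$. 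The paper sidesteps this entirely by performing the Riesz representation only on the critical-radius balls $B_\xi=B(x_\xi,\rho_\nu(x_\xi))$ of Corollary \ref{cor1}, where no moment conditions are imposed (so each $g_\xi$ is an honest, unambiguous $L^2$ function), setting $g=\sum_\xi 1_{B_\xi}g_\xi$ via the partition of unity $\sum_\xi\psi_\xi=1$, and only then testing $g$ against $L^2_0(B)$ for small $B$ to read off the Campanato bound. You would need either to adopt this critical-scale construction or to supply a genuine cocycle/telescoping argument to make your nested-ball normalization coherent.
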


An equivalent norm
of the Campanato space is as follows (see Theorem 1 in \cite{JTW}):
For any locally $L^q$ function $u$,
\begin{align*}
	\|u\|_{BMO^{s, q,M}(\Rn)} &\sim \sup_{ B: {\rm balls} }   \left[\frac{1}{|B|^{s/n}}
	\left(\frac{1}{|B|}\int_B|u(x)-P_B^M u(x)|^q dx \right)^{1/q} \right].
\end{align*}

We first recall the following result concerning independence of $q$ for the spaces $BMO^{s,q,M}(\Rn)$.
See  Theorem 3 in \cite{JTW}.

\begin{prop} \label{coincidence of global campanato}
	\begin{enumerate}
		[\rm (a)]\item If $s > 0$, $1 \leq q ,r\leq \infty$ and $M_1, M_2 \in \mathbb{N}$ such that $M_1, M_2 \geq \lfloor s\rfloor$, then $BMO^{s,q,M_1}(\Rn) = BMO^{s,r,M_2}(\Rn)$ with equivalent norms;
		\item If $1 \leq q, r < \infty$ and $M_1, M_2 \in \mathbb{N}_0$, then $\mathcal{C}^{0,q,M_1}(\Rn) = \mathcal{C}^{0,r,M_2}(\Rn)$.
	\end{enumerate}
\end{prop}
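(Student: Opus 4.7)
The plan is to follow the classical approach of Jonsson--Taibleson--Wallin. The first step is a reduction that, for any $q \in [1,\infty]$ and $M \in \mathbb{N}_0$,
\[
\|f - P_B^M f\|_{L^q(B)} \sim \inf_{Q \in \mathcal{P}_M} \|f - Q\|_{L^q(B)},
\]
with implicit constants depending only on $M$, $q$ and $n$. The non-trivial inclusion follows because $P_B^M$ is a bounded projection onto $\mathcal{P}_M$ in $L^q(B)$, which is a consequence of the coefficient control \eqref{eq1- PBM} combined with the equivalence of all norms on the finite-dimensional polynomial space, rescaled to the unit ball. With this reduction in hand, the Campanato seminorm is equivalent, up to the normalization $|B|^{-1/q-s/n}$, to the best $L^q$-approximation error by polynomials of degree at most $M$.

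For independence of $M$ in part (a), fix $q$ and take $M_1 \geq M_2 \geq \lfloor s \rfloor$. The inclusion $\mathcal{P}_{M_2} \subset \mathcal{P}_{M_1}$ gives one direction at once. For the reverse, I would examine the polynomial $P_B^{M_1}f - P_B^{M_2}f$: its $L^q(B)$ norm is controlled by the two approximation errors, and a telescoping across dyadic dilations of $B$, using the coefficient scaling in \eqref{eq- polynomial}, lets one bound the extra higher-order terms by a geometric series. The condition $M_2 \geq \lfloor s\rfloor$ is essential for convergence of this series: it provides the required gain of a factor comparable to $r_B^{s}$ at each scale.

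For independence of $q$, fix $M \geq \lfloor s \rfloor$. One direction is H\"older's inequality. The reverse is the genuine obstacle and requires a John--Nirenberg style argument adapted to the Campanato setting: one applies a Calder\'on--Zygmund stopping-time decomposition to $|f - P_B^M f|$ at a level proportional to $\|f\|_{BMO^{s,q,M}_{\rho_\nu}} |B|^{s/n}$, uses the polynomial-correction bound from the previous paragraph to compare $P_{B'}^M f$ and $P_B^M f$ across a parent--child pair, and iterates to obtain exponential decay of the distribution function on $B$. Part (b) is handled by the same scheme specialised to $s = 0$; the restriction $q < \infty$ reflects the fact that classical $BMO$ is strictly larger than $L^\infty$, so the iteration cannot be pushed to $q = \infty$.

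The main obstacle throughout is the uniform control of $P_{B'}^M f - P_B^M f$ across dyadic scales, independent of $f$ and of the ball. This is precisely what forces the hypothesis $M \geq \lfloor s\rfloor$: without the extra cancellation provided by matching enough polynomial moments, the geometric series in the telescoping step and the iteration in the stopping-time argument both fail to converge, and the equivalence between seminorms of different degrees or exponents breaks down.
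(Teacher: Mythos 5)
The paper does not actually prove this proposition: it is quoted verbatim from Theorem 3 of \cite{JTW}, so there is no internal argument to compare yours against. Your sketch is, in outline, the classical Campanato--Meyers--JTW proof, and the main ingredients are the right ones: the equivalence of $\|f-P_B^Mf\|_{L^q(B)}$ with the best $L^q(B)$-approximation by elements of $\mathcal P_M$ (via boundedness of the projection $P_B^M$, which follows from \eqref{eq1- PBM} together with equivalence of norms on the finite-dimensional space $\mathcal P_M$), and the telescoping over dyadic dilations of $B$ with the coefficient gain $(2^jr_B)^{s-|\alpha|}$ to control the part of $P_B^{M_1}f$ of degrees $M_2+1,\dots,M_1$, which is exactly where the hypothesis $M_2\ge\lfloor s\rfloor$ enters.

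The one place where your sketch is softer than the standard argument is the $q$-independence in part (a) at the endpoint. A John--Nirenberg stopping-time iteration of the kind you describe yields exponential decay of the distribution function of $|f-P_B^Mf|$, which upgrades $L^1$ control to $L^q$ for every \emph{finite} $q$ but not to $L^\infty$; yet part (a) allows $r=\infty$, and for $s>0$ the conclusion really is an $L^\infty$ bound. To reach $q=\infty$ one should instead run the telescoping at a Lebesgue point: for a.e.\ $x\in B$ write $f(x)-P_B^Mf(x)=\sum_{j\ge0}\bigl(P^M_{B_{j+1}}f(x)-P^M_{B_j}f(x)\bigr)$ over balls $B_j\ni x$ shrinking to $x$, bound each summand by $C(2^{-j}r_B)^{s}$ times the $BMO^{s,1,M}$ seminorm using \eqref{eq- polynomial}, and sum the geometric series, which converges precisely because $s>0$. (Equivalently, in the stopping-time language the level increments are summable rather than constant, so the iteration terminates at a finite level; plain exponential decay is not enough.) With that correction the sketch is complete in outline, and your treatment of part (b) as the genuine John--Nirenberg inequality, with $q<\infty$ forced by $BMO\supsetneq L^\infty$, is accurate.
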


In what follows, for a ball $B$ in $\Rn_+$ let $B_{\Rn}$ be the extension of $B$ to $\Rn$, i.e., $B_{\Rn}:=\{y\in \Rn: |x-x_B|<r_B\}$.

\begin{lem}\label{lem-new defn BMO}
	Let $\nu\in [-1/2,\vc)^n$. Let $\rho$ be the critical function as in \eqref{eq- critical function}. 
	Let $s \geq 0$, $1 \leq q \leq \infty$ and $M \in \mathbb{N}_0$. Then
	\[
	\|f\|_{BMO^{s, q,M}_{\rho_\nu}(\Rn_+)}\sim \|\tilde f\|_{BMO^{s, q,M}(\Rn)} +\sup_{\substack{B: \ {\rm balls}\\ r_B \geq \rho_{\nu}(x_B)} } \left[ \frac{1}{|B|^{s/n}}
	\left(\frac{1}{|B|}\int_B|f(x)|^q dx \right)^{1/q} \right],
	\]
	where $\tilde f$ is the zero extension of $f$ to $\Rn$.
\end{lem}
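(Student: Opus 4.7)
I will establish the equivalence by proving two inequalities, reducing the problem on $\mathbb{R}^n_+$ to the standard global Campanato framework on $\mathbb{R}^n$ plus the extra large-ball term.

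\textbf{The} $\lesssim$ \textbf{direction.} I split the defining supremum of the LHS by the dichotomy $r_B \lessgtr \rho_\nu(x_B)$. For balls $B \subset \mathbb{R}^n_+$ with $r_B < \rho_\nu(x_B)$, the remark just before the lemma says $B = B_{\mathbb{R}^n}$, so $f|_B = \tilde f|_B$ and therefore $P_B^M f = P_B^M \tilde f$; the small-ball oscillation contribution is thus a particular choice of ball in the supremum defining $\|\tilde f\|_{BMO^{s,q,M}(\mathbb{R}^n)}$. The balls with $r_B \geq \rho_\nu(x_B)$ feed into the RHS's second term by definition.

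\textbf{The} $\gtrsim$ \textbf{direction.} The large-ball term on the RHS is tautologically bounded by the LHS. To control $\|\tilde f\|_{BMO^{s,q,M}(\mathbb{R}^n)}$, fix a ball $B \subset \mathbb{R}^n$ and consider three cases. If $B \cap \mathbb{R}^n_+ = \emptyset$, then $\tilde f \equiv 0$ on $B$ and the oscillation vanishes. If $B \subset \mathbb{R}^n_+$, then $\tilde f|_B = f|_B$: when $r_B < \rho_\nu(x_B)$ the oscillation of $\tilde f$ on $B$ is exactly the small-ball contribution to $\|f\|_{BMO^{s,q,M}_{\rho_\nu}(\mathbb{R}^n_+)}$; when $r_B \geq \rho_\nu(x_B)$, the bound \eqref{eq1- PBM} together with H\"older's inequality yields $\|\tilde f - P_B^M \tilde f\|_{L^q(B)} \lesssim \|\tilde f\|_{L^q(B)} = \|f\|_{L^q(B)}$, directly controlled by the LHS large-ball sup.

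\textbf{Main obstacle: the straddling case.} When $B$ meets $\partial \mathbb{R}^n_+$ nontrivially while not sitting inside $\mathbb{R}^n_+$, applying \eqref{eq1- PBM} reduces the task to bounding $|B|^{-1/q}\|f\|_{L^q(B \cap \mathbb{R}^n_+)}$. A single ball in $\mathbb{R}^n_+$ comparable in volume to $B$ generally cannot cover $B \cap \mathbb{R}^n_+$ because near the union of coordinate hyperplanes the relative geometry forces significant volume inflation. I therefore cover $B \cap \mathbb{R}^n_+$ by a Whitney-type family $\{B_k\} \subset \mathbb{R}^n_+$ with bounded overlap and $r_{B_k} \sim \mathrm{dist}(x_{B_k},\partial \mathbb{R}^n_+) \wedge r_B$. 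Whitney balls with $r_{B_k} \geq \rho_\nu(x_{B_k})$ are handled directly by the LHS large-ball sup; the remaining small Whitney balls are enlarged to the reference ball $B(x_{B_k},\rho_\nu(x_{B_k}))$, which by Lemma \ref{lem-critical function} has $r \sim \rho_\nu$ throughout and hence falls into the large-ball regime. Summing with bounded overlap and using $\sum_k |B_k|^{1+sq/n} \lesssim |B|^{1+sq/n}$ completes the bound. The principal difficulty is organizing this covering compatibly with the anisotropic structure of $\rho_\nu$ near $\partial \mathbb{R}^n_+$, especially in coordinate directions $j \notin \mathcal{J}_\nu$ along which $\rho_\nu$ does not detect proximity to $\{x_j = 0\}$; careful bookkeeping of the split between small and large Whitney balls is where the structural properties of the critical function from Section 2.1 are essential.
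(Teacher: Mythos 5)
Your overall reduction is the same as the paper's: extend $f$ by zero, use the Janson--Taibleson--Weiss equivalent norm for $BMO^{s,q,M}(\mathbb{R}^n)$, split the balls of $\mathbb{R}^n$ into those missing $\mathbb{R}^n_+$, those contained in it, and those straddling the boundary, and use \eqref{eq1- PBM} plus H\"older to absorb the minimizing polynomial into the $L^q$-average of $\tilde f$. Your treatment of the first two classes is correct, and in fact more careful than the paper's own write-up, which applies the crude bound $\|\tilde f-P^M_B\tilde f\|_{L^q(B)}\lesssim\|\tilde f\|_{L^q(B)}$ to \emph{every} ball, including small interior ones where the left-hand norm only controls the oscillation.

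The straddling case, however, contains a genuine gap, and it is exactly the case you flag as the "principal difficulty" without resolving it. After enlarging a small Whitney ball $B_k$ to $\tilde B_k:=B(x_{B_k},\rho_\nu(x_{B_k}))$, the large-ball part of the norm yields $\|f\|^q_{L^q(B_k)}\lesssim \|f\|^q_{BMO^{s,q,M}_{\rho_\nu}}|\tilde B_k|^{1+sq/n}$, so you need $\sum_k|\tilde B_k|^{1+sq/n}\lesssim|B|^{1+sq/n}$; but $|\tilde B_k|$ is not controlled by $|B|$. In a direction $j\notin\mathcal J_\nu$ (i.e.\ $\nu_j=-1/2$), $\rho_\nu$ stays bounded below near the face $\{x_j=0\}$ (for $n=1$, $\nu=-1/2$ one has $\rho_\nu(x)=\tfrac1{16}\min\{1,1/x\}\sim 1$ near $0$), so a straddling ball of radius $\varepsilon\ll1$ is covered only by reference balls of radius $\sim 1/16$, and the sum is $\gg|B|^{1+sq/n}$ as $\varepsilon\to0$. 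This cannot be repaired by better bookkeeping: with $n=1$, $\nu=-1/2$, $s>0$ and $f$ smooth, compactly supported, $f\equiv1$ near $0$, one checks that $\|f\|_{BMO^{s,q,M}_{\rho_\nu}(\mathbb{R}_+)}<\infty$ while the zero extension has a unit jump at $0$, so the oscillation of $\tilde f$ on $(-\varepsilon,\varepsilon)$, weighted by $\varepsilon^{-s}$, blows up; no covering argument can close this case. For comparison, the paper disposes of straddling balls by the much simpler device $B\subset B(x_0,2r_B)$ for a point $x_0\in B\cap\mathbb{R}^n_+$, treating $B(x_0,2r_B)\cap\mathbb{R}^n_+$ as a single ball in the large-ball regime; that step implicitly needs $\rho_\nu(x_0)\lesssim r_B$, which holds whenever every $\nu_j>-1/2$ (then $\rho_\nu(x_0)\le\tfrac1{16}\min_j(x_0)_j\le\tfrac18 r_B$ for a straddling ball) but is precisely what fails along faces with $\nu_j=-1/2$. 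So if you restrict to $\mathcal J_\nu=\{1,\dots,n\}$, the single enlarged ball finishes the argument and your Whitney machinery is unnecessary; outside that range the step you could not complete is the step that genuinely does not go through.
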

\begin{proof}
	It is straightforward that
	\[
	\|f\|_{BMO^{s, q,M}_{\rho_\nu}(\Rn_+)}\le \|\tilde f\|_{BMO^{s, q,M}(\Rn)} +\sup_{\substack{B: \ {\rm balls}\\ r_B \geq \rho_{\nu}(x_B)} } \left[ \frac{1}{|B|^{s/n}}
	\left(\frac{1}{|B|}\int_B|f(x)|^q dx \right)^{1/q} \right].
	\]
It remains to show that 
	\[
	\|\tilde f\|_{BMO^{s, q,M}(\Rn)} +\sup_{\substack{B: \ {\rm balls}\\ r_B \geq \rho_{\nu}(x_B)}}  \left[ \frac{1}{|B|^{s/n}}
	\left(\frac{1}{|B|}\int_B|f(x)|^q dx \right)^{1/q} \right]\lesi \|f\|_{BMO^{s, q,M}_{\rho_\nu}(\Rn_+)}.
	\]
	Recall that 
	\[
	\|\tilde f\|_{BMO^{s, q,M}(\Rn)}\sim  \sup_{ B_{\Rn}: \ {\rm balls \, in \, \Rn} } \left[ \frac{1}{|B_{\Rn}|^{s/n}}
	\left(\frac{1}{|B_{\Rn}|}\int_{B_{\Rn}}|\tilde f(x) -P^M_{B_{\Rn}}\tilde f(x)|^q dx \right)^{1/q} \right]
	\]
	Note that for any ball $B_{\Rn}$ in $\Rn$, we have $P^M_{B_{\Rn}}\tilde f =0$ if $B_{\Rn} \cap \Rn_+ = \emptyset$. 
	This, together with \eqref{eq1- PBM}, implies that
	\[
	\begin{aligned}
		\|\tilde f\|_{BMO^{s, q,M}(\Rn)}&\sim  \sup_{ B_{\Rn}\cap \Rn_+\ne \emptyset } \left[ \frac{1}{|B_{\Rn}|^{s/n}}
		\left(\frac{1}{|B_{\Rn}|}\int_{B_{\Rn}}|\tilde f(x) -P^M_{B_{\Rn}}\tilde f(x)|^q dx \right)^{1/q} \right]\\
		&\lesi \sup_{ B_{\Rn}\cap \Rn_+\ne \emptyset } \left[ \frac{1}{|B_{\Rn}|^{s/n}}
		\left(\frac{1}{|B_{\Rn}|}\int_{B_{\Rn}}|\tilde f(x)|^q dx \right)^{1/q} \right] \\
		&\lesi \sup_{ B_{\Rn}\cap \Rn_+\ne \emptyset } \left[ \frac{1}{|B_{\Rn}|^{s/n}}
\left(\frac{1}{|B_{\Rn}|}\int_{B_{\Rn}\cap \Rn_+}|f(x)|^q dx \right)^{1/q} \right] \\
		&\sim \sup_{\substack{B_{\Rn}\cap \Rn_+\ne \emptyset\\ x_B\in \Rn_+ }}  \left[ \frac{1}{|B_{\Rn}|^{s/n}}
		\left(\frac{1}{|B_{\Rn}|}\int_{B_{\Rn}\cap \Rn_+}|f(x)|^q dx \right)^{1/q} \right]+ \\ 
		& \sup_{\substack{B_{\Rn}\cap \Rn_+\ne \emptyset\\ x_B\notin \Rn_+ }}  \left[ \frac{1}{|B_{\Rn}|^{s/n}}
		\left(\frac{1}{|B_{\Rn}|}\int_{B_{\Rn}\cap \Rn_+}|f(x)|^q dx \right)^{1/q} \right]\\
\end{aligned}
	\]
It is obvious that 	
$$
\begin{aligned}
	\sup_{\substack{B_{\Rn}\cap \Rn_+\ne \emptyset\\ x_B\in \Rn_+ }} &\left[ \frac{1}{|B_{\Rn}|^{s/n}}
	\left(\frac{1}{|B_{\Rn}|}\int_{B_{\Rn}\cap \Rn_+}|f(x)|^q dx \right)^{1/q} \right]\\
	 &\lesi \sup_{\substack{B_{\Rn}\cap \Rn_+\ne \emptyset\\ x_B\in \Rn_+ }} \left[ \frac{1}{|B_{\Rn}\cap \Rn_+|^{s/n}}
	\left(\frac{1}{|B_{\Rn}\cap \Rn_+|}\int_{B_{\Rn}\cap \Rn_+}|f(x)|^q dx \right)^{1/q} \right] \\
	&\lesi \|f\|_{BMO^{s, q,M}_{\rho_\nu}(\Rn_+)}.
\end{aligned}
$$	
	
For all balls $B_{\Rn}$ such that $B_{\Rn}\cap \Rn_+\ne \emptyset$ and $ x_B\notin \Rn_+$, take a $x_0\in B_{\Rn}\cap \Rn_+\ne \emptyset$. 
Then we have $B_{\Rn}\subset B_{\Rn}(x_0,2r_B)$. Consequently, for such a ball $B_{\Rn}$, 
$$
\begin{aligned}
	&\left[ \frac{1}{|B_{\Rn}|^{s/n}}
	\left(\frac{1}{|B_{\Rn}|}\int_{B_{\Rn}\cap \Rn_+}|f(x)|^q dx \right)^{1/q} \right]\\
	&\lesi \ \left[ \frac{1}{|B_{\Rn}(x_0,2r_B)\cap \Rn_+|^{s/n}}
	\left(\frac{1}{|B_{\Rn}(x_0,2r_B)\cap \Rn_+|}\int_{B_{\Rn}(x_0,2r_B)\cap \Rn_+}|f(x)|^q dx \right)^{1/q} \right] \\
	&\lesi \|f\|_{BMO^{s, q,M}_{\rho_\nu}(\Rn_+)},
\end{aligned}
$$	
which implies that
\[
	\sup_{\substack{B_{\Rn}\cap \Rn_+\ne \emptyset\\ x_B\notin \Rn_+ }} \left[ \frac{1}{|B_{\Rn}|^{s/n}}
\left(\frac{1}{|B_{\Rn}|}\int_{B_{\Rn}\cap \Rn_+}|f(x)|^q dx \right)^{1/q} \right]\lesi \|f\|_{BMO^{s, q,M}_{\rho_\nu}(\Rn_+)}.
\]

	This completes our proof.
\end{proof}

\begin{prop} \label{independent of q}
	Let $\rho_\nu$ be the function in \eqref{eq- critical function}. Then we have:
	\begin{enumerate}
		[\rm (i)]\item If $s > 0$, $1 \leq q, r \leq \infty$ and $M_1, M_2 \in \mathbb{N}$ such that $M_1, M_2 \geq \lfloor s\rfloor$, then $BMO^{s,q,M_1}_{\rho_\nu}(\Rn_+) = BMO^{s,r,M_2}_{\rho_\nu}(\Rn_+)$;
		\item If $1 \leq q ,r < \infty$ and $M_1,M_2 \in \mathbb{N}_0$, then $\mathcal{C}^{0,q,M_1}_{\rho_\nu}(\Rn_+) = \mathcal{C}^{0,r,M_2}_{\rho_\nu}(\Rn_+)$.
	\end{enumerate}
\end{prop}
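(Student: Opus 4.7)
The plan is to reduce the statement to Proposition \ref{coincidence of global campanato} via the splitting supplied by Lemma \ref{lem-new defn BMO}, namely
\[
\|f\|_{BMO^{s,q,M}_{\rho_\nu}(\mathbb{R}^n_+)} \sim \|\tilde f\|_{BMO^{s,q,M}(\mathbb{R}^n)} + I_q(f),
\]
where
\[
I_q(f) := \sup_{B:\, r_B \geq \rho_\nu(x_B)} \frac{1}{|B|^{s/n}} \left( \frac{1}{|B|} \int_B |f(x)|^q\, dx \right)^{1/q}.
\]
The key observation is that $M$ does not appear in $I_q(f)$, so $M$-independence of the full norm reduces entirely to the corresponding property of the classical Campanato norm of $\tilde f$, which is exactly what Proposition \ref{coincidence of global campanato} supplies.

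For $q$-independence, H\"older's inequality gives $I_{q_1}(f) \leq I_{q_2}(f)$ whenever $q_1 \leq q_2$, and the classical piece is $q$-equivalent by Proposition \ref{coincidence of global campanato}. The nontrivial direction is to bound $I_{q_2}(f)$ by $I_{q_1}(f)$ modulo the classical Campanato norm. To do this, I would fix an arbitrary ball $B$ with $r_B \geq \rho_\nu(x_B)$ and any admissible $M$, and split
\[
\left( \frac{1}{|B|} \int_B |\tilde f|^{q_2} \right)^{1/q_2} \leq \left( \frac{1}{|B|} \int_B |\tilde f - P_B^M \tilde f|^{q_2} \right)^{1/q_2} + \|P_B^M \tilde f\|_{L^\infty(B)}.
\]
The first summand is at most $|B|^{s/n} \|\tilde f\|_{BMO^{s,q_2,M}(\mathbb{R}^n)}$ by definition of the classical norm. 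For the second, the pointwise bound \eqref{eq1- PBM} on the minimizing polynomial together with H\"older's inequality gives
\[
\|P_B^M \tilde f\|_{L^\infty(B)} \lesssim \frac{1}{|B|} \int_B |\tilde f|\, dx \leq \left( \frac{1}{|B|} \int_B |\tilde f|^{q_1}\, dx \right)^{1/q_1} \leq I_{q_1}(f)\, |B|^{s/n}.
\]
Combining yields $I_{q_2}(f) \lesssim \|\tilde f\|_{BMO^{s,q_2,M}(\mathbb{R}^n)} + I_{q_1}(f)$, and a final application of Proposition \ref{coincidence of global campanato} lets me replace $\|\tilde f\|_{BMO^{s,q_2,M}(\mathbb{R}^n)}$ by $\|\tilde f\|_{BMO^{s,q_1,M_1}(\mathbb{R}^n)}$. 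This covers both cases (i) and (ii), treating all finite values of $q_1,q_2$.

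The remaining case $q_2 = \infty$ in (i) is handled identically, replacing the averaged $L^{q_2}$-norm of the oscillation by its $L^\infty$-norm; this is legitimate precisely because $M \geq \lfloor s \rfloor$ is assumed for $s>0$, which is the regime where the Campanato space embeds into a H\"older-Lipschitz space. I expect the only subtle point in the whole argument to be the compatibility of integrations over balls of $\mathbb{R}^n_+$ (where $\rho_\nu$ and $I_q(f)$ live) with integrations over balls in $\mathbb{R}^n$ (where the classical Campanato norm of $\tilde f$ lives); however, this is precisely the content of Lemma \ref{lem-new defn BMO}, so no additional work is required.
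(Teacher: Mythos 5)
Your proposal is correct and follows essentially the same route as the paper's proof: both reduce to the classical equivalence of Proposition \ref{coincidence of global campanato} via the splitting of Lemma \ref{lem-new defn BMO}, and both handle the supremum over large balls by writing $\tilde f = (\tilde f - P_B^M\tilde f) + P_B^M\tilde f$ and controlling the polynomial part through the pointwise bound \eqref{eq1- PBM} and an $L^1$ average. The only cosmetic difference is that you isolate the large-ball quantity as $I_q(f)$ and note its $M$-independence explicitly, whereas the paper carries the full norm through one chain of inequalities; your aside about H\"older--Lipschitz embeddings for $q=\infty$ is unnecessary (the same splitting works verbatim), but harmless.
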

\begin{proof}
	We only give the proof of part (i) since the proof of (ii) is similar.
	
	To prove part (i) it suffices to prove that
	$BMO^{s,q,M}_{\rho_\nu}(\Rn_+) = BMO^{s,1,M}_{\rho_\nu }(\Rn_+)$
	for all $1 \leq q \leq \infty$. By Lemma \ref{lem-new defn BMO} and H\"older's inequality,  $BMO^{s,q,M_2}_{\rho_\nu}(\Rn_+) \subset BMO^{s,1,M_1}_{\rho_\nu }(\Rn_+)$. Hence it remains to prove that   $BMO^{s,1,M_1}_{\rho_\nu}(\Rn_+) \subset BMO^{s,q,M_2}_{\rho_\nu}(\Rn_+)$.

	To see the latter, let $f\in BMO^{s,1,M_1}_{\rho_\nu}(\Rn_+)$. Denote by $\tilde f$ the zero extension of $f$ to $\Rn$. Note that, by part (i) of
	Proposition \ref{coincidence of global campanato}, Lemma \ref{lem-new defn BMO}, \eqref{eq1- PBM} and the fact $|B|\sim |B_{\Rn}|$ for any ball $B$ in $\Rn_+$, we have
	\begin{align*}
		\|f\|_{BMO^{s, q,M_2}_{\rho_\nu}(\Rn_+)} &\sim \|\tilde f\|_{BMO^{s, q,M_2}(\Rn)}  +\sup_{\substack{B: {\rm balls\, in\, \Rn_+}\\ r_B \geq \rho_{\nu}(x_B)} } \left\{ \frac{1}{|B|^{s/n}}
		\left[\frac{1}{|B|}\int_B|f(x)|^q dx \right]^{1/q} \right\}\\
		& \leq \|\tilde f\|_{BMO^{s,1,M}(\Rn)} + \sup_{\substack{B: {\rm balls \ in\ } \Rn_+\\ r_B \geq \rho_{\nu}(x_B)} } \left\{ \frac{1}{|B|^{s/n}}
		\left[\frac{1}{|B|}\int_B|\tilde f(x) -P_{B_{\Rn}}^{M_2} \tilde f(x)|^q dx \right]^{1/q} \right\}\\
		&\quad + \sup_{\substack{B: {\rm balls \ in\ } \Rn_+\\ r_B \geq \rho_{\nu}(x_B)} } \left\{ \frac{1}{|B|^{s/n}}
		\left[\frac{1}{|B|}\int_B|P_{B_{\Rn}}^{M_2}\tilde  f(x)|^q dx \right]^{1/q} \right\}\\
		&\leq \|\tilde f\|_{BMO^{s,1,M}(\Rn)} + \|\tilde f\|_{BMO^{s,q,M}(\Rn)}  + \sup_{\substack{B: {\rm balls \ in\ } \Rn_+\\ r_B \geq \rho_{\nu}(x_B)} } \left\{ \frac{1}{|B_{\Rn}|^{s/n}}
		\left[\sup_{x \in B_{\Rn}}|P_{B_{\Rn}}^{M_2} \tilde f(x)| \right] \right\}\\
		&\lesi 2\|\tilde f\|_{BMO^{s,1,M_1}(\Rn)}  + \sup_{\substack{B: {\rm balls\, in\, \Rn_+}\\ r_B \geq \rho_{\nu}(x_B)} } \left\{ \frac{1}{|B_{\Rn}|^{s/n}}
		\left[\frac{1}{|B_{\Rn}|}\int_{B_{\Rn}}|\tilde f(x)| dx \right] \right\}\\
		&\sim 2\|\tilde f\|_{BMO^{s,1,M_1}(\Rn)}  + \sup_{\substack{B: {\rm balls\, in\, \Rn_+}\\ r_B \geq \rho_{\nu}(x_B)} } \left\{ \frac{1}{B}
		\left[\frac{1}{|B|}\int_{B}|f(x)| dx \right] \right\}\\
		& \lesi  \|f\|_{BMO^{s,1,M_1}_{\rho_\nu}(\Rn_+)}.
	\end{align*}
	This implies that $BMO^{s,1,M_1}_{\rho_\nu}(\Rn_+) \subset BMO^{s,q,M_2}_{\rho_\nu}(\Rn_+)$ and completes the proof.
\end{proof}

\textit{Due to Proposition \ref{independent of q}, we will define the space $BMO^s_{\rho_\nu}(\Rn_+)$ as any space
	 $BMO^{s,r,M}_{\rho_\nu}(\Rn_+)$ with $1\le r\le \vc, M\ge \lfloor s\rfloor$ for $s>0$ and with $1\le r<\vc, M\ge 0$ for $s=0$.}

In order to prove Theorem \ref{dual}, we need the following technical results. 

Since  $\mathcal L_\nu$ is a non-negative self-adjoint operator  satisfying the Gaussian upper bound (see Proposition \ref{prop- delta k pt d>2}), it is well-known that the kernel $K_{\cos(t\sqrt{\mathcal L_\nu})}(\cdot,\cdot)$ of $\cos(t\sqrt{\mathcal L_\nu})$ satisfies 
\begin{equation}\label{finitepropagation}
	{\rm supp}\,K_{\cos(t\sqrt{\mathcal L_\nu})}(\cdot,\cdot)\subset \{(x,y)\in \Rn_+\times \Rn_+:
	|x-y|\leq t\}.
\end{equation}
See for example \cite{Sikora}.

As a consequence of \cite[Lemma 3]{Sikora}, we have:
\begin{lem}\label{lem:finite propagation}
	Let $\nu\in [-1/2,\vc)^n$. Let $\varphi\in C^\vc_0(\mathbb{R})$ be an even function with {\rm supp}\,$\varphi\subset (-1, 1)$ and $\displaystyle \int \varphi =2\pi$. Denote by $\Phi$ the Fourier transform of $\varphi$.  Then for any $k\in \mathbb N$ the kernel $K_{(t^2\mathcal L_\nu)^k\Phi(t\sqrt{\mathcal L_\nu})}$ of $(t^2\mathcal L_\nu)^k\Phi(t\sqrt{\mathcal L_\nu})$ satisfies 
	\begin{equation}\label{eq1-lemPsiL}
		\displaystyle
		{\rm supp}\,K_{(t^2\mathcal L_\nu)^k\Phi(t\sqrt{\mathcal L_\nu})}\subset \{(x,y)\in \Rn_+\times \Rn_+:
		|x-y|\leq t\},
	\end{equation}
	and
	\begin{equation}\label{eq2-lemPsiL}
		|K_{(t^2\mathcal L_\nu)^k\Phi(t\sqrt{\mathcal L_\nu})}(x,y)|\lesi  \f{1}{t^n}
	\end{equation}
	for all $x,y \in \Rn_+$ and $t>0$.
\end{lem}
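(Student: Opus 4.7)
The plan is to derive both claims from a single operator identity plus the finite propagation speed \eqref{finitepropagation}, and then invoke \cite[Lemma 3]{Sikora} to control the $L^\infty$ size of the kernel. I first set $F(\xi)=\xi^{2k}\Phi(\xi)$, so that $(t^2\mathcal L_\nu)^k\Phi(t\sqrt{\mathcal L_\nu})=F(t\sqrt{\mathcal L_\nu})$ by the spectral theorem. Since $\varphi\in C_c^\infty(-1,1)$ is even with $\int\varphi=2\pi$, its Fourier transform $\Phi$ is even and Schwartz, and $2k$ integrations by parts in the Fourier representation give
\[
F(\xi)=\xi^{2k}\Phi(\xi)=(-1)^k\int_{-1}^{1}\varphi^{(2k)}(s)\cos(s\xi)\,ds.
\]
Hence $F$ is an even Schwartz function on $\mathbb R$ whose inverse Fourier transform, $(-1)^k\varphi^{(2k)}/(2\pi)$, is $C_c^\infty$ with support in $(-1,1)$.

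The spectral theorem for the self-adjoint operator $\mathcal L_\nu$ then produces the identity
\[
(t^2\mathcal L_\nu)^k\Phi(t\sqrt{\mathcal L_\nu})=(-1)^k\int_{-1}^{1}\varphi^{(2k)}(s)\cos(st\sqrt{\mathcal L_\nu})\,ds,
\]
so the kernel of the left-hand side is the $s$-integral of $\varphi^{(2k)}(s)K_{\cos(st\sqrt{\mathcal L_\nu})}(x,y)$. The support property \eqref{eq1-lemPsiL} is then immediate from \eqref{finitepropagation}: for $|s|\le 1$, the kernel of $\cos(st\sqrt{\mathcal L_\nu})$ vanishes whenever $|x-y|>|s|t$, and in particular whenever $|x-y|>t$, a condition preserved after integrating in $s$.

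For the pointwise bound \eqref{eq2-lemPsiL}, I would appeal directly to \cite[Lemma 3]{Sikora}, which states that, under a Gaussian upper bound for the heat kernel (supplied here by Proposition \ref{prop- delta k pt d>2}) on a doubling space of homogeneous dimension $n$, any even bounded Borel function $G$ on $\mathbb R$ whose Fourier transform is compactly supported in $[-1,1]$ satisfies $\|K_{G(t\sqrt{\mathcal L_\nu})}\|_\infty\lesssim t^{-n}$, with constant depending on $\|G\|_\infty$ (or a few derivatives thereof). Applied with $G=F$, this yields $|K_{(t^2\mathcal L_\nu)^k\Phi(t\sqrt{\mathcal L_\nu})}(x,y)|\lesssim t^{-n}$, with the implicit constant depending only on $n$, $k$ and finitely many seminorms of $\varphi$.

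The only mild point to verify is that the ambient setting $(\mathbb R^n_+,|\cdot|,dx)$ fits Sikora's framework of a doubling metric measure space of dimension $n$. This is routine: for every $x\in\mathbb R^n_+$ and $r>0$ one has $|B(x,r)\cap\mathbb R^n_+|\sim r^n$ (since a Euclidean ball always meets $\mathbb R^n_+$ in at least a fixed fraction of its volume, even when $x$ lies near the boundary), which matches exactly the form of the Gaussian bound in Proposition \ref{prop- delta k pt d>2}. No further modification of Sikora's argument is required.
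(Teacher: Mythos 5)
Your proposal is correct and follows essentially the same route as the paper, which obtains this lemma directly as a consequence of \cite[Lemma 3]{Sikora} together with the finite propagation property \eqref{finitepropagation} and the Gaussian bound from Proposition \ref{prop- delta k pt d>2}. Your reconstruction of the identity $\xi^{2k}\Phi(\xi)=(-1)^k\int_{-1}^{1}\varphi^{(2k)}(s)\cos(s\xi)\,ds$ and the verification that $(\mathbb R^n_+,|\cdot|,dx)$ is doubling of dimension $n$ simply make explicit the ingredients the paper leaves to the citation.
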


\begin{lem} \label{03}
	Let $s\ge 0$, $\nu\in [-1/2,\vc)^n$ and $\rho_\nu$ be as in \eqref{eq- critical function}. Let $\Phi$ be defined as in Lemma \ref{lem:finite propagation}. 
	Then for $k>\f{s}{2}$, there exists $C>0$ such that for all $f\in BMO^{s}_{\rho_\nu}(\mathbb{R}^n_+)$,
	\begin{equation}
		\label{eq1-Carleson}
		\sup_{B: \, {\rm balls}} \f{1}{|B|^{2s/n +1}} \int_0^{r_B}\int_B |(t^2\mathcal L_\nu)^k  \Phi(t\sqrt{\mathcal L_\nu})f(x)|^2\f{dxdt}{t}\le  C\|f\|_{BMO^{s}_{\rho_\nu}(\mathbb{R}^n_+)}.
	\end{equation}
\end{lem}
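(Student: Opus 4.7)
Fix a ball $B=B(x_B,r_B)$ and denote by $I_B$ the integral inside the supremum on the left of \eqref{eq1-Carleson} for this particular $B$; the goal is $I_B\lesi\|f\|^2_{BMO^s_{\rho_\nu}}$ uniformly in $B$. The plan is to first use finite propagation (Lemma~\ref{lem:finite propagation}) to localize, then split into cases based on $r_B$ versus $\rho_\nu(x_B)$, and apply the two-tier BMO definition (Definition~\ref{defn-BMO}) together with the $L^2$-boundedness of the vertical Littlewood--Paley function $g(u):=\bigl(\int_0^\infty|(t^2\mathcal L_\nu)^k\Phi(t\sqrt{\mathcal L_\nu})u|^2\tfrac{dt}{t}\bigr)^{1/2}$. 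The latter follows from the spectral theorem: $\Psi(\lambda):=\lambda^{2k}\Phi(\lambda)$ satisfies $\int_0^\infty|\Psi(\lambda)|^2\tfrac{d\lambda}{\lambda}<\infty$ since $\Psi$ vanishes of order $2k\ge 2$ at the origin and $\Phi$ is Schwartz. By finite propagation, for $x\in B$ and $0<t<r_B$ the kernel vanishes outside $|x-y|\le t<r_B$, so I may replace $f$ by $f\chi_{4B}$ throughout the region of integration.

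In the \textbf{large-ball case} $r_B\ge\rho_\nu(x_B)$, the ball $4B$ also satisfies $r_{4B}\ge\rho_\nu(x_{4B})$, and the large-ball clause of Definition~\ref{defn-BMO} gives $\|f\chi_{4B}\|^2_{L^2}\lesi|B|^{1+2s/n}\|f\|^2_{BMO^s_{\rho_\nu}}$; combining with $\|g\|_{2\to 2}\lesi 1$ yields $I_B\lesi\|f\|^2_{BMO^s_{\rho_\nu}}$ at once. In the \textbf{small-ball case} $r_B<\rho_\nu(x_B)$, I fix $M=\lfloor s\rfloor$ (legitimate by Proposition~\ref{independent of q}), let $P:=P^M_{4B}f$, and decompose $f\chi_{4B}=(f-P)\chi_{4B}+P\chi_{4B}$. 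The oscillation piece is controlled by $L^2$-boundedness of $g$ together with the small-ball clause of Definition~\ref{defn-BMO}: $\|(f-P)\chi_{4B}\|^2_{L^2}\lesi|B|^{1+2s/n}\|f\|^2_{BMO^s_{\rho_\nu}}$. The intermediate range $\rho_\nu(x_B)/4\le r_B<\rho_\nu(x_B)$ is absorbed into the large-ball analysis applied to $4B$ via Lemma~\ref{lem-critical function}.

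The \textbf{main obstacle} is the polynomial contribution $\Psi(t\sqrt{\mathcal L_\nu})(P\chi_{4B})$. Because $\mathcal L_\nu=-\Delta+V$ does not annihilate polynomials, classical moment cancellation is unavailable, and the bare $L^\infty$ kernel bound $t^{-n}$ integrates to a logarithmic divergence in $t$. To recover smallness I would exploit both $\deg P\le M=\lfloor s\rfloor<2k$ and $|V(x)|\lesi\rho_\nu(x_B)^{-2}$ uniformly on $4B$ (by definition of $\rho_\nu$ and $r_B<\rho_\nu(x_B)$). Writing $\Psi(t\sqrt{\mathcal L_\nu})P=t^{2k}\mathcal L_\nu^k\Phi(t\sqrt{\mathcal L_\nu})P$ and expanding $\mathcal L_\nu^k$ pointwise into products of $V$-factors and derivatives of $P$, every term involving $\Delta^jP$ with $2j>M$ vanishes, and the surviving terms are dominated on $4B$ by $\rho_\nu(x_B)^{-2k}\sup_{4B}|P|$. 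A Campanato chain telescoping through the dyadic sequence $4B\subset 8B\subset\cdots\subset B^*:=B(x_B,\rho_\nu(x_B))$, with the large-ball BMO bound on $B^*$ providing absolute size control, yields $\sup_{4B}|P|\lesi\rho_\nu(x_B)^{s}\|f\|_{BMO^s_{\rho_\nu}}$ (with a harmless logarithmic loss when $s=0$). Finite propagation then reduces the action on $P\chi_{4B}$ for $x\in B$, $t<r_B$ to the pointwise action on $P$, giving $|\Psi(t\sqrt{\mathcal L_\nu})(P\chi_{4B})(x)|\lesi(t/\rho_\nu(x_B))^{2k}\rho_\nu(x_B)^{s}\|f\|_{BMO^s_{\rho_\nu}}$. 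Squaring, integrating over $B\times(0,r_B)$ against $dx\,dt/t$, and dividing by $|B|^{1+2s/n}$ produces a factor $(r_B/\rho_\nu(x_B))^{4k-2s}\le 1$ which closes the estimate precisely under $k>s/2$. The most delicate points are a careful execution of the expansion of $\mathcal L_\nu^kP$ (ensuring the worst surviving terms really scale as $\rho_\nu^{-2k}$, not worse, when the non-commuting derivatives of $V$ and of $P$ are all accounted for) and making the Campanato chain uniform in $B$ across the small-to-large transition.
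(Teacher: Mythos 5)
Your overall architecture coincides with the paper's: finite propagation to localize to $4B$, a case split on $r_B$ versus $\rho_\nu(x_B)$, the spectral-theoretic $L^2$ bound for the vertical square function to dispose of the large-ball case and of the oscillation piece $(f-P^M_{4B}f)\chi_{4B}$, and a pointwise bound on $\mathcal L_\nu^k P^M_{4B}f$ over $4B$ for the polynomial piece. Those parts are fine.

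The gap is in the polynomial piece, specifically the claim that the surviving terms of $\mathcal L_\nu^k P$ are dominated on $4B$ by $\rho_\nu(x_B)^{-2k}\sup_{4B}|P|$ with $P=P^M_{4B}f$. The vanishing of $\Delta^jP$ for $2j>M$ does not kill all derivative terms: for instance with $k=2$ and $s\in[3,4)$ (so $M=3$) the expansion of $\mathcal L_\nu^2$ contains $V\Delta P$ and $\nabla V\cdot\nabla P$, which survive. Since $P^M_{4B}f$ is the minimizing polynomial on a ball of radius $4r_B$, Markov's inequality gives $\sup_{4B}|\partial^\gamma P|\lesssim r_B^{-|\gamma|}\sup_{4B}|P|$, and this is sharp; the derivatives cost $r_B^{-1}$ each, not $\rho_\nu(x_B)^{-1}$. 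With the corrected cost, a term like $V^{k-1}\Delta P$ contributes $\rho_\nu(x_B)^{-2(k-1)}r_B^{-2}\sup_{4B}|P|$, and feeding in your size bound $\sup_{4B}|P|\lesssim\rho_\nu(x_B)^{s}\|f\|$ leaves you, after the final integration, with an uncontrolled factor $(\rho_\nu(x_B)/r_B)^{s-2}$ (more generally $(\rho_\nu(x_B)/r_B)^{\text{positive power}}$ whenever $\lfloor s\rfloor\ge1$). So the argument as written closes only for $0\le s<1$, where $P$ is constant and all derivative terms genuinely vanish.

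The repair is to run the Campanato telescoping at the level of $\mathcal L_\nu^k$ applied to \emph{differences} of minimizing polynomials, which is what the paper does: write $\mathcal L_\nu^kP^M_{4B}f=\sum_{j}\mathcal L_\nu^k\big(P^M_{2^jB}f-P^M_{2^{j+1}B}f\big)+\mathcal L_\nu^kP^M_{B_\rho}f$ with $B_\rho=B(x_B,\rho_\nu(x_B))$. Each difference has size $\lesssim(2^jr_B)^s\|f\|$ \emph{and} derivative scale $(2^jr_B)^{-1}$ on its natural ball $2^jB$, so $\mathcal L_\nu^k$ of it is $\lesssim(2^jr_B)^{s-2k}\|f\|$, and the geometric sum converges to $r_B^{s-2k}\|f\|$ precisely under $k>s/2$; only for the top term $\mathcal L_\nu^kP^M_{B_\rho}f$ do the $\rho_\nu(x_B)^{-2k}$ and $\rho_\nu(x_B)^{s}$ bounds legitimately apply. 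The resulting bound $t^{2k}r_B^{s-2k}\|f\|$ (weaker than the one you claimed) still closes the final integration, yielding exactly the constant $1$ after dividing by $|B|^{2s/n+1}$. You telescoped only the size of $P$, not its derivatives, and that is where the proof breaks.
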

\begin{proof} The lemma was proved in \cite{B}  for the case $s\in (0,1)$. However, in this case the proof is simpler since we can replace $P_Mf$ by $\displaystyle f_B:=\f{1}{|B|}\int_B f$ in Definition \ref{defn-BMO}. 
	
Let $B$ be an arbitrary ball in $\mathbb{R}^n_+$. We consider two cases.
	
	\medskip
	\textbf{Case 1: $r_B\ge  \rho_\nu(x_B)/4$}. Due to Lemma \ref{lem:finite propagation},  
	\[
	\begin{aligned}
		\f{1}{|B|^{2s/n +1}} \int_0^{r_B}\int_B & |(t^2\mathcal L_\nu)^k  \Phi(t\sqrt{\mathcal L_\nu})f(x)|^2\f{dx dt}{t}\\
		&= \f{1}{|B|^{2s/n +1}} \int_0^{r_B}\int_{B} |(t^2\mathcal L_\nu)^k  \Phi(t\sqrt{\mathcal L_\nu})(f\chi_{4B})(x)|^2\f{dx dt}{t}.
	\end{aligned}
	\]
	Note that by the spectral theory, the Littlewood-Paley square function
	\[
	g \mapsto \left(\int_0^{\infty} |(t^2\mathcal L_\nu)^k\Phi(t\sqrt{\mathcal L_\nu})g|^2\f{dt}{t}\right)^{1/2}
	\]
	is bounded on $L^2(\mathbb{R}^n_+)$. Hence,
	\[
	\begin{aligned}
		\f{1}{|B|^{2s/n +1}} \int_0^{r_B}\int_{B} |(t^2\mathcal L_\nu)^k  \Phi(t\sqrt{\mathcal L_\nu})(f\chi_{3B})(x)|^2\f{dx dt}{t}&\lesi  \f{1}{|B|^{2s/n +1}} \|f\chi_{4B}\|_{2}^2\\
		&\lesi  \|f\|_{BMO^s_{\rho_\nu}(\mathbb{R}^n_+)}^2.
	\end{aligned}
	\]

	\medskip
	
	\textbf{Case 2: $r_B< \rho_\nu(x_B)/4$}.  In this case, we write
	\[
	\begin{aligned}
		\f{1}{|B|^{2s/n +1}} \int_0^{r_B}&\int_B |(t^2\mathcal L_\nu)^k \Phi(t\sqrt{\mathcal L_\nu}) f(x)|^2\f{dx dt}{t}\\
		&\le \f{1}{|B|^{2s/n +1}} \int_0^{r_B}\int_{B} |(t^2\mathcal L_\nu)^k  \Phi(t\sqrt{\mathcal L_\nu})(f - P^M_{4B}f)(x)|^2\f{dx dt}{t}\\
		&\ \ \ \ \ + \f{1}{|B|^{2s/n +1}} \int_0^{r_B}\int_{B} |(t^2\mathcal L_\nu)^k  \Phi(t\sqrt{\mathcal L_\nu})P^M_{4B}f(x)|^2\f{dx dt}{t}.
	\end{aligned}
	\]
	Similarly to Case 1, we have
	\[
	\begin{aligned}
		\f{1}{|B|^{2s/n +1}} \int_0^{r_B}\int_{B} &|(t^2\mathcal L_\nu)^k \Phi(t\sqrt{\mathcal L_\nu})(f - P^M_{4B}f)(x)|^2\f{dx dt}{t}\\
		&=\f{1}{|B|^{2s/n +1}} \int_0^{r_B}\int_{B} |(t^2\mathcal L_\nu)^k  \Phi(t\sqrt{\mathcal L_\nu})\big[(f - P^M_{4B}f)\chi_{4B}\big](x)|^2\f{dx dt}{t}\\
		&\lesi \f{1}{|B|^{2s/n +1}}\|(f - P^M_{4B}f)\chi_{4B}\|_{2}^2\\
		&\lesi  \|f\|_{BMO^s_{\rho_\nu}(\mathbb{R}^n_+)}^2.
	\end{aligned}
	\]
	Hence it remains to show that
	\begin{equation}\label{eq123}
		\f{1}{|B|^{2s/n +1}} \int_0^{r_B}\int_{B} |(t^2\mathcal L_\nu)^k  \Phi(t\sqrt{\mathcal L_\nu})P^M_{4B}f(x)|^2\f{dx dt}{t}\lesi   \|f\|_{BMO^s_{\rho_\nu}(\mathbb{R}^n_+)}^2.
	\end{equation}

By Lemma \ref{lem:finite propagation}, we have, for $x\in B$ and $t<r_B$,
	\begin{equation}\label{eq0-Carleson proof}
		\begin{aligned}
			|(t^2\mathcal L_\nu)^k\Phi(t\sqrt{\mathcal L_\nu})P^M_{4B}f(x)| &=\Big| \int_{2B} t^{2k}\varphi(t\sqrt \mathcal L_\nu)(x,y) \mathcal L_\nu^k(P^M_{4B}f)(y) dy\Big|\\
			&\lesi t^{2k} \sup_{z\in 4B}|\mathcal L_\nu^k(P^M_{4B}f)(z)|.
		\end{aligned}
	\end{equation}
		Setting $B_\rho = B(x_B,\rho_\nu(x_B))$, then we have
	\[
	\begin{aligned}
		\sup_{z\in 4B}|\mathcal L_\nu^k(P^M_{4B}f)(z)|
		&\lesi  \sup_{z\in 4B} |\mathcal L_\nu^k[P_{4B}^M f(z)-P_{B_\rho}^M f(z)]|+ r_B^{-2k}\sup_{z\in 4B} |\mathcal L_\nu^kP_{B_\rho}^M f(z)|\\
		&=: E_1 + E_2.
	\end{aligned}
	\]	
	For the term $E_2$, using the expression
	\[
	\mathcal L_\nu^k=\Biggl\{\sum_{i=1}^n \left[-\frac{\partial^2}{\partial x_i^2} + x_i^2 + \frac{1}{x_i^2}(\nu_i^2 - \frac{1}{4})\right]\Biggr\}^k,
	\]
	the fact $\rho_\nu(x)\lesi \min\{1/|x|, x_i: i\in \mathcal J_\nu\}$, \eqref{eq- polynomial} and Lemma \ref{lem-critical function} we have
\[
\begin{aligned}
	E_2 &\lesi  \sup_{z\in B_\rho} |\mathcal L_\nu^k P_{B_\rho}^M f(z)| \lesi \rho_\nu(x_B)^{-2k}\sup_{z\in B_\rho} | P_{B_\rho}^M f(z)|.
\end{aligned}
\]
This, together with  \eqref{eq1- PBM}, Lemma \ref{lem-new defn BMO} and Lemma \ref{independent of q}, further implies that
	\[
	\begin{aligned}
		E_2 
		&\lesi  \rho_\nu(x_B)^{-2k} \fint_{B_\rho}|P_{B_\rho}^M f(z)|d(z)\\
		&\lesi  \rho_\nu(x_B)^{-2k} \fint_{B_\rho}|f(z)-P_{B_\rho}^M f(z)|dz +\rho_\nu(x_B)^{-2k} \fint_{B_\rho}|f(z)|dz\\
		&\lesi  \rho_\nu(x_B)^{-2k} |B_\rho|^{s/n}\|f\|_{BMO_\rho^{s}(\Rn_+)}\\
		&\lesi  r_B^{s-2k}\|f\|_{BMO_\rho^{s}(\Rn_+)}.
	\end{aligned}
	\]
	To estimate $E_1$, let $k_0\in \mathbb N$ such that $2^{k_0}r_{B}<\rho_\nu(x_B)\le 2^{k_0+1}r_{B}$. Then we have
\[
\begin{aligned}
	E_1 &\lesi    \left[\sum_{j=2}^{k_0-1}\sup_{z\in 4B} |\mathcal L_\nu^k [P_{2^{j}B}^M f(z)-P_{2^{j+1}B}^M f(z)]| +\sup_{z\in 4B} |\mathcal L_\nu^k [P_{2^{k_0}B}^M f(z)-P_{B_\rho}^M f(z)]|\right]\\
	&\lesi  \left[\sum_{j=2}^{k_0-1}\sup_{z\in 2^jB} |\mathcal L_\nu^k [P_{2^{j}B}^M f(z)-P_{2^{j+1}B}^M f(z)]| +\sup_{z\in 2^{k_0}B} |\mathcal L_\nu^k [P_{2^{k_0}B}^M f(z)-P_{B_\rho}^M f(z)]|\right].
\end{aligned}
\]
	
	Using the expression
	\[
	\mathcal L_\nu^k=\Biggl\{\sum_{i=1}^n \left[-\frac{\partial^2}{\partial x_i^2} + x_i^2 + \frac{1}{x_i^2}(\nu_i^2 - \frac{1}{4})\right]\Biggr\}^k,
	\]
	the fact that $\rho_\nu(x)\lesi \min\{1/|x|, x_i: i\in \mathcal J_\nu\}$, \eqref{eq- polynomial}, Lemma \ref{lem-critical function} and the fact that $r_B\le \rho(x_B)/4$, we obtain
	\[
	\begin{aligned}
		E_1 
		&\lesi   \sum_{j=2}^{k_0-1}(2^jr_B)^{-2k}\sup_{z\in 2^jB} |P_{2^{j}B}^M f(z)-P_{2^{j+1}B}^M f(z)| +\rho(x_B)^{-2k}\sup_{z\in 2^{k_0}B} |P_{2^{k_0}B}^M f(z)-P_{B_\rho}^M f(z)| \\
		&\lesi \sum_{j=2}^{k_0-1}(2^jr_B)^{-2k}\fint_{2^jB} |P_{2^{j}B}^M f(z)-P_{2^{j+1}B}^M f(z)|dz +\rho(x_B)^{-2k}\fint_{2^{k_0}B} |P_{2^{k_0}B}^M f(z)-P_{B_\rho}^M f(z)|dz \\
		&=: E_{11}+E_{12}.
	\end{aligned}
	\]
For $E_{11}$, we have
	\[
	\begin{aligned}
		E_{11}	&\lesi \sum_{j=2}^{k_0-1}(2^jr_B)^{-2k}\Big[\fint_{2^{j}B}|f(z)- P_{2^{j}B}^M f(z)| dz+\fint_{2^{j}B}|f(z)- P_{2^{j+1}B}^M f(z)| dz\Big]\\
		&\lesi \sum_{j=2}^{k_0-1}(2^jr_B)^{-2k}|2^jB|^{s/n} \|f\|_{BMO_{\rho_\nu}^{s}(\Rn_+)}\\
		&\lesi r_B^{s-2k} \|f\|_{BMO_{\rho_\nu}^{s}(\Rn_+)},
	\end{aligned}
	\]
	as long as $k>s/2$.
	
	Similarly,
	\[
	E_{12}\lesi r_B^{s-2k} \|f\|_{BMO_{\rho_\nu}^{s}(\Rn_+)}.
	\]
		Collecting the estimates of $E_2$, $E_{11}$ and $E_{12}$,
	\[
	\begin{aligned}
		\sup_{z\in 4B}|\mathcal L_\nu^kP^M_{4B}f(z)| &\lesi   r_B^{s-2k} \|f\|_{BMO_{\rho_\nu}^{s}(\Rn_+)}.
	\end{aligned}
	\]
	
	This, along with \eqref{eq0-Carleson proof}, yields
	\begin{equation}
		\begin{aligned}
			&	\f{1}{|B|^{2s/n +1}} \int_0^{r_B}\int_B |(t^2\mathcal L_\nu)^k\varphi(t\sqrt{\mathcal L_\nu})f(x)|^2\f{dxdt}{t}\\
			&\quad \lesi \|f\|^2_{BMO_{\rho_\nu}^{s}(\Rn_+)} \f{r_B^{2s-4k}}{|B|^{2s/n +1}} \int_0^{r_B}\int_B t^{4k}  \f{dxdt}{t}\\
			&\quad \lesi  \|f\|^2_{BMO_{\rho_\nu}^{s}(\Rn_+)},
		\end{aligned}
	\end{equation}
	as along as $k> s/2$.

	This completes our proof.

\end{proof}

Arguing similarly to the proof of \cite[Lemma 4.9]{B}, we have:
\begin{lem} \label{inte}
	Let $s\ge 0$, $\nu\in [-1/2,\vc)^n$ and $\rho_\nu$ be as in \eqref{eq- critical function}.  If $f \in BMO^s_{\rho_\nu}(\mathbb{R}^n_+)$ and $\sigma > s$, then
	\begin{align} \label{integrable}
		\int_{\mathbb R^n_+}\frac{ |f(x)| }{ (1+ |x|) ^{n + \sigma}}dx <\infty.
	\end{align}
\end{lem}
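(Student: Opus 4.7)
I plan to decompose $\Rn_+$ dyadically around the origin and, on each dyadic shell, bound $\int|f|$ using only the large-ball clause ($r_B \ge \rho_\nu(x_B)$) of Definition \ref{defn-BMO}. The crucial observation is that $\rho_\nu(x)\le 1/16$ for every $x\in\Rn_+$, so any ball of radius bounded below by, say, $1/16$ automatically falls into the large-ball regime regardless of where its center sits; this lets me sidestep entirely the small-ball clause (and the auxiliary bounds on $\sup_B|P_B^M f|$ that a small-ball argument would require).

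Concretely, set $A_0:=\{x\in \Rn_+:|x|\le 1\}$ and $A_k:=\{x\in \Rn_+: 2^{k-1}<|x|\le 2^k\}$ for $k\ge 1$. For each $k\ge 0$ I will fix $y_k:=2^k(1,\ldots,1)/\sqrt n\in\Rn_+$, so that $|y_k|=2^k$, and consider the ball $B_k:=B(y_k, 3\cdot 2^k)$. By the triangle inequality $A_k\subset B_k\cap\Rn_+$, and since $\rho_\nu(y_k)\le 1/16< 3\cdot 2^k=r_{B_k}$ one has $r_{B_k}\ge \rho_\nu(x_{B_k})$. Taking $q=1$ in Definition \ref{defn-BMO} (which is legitimate by Proposition \ref{independent of q}), the large-ball clause yields
\begin{equation*}
\int_{A_k}|f(x)|\,dx\ \le\ \int_{B_k\cap \Rn_+}|f(x)|\,dx\ \lesssim\ |B_k|^{1+s/n}\,\|f\|_{BMO^s_{\rho_\nu}(\Rn_+)}\ \sim\ 2^{k(n+s)}\,\|f\|_{BMO^s_{\rho_\nu}(\Rn_+)}.
\end{equation*}

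Summing against the weight $(1+|x|)^{-(n+\sigma)}\sim 2^{-k(n+\sigma)}$ on $A_k$ (and $\sim 1$ on $A_0$) then gives
\begin{equation*}
\int_{\Rn_+}\f{|f(x)|}{(1+|x|)^{n+\sigma}}\,dx\ \lesssim\ \int_{A_0}|f(x)|\,dx+\sum_{k=1}^{\infty}2^{-k(n+\sigma)}\int_{A_k}|f(x)|\,dx\ \lesssim\ \|f\|_{BMO^s_{\rho_\nu}(\Rn_+)}\sum_{k=0}^{\infty}2^{-k(\sigma-s)},
\end{equation*}
which is finite precisely because $\sigma>s$. I do not foresee any serious obstacle: the argument is essentially a direct unpacking of the large-ball half of the Campanato norm, combined with the uniform upper bound $\rho_\nu\le 1/16$ that makes each dyadic shell accessible via a single large ball.
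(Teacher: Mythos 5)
Your argument is correct. The two things that need checking both hold: (a) every dyadic shell $A_k$ is contained in a single ball $B_k=B(y_k,3\cdot 2^k)$ with center $y_k\in\Rn_+$, and (b) since $\rho_\nu(x)\le 1/16$ for \emph{every} $x\in\Rn_+$ by \eqref{eq- critical function}, the ball $B_k$ satisfies $r_{B_k}\ge\rho_\nu(x_{B_k})$ and therefore falls under the second (no-cancellation) clause of Definition \ref{defn-BMO}, which with $q=1$ (legitimate by H\"older or by Proposition \ref{independent of q}) gives $\int_{B_k\cap\Rn_+}|f|\lesssim |B_k|^{1+s/n}\|f\|_{BMO^s_{\rho_\nu}}\sim 2^{k(n+s)}\|f\|_{BMO^s_{\rho_\nu}}$. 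The geometric series $\sum_k 2^{-k(\sigma-s)}$ then converges exactly because $\sigma>s$.

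The paper does not write out a proof here; it defers to the analogous lemma in \cite{B}, where the standard route is a telescoping comparison of the local averages (or minimizing polynomials) of $f$ over dyadic dilates $2^jB$ up to the critical scale $r\sim\rho_\nu(x_B)$, followed by the large-ball clause. Your proof bypasses the telescoping entirely by observing that the critical function is uniformly bounded above, so that a single large ball per shell suffices and the small-ball clause (and hence any control of $\sup_B|P^M_Bf|$) is never needed. This is a genuine simplification specific to this setting — it would not work for a general critical function that can be large — and it has the added benefit of treating all $s\ge 0$ and all $M$ uniformly. No gap.
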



\begin{lem} \label{07}
	Let $\nu\in [-1/2,\vc)^n$, $\rho_\nu$ be as in \eqref{eq- critical function} and $\Phi$ be as in Lemma \ref{lem:finite propagation}. Let  $\f{n}{n+\gamma_\nu}<p\le 1$ and $s=n(1/p-1)$, where $\gamma_\nu=\nu_{\min}+1/2$. 
	Then  for every $f \in BMO^{s}_{\rho_\nu} (\mathbb{R}^n_+)$ and every $(p,\rho_\nu)$-atom $a$,
	\begin{align} \label{abc}
		\int_{\mathbb R^n_+} f (x) {a(x)}dx  =  C(k)\int_{\mathbb R^n_+ \times (0,\infty)}(t^2\mathcal L_\nu )^k\Phi(t\sqrt{\mathcal L_\nu})f(x) { t^2\mathcal L_\nu e^{-t^2\mathcal L_\nu}a(x)} \frac{dx dt}{t},
	\end{align}
	where $C(k) = \Big[\displaystyle \int_0^\vc z^{k}\Phi(\sqrt z)e^{-z}  dz\Big]^{-1}$. 
\end{lem}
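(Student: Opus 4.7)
The identity to prove is a Calderón reproducing formula, paired against a BMO-type function $f$ which is not in $L^2$. My plan is three-stage: a spectral identity on $L^2$, verification of absolute convergence on the right-hand side via tent-space duality, and a truncation/limit argument that handles the pairing with $f \notin L^2$.

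\textbf{Spectral identity.} The constant $C(k)$ is chosen precisely so that, after the substitution $z = t^2\lambda$,
\[
C(k) \int_0^\infty (t^2\lambda)^k \Phi(t\sqrt\lambda)\cdot t^2\lambda\, e^{-t^2\lambda}\,\frac{dt}{t} = 1 \qquad \text{for every } \lambda > 0
\]
(up to a harmless numerical factor absorbed into the normalization). The spectral theorem applied to the nonnegative self-adjoint $\mathcal L_\nu$ then yields the strongly $L^2$-convergent identity
\[
a = C(k)\int_0^\infty (t^2\mathcal L_\nu)^k \Phi(t\sqrt{\mathcal L_\nu})\, t^2\mathcal L_\nu e^{-t^2\mathcal L_\nu}\, a\,\frac{dt}{t}, \quad a \in L^2(\mathbb R^n_+).
\]

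\textbf{Tent-space duality for the RHS.} Set $F(x,t) = (t^2\mathcal L_\nu)^k\Phi(t\sqrt{\mathcal L_\nu})f(x)$ and $G(x,t) = t^2\mathcal L_\nu e^{-t^2\mathcal L_\nu}a(x)$. Here $F$ is defined pointwise by Lemma \ref{lem:finite propagation} (kernel compactly supported in $\{|x-y|\le t\}$ with size $\lesssim t^{-n}$) together with the local integrability of $f$ coming from Lemma \ref{inte}. Choosing $k > s/2$ (admissible since $s = n(1/p - 1) < \gamma_\nu$ in our range of $p$), Lemma \ref{03} combined with the inclusion $\widehat{B} \subset B \times (0, r_B)$ gives $\|F\|_{T_2^{p,\infty}} \lesssim \|f\|_{BMO^{s}_{\rho_\nu}}$. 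On the $G$ side, the $(p,\rho_\nu)$-atom $a$ belongs to $H^p_{\rho_\nu} = H^p_{\mathcal L_\nu} = H^p_{S_{\mathcal L_\nu}}$ by Theorems \ref{mainthm2s} and \ref{mainthm1-Hardy}, which yields $\|G\|_{T_2^p} = \|S_{\mathcal L_\nu} a\|_{L^p} \lesssim 1$. Proposition \ref{05}(iii) then produces the absolute convergence
\[
\iint_{\mathbb R^n_+\times(0,\infty)} |F(x,t)G(x,t)|\,\frac{dx\,dt}{t} < \infty.
\]

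\textbf{Truncation and pairing.} Let $f_R = f\chi_{B(0,R)\cap \mathbb R^n_+} \in L^2$. Pairing the spectral identity against $f_R$ is legitimate: Fubini applies by $L^2$-convergence of the $t$-integral, and self-adjointness of $(t^2\mathcal L_\nu)^k\Phi(t\sqrt{\mathcal L_\nu})$ moves the operator onto $f_R$, giving
\[
\int f_R a\,dx = C(k)\iint (t^2\mathcal L_\nu)^k\Phi(t\sqrt{\mathcal L_\nu})f_R(x)\cdot G(x,t)\,\frac{dx\,dt}{t}.
\]
Now let $R\to\infty$. The LHS tends to $\int f a\,dx$ since $a$ is compactly supported. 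On the RHS, the finite propagation property of Lemma \ref{lem:finite propagation} gives $(t^2\mathcal L_\nu)^k\Phi(t\sqrt{\mathcal L_\nu})f_R(x) = F(x,t)$ whenever $|x|+t \le R$, while on the complement the difference is dominated pointwise by $\tilde F(x,t) := Ct^{-n}\int_{|y-x|\le t}|f(y)|\,dy$. The analogue of Lemma \ref{03} applied to $|f|$ (or a direct maximal Carleson estimate leveraging Lemma \ref{inte}) places $\tilde F \in T_2^{p,\infty}$, so $\tilde F\,|G| \in L^1(dx\,dt/t)$ and dominated convergence concludes \eqref{abc}.

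\textbf{Main obstacle.} The decisive technical step is the tent-space control of $F$ in the second stage: Lemma \ref{03} is the Carleson estimate that converts a BMO condition on $f$ into a $T_2^{p,\infty}$ bound on $F$, compensating for $f \notin L^2$. The subsidiary difficulty is dominating the tail arising from $(f - f_R)$; this is handled by the pointwise averaging bound that replaces $F$ with $\tilde F$ and by an analogous Carleson estimate, whose validity in turn relies on the growth control \eqref{integrable} from Lemma \ref{inte}.
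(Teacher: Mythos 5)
Your overall architecture (the spectral reproducing identity, tent-space duality via Lemmas \ref{03} and \ref{05} to make the right-hand side absolutely convergent, and a limiting argument to handle $f\notin L^2$) matches the paper's, and the first two stages are fine. The gap is in the truncation stage. You dominate $(t^2\mathcal L_\nu)^k\Phi(t\sqrt{\mathcal L_\nu})(f-f_R)$ by $\tilde F(x,t):=Ct^{-n}\int_{|y-x|\le t}|f(y)|\,dy$ and assert that ``the analogue of Lemma \ref{03} applied to $|f|$'' places $\tilde F$ in $T_2^{p,\infty}$. This is false. The Carleson estimate of Lemma \ref{03} rests entirely on the cancellation of the operator $(t^2\mathcal L_\nu)^k\Phi(t\sqrt{\mathcal L_\nu})$: the factor $(t^2\mathcal L_\nu)^k$ annihilates the minimizing polynomial $P^M_{4B}f$ up to an error of size $t^{2k}r_B^{s-2k}$, and it is precisely this decay in $t$ that makes $\int_0^{r_B}\frac{dt}{t}$ harmless. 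Once you put absolute values inside the kernel, $\tilde F(y,t)\approx\fint_{B(y,t)}|f|$ has no decay as $t\to0$; on any ball $B$ where $|f|$ is bounded below on a set of positive measure one gets $\int_{\widehat B}|\tilde F(y,t)|^2\frac{dy\,dt}{t}\gtrsim\int_0^{r_B}\frac{dt}{t}=\infty$, so $\tilde F\notin T_2^{p,\infty}$ and your dominated-convergence step has no integrable majorant.

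The paper avoids this by never truncating $f$: it keeps the operator on the atom side, truncates the $t$-integral instead, and proves the uniform pointwise bound
\begin{equation*}
\sup_{\varepsilon,N>0}\Bigl|\int_\varepsilon^N(t^2\mathcal L_\nu)^k\Phi(t\sqrt{\mathcal L_\nu})\,t^2\mathcal L_\nu e^{-t^2\mathcal L_\nu}a(x)\,\frac{dt}{t}\Bigr|\le c_a(1+|x|)^{-(n+\gamma_\nu)},
\end{equation*}
obtained by writing the truncated integral as $\eta(\varepsilon\sqrt{\mathcal L_\nu})a-\eta(N\sqrt{\mathcal L_\nu})a$ for the auxiliary function $\eta(\lambda)=\int_\lambda^\infty u^{2k}\Phi(u)u^2e^{-u^2}\frac{du}{u}$, which extends to an even Schwartz function, and then establishing kernel bounds for $\eta(t\sqrt{\mathcal L_\nu})$ with the $\rho_\nu$-decay. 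Paired with Lemma \ref{inte}, this justifies Fubini directly. If you want to salvage your route, you cannot work with a global tent-space bound on $\tilde F$; you would instead need an $R$-uniform estimate of $\iint|D_R(x,t)||G(x,t)|\frac{dx\,dt}{t}$ (where $D_R$ is the error supported in $\{|x|+t>R\}$) that exploits the rapid decay of $G(x,t)=t^2\mathcal L_\nu e^{-t^2\mathcal L_\nu}a(x)$ away from $\operatorname{supp}a$ and for large $t$ together with \eqref{integrable} --- which amounts to redoing the paper's pointwise estimates in a different guise.
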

\begin{proof}
	\cite[Lemma 4.11]{B} proved the result for $e^{-t^2\mathcal L_\nu}$ instead of $\Phi(t\sqrt{\mathcal L_\nu})$. While the proof in this setting follows the spirit of \cite[Lemma 4.11]{B}, some major modifications are required. For the readers' convenience, we provide the details here.
	
	  Set
	\begin{align*}
		F(x,t) := (t^2\mathcal L_\nu )^k\Phi(t\sqrt{\mathcal L_\nu})f(x) \quad \mbox{and} \quad G(x,t):= t^2\mathcal L_\nu e^{-t^2\mathcal L_\nu}a(x).
	\end{align*}
	It follows  from Lemma \ref{03}, Theorem \ref{mainthm1-Hardy} and Theorem \ref{mainthm2s} that
	\begin{align*}
		\|\mathcal{C}_p (F)\|_{L^\infty(\mathbb R^n_+)} \leq C \|f\|_{BMO^s_{\rho_\nu}(\mathbb R^n_+)}
	\end{align*}
	and
	\begin{align*}
		\|\mathcal{A}(G)\|_{L^p(\mathbb R^n_+)} = \|S_L a\|_{L^p(\mathbb R^n_+)} \leq C.
	\end{align*}
	Hence by Lemma \ref{05} (iii) the integral
	\begin{align*}
		\int_{\mathbb R^n_+ \times (0,\infty)} F(x,t)G(x,y)\frac{dx dt}{t}
	\end{align*}
	is well-defined. 
	
	On the other hand, for each $t>0$,
	\begin{equation} \label{int1}
		\begin{split}
			\int_{\mathbb R^n_+ \times (0,\infty)} F(x,t)G(x,y)\frac{dx dt}{t} &= \int_{\mathbb R^n_+ \times (0,\infty)}f(x) {(t^2\mathcal L_\nu )^k\Phi(t\sqrt{\mathcal L_\nu}) t^2\mathcal L_\nu e^{-t^2\mathcal L_\nu}a(x)} \frac{dx dt}{t}\\
			&= \int_{\mathbb R^n_+ }f(x) \int_0^\vc {(t^2\mathcal L_\nu )^k\Phi(t\sqrt{\mathcal L_\nu}) t^2\mathcal L_\nu e^{-t^2\mathcal L_\nu}a(x)}\f{dt}{t} dx.
		\end{split}
	\end{equation}

We will show that 
	\begin{align} \label{jus2}
		\sup_{\substack{\varepsilon, N>0}}\left|\int_\varepsilon^N
		(t^2\mathcal L_\nu )^k\Phi(t\sqrt{\mathcal L_\nu}) t^2\mathcal L_\nu e^{-t^2\mathcal L_\nu}a(x)\f{dt}{t}\right| \leq c_{a} (1+|x|)^{-(n +\gamma_\nu)}.
	\end{align}
	
	To do this, we first prove that
		\begin{align} \label{jus1}
		\sup_{t>0}\left|(t^2\mathcal L_\nu )^k\Phi(t\sqrt{\mathcal L_\nu}) t^2\mathcal L_\nu e^{-t^2\mathcal L_\nu}a(x) \right| \leq c_{a} (1 + |x|)^{-(n+\gamma_\nu)}.
	\end{align}
		In what follows we denote by $K_{K(\mathcal L_\nu)}(x,y)$ the kernel of the functional calculus $F(\mathcal L_\nu)$ for a bounded function $F: [0,\vc)\to \mathbb C$. By Proposition \ref{prop- delta k pt d>2}, \eqref{eq1-lemPsiL} and \eqref{eq2-lemPsiL}, we have
	\begin{equation}\label{eq- kernel of pt Phi}
		\begin{aligned}
		\left|K_{(t^2\mathcal L_\nu)^k\Phi(t\sqrt{\mathcal L_\nu}) (t^2 \mathcal L_\nu)^k e^{-t^2 \mathcal L_\nu}}(x,y) \right|
		& = \left|\int_{\Rn_+} p_{t^2}^\nu(x,y)K_{(t^2\mathcal L_\nu)^{k+1}\Phi(t\sqrt{\mathcal L_\nu})}(y,z) dz \right| \\
		&  \le \int_{B(y,t)} \frac{C_k}{t^{2n}} \exp\left( -\frac{|x-z|^2}{ct^2} \right) \left(1+\frac{t}{\rho_\nu(x)}\right)^{-\gamma_\nu}dz \\
		& \le  \frac{c_k}{t^n} \exp\left( -\frac{|x-y|^2}{ct^2} \right)\left(1+\frac{t}{\rho_\nu(x)}\right)^{-\gamma_\nu}.
	\end{aligned}
\end{equation}

	Suppose $a$ is a $(p,\rho)$-atom associated to the ball $B:=B(x_B,r_B)$ with $r_B\le \rho_\nu(x_B)$. If $|x|\le 2(1+|x_B|)$, then
	\begin{equation}\label{small}
		\begin{split}
			\left|(t^2\mathcal L_\nu)^k\Phi(t\sqrt{\mathcal L_\nu}) (t^2 \mathcal L_\nu)^k e^{-t^2 \mathcal L_\nu}a(x) \right| &\leq C_{k} \|a\|_{\vc}
			\int_{\Rn_+}  \frac{1}{t^n} \exp\left( -\frac{|x-y|^2}{ct^2} \right) dy \\
			& \leq c_{k}  \|a\|_{\vc}\\
			& \leq c_{k,a}  (1+|x_B|)^{-(n+\gamma_\nu)}\\
			&\le c_{k,a} (1+|x|)^{-(n+\gamma_\nu)}.
		\end{split}
	\end{equation}
	If $|x|\ge 2(1+|x_B|)\ge 2r_B$, then $|x-z|\sim |x|$ for $z\in B$. This, together with \eqref{eq- kernel of pt Phi} and the fact $\rho_\nu(x)\le \f{1}{|x|}$, yields that for $|x|\ge 2(1+|x_B|)\ge 2r_B$,
	\begin{equation} \label{large}
		\begin{aligned}
			\left|(t^2\mathcal L_\nu)^k\Phi(t\sqrt{\mathcal L_\nu}) (t^2 \mathcal L_\nu)^k e^{-t^2 \mathcal L_\nu}a(x) \right| 	&\leq C_{k} \int_{B} \frac{1}{t^n} \exp\left( -\frac{|x|^2}{ct^2} \right)\left(\frac{t}{\rho_\nu(x)}\right)^{-\gamma_\nu} |a(y)| dy \\
			&\leq c_{k}  \frac{1}{t^n} \exp\left( -\frac{|x|^2}{ct^2} \right) \left(\frac{1}{t|x|}\right)^{\gamma_\nu} \|a\|_{1} \\
			&\leq c_{k,a}  |x|^{-(n+\gamma_\nu)}\sim C_{k,a}  (1+|x|)^{-(n+\gamma_\nu)}.
		\end{aligned}
	\end{equation}
	Combining \eqref{small} and \eqref{large} yields \eqref{jus1}.

	We now turn to prove \eqref{jus2}. To do this, we define, for $\lambda \in [0, \infty)$
	\begin{align*}
		\eta(\lambda) = \int_\lambda^\infty t^{2k}\Phi(t)t^{2}e^{-t^2}\frac{dt}{t}.
	\end{align*}
	By the Fundamental Theorem of Calculus, one can check that $\eta$ is smooth on $[0,\infty)$ and decays rapidly at infinity.
	Moreover,
	\begin{align*}
		\lim_{\lambda \rightarrow 0^+}  \eta^{(2\ell +1)} (\lambda) =0 \quad \text{for all $\ell \in \mathbb{N}_0$}.
	\end{align*}
	Hence $\eta$ can be extended to an even Schwartz function on $\mathbb{R}$.  We claim that the kernel of $\eta(t\sqrt{L})$ satisfies
	\begin{align} \label{claim}
		\left| K_{\eta(t\sqrt{L})}(x,y)\right| \leq  \frac{C_{N}}{t^Q} \left(1+\frac{|x-y|^2}{t} \right)^{-N} \left(1+\frac{t}{\rho_\nu(x)} \right)^{-\gamma_\nu}
	\end{align}
	for arbitrary $N$.
	Indeed, let $m >n/2+\gamma_\nu$ be sufficiently large (depending on $N$), using Proposition \ref{prop-heat kernel} and the formula
	\begin{align*}
		(I + t^2\mathcal L_\nu)^{-m} =\frac{1}{\Gamma(m)} \int_0^\infty e^{-ut^2\mathcal L_\nu}e^{-u}u^{m-1}du,
	\end{align*}
	one can show that
	\begin{align*}
		\left|K_{(I + t^2\mathcal L_\nu)^{-m}} (x,y) \right| \leq \frac{C_N}{t^n}\left(1 +\frac{|x-y|}{t} \right)^{-N} \left(1 + \frac{t}{\rho_\nu(x)} \right)^{-\gamma_\nu}
	\end{align*}
	for arbitrary $N>0$.

	It follows that
	\begin{align*}
		&\hspace{0.5cm}\left|K_{\eta(t\sqrt{L})}(x,y) \right| \\
		& \leq  \int_{\Rn_+}  \left|K_{(I + t^2\mathcal L_\nu)^{-m}} (x,z)\left| K_{(I + t^2\mathcal L_\nu)^{m}\eta(t\sqrt{L}) }(z,y)\right|dz \right|dz \\
		&\leq C_{a} \frac{1}{t^Q} \int_{\Rn_+} \left| K_{(I + t^2\mathcal L_\nu)^{m}\eta(t\sqrt{L}) }(x,z)\right|
		\left(1 +\frac{|x-z|}{t} \right)^{-N} \left(1 + \frac{t}{\rho_\nu(x)} \right)^{-\gamma_\nu}dz   \\
		& \leq C_{a} \frac{1}{t^n} \left(1 +\frac{|x-y|}{t} \right)^{-N} \left(1 +\frac{t}{\rho_\nu(x)}\right)^{-\gamma_\nu}
		\int_{\Rn_+} \left| K_{(I + t^2\mathcal L_\nu)^{m}\eta(t\sqrt{L}) }(z,y)\right|\left(1 +\frac{|y-z|}{t} \right)^{N} dz.
	\end{align*}
	A standard argument as in the proof of \cite[Theorem 3.4]{PK} shows that there exists a sufficiently large integer $m$ (depending on $N$) such that
	\begin{align*}
		\int_{\Rn_+} \left| K_{(I + t^2\mathcal L_\nu)^{m}\eta(t\sqrt{L}) }(z,y)\right|\left(1 +\frac{|y-z|}{t} \right)^{N} dz \leq  C.
	\end{align*}
	Hence
	\begin{align*}
		\left|K_{\eta(t\sqrt{L})}(x,y) \right|
		\lesi \frac{1}{t^n} \left(1 +\frac{|x-y|}{t} \right)^{-N} \left(1 +\frac{t}{\rho_\nu(x)}\right)^{-\gamma_\nu}
	\end{align*}
	which gives \eqref{claim}.
	
	With the estimate \eqref{claim} on hand, we can proceed as in the proof of \eqref{jus1} to obtain
	\begin{align*}
		\sup_{t>0} |\eta(t\sqrt{L})a(x)| \leq C_a (1 + |x|)^{-(n +\gamma_\nu)}.
	\end{align*}
	Consequently,
	\begin{align*}
		\sup_{\substack{\varepsilon, N>0}}\left|\int_\varepsilon^N
		(t^2\mathcal L_\nu)^k\Phi(t\sqrt{\mathcal L_\nu}) (t^2 \mathcal L_\nu)^k e^{-t^2 \mathcal L_\nu}a(x)\right|
		&   = \sup_{\substack{\varepsilon, N>0}}\left|\eta(\varepsilon \sqrt{L})a(x)-\eta(N\sqrt{L})a(x) \right | \\
		& \leq   \sup_{\varepsilon>0}\left| \eta(\varepsilon \sqrt{L})a(x)\right | + \sup_{N>0}\left|\eta(N\sqrt{L})a(x)\right |\\
		&\leq  C_a (1 + |x|)^{-(n+\gamma_\nu)}.
	\end{align*}
	This verifies \eqref{jus2}.

This, together with Lemma \ref{inte}, implies that 
\[
\int_{\mathbb R^n_+ }\Big|f(x) \int_0^\vc {(t^2\mathcal L_\nu )^k\Phi(t\sqrt{\mathcal L_\nu}) t^2\mathcal L_\nu e^{-t^2\mathcal L_\nu}a(x)}\f{dt}{t}\Big| dx<\vc.
\]
On the other hand, by the spectral theory we have
\begin{align*}
	 \int_0^\vc
	(t^2\mathcal L_\nu )^k\Phi(t\sqrt{\mathcal L_\nu}) t^2\mathcal L_\nu e^{-t^2\mathcal L_\nu}a  \frac{dt}{t} = C( k) a \quad \mbox{in } L^2(\mathbb{R}^n_+).
\end{align*}

Therefore,  \eqref{abc} follows from \eqref{int1}.

This completes our proof.
	
\end{proof}

We are ready to give the proof of Theorem \ref{dual}.

\begin{proof} [Proof of  Theorem \ref{dual}:] Fix $\f{n}{n+\gamma_\nu}<p\le 1$, $s=n(1/p-1)$ and  $M\in \mathbb N$ such that $M\ge \lfloor s \rfloor$. Note that the proof for $\f{n}{n+1}<p\le 1$ was given in \cite[Theorem 1.5]{B}. In our case, $p$ might take all values between $0$ and $1$ and hence we need some modifications.

Firstly, let $f \in BMO^s_{\rho_\nu}(\mathbb{R}^n_+)$ and let $a$ be a  $(p, \rho_\nu)$-atoms. Then
	by Lemma \ref{07}, Lemma \ref{05} and Lemma \ref{03}, we have
	\begin{align*}
		\left|\int_{\Rn_+} f(x) {a(x)}dx  \right|
		& =  \left| \int_{\Rn_+ \times (0,\infty)}(t^2\mathcal L_\nu)^k\Phi(t\sqrt{\mathcal L_\nu}) f(x) { t^2\mathcal L_\nu e^{-t^2\mathcal L_\nu}a(x)} \frac{dx dt}{t} \right| \\
		&\leq \left( \f{1}{|B|^{2s/n +1}} \int_0^{r_B}\int_B |(t^2\mathcal L_\nu)^k\Phi(t\sqrt{\mathcal L_\nu})f(x)|^2\f{dx dt}{t}\right) \\
		&\qquad \times \left\| \left( \iint_{\Gamma(x)}|t^2\mathcal L_\nu e^{-t^2\mathcal L_\nu}a(y)|^2 \frac{dy dt}{t^{n+1}}\right)^{1/2}\right\|_{L^p(\mathbb{R}^n_+)} \\
		&\lesssim \|f\|_{BMO^s_{\rho_\nu}(G)}.
	\end{align*}
	
	This proves $BMO^s_{\rho_\nu}(\mathbb{R}^n_+) \subset (H^p_{\mathcal L_\nu}(\mathbb{R}^n_+))^\ast$.

	\medskip 
	
	It remains to show that  $(H^p_{\mathcal L_\nu}(\mathbb{R}^n_+))^\ast \subset BMO^s_{\rho_\nu}(\mathbb{R}^n_+) $. To do this, let $\{\psi_\xi\}_{\xi \in \mathcal{I}}$ and $\{B(x_\xi,\rho_\nu(x_{\xi}))\}_{\xi \in \mathcal{I}}$ as in Corollary \ref{cor1}. 
	Set $B_\xi:= B(x_\xi, \rho_\nu(x_\xi))$ and we will claim that for any $f \in L^2(\mathbb{R}^n_+)$ and $\xi \in \mathcal{I}$, we have $\psi_\xi f \in H^p_{\mathcal L_\nu} (\Rn_+)$ and
	\begin{align} \label{eq:L2}
		\|\psi_{\xi}f\|_{H^p_{\mathcal L_\nu}(\mathbb{R}^n_+)} \leq C \left|B_\xi\right|^{\frac{1}{p}-\frac{1}{2}} \|f\|_{L^2(\mathbb{R}^n_+)}.
	\end{align}
	It suffices to prove that 
	\[
	\left\|\sup_{t>0}\big|e ^{-t^2 \mathcal L_\nu}(\psi_{\xi}f)\big| \right\|_{p}\lesssim  |B_\xi|^{\frac{1}{p}-\frac{1}{2}}\|f\|_{2}.
	\]
	Indeed,  by H\"{o}lder's inequality,
	\begin{equation} \label{eq:B}
		\begin{split}
			\left\|\sup_{t>0}\big|e ^{-t^2 \mathcal L_\nu}(\psi_{\xi}f)\big| \right\|_{L^p(4B_\xi)}& \leq |4B_\xi|^{\frac{1}{p}-\frac{1}{2}}
			\left\| \sup_{t>0}\big|e ^{-t^2 \mathcal L_\nu}(\psi_{\xi}f)\big| \right\|_{L^2(4B_\xi)}  \\
			& \lesssim  |B_\xi|^{\frac{1}{p}-\frac{1}{2}}\|f\|_{L^2(\mathbb{R}^n_+)}.
		\end{split}
	\end{equation}
	If $x \in \mathbb R^n_+\backslash 4B_\xi$, then  applying Proposition \ref{prop- delta k pt d>2}, Lemma \ref{lem-critical function} and H\"older's inequality, we get
	\begin{align*}
		\big|e ^{-t^2 \mathcal L_\nu}(\psi_{\xi}f)(x)\big|
		& \lesi \int_{B_\xi} \left( \frac{t}{\rho_\nu(y)}\right)^{-\gamma_\nu} \frac{1}{t^n} \exp\left( -\frac{|x-y|^2}{ct^2}\right)|f(y)|dy \\
		& \sim \int_{B_\xi} \left(  \frac{t}{\rho_\nu(x_\xi)}\right)^{-\gamma_\nu} \frac{1}{t^n} \exp\left( -\frac{|x-y|^2}{ct^2}\right)|f(y)|dy \\
		& \lesi \frac{\rho_\nu(x_\xi)^{\gamma_\nu}}{|x-x_\xi|^{n+\gamma_\nu}}\int_{B_\xi}  |f(y)|dy \\			
		&\lesi \frac{\rho_\nu(x_\xi)^{\gamma_\nu}}{|x-x_\xi|^{n+\gamma_\nu}} |B_\xi|^{\frac{1}{2}} \|f\|_{L^2(\mathbb{R}^n_+)},
	\end{align*}
	which implies that
	\begin{equation} \label{eq:BC}
		\begin{split}
			\left\|\sup_{t>0}\big|e ^{-t^2 \mathcal L_\nu}(\psi_{\xi}f)\big| \right\|_{L^p(\Rn_+ \backslash 4B_\xi)} \lesi  |B_\xi|^{\frac{1}{p}-\frac{1}{2}}\|f\|_{L^2(\mathbb{R}^n_+)},
		\end{split}
	\end{equation}
	as long as $\f{n}{n+\gamma_\nu}<p\le 1$.

	Combining \eqref{eq:B} and \eqref{eq:BC} yields \eqref{eq:L2}.
	
	Assume that $\ell \in (H^p_{\mathcal L_\nu}(\mathbb{R}^n_+))^\ast$.
	For each index $\xi \in \mathcal{I}$ we define
	\begin{align*}
		\ell_{\xi} f := \ell(\psi_{\xi} f), \quad f \in L^2(\mathbb{R}^n_+).
	\end{align*}
	By \eqref{eq:L2},
	\begin{align*}
		|\ell_{\xi}(f)| \leq C \|\psi_{\xi}f\|_{H^p_{\mathcal L_\nu}(\mathbb{R}^n_+)} \leq C|B_\xi|^{\frac{1}{p}-\frac{1}{2}}\|f\|_{L^2(\mathbb{R}^n_+)}.
	\end{align*}
	Hence there exists $g_{\xi} \in L^{2}(B_\xi)$ such that
	\begin{align*}
		\ell_{\xi}(f) = \int_{B_\xi} f(x)g_{\xi}(x)dx ,
		\quad f \in L^2 (\mathbb{R}^n_+).
	\end{align*}

	We define $g = \sum_{\xi \in \mathcal{I}}1_{B_{\xi}} g_{\xi}$. Then, if $f =\sum_{i=1}^k \lambda_j a_j$, where $k \in \mathbb{N}$,
	$\lambda_i \in \mathbb{C}$, and $a_i$ is a $(p,\rho_\nu)$-atom, $i =1, \cdots, k$, we have
	\begin{align*}
		\ell(f) = \sum_{i=1}^k \lambda_i  \ell (a_i)
		& =  \sum_{i=1}^k \lambda_i \sum_{\xi \in \mathcal{I}}\ell_\xi (a_i) \\
		&= \sum_{i=1}^k \lambda_i \sum_{\xi \in \mathcal{I}} \int_{B_\xi} a_i(x)g_\xi (x)dx \\
		&= \int_{{\Rn_+}} f(x)g(x)dx .
	\end{align*}

	Suppose that $B = B(x_B, r_B) \in \mathbb R^n_+$ with $r_B < \rho_\nu(x_B)$, and $0 \not\equiv f \in L_0^2(B)$, that is,
	$f \in L^2(\mathbb{R}^n_+)$ such that $\supp f \subset B$ and $\displaystyle \int_B f(x)x^\alpha dx  =0$ for all $\alpha$ with $|\alpha|\le M$. Arguing similarly to the proof of \eqref{eq:L2},
	\[
	\|f\|_{H^p_{\mathcal L_\nu}(\mathbb{R}^n_+) } \lesi \|f\|_{L^2} |B|^{1/p -1/2}.
	\]
	 Hence
	\begin{align*}
		|\ell(f)| = \left| \int_B fg\right| \leq \|\ell\|_{(H^p_{\mathcal L_\nu}(\mathbb{R}^n_+))^\ast} \|f\|_{H^p_{\mathcal L_\nu}(\mathbb{R}^n_+) } \leq C \|\ell\|_{(H^p_{\mathcal L_\nu}(\mathbb{R}^n_+))^\ast} \|f\|_{L^2} |B|^{1/p -1/2}.
	\end{align*}
	From this we conclude that $g \in (L_0^2(B))^\ast$ and
	\[
	\|g\|_{(L_0^2(B))^\ast} \leq C \|\ell\|_{(H^p_{\mathcal L_\nu}(\mathbb{R}^n_+))^\ast}|B|^{1/p -1/2}.
	\]
Using the fact that (see Folland and Stein \cite[p. 145]{FS})
	\[
	\|g\|_{(L_0^2(B))^\ast}=\inf_{P\in \mathcal P_M} \|g-P\|_{L^{2}(B)},
	\]
we obtain	
	\begin{align} \label{31}
		\sup_{\substack{B: ball \\ r_B <\rho_\nu(x_B)}}|B|^{1/2-1/p} \inf_{P\in \mathcal P_M} \|g-P\|_{L^{2}(B)} \leq C \|\ell\|_{(H^p_{\mathcal L_\nu}(\mathbb{R}^n_+))^\ast} .
	\end{align}
	Moreover, if $B$ is a ball with $r_B \ge \rho_\nu(x_B)$, and $f \in L^2(\mathbb{R}^n_+)$ such that $f \not\equiv 0$
	and $\supp f \subset B$, then similarly to \eqref{eq:L2},
	\[
	\|f\|_{H^p_{\mathcal L_\nu}(\mathbb{R}^n_+)}\lesi |B|^{1/2 -1/p} \|f\|_{L^2}^{-1}.
	\] 
	Hence,
	\begin{align*}
		|\ell(f)| = \Big|\int_{\Rn_+} fg \Big| \leq C\|\ell\|_{(H^p_{\mathcal L_\nu}(\mathbb{R}^n_+))^\ast } \|f\|_{H^p_{\mathcal L_\nu}(\mathbb{R}^n_+)}
		\leq   C\|\ell\|_{(H^p_{\mathcal L_\nu}(\mathbb{R}^n_+))^\ast } |B|^{1/p-1/2} \|f\|_{L^2}.
	\end{align*}
	Hence
	\begin{align} \label{32}
		\sup_{\substack{B: {\rm ball} \\ r_B \ge \rho_\nu(x_B)}}|B|^{1/2 -1/p} \|g\|_{L^{2}(B)} \leq C \|\ell\|_{(H^p_{\mathcal L_\nu}(\mathbb{R}^n_+))^\ast} .
	\end{align}
	From \eqref{31} and \eqref{32} it follows that $g \in BMO^s_{\rho_{\nu}} (\mathbb{R}^n_+)$ and
	\begin{align*}
		\|g\|_{BMO^s_{\rho_{\nu}} (\mathbb{R}^n_+)} \leq C  \|\ell\|_{H^p_{\mathcal L_\nu}(\mathbb{R}^n_+))^\ast}, \quad \mbox{where } s =n(1/p-1).
	\end{align*}

	The proof of Theorem \ref{dual} is thus complete.
\end{proof}

\section{Boundedness of Riesz transforms on Hardy spaces and Campanato spaces associated to $\mathcal L_\nu$}

In this section, we will study the boundedness of the higher-order Riesz transforms. We first show in Theorem \ref{thm-Riesz transform} below that the higher-order Riesz transforms are Calder\'on-Zygmund operators. Then, in Theorem \ref{thm- boundedness on Hardy and BMO} we will show that the  higher-order Riesz transforms are bounded on our new Hardy spaces and new BMO type spaces defined in Section 4.

\begin{thm}\label{thm-Riesz transform} Let $\nu\in [-1/2,\vc)^n$, $\gamma_\nu = \min\{1, \nu_{\min}+1/2\}$ and a multi-index $k=(k_1,\ldots, k_n)\in \mathbb N^n$. Denote by $\delta^k_\nu \mathcal L_\nu^{-|k|/2}(x,y)$ the associated kernel of $\delta^k_\nu \mathcal L_\nu^{-|k|/2}$. Then the Riesz transform $\delta^k_\nu \mathcal L_\nu^{-|k|/2}$ is a Calder\'on-Zygmund operator. That is,	$\delta^k_\nu \mathcal L_\nu^{-|k|/2}$ is bounded on $L^2(\Rn_+)$ nd its kernel satisfies the following estimates:
	\[
	|\delta^k_\nu \mathcal L_\nu^{-|k|/2}(x,y)|\lesi  \f{1}{|x-y|^n} , \ \ \ x\ne y
	\]
	and
	\[
	\begin{aligned}
		| \delta^k_\nu \mathcal L_\nu^{-|k|/2}(x,y)-\delta^k_\nu \mathcal L_\nu^{-|k|/2}(x,y')|&+| \delta^k_\nu \mathcal L_\nu^{-|k|/2}(y,x)-\delta^k_\nu \mathcal L_\nu^{-|k|/2}(y',x)|
		&\lesi \Big(\f{|y-y'|}{x-y}\Big)^{\gamma_\nu}\f{1}{|x-y|^n},
	\end{aligned}
	\]
	whenever $|y-y'|\le |x-y|/2$.
\end{thm}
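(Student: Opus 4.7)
My plan is to establish the three required properties separately, using the subordination formula
\begin{equation*}
\delta_\nu^k \mathcal L_\nu^{-|k|/2}(x,y) \;=\; \frac{1}{\Gamma(|k|/2)}\int_0^\infty t^{|k|/2-1}\,\delta_\nu^k p_t^\nu(x,y)\,dt
\end{equation*}
as the workhorse for both kernel estimates. For the size bound I insert Proposition \ref{prop- delta k pt d>2} (discarding the factor $(1+\sqrt t/\rho_\nu(x)+\sqrt t/\rho_\nu(y))^{-\gamma_\nu}\le 1$) and evaluate the resulting Gaussian integral, producing
\begin{equation*}
|\delta_\nu^k \mathcal L_\nu^{-|k|/2}(x,y)|\;\lesssim\;\int_0^\infty t^{-n/2-1}\exp\!\big(\!-|x-y|^2/ct\big)\,dt\;\sim\;|x-y|^{-n}.
\end{equation*}

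For the smoothness in the second variable, I split the $t$-integral of $\delta_\nu^k p_t^\nu(x,y)-\delta_\nu^k p_t^\nu(x,y')$ at $t=|y-y'|^2$. On $t\le|y-y'|^2$ I apply the size bound from Proposition \ref{prop- delta k pt d>2} to each term separately; by trading $\sqrt t$ against $|y-y'|$ inside the Gaussian (which is permissible since the exponential beats any polynomial) I recover the factor $(|y-y'|/|x-y|)^{\gamma_\nu}$. On $t>|y-y'|^2$ I apply the mean value theorem in $y$, for which I need a pointwise bound on $|\partial_{y_j}\delta_\nu^k p_t^\nu(x,y)|$. I obtain this from the product formula $p_t^\nu=\prod_i p_t^{\nu_i}$ together with the one-dimensional symmetry $p_t^{\nu_i}(x_i,y_i)=p_t^{\nu_i}(y_i,x_i)$, which converts $\partial_{y_j}$ into a first-argument derivative, and then apply Proposition \ref{prop-gradient x y d>2} factor by factor; the resulting bound is exactly the ingredient needed to close the estimate after integrating in $t$. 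The smoothness in the first variable is obtained directly from Proposition \ref{prop-gradient x y d>2} applied in $x$, without the symmetry step.

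The main obstacle I foresee is the $L^2$-boundedness of $\delta_\nu^k \mathcal L_\nu^{-|k|/2}$ for $|k|\ge 2$, which the introduction explicitly flags as significantly harder than the first-order case. My approach is spectral: using \eqref{eq- delta and eigenvector} iteratively, each application of $\delta_{\nu_j}$ produces a factor $\sqrt{m_j}$, compatible with the eigenvalue growth $4|m|+2|\nu|+2n$ supplied by $\mathcal L_\nu^{-|k|/2}$, but also shifts the type parameter to $\nu+e_j$, so a naive iteration of \eqref{eq- delta and eigenvector} is not available. To circumvent this I would apply \eqref{eq- delta k} recursively to re-express $\delta_{\nu_j}^{k_j}=\delta_{\nu_j+1}^{k_j}+(k_j/x_j)\delta_{\nu_j+1}^{k_j-1}$; the clean $\delta_{\nu_j+\ell}$ terms act tidily on shifted eigenfunctions, while the accumulated $1/x_j$ weights are controlled by the singular term $\frac{1}{x_j^2}(\nu_j^2-\tfrac14)$ sitting inside $L_\nu$ when $\nu_j>1/2$, and by a Hardy-type inequality adapted from the ground-state representation of $\mathcal L_{\nu_j}$ otherwise. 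As a fallback, once the kernel bounds are in place, one can obtain $L^p$-boundedness for a single $p\in(1,\infty)$ by a direct weak-type $(1,1)$ argument from the size and regularity estimates, and then extend to $L^2$ by Calder\'on--Zygmund interpolation and duality.
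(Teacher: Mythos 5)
Your subordination formula and the resulting size bound $|\delta_\nu^k\mathcal L_\nu^{-|k|/2}(x,y)|\lesssim|x-y|^{-n}$ are exactly the paper's, but two of your three steps have genuine gaps. The most serious is the $L^2$-boundedness. Your fallback --- deriving a weak-$(1,1)$ bound from the kernel estimates and then interpolating up to $L^2$ --- is circular: Calder\'on--Zygmund theory takes an a priori $L^{p_0}$ bound as \emph{input}, and size plus regularity estimates on a kernel of order $|x-y|^{-n}$ bound no $L^p$ norm by themselves. Your main route is also aimed at the wrong dichotomy: the relevant case split is $\nu_j>-1/2$ versus $\nu_j=-1/2$, not $\nu_j>1/2$ versus the rest; for $\nu_j\in(-1/2,1/2)$ the potential term $x_j^{-2}(\nu_j^2-\tfrac14)$ is \emph{negative}, so it cannot absorb the accumulated $1/x_j$ weights, and the half-line Hardy inequality is exactly borderline there. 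The paper instead argues by induction on $|k|$: it peels off one factor $\delta_{\nu_1}\mathcal L_\nu^{-1/2}$ (bounded via \eqref{eq- delta and eigenvector}) through the intertwining $\delta_\nu^k\mathcal L_\nu^{-(\ell+1)/2}=\delta_\nu^{k-e_1}(\mathcal L_{\nu+e_1}+2)^{-\ell/2}\delta_{\nu_1}\mathcal L_\nu^{-1/2}$, uses \eqref{eq- delta k} as you propose, and then controls the leftover weighted operator $\frac{k_1-1}{x_1}\delta_{\nu+e_1}^{k-2e_1}\mathcal L_{\nu+e_1}^{-\ell/2}$ not by a form inequality but by a pointwise kernel estimate: the extra decay $(1+\sqrt t/\rho_{\nu_1+1}(x_1))^{-(\nu_1+3/2)}$ together with $\rho_{\nu_1+1}(x_1)\lesssim x_1$ dominates it by the Hardy--Littlewood maximal function; the case $\nu_1=-1/2$ is handled separately via $\delta_{\nu_1}^2=(-\mathcal L_{\nu_1}+1)+2x_1\delta_{\nu_1}$.

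The second gap is in the H\"older estimate. Splitting at $t=|y-y'|^2$ and applying the mean value theorem for all $t>|y-y'|^2$ fails because the $y$-gradient of the heat kernel carries a factor $\rho_\nu(\cdot)^{-1}$ that is not comparable along the segment $[y,y']$ once $|y-y'|\gtrsim\rho_\nu(y)$ (the segment may approach the boundary of $\mathbb R^n_+$, where $\rho_\nu\to0$), and for $\nu_{\min}<1/2$ the decay factor $(1+\sqrt t/\rho_\nu)^{-(\nu_{\min}+1/2)}$ is too weak to cancel this degeneracy. The paper therefore splits on $|y-y'|$ versus $\rho_\nu(y)$ rather than on $t$: when $|y-y'|\ge\rho_\nu(y)$ it uses \emph{no} mean value theorem at all, only the size bound together with its factor $(\rho_\nu(y)/|x-y|)^{\nu_{\min}+1/2}\le(|y-y'|/|x-y|)^{\nu_{\min}+1/2}$ --- precisely the decay factor you chose to discard. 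You should retain the $\rho_\nu$-decay in the size bound and adopt that case split. A smaller point: the symmetry $p_t^\nu(x,y)=p_t^\nu(y,x)$ moves $\partial_{y_j}$ into the first slot but simultaneously moves $\delta_\nu^k$ into the second, so the mixed bound on $\partial_{y_j}\delta_{\nu,x}^k p_t^\nu$ does not follow verbatim from Proposition \ref{prop-gradient x y d>2}; a separate (if routine) verification is required, a point on which the paper itself is also terse.
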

We would like to emphasize that Theorem \ref{thm-Riesz transform} is new, even when $n=1$. In fact, in the case of $n=1$,   
the boundedness of the Riesz transform was explored in \cite{NS, Betancor, Betancor2}. In \cite{NS}, the Riesz operator was 
decomposed into local and global components. The local part emerged as a local Calder\'on-Zygmund operator, while the global part was 
effectively dealt with by using weighted Hardy’s inequalities. In \cite{Betancor, Betancor2}, the investigation leaned on the connections 
between the Riesz transform associated with the Laguerre operator and that associated with the Hermite operator on $\mathbb{R}$. It's important to note that the methods employed in \cite{NS, Betancor, Betancor2} do not imply the boundedness of the Riesz transform on weighted $L^p_w(\mathbb{R}_+)$ with $1 < p < \infty$ and $w \in A_p(\mathbb{R}_+)$.

Returning to the Riesz transform $\delta_\nu \mathcal L^{-1/2}\nu$, we successfully establish its boundedness on the Hardy space $H^p_{\rho_\nu}(\mathbb R^n_+)$ and the Campanato spaces $BMO^{s}_{\rho_{\nu}} (\mathbb{R}^n_+)$, thus providing a comprehensive account of the boundedness properties of the Riesz transform.

\begin{thm}\label{thm- boundedness on Hardy and BMO}
	Let $\nu\in [-1/2, \vc)^n$, $\gamma_\nu = \nu_{\min} + 1/2$ and $k\in \mathbb N^n$. Then for  $\f{n}{n+\gamma_\nu}<p\le 1$ and $s=n(1/p-1)$, we have
	\begin{enumerate}[{\rm (i)}]
		\item the Riesz transform $\delta^k_\nu \mathcal L_\nu^{-|k|/2}$ is bounded on $H^p_{\rho_\nu}(\mathbb{R}^n_+)$;
		\item the Riesz transform $\delta^k_\nu \mathcal L_\nu^{-|k|/2}$ is bounded on $BMO^s_{\rho_\nu}(\mathbb{R}^n_+)$.
	\end{enumerate} 
\end{thm}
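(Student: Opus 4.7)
My plan is to handle part (i) via the atomic decomposition and then obtain part (ii) by duality.

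For part (i), by Theorem~\ref{mainthm2s} and a density argument, it suffices to prove $\|Ta\|_{H^p_{\mathcal L_\nu}(\mathbb R^n_+)}\lesssim 1$ uniformly over $(p,\rho_\nu)$-atoms $a$ supported in balls $B=B(x_0,r)$ with $r\le \rho_\nu(x_0)$, where $T=\delta^k_\nu\mathcal L_\nu^{-|k|/2}$. Using the maximal characterization of $H^p_{\mathcal L_\nu}$, this reduces to $\|\mathcal M_{\mathcal L_\nu}(Ta)\|_{L^p}\lesssim 1$, which I split on $4B$ and $(4B)^c$. On $4B$, H\"older's inequality together with the $L^2$-boundedness of both $\mathcal M_{\mathcal L_\nu}$ and $T$ (supplied by Theorem~\ref{thm-Riesz transform}) closes the argument at once. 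On $(4B)^c$, I estimate the kernel of $e^{-s^2\mathcal L_\nu}\,T$ directly. Using the integral representation
\[
\mathcal L_\nu^{-|k|/2} = \frac{1}{\Gamma(|k|/2)}\int_0^\infty e^{-u\mathcal L_\nu}\,u^{|k|/2-1}\,du,
\]
the semigroup identity $e^{-s^2\mathcal L_\nu}e^{-u\mathcal L_\nu}=e^{-(s^2+u)\mathcal L_\nu}$, and the kernel estimate of Proposition~\ref{prop- delta k pt d>2} applied to $\delta^k_\nu p^\nu_{s^2+u}$, I obtain, after integration in $u$, a pointwise bound on the kernel of $e^{-s^2\mathcal L_\nu}\,T$ that retains the decay factor $(1+\sqrt{s^2+u}/\rho_\nu)^{-\gamma_\nu}$ with $\gamma_\nu=\nu_{\min}+1/2$.

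Combined with the moment cancellation of the atom $a$ — directly through its $L^1$-norm when $r=\rho_\nu(x_0)$, and through a Taylor expansion of order $\lfloor n(1/p-1)\rfloor$ and Proposition~\ref{prop-gradient x y d>2} to control the remainder when $r<\rho_\nu(x_0)$ — this yields
\[
\mathcal M_{\mathcal L_\nu}(Ta)(x)\lesssim \Bigl(\tfrac{r}{|x-x_0|}\Bigr)^{\gamma_\nu\wedge(\lfloor n(1/p-1)\rfloor+1)}\frac{|B|^{1-1/p}}{|x-x_0|^n},\quad x\in(4B)^c,
\]
mirroring the two-case analysis already carried out in the proof of Theorem~\ref{mainthm2s}. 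The right-hand side belongs to $L^p$ precisely when $p>n/(n+\gamma_\nu)$.

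For part (ii), I invoke the duality $(H^p_{\mathcal L_\nu})^*=BMO^s_{\rho_\nu}$ established in Theorem~\ref{dual}. The BMO-boundedness of $T$ then follows from the $H^p$-boundedness of the formal adjoint $T^*=(\delta_\nu^*)^k\mathcal L_\nu^{-|k|/2}$ (up to a reordering of the coordinate factors), which I prove by a verbatim repetition of the argument for (i), this time using Proposition~\ref{prop- delta k pt d>2 dual delta} in place of Proposition~\ref{prop- delta k pt d>2}. The identity $\rho_{\nu+\ell_\nu}=\rho_\nu$ together with Proposition~\ref{prop-equivalence L +2} ensures that the critical function and the admissible range of $p$ are preserved. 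The required pairing $\langle Tf,a\rangle=\langle f,T^*a\rangle$ for $f\in BMO^s_{\rho_\nu}$ and $a$ a $(p,\rho_\nu)$-atom is justified by the growth estimate on $BMO$ functions from Lemma~\ref{inte} combined with the rapid pointwise decay of $T^*a$ which is a by-product of step (i) applied to $T^*$.

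The hard part will be the kernel estimate on $(4B)^c$: splitting the subordination integral in $u$ into the regimes $u\lesssim s^2$ and $u\gtrsim s^2$ while uniformly preserving the factor $(\sqrt{s^2+u}/\rho_\nu)^{-\gamma_\nu}$, and ensuring that the Taylor-remainder bound built from $\partial^\alpha\delta^k_\nu p^\nu_t$ matches this same decay. The sharper range $p>n/(n+\gamma_\nu)$ with $\gamma_\nu=\nu_{\min}+1/2$ possibly exceeding $1$ is achievable only through this $\rho_\nu$-weighted decay — the Calder\'on--Zygmund H\"older regularity of Theorem~\ref{thm-Riesz transform} alone would not reach below $p=n/(n+1)$.
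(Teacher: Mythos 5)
Your strategy for part (ii) (duality via Theorem \ref{dual} plus $H^p$-boundedness of the adjoint) matches the paper's. The genuine gap is in part (i), at the very step you flag as "the hard part": you propose to bound the kernel of $e^{-s^2\mathcal L_\nu}\,\delta^k_\nu\mathcal L_\nu^{-|k|/2}$ by writing $\mathcal L_\nu^{-|k|/2}$ as a subordination integral, merging the semigroups via $e^{-s^2\mathcal L_\nu}e^{-u\mathcal L_\nu}=e^{-(s^2+u)\mathcal L_\nu}$, and applying Proposition \ref{prop- delta k pt d>2} to $\delta^k_\nu p^\nu_{s^2+u}$. That merge is not available: in $e^{-s^2\mathcal L_\nu}\,\delta^k_\nu\, e^{-u\mathcal L_\nu}$ the derivatives $\delta^k_\nu$ sit \emph{between} the two semigroup factors and do not commute with $\mathcal L_\nu$, so the kernel is $\int p^\nu_{s^2}(x,z)\,\delta^k_{\nu,z}p^\nu_u(z,y)\,dz$ and not $\delta^k_{\nu,x}p^\nu_{s^2+u}(x,y)$. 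To control it one must integrate by parts in $z$, which lands $(\delta^*_\nu)^k$ on the left-hand heat kernel; but the only available bound of that type is Proposition \ref{prop- delta k pt d>2 dual delta}, which holds only for the heat kernel of the \emph{shifted} operator $\mathcal L_{\nu+\ell_\nu}$ with $\ell\ge k+2\vec m$ (the operator $\delta^*_\nu$ produces factors $1/x_j$ that can only be absorbed after raising the index $\nu$). This is exactly why the paper replaces the maximal function $\sup_t|e^{-t\mathcal L_\nu}(\cdot)|$ by $\sup_t|e^{-t\mathcal L_{\nu+(k+2\vec M)_\nu}}(\cdot)|$ --- legitimate because $H^p_{\mathcal L_{\nu+(k+2\vec M)_\nu}}=H^p_{\mathcal L_\nu}=H^p_{\rho_\nu}$ by Proposition \ref{prop-equivalence L +2} --- and why it tests against $(p,M)_{\mathcal L_\nu}$-atoms $a=\mathcal L_\nu^M b$ rather than $(p,\rho_\nu)$-atoms: the off-diagonal decay then comes from moving $\mathcal L_\nu^M(\delta^*_\nu)^k$ onto the shifted kernel (Proposition \ref{prop-gradient x y dual delta}) together with $\|b\|_1\lesi r_B^{2M}|B|^{1-1/p}$, and no Taylor expansion or moment condition is needed. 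Your Taylor-remainder plan additionally presupposes mixed estimates $\partial^\alpha_y$ of the kernel of $e^{-s^2\mathcal L_\nu}\delta^k_\nu\mathcal L_\nu^{-|k|/2}$ that Proposition \ref{prop-gradient x y d>2} does not supply (its derivatives act on the same variable as $\delta^j_\nu$).

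Your scheme does, however, work for the adjoint in part (ii): the kernel of $e^{-t\mathcal L_\nu}\mathcal L_\nu^{-|k|/2}(\delta^*_\nu)^k$ is the transpose of that of $\delta^k_\nu e^{-t\mathcal L_\nu}\mathcal L_\nu^{-|k|/2}$, where the derivatives are outermost and the two semigroup factors genuinely merge (this is Proposition \ref{Prop-Riesz transform 2}); the paper's proof of (ii) is then precisely the $(p,\rho_\nu)$-atom cancellation argument you describe, using the two-case split $r_B=\rho_\nu(x_B)$ versus $r_B<\rho_\nu(x_B)$. So the proposal is essentially correct for (ii) but, as written, the kernel estimate underlying (i) fails and must be replaced by the shifted-semigroup/dual-derivative device.
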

In the specific case where $n=1, p=1$, and $\nu>-1/2$, the boundedness of the Riesz transform on $BMO^s_{\rho_\nu}(\mathbb{R}_+)$ with $s=0$ was established in \cite{ChaLiu}. The approach employed in this proof leveraged the structure of the half-line $\mathbb{R}_+$ and exploited the connection between the Riesz transform and the Hermite operator. In addition, item (i) in Theorem \ref{thm- boundedness on Hardy and BMO} is new, even in the case of $n=1$.

 We first prove that $\delta_{\nu_j}^k \mathcal L_\nu^{-|k|/2}$ is bounded on $L^2(\Rn_+)$.
\begin{prop}
	 Let $\nu\in [-1/2,\vc)^n$ and $k\in \mathbb N^k$ be a multi-index.  Then the Riesz transform $\delta^k_\nu \mathcal L_\nu^{-|k|/2}$ is bounded on $L^2(\Rn_+)$.
\end{prop}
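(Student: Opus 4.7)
The plan is to deduce the $L^2$-boundedness from the operator inequality
\[
(\delta_\nu^k)^* \delta_\nu^k \;\leq\; C_{k,\nu}\, \mathcal L_\nu^{|k|}
\]
on a dense subspace of $L^2(\mathbb R^n_+)$. Once this is established, the proposition follows immediately from the spectral calculus, since
\[
\bigl\|\delta_\nu^k \mathcal L_\nu^{-|k|/2} f\bigr\|_2^2 = \bigl\langle \mathcal L_\nu^{-|k|/2} (\delta_\nu^k)^* \delta_\nu^k\, \mathcal L_\nu^{-|k|/2} f,\, f\bigr\rangle \;\leq\; C_{k,\nu}\|f\|_2^2.
\]

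To establish the operator inequality I would first exploit the tensor-product structure. Because $\delta_{\nu_i}$ acts only on the $i$-th variable, the families $\{\delta_{\nu_i},\delta_{\nu_i}^*\}$ and $\{\delta_{\nu_j},\delta_{\nu_j}^*\}$ mutually commute for $i\neq j$, so
\[
(\delta_\nu^k)^* \delta_\nu^k \;=\; \prod_{i=1}^n (\delta_{\nu_i}^*)^{k_i}(\delta_{\nu_i})^{k_i}
\]
is a product of $n$ pairwise commuting positive operators. In the same way $\mathcal L_\nu = \sum_{i=1}^n L_{\nu_i}$ is a sum of commuting positive operators, and the multinomial theorem gives $\mathcal L_\nu^{|k|} \geq \binom{|k|}{k}\prod_i L_{\nu_i}^{k_i}$. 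The problem therefore reduces to the one-dimensional operator inequality
\[
(\delta_\nu^*)^k (\delta_\nu)^k \;\leq\; C_k\, L_\nu^k \quad\text{on } L^2(\mathbb R_+),
\]
which one then has to verify uniformly for $\nu\geq -1/2$ and all $k\in\mathbb N$.

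This single-variable statement I would prove by induction on $k$. The base case $k=1$ is the identity $\delta_\nu^*\delta_\nu = L_\nu - 2(\nu+1)\leq L_\nu$ taken from the opening of the paper. The inductive step is driven by the commutator relation
\[
[L_\nu,\delta_\nu] \;=\; -\Big(2+\tfrac{2\nu+1}{x^2}\Big)\delta_\nu,
\]
which follows by a direct calculation from $L_\nu = \delta_\nu^*\delta_\nu + 2(\nu+1)$ together with $[\delta_\nu,\delta_\nu^*] = 2+(2\nu+1)/x^2$. Sandwiching the inductive hypothesis between $\delta_\nu^*$ and $\delta_\nu$ and iteratively moving $\delta_\nu$ through the polynomial factors of $L_\nu$ produces an identity of the form
\[
(\delta_\nu^*)^{k+1}(\delta_\nu)^{k+1} \;=\; \prod_{j=1}^{k+1}\bigl(L_\nu - 2(\nu+j)\bigr) - E_{k+1},
\]
where $E_{k+1}$ is a finite sum of expressions of the form $(\delta_\nu^*)^j A_j (\delta_\nu)^j$ with each $A_j$ a nonnegative combination of multiplications by $(2\nu+1)/x^{2\ell}$. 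Since $\nu\geq -1/2$ ensures $(2\nu+1)\geq 0$, the remainder $E_{k+1}$ is a positive operator which can be discarded. Finally, on the discrete spectrum $\sigma(L_\nu)=\{4m+2(\nu+1):m\in\mathbb N_0\}$ one has $\prod_{j=1}^{k+1}(\lambda-2(\nu+j)) = 2^{k+1}(2m)(2m-1)\cdots(2m-k) \leq C\lambda^{k+1}$, which closes the induction.

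The main obstacle will be the careful bookkeeping needed to verify that the cumulative remainder $E_{k+1}$ is indeed a positive operator. The case $k=2$ is transparent, producing the single term $(2\nu+1)\delta_\nu^*(1/x^2)\delta_\nu$; but at higher orders the iterated commutators $[L_\nu,\delta_\nu^j]$ generate mixed expressions of the type $\delta_\nu^* P(L_\nu)(1/x^{2\ell})Q(L_\nu)\delta_\nu$ that are not individually self-adjoint. One has to symmetrize these contributions using the auxiliary identity $[1/x^2,L_\nu]=-4\partial_x/x^3+6/x^4$ and re-assemble them into the manifestly nonnegative sandwich form stated above. The sign of $(2\nu+1)$ being what controls this positivity is also precisely the reason that $\nu\geq -1/2$ is the natural and sharp range, in agreement with the statement of Theorem \ref{thm-Riesz transform}.
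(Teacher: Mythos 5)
Your overall strategy is plausible --- the proposition is indeed equivalent (in one dimension) to the form inequality $(\delta_\nu^*)^k\delta_\nu^k\le C_k L_\nu^k$, and your tensor-product reduction to $n=1$ and the base cases $k=1,2$ are correct (for $k=2$ one gets exactly $(\delta_\nu^*)^2\delta_\nu^2=(L_\nu-2(\nu+1))(L_\nu-2(\nu+2))-(2\nu+1)\,\delta_\nu^*\tfrac{1}{x^2}\delta_\nu$, and the last term is nonnegative precisely when $\nu\ge -1/2$). But the inductive step contains a genuine gap at its crucial point. When you push $\prod_{j=1}^{k}(L_\nu-2(\nu+j))$ through $\delta_\nu$ for $k\ge 2$, the commutator $[L_\nu,\delta_\nu]=-(2+(2\nu+1)/x^2)\delta_\nu$ generates remainder terms of the form
\[
(2\nu+1)\,\delta_\nu^*\,P(L_\nu)\,\tfrac{1}{x^2}\,Q(L_\nu)\,\delta_\nu ,
\]
where $P$ and $Q$ are products of factors $L_\nu-2(\nu+j)$; for instance at the third step one already finds $\delta_\nu^*\tfrac{1}{x^2}\delta_\nu(L_\nu-2(\nu+3))+\delta_\nu^*(L_\nu-2(\nu+1))\tfrac{1}{x^2}\delta_\nu$. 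These polynomial factors change sign on the spectrum $\{4m+2\nu+2\}$ (e.g.\ $L_\nu-2(\nu+3)$ is negative on the lowest eigenvalues), and attempting to commute them away reintroduces new singular factors $1/x^4$, $\partial_x/x^3$, etc. So the remainder $E_{k+1}$ is \emph{not} manifestly a nonnegative sum of sandwiches $(\delta_\nu^*)^jA_j\delta_\nu^j$ with $A_j\ge 0$, and your proposal explicitly defers the verification ("one has to symmetrize \dots and re-assemble") without carrying it out. Since this positivity is exactly the content of the theorem --- it is where the boundary singularity $1/x^2$ must be absorbed --- the proof is incomplete as it stands, and it is not clear that the identity you posit is even the right normal form for $k\ge 3$.

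For comparison, the paper avoids this algebraic positivity question entirely. It inducts on $|k|$ and uses the intertwining relations: writing $\delta_\nu^{k}\mathcal L_\nu^{-(\ell+1)/2}=\delta_\nu^{k-e_1}(\mathcal L_{\nu+e_1}+2)^{-\ell/2}\,\delta_{\nu_1}\mathcal L_\nu^{-1/2}$ (when $\nu_1>-1/2$) reduces matters to $\delta_{\nu+e_1}^{k-e_1}\mathcal L_{\nu+e_1}^{-\ell/2}$ (covered by the inductive hypothesis) plus the error operator $\tfrac{k_1-1}{x_1}\delta_{\nu+e_1}^{k-2e_1}\mathcal L_{\nu+e_1}^{-\ell/2}$. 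The singular factor $1/x_1$ --- the same obstruction your $E_{k+1}$ encodes --- is then controlled not algebraically but analytically: the heat-kernel bound of Proposition \ref{prop-delta k p x y}, with its extra decay $(1+\sqrt t/\rho_\nu(x))^{-(\nu+1/2)}$ near the boundary, shows this error operator is pointwise dominated by the Hardy--Littlewood maximal function. A separate argument (using $\delta_{\nu_1}^2=-\mathcal L_{\nu_1}+1+2x_1\delta_{\nu_1}$) handles $\nu_1=-1/2$. If you want to salvage your approach, you would either need to prove the positivity of $E_{k+1}$ in full (which I expect requires essentially the same boundary-decay information), or fall back on kernel estimates as the paper does.
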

\begin{proof}
We will prove the proposition by induction.

$\bullet$ For $|k|=1$, the $L^2$-boundedness of the Riesz transform $\delta_\nu^k \mathcal L_\nu^{-|k|/1}$ is quite straightforward. Indeed, for each $j=1,\ldots, n$, by \eqref{eq- delta and eigenvector} we have
\[
\begin{aligned}
	\delta_{\nu_j}\mathcal{L}_{\nu}^{-1/2}f& = \sum_{\alpha\in \mathbb{N}^{n}} (4|\alpha| + 2|\nu| + 2n)^{-1/2} \langle f, \varphi_{\alpha}^\nu\rangle \delta_{\nu_j}varphi_{\alpha}^\nu\\
	& = \sum_{\alpha\in \mathbb{N}^{n}} \f{-2\sqrt{\alpha_j}}{\sqrt{4|\alpha| + 2|\nu| + 2n}} \langle f, \varphi_{\alpha}^\nu\rangle \varphi_{\alpha-e_j}^{\nu+e_j},
\end{aligned} 
\]
which implies that
\[
\|\delta_{\nu_j}\mathcal{L}_{\nu}^{-1/2}f\|_2^2\le \sum_k | \langle f, \varphi_{k}^\nu\rangle|^2 =\|f\|_2^2. 
\]

$\bullet$ Assume that the Riesz transform  $\delta^k \mathcal L_\nu^{-|k|/1}$ is bounded on $L^2(\Rn_+)$ for all $|k|\le \ell$ for some $\ell\ge 1$. We need to show that $\delta^k \mathcal L_\nu^{-|k|/1}$ is bounded on $L^2(\Rn_+)$ for all $|k|=\ell+1$. Fix a multi-index $k=(k_1,\ldots, k_n)$ with $|k|=\ell+1$. If all indices $k_j$ is less than or equal to $1$ for $j=1,\ldots, n$, the $L^2$-boundedness follows by using the similar argument to  that of the case $|k|=1$. Hence, without loss of generality, we might assume that $k_1\ge 2$.  Then we consider two cases.

\textbf{Case 1: $\nu_1>-1/2$}. From \eqref{eq- delta and eigenvector}, it can be verified that 
\[
\delta_{\nu}^{k} \mathcal L_\nu^{-(\ell+1)/2}=\delta_{\nu}^{k-e_1} (\mathcal L_{\nu+e_1}+2)^{-\ell/2}\delta_{\nu_1} \mathcal L_\nu^{-1/2}.
\]
Since $\delta_{\nu_1} \mathcal L_\nu^{-1/2}$ is bounded on $L^2(\Rn_+)$, it suffices to prove $\delta_{\nu}^{k-e_1} (\mathcal L_{\nu+e_1}+2)^{-\ell/2}$ is bounded on $L^2(\Rn_+)$. To do this, we write
\[
\delta_{\nu}^{k-e_1}\mathcal L_{\nu+e_j}^{-\ell/2}  \mathcal L_{\nu+e_j}^{\ell/2}(\mathcal L_{\nu+e_j}+2)^{-\ell/2}.
\]
By the spectral theory, $\mathcal L_{\nu+e_1}^{\ell/2}(\mathcal L_{\nu+e_1}+2)^{-\ell/2}$ is bounded on $L^2(\Rn_+)$. We need only to show that $\delta_{\nu}^{k-e_1}\mathcal L_{\nu+e_1}^{-\ell/2}$ is bounded on $L^2(\Rn_+)$.

Notice that by \eqref{eq- delta k}
\begin{equation*} 
	\delta_{\nu_1}^{k_1-1}=\Big(\delta_{{\nu_1}+1}+\f{1}{x_1}\Big)^{k_1-1} =\delta_{{\nu_1}+1}^{k_1-1} +\f{k_1-1}{x_1}\delta^{k_1-2}_{{\nu_1}+1},
\end{equation*}
which implies
\[
\delta_{\nu}^{k-e_1}\mathcal L_{\nu+e_1}^{-\ell/2}=\delta_{\nu +e_1}^{k-e_1}\mathcal L_{\nu+e_1}^{-\ell/2} +\f{k_1-1}{x_1}\delta_{\nu+e_1}^{k-2e_1}\mathcal L_{\nu+e_1}^{-\ell/2}.
\]
The first operator $\delta_{\nu+e_1}^{k-e_1}\mathcal L_{\nu+e_1}^{-\ell/2}$ is bounded on $L^2(\Rn_+)$ by the inductive hypothesis. Hence, it suffices to prove that the operator 
$$
f\mapsto \f{k_1-1}{x_1}\delta_{\nu+e_1}^{k-2e_1}\mathcal L_{\nu+e_1}^{-\ell/2}f
$$
is bounded on $L^2(\Rn_+)$.

By \eqref{eq- prod ptnu}, Proposition \ref{prop-delta k p x y} and the fact $|k-2e_1|=\ell-1$, we have, for a fixed $\epsilon \in (0,1)$,
\[
\begin{aligned}
	\delta_{\nu+e_1}^{k-2e_1}\mathcal L_{\nu+e_1}^{-\ell/2}(x,y) &= \int_0^\vc t^{\ell/2}\delta_{\nu+e_1}^{k-2e_1} p_t^{\nu+e_1}(x,y) \f{dt}{t}\\
	&\lesi \int_0^\vc  \f{1 }{t^{(n-1)/2}}\exp\Big(-\f{|x-y|^2}{ct}\Big)\Big(1+\f{\sqrt t}{\rho_{\nu_1+1}(x_1)}\Big)^{-(\nu_{1}+3/2)} \f{dt}{t}\\
	&\lesi  \f{1}{|x-y|^{n-1}} \min\{\Big(\f{|x-y|}{\rho_{\nu_1+1}(x_1)}\Big)^{-\epsilon}, \Big(\f{|x-y|}{\rho_{\nu_1+1}(x_1)}\Big)^{-(\nu_1+3/2)}\},
\end{aligned}
\]
which, together with the fact $\rho_{\nu_1+1}(x_1)\lesi x_1$, implies
\[
\begin{aligned}
	\Big|\f{k_1-1}{x_1}\delta_{\nu +e_1}^{k_1-1}\mathcal L_{\nu+e_1}^{-\ell/2}f(y)\Big|&\lesi \int_{|x-y|\le \rho_{\nu_1+1}(x_1)}  \f{1}{\rho_{\nu_1+1}(x_1)|x-y|^{n-1}} \Big(\f{|x-y|}{\rho_{\nu_1+1}(x_1)}\Big)^{-\epsilon} |f(y)|dy\\
	& \ \ \ \ +\int_{|x-y|> \rho_{\nu_1+1}(x_1)} \f{1}{\rho_{\nu_1+1}(x_1)|x-y|^{n-1}} \Big(\f{|x-y|}{\rho_{\nu_1+1}(x_1)}\Big)^{-(\nu_1+3/2)} |f(y)|dy  \\
	&=:I_1 +I_2.
\end{aligned}
\]
It is easy to see that 
\[
\begin{aligned}
	I_1&\lesi \int_{|x-y|\le   \rho_{\nu_1+1}(x_1)}\Big(\f{|x-y|}{\rho_{\nu_1+1}(x_1)}\Big)^{1-\epsilon}\f{|f(y)|}{|x-y|^{n}}dy\\
	&\lesi \mathcal Mf(x)
\end{aligned}
\]
and
\[
\begin{aligned}
	I_2&\lesi \int_{|x-y|> \rho_{\nu_1+1}(x_1)} \f{|f(y)|}{|x-y|^{n}}\Big(\f{\rho_{\nu_1+1}(x_1)}{|x-y|}\Big)^{\nu_1+1/2} dy\\
	&\lesi \mathcal Mf(x),
\end{aligned}
\]
where $\mathcal M$ is the Hardy-Littlewood maximal function.

It follows that the operator $\f{k_1-1}{x_1}\delta_{\nu+e_1}^{k_1-1}\mathcal L_{\nu+e_1}^{-|k|/2}$ is bounded on $L^2(\Rn_+)$, which completes the proof in the case $\nu_1>-1/2$.

\medskip

\textbf{Case 2: $\nu_1=-1/2$}

We first have
$$
\delta_{\nu_1}^2 = (-\delta_{\nu_1}^* +2x)\delta_{\nu_1} = (-\mathcal L_{\nu_1}+1)+2x_1 \delta_{\nu_1},
$$
which implies
\[
\delta_\nu^{k} \mathcal L_\nu^{-(\ell+1)/2} =-\delta_{\nu}^{k-2e_1}\mathcal L_{\nu_1}  \mathcal L_\nu^{-(\ell+1)/2} +\delta_{\nu}^{k-2e_1}   \mathcal L_\nu^{-(\ell+1)/2} +\delta_\nu^{k-2e_1} \big[x_1 \delta_{\nu_1}\mathcal L_\nu^{-(\ell+1)/2}\big].
\]
For the first operator we have
\[
\delta_{\nu}^{k-2e_1}\mathcal L_\nu^{-(\ell-1)/2}\circ  \mathcal L_\nu^{(\ell-1)/2}\mathcal L_{\nu_1}  \mathcal L_\nu^{-(\ell+1)/2}.
\]
The operator $\delta_{\nu}^{k-2e_1}\mathcal L_\nu^{-(\ell-1)/2}$ is bounded on $L^2(\Rn_+)$ due to $|k-2e_1|=\ell-1< \ell$ and the inductive hypothesis, meanwhile the operator $\mathcal L_\nu^{(\ell-1)/2}\mathcal L_{\nu_1}  \mathcal L_\nu^{-(\ell+1)/2}=\mathcal L_{\nu_1}\mathcal L_{\nu}^{-1}$ is bounded on $L^2(\Rn_+)$ by using the similar argument to that of the case $|k|=1$ above and \eqref{eq-eignevalue eigenvector}. Hence, $\delta_{\nu}^{k-2e_1}\mathcal L_{\nu_1}  \mathcal L_\nu^{-(\ell+1)/2}$ is bounded on $L^2(\Rn_+)$.

For the second operator, we have
 $$
 \delta_{\nu}^{k-2e_1}   \mathcal L_\nu^{-(\ell+1)/2} = \delta_{\nu}^{k-2e_1}   \mathcal L_\nu^{-(\ell-1)/2}\circ \mathcal L_\nu^{-1}.
 $$
The operator $\delta_{\nu}^{k-2e_1}   \mathcal L_\nu^{-(\ell-1)/2}$ is bounded on $L^2(\Rn_+)$ due to $|k-2e_1|=\ell-1< \ell$ and the inductive hypothesis; and the operator $\mathcal L_\nu^{-1}$ is bounded on $L^2(\Rn_+)$ due to the spectral theory and the spectral gap of $\mathcal L_\nu$. Consequently, $\delta_{\nu}^{k-2e_1}   \mathcal L_\nu^{-(\ell+1)/2}$ is bounded on $L^2(\Rn_+)$.

For the third operator, we have
\[
\delta_\nu^{k-2e_1} \big[x_1 \delta_{\nu_1}\mathcal L_\nu^{-(\ell+1)/2}\big]=x_1\delta^{k-e_1}_\nu\mathcal L_\nu^{-(\ell+1)/2} + (k-2) \delta^{k-2e_1}_\nu\mathcal L_\nu^{-(\ell+1)/2}.
\]
The  operator $x_1\delta^{k-e_1}_\nu\mathcal L_\nu^{-(\ell+1)/2}$ is bounded on $L^2$ by using the argument similarly to the case $\nu\ne -1/2$; meanwhile the operator $\delta^{k-2e_1}_\nu\mathcal L_\nu^{-(\ell+1)/2}$ is exactly the second operator above and is also bounded on $L^2(\Rn_+)$. Therefore, the operator $\delta_\nu^{k-2e_1} \big[x_1 \delta_{\nu_1}\mathcal L_\nu^{-(\ell+1)/2}\big]$ is bounded on $L^2(\Rn_+)$.

This completes our proof.
\end{proof}

 The following results establish the kernel bounds for the Riesz transforms.
\begin{prop}\label{Prop-Riesz transform} Let $\nu\in [-1/2,\vc)^n$ and $k\in \mathbb N^k$ be a multi-index.  Denote by $\delta^k_\nu \mathcal L_\nu^{-|k|/2}(x,y)$ the kernel of $\delta^k_\nu \mathcal L_\nu^{-|k|/2}$. Then we have, for $x\ne y$ and $j=1,\ldots,n$,
		\begin{equation}\label{eq- R kernel 1}
		|\delta^k_\nu \mathcal L_\nu^{-|k|/2}(x,y)|\lesi  \f{1}{|x-y|^n} \Big(1+\f{|x-y|}{\rho_\nu(x)}+\f{|x-y|}{\rho_\nu(y)}\Big)^{-(\nu_{\min}+1/2)},
		\end{equation}
		\begin{equation}\label{eq- R kernel 2}
		|\partial_{y_j}\delta^k_\nu \mathcal L_\nu^{-|k|/2}(x,y)|\lesi \Big(\f{1}{\rho_\nu(y)}+ \f{1}{|x-y|}\Big)\f{1}{|x-y|^n} \Big(1+\f{|x-y|}{\rho_\nu(x)}+\f{|x-y|}{\rho_\nu(y)}\Big)^{-(\nu_{\min}+1/2)}
		\end{equation}
		and
		\begin{equation}\label{eq- R kernel 3}
		|\partial_{x_j}\delta^k_\nu \mathcal L_\nu^{-|k|/2}(x,y)|\lesi \Big(\f{1}{\rho_\nu(x)}+ \f{1}{|x-y|}\Big)\f{1}{|x-y|^n} \Big(1+\f{|x-y|}{\rho_\nu(x)}+\f{|x-y|}{\rho_\nu(y)}\Big)^{-(\nu_{\min}+1/2)}.
		\end{equation}
\end{prop}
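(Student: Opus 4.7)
The key idea is the subordination formula
$$\mathcal L_\nu^{-|k|/2} = \frac{1}{\Gamma(|k|/2)} \int_0^\infty t^{|k|/2-1} e^{-t\mathcal L_\nu}\,dt,$$
which yields
$$\delta^k_\nu \mathcal L_\nu^{-|k|/2}(x,y) = \frac{1}{\Gamma(|k|/2)}\int_0^\infty t^{|k|/2-1} \delta^k_\nu p_t^\nu(x,y)\,dt,$$
and analogous formulas with either partial derivative brought inside the integral. For the rest of this sketch write $\gamma:=\nu_{\min}+1/2$ and $\rho_{\min}:=\min\{\rho_\nu(x),\rho_\nu(y)\}$. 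Each of \eqref{eq- R kernel 1}--\eqref{eq- R kernel 3} will be proved by plugging the appropriate pointwise heat-kernel bound from Section~3 into the time integral and splitting at $t=|x-y|^2$.

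For \eqref{eq- R kernel 1}, Proposition~\ref{prop- delta k pt d>2} with $\ell=|k|$ reduces the task to
$$\int_0^\infty \frac{1}{t^{n/2+1}}\exp\Bigl(-\frac{|x-y|^2}{ct}\Bigr)\Bigl(1+\tfrac{\sqrt t}{\rho_\nu(x)}+\tfrac{\sqrt t}{\rho_\nu(y)}\Bigr)^{-\gamma}\,dt.$$
On $t\le|x-y|^2$ we discard the last factor and change variables $u=|x-y|^2/t$ to get $\lesssim|x-y|^{-n}$; on $t>|x-y|^2$ the bound $(1+\sqrt t/\rho_\nu(x)+\sqrt t/\rho_\nu(y))^{-\gamma}\le(\sqrt t/\rho_{\min})^{-\gamma}$ and direct integration give $\lesssim \rho_{\min}^{\gamma}|x-y|^{-n-\gamma}$. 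Combining both through the elementary equivalence
$$\Bigl(1+\tfrac{|x-y|}{\rho_\nu(x)}+\tfrac{|x-y|}{\rho_\nu(y)}\Bigr)^{-\gamma}\sim \min\Bigl\{1,\tfrac{\rho_{\min}}{|x-y|}\Bigr\}^{\gamma}$$
produces \eqref{eq- R kernel 1}. The $x$-derivative bound \eqref{eq- R kernel 3} follows by the same splitting: Proposition~\ref{prop-gradient x y d>2} supplies a bound for $|\partial_{x_j}\delta^k_\nu p_t^\nu(x,y)|$ with an additional factor $\rho_\nu(x)^{-1}+t^{-1/2}$, and handling the two resulting integrals in parallel delivers the announced $\rho_\nu(x)^{-1}+|x-y|^{-1}$ in the conclusion.

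The substantive new case is \eqref{eq- R kernel 2}, because Proposition~\ref{prop-gradient x y d>2} is formulated for derivatives in the same variable as $\delta^k_\nu$, whereas here $\partial_{y_j}$ acts in $y$ while $\delta^k_\nu$ acts in $x$. The plan is first to establish the mixed heat-kernel bound
$$|\partial_{y_j}\delta^k_\nu p_t^\nu(x,y)|\lesssim \Bigl(\tfrac{1}{\rho_\nu(y)}+\tfrac{1}{\sqrt t}\Bigr)\frac{1}{t^{(|k|+n)/2}}\exp\Bigl(-\frac{|x-y|^2}{ct}\Bigr)\Bigl(1+\tfrac{\sqrt t}{\rho_\nu(x)}+\tfrac{\sqrt t}{\rho_\nu(y)}\Bigr)^{-\gamma}$$
and then to argue as for \eqref{eq- R kernel 3}. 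To obtain this mixed bound, I would use the product decomposition $p_t^\nu(x,y)=\prod_i p_t^{\nu_i}(x_i,y_i)$ and treat the $j$-th factor via the semigroup splitting
$$p_t^{\nu_j}(x_j,y_j)=\int_0^\infty p_{t/2}^{\nu_j}(x_j,z)\,p_{t/2}^{\nu_j}(z,y_j)\,dz,$$
which separates the two derivatives: $\delta_{\nu_j,x_j}^{k_j}$ falls on the first subfactor (Proposition~\ref{prop-delta k p x y}) and $\partial_{y_j}$ on the second (the symmetric form of Proposition~\ref{prop-gradient x y}, obtained through $p_t^{\nu_j}(z,y_j)=p_t^{\nu_j}(y_j,z)$). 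The two Gaussian parts are merged through Lemma~\ref{lem- product two kernels}; the remaining $n-1$ factors are estimated directly by Proposition~\ref{prop-delta k p x y}, and the product of decay factors is collapsed using \eqref{eq- equivalence of rho} together with the elementary inequality $(1+a)^{-\gamma}(1+b)^{-\gamma}\le(1+a+b)^{-\gamma}$.

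The principal obstacle is preserving the full joint decay $(1+\sqrt t/\rho_\nu(x)+\sqrt t/\rho_\nu(y))^{-\gamma}$ after the semigroup splitting: the intermediate variable $z$ could a priori have $\rho_\nu(z)$ unrelated to $\rho_\nu(x)$ or $\rho_\nu(y)$, which would threaten the decay. The remedy is to discard the $\rho_\nu(z)$-dependent part of the decay factor in each half-step kernel before composing, and then to reassemble the $x$- and $y$-decay by the product inequality above.
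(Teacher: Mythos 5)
Your overall strategy is exactly the paper's: write $\mathcal L_\nu^{-|k|/2}$ by subordination, insert the pointwise bounds of Propositions \ref{prop- delta k pt d>2} and \ref{prop-gradient x y d>2} for $\delta_\nu^k p_t^\nu$ and its derivatives, and estimate the resulting time integral. Your explicit treatment of \eqref{eq- R kernel 2} is a genuine (and welcome) supplement: the paper disposes of it with ``similarly,'' even though Proposition \ref{prop-gradient x y d>2} is stated only for derivatives falling on the \emph{same} variable as $\delta_\nu^k$. Your route --- split $p_t^{\nu_j}=p_{t/2}^{\nu_j}\ast p_{t/2}^{\nu_j}$ in the $j$-th coordinate, put $\delta_{\nu_j}^{k_j}$ on one half and $\partial_{y_j}$ on the other via symmetry, discard the decay in the intermediate variable, merge the Gaussians with Lemma \ref{lem- product two kernels}, and reassemble the $x$- and $y$-decay with $(1+a)^{-\gamma}(1+b)^{-\gamma}\le(1+a+b)^{-\gamma}$ --- is sound and produces the mixed heat-kernel bound you state; alternatively one could iterate the identity \eqref{eq- chain rule 2} to express $\delta_{\nu,x}^{k}p_t^\nu$ through kernels $p_t^{\nu+i}$ and then differentiate in $y$, but your semigroup splitting is cleaner and self-contained.

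There is one slip in the time integral for \eqref{eq- R kernel 1} (and it propagates to the other two estimates): on the near range $t\le|x-y|^2$ you discard the factor $\bigl(1+\tfrac{\sqrt t}{\rho_\nu(x)}+\tfrac{\sqrt t}{\rho_\nu(y)}\bigr)^{-\gamma}$ before integrating, which yields only $|x-y|^{-n}$ with \emph{no} decay; adding this to the far-range contribution $\rho_{\min}^{\gamma}|x-y|^{-n-\gamma}$ gives $|x-y|^{-n}\bigl(1+(\rho_{\min}/|x-y|)^{\gamma}\bigr)$, which does not imply the target $|x-y|^{-n}\min\{1,\rho_{\min}/|x-y|\}^{\gamma}$ in the regime $\rho_{\min}\le|x-y|$ where the decay actually matters. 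The factor must be kept on the near range and the standard trade with the Gaussian used: for $\sqrt t\le|x-y|$ one has
\[
\Bigl(1+\frac{\sqrt t}{\rho}\Bigr)^{-\gamma}\le\Bigl(\frac{|x-y|}{\sqrt t}\Bigr)^{\gamma}\Bigl(1+\frac{|x-y|}{\rho}\Bigr)^{-\gamma},
\]
and the loss $(|x-y|/\sqrt t)^{\gamma}$ is absorbed by $\exp(-|x-y|^2/(ct))$. With this correction the near and far ranges both produce the full right-hand side of \eqref{eq- R kernel 1}, and the same adjustment fixes \eqref{eq- R kernel 2} and \eqref{eq- R kernel 3}.
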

\begin{proof}
	By Proposition \ref{prop- delta k pt d>2}, we have
	\[
	\begin{aligned}
		\delta^k_\nu \mathcal L_\nu^{-|k|/2}(x,y) &= \int_0^\vc t^{|k|/2}\delta^k_\nu p_t^\nu(x,y) \f{dt}{t}\\
		&\lesi \int_0^\vc  \f{1 }{t^{n/2}}\exp\Big(-\f{|x-y|^2}{ct}\Big)\Big(1+\f{\sqrt t}{\rho_\nu(x)}+\f{\sqrt t}{\rho_\nu(y)}\Big)^{-(\nu_{\min}+1/2)} \f{dt}{t}\\
		&\lesi  \f{1}{|x-y|^n} \Big(1+\f{|x-y|}{\rho_\nu(x)}+\f{|x-y|}{\rho_\nu(y)}\Big)^{-(\nu_{\min}+1/2)}.
	\end{aligned}
	\]
	Similarly, by using Proposition \ref{prop-gradient x y d>2} we obtain the second and the third estimates.

	This completes our proof.
\end{proof}

\begin{prop}\label{Prop-Riesz transform 2} Let $\nu\in [-1/2,\vc)^n$ and $k\in \mathbb N^k$ be a multi-index.  Denote by $\delta^k_\nu \mathcal L_\nu^{-|k|/2}(x,y)$ the kernel of $\delta^k_\nu \mathcal L_\nu^{-|k|/2}$. Then we have, for $t>0$ and $x\ne y$ and $j=1,\ldots,n$,
	\begin{equation}\label{eq- R kernel 1 v2}
		|\delta^k_\nu e^{-t\mathcal L_\nu}\mathcal L_\nu^{-|k|/2}(x,y)|\lesi  \f{1}{|x-y|^n} \Big(1+\f{|x-y|}{\rho_\nu(x)}+\f{|x-y|}{\rho_\nu(y)}\Big)^{-(\nu_{\min}+1/2)}
	\end{equation}
	and
	\begin{equation}\label{eq- R kernel 2 v2}
		|\partial_{x_j}\delta^k_\nu e^{-t\mathcal L_\nu}\mathcal L_\nu^{-|k|/2}(x,y)|\lesi \Big(\f{1}{\rho_\nu(x)}+ \f{1}{|x-y|}\Big)\f{1}{|x-y|^n} \Big(1+\f{|x-y|}{\rho_\nu(x)}+\f{|x-y|}{\rho_\nu(y)}\Big)^{-(\nu_{\min}+1/2)}.
	\end{equation}
\end{prop}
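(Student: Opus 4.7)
The plan is to reduce the estimates to the static bounds of Proposition \ref{Prop-Riesz transform} via a subordination formula. Since $e^{-t\mathcal L_\nu}$ commutes with $\mathcal L_\nu^{-|k|/2}$ by functional calculus, applying $\mathcal L_\nu^{-|k|/2}=\Gamma(|k|/2)^{-1}\int_0^\infty s^{|k|/2-1}e^{-s\mathcal L_\nu}\,ds$ and the change of variable $u=t+s$ yields
\begin{equation}\label{eq:planrep}
\delta^k_\nu e^{-t\mathcal L_\nu}\mathcal L_\nu^{-|k|/2}(x,y)=\frac{1}{\Gamma(|k|/2)}\int_t^\infty (u-t)^{|k|/2-1}\,\delta^k_\nu p^\nu_u(x,y)\,du.
\end{equation}

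I would first apply Proposition \ref{prop- delta k pt d>2} to bound $|\delta^k_\nu p^\nu_u(x,y)|$ pointwise, then split the integral in \eqref{eq:planrep} at $u=2t$. On the tail $u\in(2t,\infty)$ one has $(u-t)^{|k|/2-1}\lesi u^{|k|/2-1}$ (immediate when $|k|\ge 2$ and justified by $u-t\sim u$ when $|k|=1$), so the tail contribution is dominated by $\int_0^\infty u^{|k|/2-1}|\delta^k_\nu p^\nu_u(x,y)|\,du\lesi|\delta^k_\nu\mathcal L_\nu^{-|k|/2}(x,y)|$, which is already controlled by Proposition \ref{Prop-Riesz transform}. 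On the near-diagonal slab $u\in(t,2t)$ one has $u\sim t$, and integrating $(u-t)^{|k|/2-1}du$ yields a factor $\lesi t^{|k|/2}$; the slab contribution is therefore bounded by the Gaussian-type expression
\[
G_t(x,y):=\frac{1}{t^{n/2}}\exp\Big(-\frac{|x-y|^2}{ct}\Big)\Big(1+\frac{\sqrt t}{\rho_\nu(x)}+\frac{\sqrt t}{\rho_\nu(y)}\Big)^{-(\nu_{\min}+1/2)}.
\]

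The remaining step is a $t$-uniform pointwise majorisation
\[
G_t(x,y)\lesi\frac{1}{|x-y|^n}\Big(1+\frac{|x-y|}{\rho_\nu(x)}+\frac{|x-y|}{\rho_\nu(y)}\Big)^{-(\nu_{\min}+1/2)}.
\]
This follows from a short case analysis on $\sqrt t$ versus $|x-y|$: when $\sqrt t\ge|x-y|$ one uses $t^{-n/2}\le|x-y|^{-n}$ together with the monotonicity $(1+\sqrt t/\rho_\nu)^{-\alpha}\le(1+|x-y|/\rho_\nu)^{-\alpha}$ with $\alpha:=\nu_{\min}+1/2\ge 0$; when $\sqrt t<|x-y|$ the rapid decay $e^{-x}\le C_N x^{-N}$ with $N$ sufficiently large (depending on $n$ and $\alpha$) absorbs both the discrepancy between $t^{-n/2}$ and $|x-y|^{-n}$ and the possible loss in the $\rho_\nu$-factor.

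The second estimate for $\partial_{x_j}\delta^k_\nu e^{-t\mathcal L_\nu}\mathcal L_\nu^{-|k|/2}(x,y)$ is proved by the identical scheme, with Proposition \ref{prop-gradient x y d>2} replacing Proposition \ref{prop- delta k pt d>2}; the additional prefactor $\rho_\nu(x)^{-1}+u^{-1/2}$ from the gradient bound propagates through the splitting and the pointwise majorisation above (applied once with $n$ and once with $n+1$) yields the claimed $\rho_\nu(x)^{-1}+|x-y|^{-1}$. The principal subtlety is the $|k|=1$ case, in which $(u-t)^{-1/2}$ is integrable but not dominated by $u^{-1/2}$ near $u=t$; the separation of the slab $u\in(t,2t)$ from the tail is precisely what resolves this, and handling the on-slab Gaussian bound uniformly in $t$ is exactly what the pointwise lemma above is designed to do.
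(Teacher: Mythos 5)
Your proposal is correct and follows essentially the same route as the paper: both write $\delta^k_\nu e^{-t\mathcal L_\nu}\mathcal L_\nu^{-|k|/2}(x,y)$ as a subordination integral of $\delta^k_\nu p^\nu_{t+u}(x,y)$, split at the point where $t+u\sim t$ versus $t+u\sim u$ (your split at $u=2t$ in the shifted variable is the paper's split at $u=t$ in the unshifted one), invoke Propositions \ref{prop- delta k pt d>2} and \ref{prop-gradient x y d>2}, and finish with the uniform-in-$t$ Gaussian-to-polynomial majorisation that the paper leaves implicit in its final inequality. Your explicit treatment of the $|k|=1$ integrability near $u=t$ and of the pointwise majorisation lemma only makes precise steps the paper asserts without comment.
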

\begin{proof}
	By Proposition \ref{prop- delta k pt d>2}, we have
	\[
	\begin{aligned}
		\delta^k_\nu e^{-t\mathcal L_\nu}\mathcal L_\nu^{-|k|/2}(x,y) &= \int_0^\vc  u^{|k|/2} \delta^k_\nu p_{t+u}^\nu(x,y) \f{du}{u}\\
		&\lesi \int_0^\vc  \f{u^{|k|/2}}{(t+u)^{|k|/2}} \f{1 }{(t+u)^{n/2}}\exp\Big(-\f{|x-y|^2}{c(t+u)}\Big)\Big(1+\f{\sqrt {t+u}}{\rho_\nu(x)}+\f{\sqrt {t+u}}{\rho_\nu(y)}\Big)^{-(\nu_{\min}+1/2)} \f{du}{u}\\
		&\lesi \int_0^t  \f{u^{|k|/2}}{t^{|k|/2}} \f{1 }{t^{n/2}}\exp\Big(-\f{|x-y|^2}{ct}\Big)\Big(1+\f{\sqrt {t}}{\rho_\nu(x)}+\f{\sqrt {t}}{\rho_\nu(y)}\Big)^{-(\nu_{\min}+1/2)} \f{du}{u}\\
		& \ \ \ + \int_t^\vc    \f{1 }{u^{n/2}}\exp\Big(-\f{|x-y|^2}{cu}\Big)\Big(1+\f{\sqrt {u}}{\rho_\nu(x)}+\f{\sqrt {u}}{\rho_\nu(y)}\Big)^{-(\nu_{\min}+1/2)} \f{du}{u}\\
		&\lesi  \f{1}{|x-y|^n} \Big(1+\f{|x-y|}{\rho_\nu(x)}+\f{|x-y|}{\rho_\nu(y)}\Big)^{-(\nu_{\min}+1/2)}.
	\end{aligned}
	\]
	Similarly, by using Proposition \ref{prop-gradient x y d>2} we obtain the second  estimates.

	This completes our proof.
\end{proof}

We are ready to give the proof of Theorem \ref{thm-Riesz transform}.
\begin{proof}[Proof of Theorem \ref{thm-Riesz transform}:]
	From \eqref{eq- R kernel 1}, we have
	\[
	| \delta^k_\nu \mathcal L_\nu^{-|k|/2}(x,y)|\lesi \f{1}{|x-y|^n}, \ \ \ x\ne y.
	\]
	We now prove the H\"older's continuity of the Riesz kernel. If $|y-y'|\ge \max\{\rho_\nu(y),\rho_\nu(y')\}$, then from \eqref{eq- R kernel 1} we have
	\[
	\begin{aligned}
		| \delta^k_\nu \mathcal L_\nu^{-|k|/2}(x,y)-\delta^k_\nu \mathcal L_\nu^{-|k|/2}(x,y')|&\lesi | \delta^k_\nu \mathcal L_\nu^{-|k|/2}(x,y)|+|\delta^k_\nu \mathcal L_\nu^{-|k|/2}(x,y')| \\
		&\lesi \f{1}{|x-y|^n}\Big[\Big(\f{\rho_\nu(y)}{|x-y|}\Big)^{\nu_{\min}+1/2}+\Big(\f{\rho_\nu(y')}{|x-y|}\Big)^{\nu_{\min}+1/2}\Big]\\
		&\lesi \f{1}{|x-y|^n} \Big(\f{|y-y'|}{|x-y|}\Big)^{\nu_{\min}+1/2}
	\end{aligned}
	\]
	
	If $|y-y'|\lesi \max\{\rho_\nu(y),\rho_\nu(y')\}$, then by Lemma \ref{lem-critical function} we have $\rho_\nu(y)\sim \rho_\nu(y')$; moreover, for every $z\in B(y, 2|y-y'|)$, $\rho_\nu(z)\sim \rho_\nu(y)$. This, together with  the mean value theorem and \eqref{eq- R kernel 2}, gives
	\[
	\begin{aligned}
		| \delta^k_\nu \mathcal L_\nu^{-|k|/2}(x,y)&-\delta^k_\nu \mathcal L_\nu^{-|k|/2}(x,y')|\\
		&\lesi  |y-y'|\Big[\sum_{j=1}^n\f{1}{\rho_\nu(y)}+ \f{1}{|x-y|}\Big]\Big(1+\f{|x-y|}{\rho_\nu(x)}+\f{|x-y|}{\rho_\nu(y)}\Big)^{-(\nu_{\min}+1/2)}\f{1}{|x-y|^n}\\
		&\lesi \Big[ \f{|y-y'|}{\rho_\nu(y)}+ \f{|y-y'|}{|x-y|}\Big]\Big(1+\f{|x-y|}{\rho_\nu(x)}+\f{|x-y|}{\rho_\nu(y)}\Big)^{-(\nu_{\min}+1/2)}\f{1}{|x-y|^n}\\
		&\lesi \f{|y-y'|}{\rho_\nu(y)} \Big(1+\f{|x-y|}{\rho_\nu(x)}+\f{|x-y|}{\rho_\nu(y)}\Big)^{-(\nu_{\min}+1/2)}\f{1}{|x-y|^n}+ \f{|y-y'|}{|x-y|}\f{1}{|x-y|^n}\\
		&=: E_1 +E_2.
	\end{aligned}
	\]
	Since $|y-y'|\lesi \rho_\nu(y)$, we have
	\[
	\begin{aligned}
		E_1&\lesi \Big(\f{|y-y'|}{\rho_\nu(y)}\Big)^{\gamma_\nu} \Big(1+\f{|x-y|}{\rho_\nu(x)}+\f{|x-y|}{\rho_\nu(y)}\Big)^{-\gamma_\nu}\f{1}{|x-y|^n}\\
		&\lesi \Big(\f{|y-y'|}{\rho_\nu(y)}\Big)^{\gamma_\nu} \Big(\f{|x-y|}{\rho_\nu(x)}\Big)^{-\gamma_\nu}\f{1}{|x-y|^n}\\
		&\lesi \Big(\f{|y-y'|}{|x-y|}\Big)^{\gamma_\nu}\f{1}{|x-y|^n},
	\end{aligned}
	\]
	where  and $\gamma_\nu = \min\{1, \nu_{\min}+1/2\}$.
	
	For the same reason, since $|y-y'|\le |x-y|/2$, we have
	\[
	\begin{aligned}
		E_2	&\lesi \Big(\f{|y-y'|}{|x-y|}\Big)^{\gamma_\nu}\f{1}{|x-y|^n}.
	\end{aligned}
	\] 
	It follows that 
	\[
	\begin{aligned}
		| \delta^k_\nu \mathcal L_\nu^{-|k|/2}(x,y)-\delta^k_\nu \mathcal L_\nu^{-|k|/2}(x,y')|\lesi \Big(\f{|y-y'|}{x-y}\Big)^{\gamma_\nu}\f{1}{|x-y|^n},
	\end{aligned}
	\]
	whenever $|y-y'|\le |x-y|/2$.
	
	Similarly, by using \eqref{eq- R kernel 3} we obtain
	\[
	\begin{aligned}
		| \delta^k_\nu \mathcal L_\nu^{-|k|/2}(y,x)-\delta^k_\nu \mathcal L_\nu^{-|k|/2}(y',x)|&\lesi \Big(\f{|y-y'|}{|x-y|}\Big)^{\gamma_\nu}\f{1}{|x-y|^n},
	\end{aligned}
	\]
	whenever $|y-y'|\le |x-y|/2$.
	
	This completes our proof.
\end{proof}

We now give the proof of Theorem \ref{thm- boundedness on Hardy and BMO}.

\begin{proof}[Proof of Theorem \ref{thm- boundedness on Hardy and BMO}:]
	Fix $\f{n}{n+\gamma_\nu}<p\le 1$, $k \in \mathbb N^n$ and $M>n(1/p-1)$. 
	
	\noindent (i)  By Proposition \ref{prop-equivalence L +2} and Theorem \ref{mainthm2s}, $H^p_{\mathcal L_{\nu+(k+\vec{M})_\nu}}(\mathbb{R}^n_+)\equiv H^p_{\mathcal L_\nu}(\mathbb{R}^n_+)\equiv H^p_{\rho_\nu}(\mathbb R^n_+)$, where $\vec{M}=(M,\ldots,M)\in \mathbb R^n$ and $(k+2\vec{M})_\nu$ is defined as in \eqref{eq-ell nu}. Consequently, it suffices to prove that 
	\[
	\Big\|\sup_{t>0}|e^{-t\mathcal L_{\nu + (k+2\vec{M})_\nu}}\delta^k \mathcal L_\nu^{-|k|/2}a|\Big\|_p\lesi 1
	\]
	for all $(p,M)_{\mathcal L_\nu}$ atoms $a$.
	
	Let $a$ be a $(p,M)_{\mathcal L_\nu}$ atom associated to a ball $B$. We have
	\[
	\begin{aligned}
		\Big\|\sup_{t>0}|e^{-t\mathcal L_{\nu + (k+2\vec{M})_\nu}}\delta^k \mathcal L_\nu^{-|k|/2}a|\Big\|_p&\lesi \Big\|\sup_{t>0}|e^{-t\mathcal L_{\nu + (k+2\vec{M})_\nu}}\delta^k \mathcal L_\nu^{-|k|/2}a|\Big\|_{L^p(4B)}\\ & \ \ \ \ \ +\Big\|\sup_{t>0}|e^{-t\mathcal L_{\nu + (k+2\vec{M})_\nu}}\delta^k \mathcal L_\nu^{-|k|/2}a|\Big\|_{L^p(\mathbb R^n_+\backslash 4B)}.		
	\end{aligned}
	\]
	Using the $L^2$-boundedness of both $f\mapsto \sup_{t>0}|e^{-t \mathcal L_{\nu + k +\vec M} }f|$ and the Riesz transform $\delta^k \mathcal L_\nu^{-|k|/2}$ and the H\"older  inequality, by the standard argument, we have
	\[
	\Big\|\sup_{t>0}|e^{-t\mathcal L_{\nu + (k+2\vec{M})_\nu}}\delta^k \mathcal L_\nu^{-|k|/2}a|\Big\|_{L^p(4B)}\lesi 1.
	\]
	For the second term, using $a=\mathcal L_\nu^Mb$,
	\[
	e^{-t\mathcal L_{\nu + (k+2\vec{M})_\nu}}\delta^k \mathcal L_\nu^{-|k|/2}a = e^{-t\mathcal L_{\nu + (k+2\vec{M})_\nu}}\delta^k \mathcal L_\nu^{M-|k|/2}b.
	\]
	We have
	\[
	\begin{aligned}
		e^{-t\mathcal L_{\nu + (k+2\vec{M})_\nu}}\delta^k \mathcal L_\nu^{M-|k|/2}(x,y) &= c\int_0^\vc u^{|k|/2}e^{-t\mathcal L_{\nu + (k+2\vec{M})_\nu}}\delta^k \mathcal L_\nu^{M}e^{-u\mathcal L_\nu}(x,y) \f{du}{u}\\
		&= c\int_0^t \ldots \f{du}{u} + c\int_t^\vc \ldots \f{du}{u}\\
		&=I_1+I_2.
	\end{aligned}
	\]
	For $I_1$, by Proposition \ref{prop-heat kernel}, Proposition \ref{prop-gradient x y dual delta} and Lemma \ref{lem- product two kernels} we have
	\[
	\begin{aligned}
		I_1&\lesi \int_0^t \int_{\Rn_+}u^{|k|/2} |[\mathcal L_\nu^{M}(\delta_\nu^*)^k e^{-t\mathcal L_{\nu + (k+2\vec{M})_\nu}}](z,x)| |p_u^\nu  (z,y)| dz \f{du}{u}\\
		&\lesi \int_0^t \f{u^{|k|/2}}{t^{M+|k|/2}} \f{1}{t^{n/2}}\exp\Big(-\f{|x-y|^2}{ct}\Big) \f{du}{u}\\
		&\sim \f{1}{t^{M}} \f{1}{t^{n/2}}\exp\Big(-\f{|x-y|^2}{ct}\Big)\\
		&\lesi \f{1}{|x-y|^{2M}} \f{1}{|x-y|^n}  
	\end{aligned}
	\]
	and
	\[
	\begin{aligned}
		I_2&\lesi  \int_t^\vc \int_{\Rn_+}u^{|k|/2} |p_u^{\nu+(k+2\vec{M})_\nu}  (x,z)| |[\delta^k \mathcal L_\nu^{M} e^{-t\mathcal L_{\nu}}](z,y)| dz \f{du}{u}\\
		&\lesi \int_t^\vc \f{1}{u^{M}} \f{1}{u^{n/2}}\exp\Big(-\f{|x-y|^2}{cu}\Big)  \f{du}{u}\\
		&\lesi \int_0^{|x-y|^2}\ldots + \int_{|x-y|^2}^\vc\ldots \\ 		
		&\lesi \f{1}{|x-y|^{2M}} \f{1}{|x-y|^n}.
	\end{aligned}
	\]
	
	Hence,
	\[
	\begin{aligned}
		\Big\|\sup_{t>0}|e^{-t\mathcal L_{\nu + (k+2\vec{M})_\nu}}\delta^k \mathcal L_\nu^{-|k|/2}a|\Big\|^p_{L^p(\mathbb R^n_+\backslash 4B)}
		& \lesi \int_{\mathbb R^n_+\backslash 4B} \Big[\int_{B} \f{1}{|x-y|^{2M}} \f{1}{|x-y|^n}  |b(y)|dy\Big]^{p}dx\\
		& \lesi \int_{\mathbb R^n_+\backslash 4B} \Big[\int_{B} \f{1}{|x-x_B|^{2M}} \f{1}{|x-x_B|^n} |b(y)|dy\Big]^{p}dx\\
		&\lesi \int_{\mathbb R^n_+\backslash 4B}  \f{\|b\|_1^p}{|x-x_B|^{(n+2M)p}}  dx\\
		&\lesi \int_{\mathbb R^n_+\backslash 4B}  \f{r_B^{2Mp}|B|^{p-1}}{|x-x_B|^{(n+2M)p}}  dx\\		
		&\lesi 1,
	\end{aligned}
	\]
	as long as $M>n(1/p-1)$.
\bigskip

\noindent (ii)  By the duality in Theorem \ref{dual}, it suffices to prove that the conjugate $\mathcal L_\nu^{-1/2}\delta_{\nu_j}^*$ 
is bounded on the Hardy space  $H^p_{\rho_\nu}(\mathbb R^n_+)$. By Theorem \ref{mainthm2s}, we need only to prove that 
	\[
\Big\|\sup_{t>0}|e^{-t\mathcal L_{\nu}} \mathcal L_\nu^{-|k|/2}(\delta_\nu^*)^ka|\Big\|_p\lesi 1
\]
for all $(p,\rho_\nu)$ atoms $a$.
	
Suppose that  $a$ is a $(p,\rho_\nu)$ atom associated to a ball $B$. Then we write
\[
\begin{aligned}
	\Big\|\sup_{t>0}|e^{-t\mathcal L_{\nu}} \mathcal L_\nu^{-|k|/2}(\delta_\nu^*)^ka|\Big\|_p&\lesi \Big\|\sup_{t>0}|e^{-t\mathcal L_{\nu}} \mathcal L_\nu^{-|k|/2}(\delta_\nu^*)^ka|\Big\|_{L^p(4B)} +\Big\|\sup_{t>0}|e^{-t\mathcal L_{\nu}}\mathcal L_\nu^{-|k|/2}(\delta_\nu^*)^ka|\Big\|_{L^p(\mathbb R^n_+\backslash 4B)}.		
\end{aligned}
\]
Since  $f\mapsto \sup_{t>0}|e^{-t \mathcal L_{\nu}}f|$ and  $\mathcal L_\nu^{-|k|/2}(\delta_\nu^*)^k$ is bounded on $L^2(\mathbb R^n_+)$, by  the H\"older's inequality and the standard argument, we have
\[
\Big\|\sup_{t>0}|e^{-t\mathcal L_{\nu}} \mathcal L_\nu^{-|k|/2}(\delta_\nu^*)^ka|\Big\|_{L^p(4B)}\lesi 1.
\]
For the second term, we consider two cases.

\textbf{Case 1: $r_B=\rho_\nu(x_B)$.} By Proposition \ref{Prop-Riesz transform 2} (a), for $x\in (4B)^c$,
\[
\begin{aligned}
	|\sup_{t>0}|e^{-t\mathcal L_{\nu}} \mathcal L_\nu^{-|k|/2}(\delta_\nu^*)^ka(x)|&\lesi \sup_{t>0}\int_B  |\delta ^k \mathcal L_\nu^{-|k|/2} e^{-t\mathcal L_{\nu}}(y,x)||a(y)|dy\\  
		&\lesi   \f{1}{|x-x_B|^n}\Big(\f{\rho_\nu(x_0)}{|x-x_B|}\Big)^{\gamma_\nu}\|a\|_1\\
	&\lesi   \f{r^{\gamma_\nu}}{|x-x_B|^{n+\gamma_\nu}} |B|^{1-1/p},
\end{aligned}
\]
where  we used the fact $\rho_\nu(y)\sim \rho_\nu(x_0)$ for $y\in B$ (due to Lemma \ref{lem-critical function}) in the second inequality.

It follows that
\[
\Big\|\sup_{t>0}|e^{-t\mathcal L_{\nu}} \mathcal L_\nu^{-|k|/2}(\delta_\nu^*)^ka|\Big\|_{L^p(\mathbb R^n_+\backslash 4B)}\lesi 1,
\]
as long as $\f{n}{n+\gamma_\nu}<p\le 1$.

\bigskip

\textbf{Case 2: $r_B<\rho_\nu(x_B)$.} 

Using the cancellation property $\int a(x) dx= 0$, we have
\[
\begin{aligned}
\sup_{t>0} |e^{-t\mathcal L_{\nu}} \mathcal L_\nu^{-|k|/2}(\delta_\nu^*)^ka(x)|&= \sup_{t>0}\Big|\int_{B}[e^{-t\mathcal L_{\nu}} \mathcal L_\nu^{-|k|/2}(\delta_\nu^*)^k(x,y)-e^{-t\mathcal L_{\nu}} \mathcal L_\nu^{-|k|/2}(\delta_\nu^*)^k(x,x_B)]a(y)dy\Big|.
\end{aligned}
\]
By mean value theorem, Proposition \ref{Prop-Riesz transform 2}  and the fact that $\rho_\nu(y_j)\gtrsim \rho_\nu(x_0)$ and $\rho_\nu(y)\sim \rho_\nu(x_B)$ for all $y\in B$, we have, for $x\in (4B)^c$,
\[
\begin{aligned}
|e^{-t\mathcal L_{\nu}} \mathcal L_\nu^{-|k|/2}(\delta_\nu^*)^ka(x)|&\lesi   \int_{B}\Big(\f{|y-x_B|}{|x-y|}+\f{|y-x_B|}{\rho_\nu(x_B)}\Big)\f{1}{|x-y|^n} \Big(1+\f{|x-y|}{\rho_\nu(x_B)}\Big)^{-\gamma_\nu} |a(y)|dy\\
&\lesi   \int_{B}\Big(\f{r_B}{|x-x_B|}+\f{r_B}{\rho_\nu(x_B)}\Big)\f{1}{|x-x_B|^n} \Big(1+\f{|x-x_B|}{\rho_\nu(x_B)}\Big)^{-\gamma_\nu} |a(y)|dy\\
&\lesi   \int_{B} \f{r_B}{|x-x_B|} \f{1}{|x-x_B|^n}  |a(y)|dy\\
&\ \ \ \ \ + \int_{B} \f{r_B}{\rho_\nu(x_B)} \f{1}{|x-x_B|^n} \Big(1+\f{|x-x_B|}{\rho_\nu(x_B)}\Big)^{-\gamma_\nu} |a(y)|dy\\
&= F_1 + F_2.
\end{aligned}
\]
For the term $F_1$, it is straightforward to see that 
\[
\begin{aligned}
F_1&\lesi \f{r_B}{|x-x_B|}\f{1}{|x-x_B|^n}\|a\|_1\\
&\lesi \Big(\f{r_B}{|x-x_B|}\Big)^{\gamma_\nu}\f{1}{|x-x_B|}|B|^{1-1/p},
\end{aligned}
\]
where in the last inequality we used
\[
\f{r_B}{|x-x_B|}\le \Big(\f{r_B}{|x-x_B|}\Big)^{\gamma_\nu},
\]
since both $\f{r_B}{|x-x_B|}$ and $\gamma_\nu$ are less than or equal to $1$.

For $F_2$, since $r_B<\rho_\nu(x_B)$ and $\gamma_\nu\le 1$, we have $\f{r_B}{\rho_\nu(x_B)}\le \Big(\f{r_B}{\rho_\nu(x_B)}\Big)^{\gamma_\nu}$. Hence,
\[
\begin{aligned}
F_2&\lesi \Big(\f{r_B}{\rho_\nu(x_B)}\Big)^{\gamma_\nu}\f{1}{|x-x_B|^n} \Big(\f{|x-x_B|}{\rho_\nu(x_B)}\Big)^{-\gamma_\nu} \|a\|_1\\
&\lesi \Big(\f{r_B}{|x-x_B|}\Big)^{\gamma_\nu}\f{1}{|x-x_B|^n}|B|^{1-1/p}.
\end{aligned}
\]
Taking this and the estimate of $F_1$ into account, we obtain
\[
\sup_{t>0} |e^{-t\mathcal L_{\nu}} \mathcal L_\nu^{-|k|/2}(\delta_\nu^*)^ka(x)|\lesi \Big(\f{r_B}{|x-x_B|}\Big)^{\gamma_\nu}\f{1}{|x-x_B|^n}|B|^{1-1/p}	\]
for all $x\in (4B)^c$.
Consequently,
\[
\Big\|\sup_{t>0}|e^{-t\mathcal L_{\nu}} \mathcal L_\nu^{-|k|/2}(\delta_\nu^*)^ka|\Big\|_{L^p(\mathbb R^n_+\backslash 4B)}\lesi 1,
\]
as long as $\f{n}{n+\gamma_\nu}<p\le 1$.

\end{proof}

{\bf Acknowledgement.} The authors were supported by the research grant ARC DP220100285 from the Australian Research Council.

\end{document}